\documentclass{article}
\usepackage[a4paper,top=2cm,bottom=2cm,left=2cm,right=2cm]{geometry}

\usepackage{amsmath}
\usepackage{amssymb}
\usepackage{amsthm}
\usepackage{mathrsfs}
\usepackage{authblk}
\usepackage{t-angles}
\usepackage{mathtools}
\usepackage[all]{xy}
\usepackage[bbgreekl]{mathbbol}
\def\pulb{\ar@{}[dr]|(0.2){\mbox{\Large{$\lrcorner$}}}}
\allowdisplaybreaks[4]

\usepackage{graphicx}  
\usepackage[linktocpage=true]{hyperref}
\usepackage{comment}
\usepackage{tikz-cd}
\usepackage{stmaryrd}
\usepackage[toc,page]{appendix}
\usepackage{bbm}

\newtheorem{thm}{Theorem}[section]
\newtheorem{theorem}[thm]{Theorem}

\newtheorem{corollary}[thm]{Corollary}
\newtheorem{lemma}[thm]{Lemma}
\newtheorem{proposition}[thm]{Proposition}
\theoremstyle{definition}
\newtheorem{definition}[thm]{Definition}
\newtheorem{example}[thm]{Example}

\newtheorem{remark}[thm]{Remark}

\newcommand{\ud}{\underline{\mathrm{d}}}
\newcommand{\uDelta}{\underline{\Delta}}
\newcommand{\rH}{\mathrm{H}}
\newcommand{\rG}{\mathrm{G}}

\newcommand{\rGL}{{\mathrm{GL}}}
\newcommand{\uM}{{\underline{\mathcal{M}}}}

\newcommand{\beq}{\begin{equation}}
\newcommand{\eeq}{\end{equation}}
\newcommand{\co}{\mathrm{co}}
\newcommand{\uB}{\underline{B}}

\title{{\bf On Braided Differential Calculi\\ and Quantum G-structures}}
\author{{\large Antonio Del Donno$^{1}$}
} 
\author{{\large Giovanni Gava$^{1}$}
}
\author{{\large Emanuele Latini$^{2}$}
}
\author{{\large Thomas Weber$^{1}$}
}
\affil{
\centerline{\sl $^{1}$Mathematical Institute of Charles University}
\centerline{\sl Sokolovská 49/83, 18675 Prague 8, Czech Republic}

~\\

\centerline{\sl
{ $^{2}$Alma Mater Studiorum - Università di Bologna}}

\centerline{\sl  Via Zamboni 33, 40126 Bologna, Italy}
}
\date{\today}

\begin{document}
\maketitle

\begin{abstract}
\noindent We develop a theory of first order differential calculi in braided monoidal categories and classify braided covariant and bicovariant calculi on braided Hopf algebras. We show that, under certain conditions, bicovariant calculi can be transmuted to braided bicovariant calculi. For Radford--Majid biproducts, we combine bicovariant calculi on a Hopf algebra and braided bicovariant calculi on the corresponding braided Hopf algebra to covariant smash product calculi. The associated Maurer--Cartan form is shown to decompose into a direct sum of the Maurer--Cartan forms of the structure Hopf algebra of the quantum principal bundle and the braided Hopf algebra on the base. Geometrically, this construction realises the quantum affine extension of a given Hopf algebra, and we prove that the resulting quantum principal bundle is equipped with a frame resolution induced by the quantum Maurer--Cartan form. Building on this correspondence, we introduce and develop the notion of quantum $\textrm{G}$-structure, proving that quantum $\textrm{G}$-structures are quantum frame resolutions on the reduction. The theory is illustrated by examples based on transmutations of higher analogues of Sweedler's Hopf algebra and on the braided quantum plane, seen as a Yetter--Drinfeld module of $\mathcal{O}_q(\mathrm{GL}_2)$.  
\end{abstract}

\bigskip

\noindent
\textbf{Subject Classification:} 58B32, 16T05, 20G42, 46L87. \\
\textbf{Keywords:} noncommutative differential geometry, Hopf algebras, braided monoidal categories. 

\tableofcontents

\section*{Introduction}
G-structures are fundamental objects in differential geometry. They encode geometric structures on smooth manifolds through reductions of the structure group of the frame bundle, providing a unified language for a wide range of geometric settings. Within this framework, Cartan geometry emerges as the natural language for the study of G-structures: under suitable assumptions, a Cartan connection simultaneously encodes both the reduction of the frame bundle and a compatible affine connection on the tangent bundle \cite{sharpe}.
The basic example illustrating this correspondence is the affine homogeneous space
\[
\operatorname{Aff}(n,\Bbbk)/\operatorname{GL}(n,\Bbbk).
\]
The Maurer--Cartan form of the affine group $\operatorname{Aff}(n,\Bbbk)$, viewed as the semidirect product of the translation group $\Bbbk^n$ with the general linear group $\operatorname{GL}(n,\Bbbk)$, contains the soldering form as its translational component. Consequently, any reduction of the principal bundle
\[
\textrm{Aff}(n,\Bbbk)\longrightarrow
\textrm{Aff}(n,\Bbbk)/\textrm{GL}(n,\Bbbk)
\]
to a subgroup $\textrm{G}\subseteq \textrm{GL}(n,\Bbbk)$ accordingly determines a $\textrm{G}$-structure. This construction is closely related to the notion of a \emph{frame resolution}, whose essential ingredient is the soldering form. Indeed, the latter induces an isomorphism between the tangent bundle and an associated vector bundle, thereby encoding the geometric information carried by the underlying $\textrm{G}$-structure \cite{parabook}. In the Cartan approach to $\textrm{G}$-structures, the simplest model one can consider is provided by the affine extension of a Lie group. Since the splitting of the affine Lie algebra is equivariant, the Maurer--Cartan form of the affine group naturally encodes both the information contained in the soldering form, and hence in the frame resolution described above, and that of the affine connection.

\medskip

The aim of this paper is to generalise this picture to the setting of noncommutative differential geometry. In this context, a quantum principal bundle is described by a faithfully flat Hopf--Galois extension, while the quantum analogue of a frame bundle is provided by the notion of a \emph{quantum frame resolution}, first introduced in \cite{BrzMaj}. Our goal is to develop an appropriate notion of quantum $\textrm{G}$-structure within this framework from a Cartan-geometric perspective. To this end, we rely on the theory of quantum principal bundle reductions, originally developed in \cite{hajac} and subsequently extended in \cite{pagani}, and make use of the affine extension model.
Algebraically the affine is mirrored by the quantum principal bundle obtained from a smash product algebra (or, more generally, a crossed product algebra). More precisely, given a Hopf algebra $H$, one can construct the noncommutative-geometric analogue of a semidirect product via the smash product algebra $B\# H$, where $B$ is a left $H$-module algebra. Although $B\# H$ is, in general, only an associative algebra, it becomes a Hopf algebra when $B$ is equipped with the structure of a braided Hopf algebra in the category of Yetter--Drinfeld modules ${}_H^H\mathcal{YD}$. The resulting Hopf algebra, known as the Radford--Majid biproduct, has been described in \cite{Radford,MajidBos}.

The study of the quantum affine models and of quantum $\textrm{G}$-structures thus requires a careful treatment of differential calculi on the Radford--Majid biproduct. Indeed, as is well-known, one of the salient features of noncommutative differential geometry is the non-uniqueness of differential calculi, and in many instances it is not a priori clear which differential structure would be the one carrying meaningful geometrical information. For the Radford--Majid biproduct this issue can be addressed, in considerable generality, by means of the \emph{smash product calculus}, first introduced in \cite{PflaumSchauenburg} and subsequently generalised to crossed product algebras, and further studied in \cite{aflw} and in \cite{SciWeb}. As the Radford--Majid biproduct features a Hopf algebra $\uB$ in the braided monoidal category of Yetter--Drinfeld modules of a Hopf algebra $H$, to fully understand differentials on $\uB\# H$ one must first develop a consistent theory of first order differential calculi in braided monoidal categories and, more in depth, for braided Hopf algebras. The detailed study of such differential structures is one of the main features of the paper.

Differential calculi in the braided setting are, in fact, not yet well-understood in full generality, and only partial results are currently available in the literature \cite{BespalovCalcs,AzizMajid}. We therefore introduce the general theory of braided covariant first order differential calculi, with particular emphasis on the case of bialgebras and Hopf algebras internal to a braided monoidal category. We show the existence of a universal object in the category of braided covariant calculi on a braided bialgebra, namely the universal calculus, and that for the case of braided Hopf algebra such differential structures are classified via appropriate ideals in the augmentation ideal, thereby generalising the classification theorem of Woronowicz \cite{Woronowicz1989}. We then discuss explicit examples of braided covariant calculi; in particular, we examine in depth the case of the braided quantum plane, keeping in mind a realisation of the quantum affine extension. For the latter, we prove that no (non-trivial) braided bicovariant calculus exists once the target category is fixed to be that of Yetter--Drinfeld modules over the quantised coordinate algebra of the general linear group of rank two.

This non-existence result illustrates that bicovariance is, in general, a genuinely restrictive condition within the braided setting. It is, however, not clear a priori whether working within a braided monoidal category should be expected to imply, in comparison with the ordinary category of vector spaces, a systematic loss or a systematic gain of such structural properties. For the case of quasitriangular and coquasitriangular Hopf algebras, we show that transmutation \cite{MajidFoundation} gives a partial answer on this front: bicovariant FODCi enjoying some additional features on a quasitriangular Hopf algebra correspond to braided bicovariant FODCi on the corresponding braided Hopf algebra, while the same pattern holds in full generality for the coquasitriangular case. Beside this context, no general pattern of this kind is expected, and the answer depends sensitively on the case under investigation. Motivated by this observation, we present in this paper a surprising and, we believe, rather striking result exhibiting a case in which bicovariance is in fact gained, again by taking advantage of the theory of transmutation. We consider the transmutation of the Sweedler quasitriangular Hopf algebra $E_n$ over itself, which produces a braided Hopf algebra $\underline{E_n}$ in the category of left $E_n$-modules ${}_{E_n}\mathrm{Mod}$, together with a left-covariant differential calculus on it which is \emph{not} bicovariant. We prove that this calculus is, in fact, \emph{braided} bicovariant, thereby giving an explicit realisation of how certain differential structures may acquire additional symmetries precisely when understood within a braided monoidal category.

Having examined braided covariant calculi, we are then in the position to give a coherent description of smash product calculi on Radford--Majid biproducts. As a first result in this direction, we carefully establish the covariance properties of the latter. We then undertake a detailed study of the coinvariant 1-forms associated with a left-covariant, or bicovariant, differential structure on $\uB\# H$. In particular, we show that the corresponding cotangent space splits according to the nature of the chosen differential calculus, this splitting arising from an analogous decomposition of the quantum Maurer--Cartan form into two components. We discuss how this decomposition naturally reflects the geometric picture of a Cartan connection decomposing as the sum of a principal connection and a soldering form, and how the latter canonically induces a quantum frame resolution.

Although the present work is devoted to the quantum affine model, we expect that the ideas developed here should extend beyond this setting. A natural direction is the study of quantum analogues of Cartan geometries modeled on more general homogeneous spaces, where one would seek a local description in terms of possible suitable sheaf-theoretic techniques, such as in \cite{aflw} or more algebro-geometrically as in \cite{RosenbergKontsevich}. From the algebraic viewpoint, another possible direction would be to replace the Radford--Majid biproduct by the more general crossed product constructions, and to investigate the conditions under which such algebras admit compatible Hopf algebra structures and the corresponding theory of differential calculi.

\medskip

A short outline of the manuscript follows. In Section \ref{section:braided_hopf_algebras} we collect the algebraic preliminaries required in the sequel, mostly reviewing standard material from the theory of quantum principal bundles and of Hopf algebras in braided monoidal categories, and establishing the braided generalisation of the fundamental theorem of Hopf modules. In Section \ref{First order braided differential calculi} we develop the theory of first order differential calculi over bialgebras and Hopf algebras internal to a braided monoidal category. We establish the existence of a universal calculus, prove a braided analogue of Woronowicz's classification theorem \cite{Woronowicz1989} in terms of ideals in the kernel of the counit, and illustrate the theory through explicit examples. We prove the link between bicovariant calculi on (co)quasitriangular Hopf algebras and braided bicovariant calculi on the corresponding transmutation. We then return, in Section \ref{chapter:smash_calculi}, to the Radford--Majid biproduct, examining its interaction with the smash product calculus and, in particular, the properties of the Maurer--Cartan form in this setting. Finally, in Section \ref{section:framings_and_G_structures} we discuss the notion of quantum frame resolution from a Cartan point of view. We show how the Maurer--Cartan form for the Radford--Majid biproduct canonically induces a quantum frame resolution, and, in a general setting, that the reduction of a quantum frame resolution is again a quantum frame resolution. In Appendix \ref{section:classicalpicture} we collect some results on the classical theory of Cartan geometry and G-structures, motivating how the notion of framing, instead of the one of frame bundle, is the correct one to consider in the algebraic noncommutative setting. As already mentioned, explicit examples featuring Sweedler's Hopf algebras, the braided quantum plane, and the $q$-deformed coordinate algebra of the general linear group and the special orthogonal group of rank 2 are supplied throughout, both from the algebraic and noncommutative differential geometric point of view.

\subsection*{Acknowledgments}
The authors are thankful to Andreas \v{C}ap, Rita Fioresi, Réamonn \'{O} Buachalla and Chiara Pagani for insightful discussions. This publication is based upon work from COST Action CaLISTA CA21109 supported by COST (European Cooperation in Science and Technology) www.cost.eu, as well as HORIZON-MSCA-2022-SE-01-01 CaLIGOLA, MSCA-DN CaLiForNIA - 101119552 and INFN Sezione Bologna. T.W. is supported by GA\v{C}R PIF 24-11324I.

\subsection*{Summary of the notation and conventions}
Throughout the whole paper we denote by $\Bbbk$ either the field of real or complex numbers, and all the Hopf algebras (in the standard or braided sense) shall be understood to have a bijective antipode. In view of the multiple algebraic structures to be featured therefrom, we find it essential to collect, in the table below, a summary of the notation employed throughout the paper. We emphasize, however, that in certain specific instances departures from this notation nevertheless proves necessary; whenever this occurs, we shall make explicit the notation in use at that point. 

\begin{center}
\renewcommand{\arraystretch}{1.25}
\begin{tabular}{@{}ll@{}}

\textbf{Symbol} & \textbf{Description} \\

$\rH$, $\rG$
    & Lie groups \\

$H$
    & Hopf algebra in ${}_\Bbbk\mathrm{Vec}$ \\

$(\Delta,\varepsilon,S)$
    & Coproduct, antipode and counit of a Hopf algebra \\

$\uB$
    & Braided Hopf algebra \\

$(\underline{\Delta},\underline{\varepsilon},\underline{S})$
    & Coproduct, antipode and counit of a braided Hopf algebra \\

$B\#H$
    & Smash product algebra \\

$\uB\#H$
    & Radford--Majid biproduct  \\

$(\Delta_\#,\varepsilon_\#,S_\#)$
    & Coproduct, antipode and counit of a Radford--Majid biproduct \\

${}^H\mathrm{Mod}$, ${}_H\mathrm{Mod}$, $\mathrm{Mod}^H$, $\mathrm{Mod}_H$
    & Categories of left/right $H$-(co)modules \\

$\Delta_V$, ${}_V\Delta$
    & Right and left $H$-coactions on a $H$-comodule $V$ \\

$\underline{\Delta}_V$, ${}_V\underline{\Delta}$
    & Right and left $\uB$-coactions on a $\uB$-comodule $V$ \\
\end{tabular}
\end{center}
Throughout the paper we make extensive use of Sweedler's notation for coproducts and coactions. We distinguish between the cases in which the coalgebra (or bialgebra) lives in the category of vector spaces ${}_\Bbbk\mathrm{Vec}$ or in a general braided monoidal category $\underline{\mathcal{M}}$; the table below records the resulting conventions in the two settings.

\begin{center}
\renewcommand{\arraystretch}{1.25}
\begin{tabular}{@{}llc@{}}
\textbf{Symbol} & \textbf{Description} & \textbf{Category} \\
$\Delta_V(v)=v_{0}\otimes v_{1}$
    & Sweedler's notation for a right coaction
    & ${}_{\Bbbk}\mathrm{Vec}$ \\
${}_V\Delta(v)=v_{-1}\otimes v_{0}$
    & Sweedler's notation for a left coaction
    & ${}_{\Bbbk}\mathrm{Vec}$ \\
$\Delta(h)=h_{1}\otimes h_{2}$
    & Sweedler's notation for the coproduct
    & ${}_{\Bbbk}\mathrm{Vec}$ \\
$\underline{\Delta}_V(v)=v^{0}\otimes v^{1}$
    & Sweedler's notation for a right coaction
    & $\underline{\mathcal{M}}$ \\
${}_V\underline{\Delta}(v)=v^{-1}\otimes v^{0}$
    & Sweedler's notation for a left coaction
    & $\underline{\mathcal{M}}$ \\
$\underline{\Delta}(b)=b^{1}\otimes b^{2}$
    & Sweedler's notation for the coproduct
    & $\underline{\mathcal{M}}$ \\
\end{tabular}
\end{center}

 \section{Braided Hopf algebras and the Radford--Majid biproduct}
\label{section:braided_hopf_algebras}
In this Section, we review the basic algebraic notions underlying the constructions that appear throughout the paper.
In Section \ref{subsection:Hopf--Galois_extensions_and_smash_product_algebras}, we review the basic notions of quantum principal bundles, paying particular attention to the case of trivial bundles, which are algebraically described as trivial Hopf--Galois extensions.
In Section~\ref{subsection:braided_Hopf_algebras}, we study Hopf algebras in braided monoidal categories. After introducing the necessary framework, we present fundamental results, such as the Radford--Majid theorem, and discuss some explicit examples that are used in the sequel.
In Section \ref{subsection:braided_fundamental_theorem_of_Hopf_modules} we discuss the generalisation of the fundamental theorem of Hopf modules to the setting of braided monoidal categories. 

\subsection{Hopf--Galois extensions and smash product algebras}
\label{subsection:Hopf--Galois_extensions_and_smash_product_algebras}
The main references for this Section are \cite{DoiTak,Brz}. While here we direct our focus to the affine case, we remark that the construction we present can be generalised to incorporate a sheaf theoretic point of view that is more suitable when dealing with the quantisation of projective varieties and flag manifolds. The interested reader can consider, for example, \cite{afl,aflw}.

\medskip
Let $H$ be a Hopf algebra with bijective antipode, and $A$ a \emph{right $H$-comodule algebra},
that is, an algebra with a right $H$-coaction 
$\Delta_A: A \to  A\otimes H$, $\Delta_A(a)={a}_{0} \otimes {a}_{1}$, that
is also an algebra morphism. The space of \emph{coinvariant elements}
$$
B:=A^{\co H}:=\{a \in A ~|~ \Delta_A(a)=a \otimes 1 \}
$$
is a subalgebra of $A$.   
The algebra extension $B\subseteq A$ is called a Hopf--Galois extension
if the \emph{canonical map} 
\begin{equation}
\begin{aligned}\label{can-map}
\chi \colon A\otimes_{B} A&\to A \otimes H \, ,\\  a'\otimes_{B} a & \mapsto a'a_{0}\otimes a_{1}, 
\end{aligned}
\end{equation}
is bijective.

We say that $A$ is faithfully flat as a right $B$-module if the functor $A\otimes_B (\cdot)$  from the category of left $B$-modules to the category of vector spaces preservers and reflects exact sequences. By definition, a Hopf--Galois extension
is called faithfully flat if $A$ is faithfully flat as a right $B$-module. Faithful flatness is required for equivariant projectivity and the existence of a strong connection \cite{hajac_strong}, and it reflects the classical requirement in the definition of a principal bundle that the action of the Lie group on the total space is proper. Moreover, it ensures the categorical equivalence of $B$-modules with $H$-covariant $A$-modules \cite{Schneider}. For these reasons this requirement is included in the following definition.
\begin{definition}[{\cite[Definition 3.2]{delDLW}}]\label{def:QPB}
 A faithfully flat Hopf--Galois extension $B := A^{\mathrm{co}H} \subseteq A$ is called a \emph{quantum principal bundle (QPB)} or \emph{principal $H$-comodule algebra}. 
    \label{def:quantumprincipalbundledef}
\end{definition}
An extension $A^{\co H}\subseteq A$ is called 
\emph{trivial}  if it admits a convolution invertible $H$-comodule algebra map $j:H\to A$, known as cleaving map.
In this case, the inverse of the cleaving map is given by $ j \circ S$ with $S$ the antipode of $H$. As we see in the following, trivial extensions can be constructed by considering a \emph{left $H$-module algebra} $B$, i.e., a $\Bbbk$-algebra $B$, together with a left $H$-action 
\begin{align*}
\triangleright\colon  H \otimes B &\to B,\\
 h \otimes b \,\,&\mapsto h \triangleright b,
\end{align*}
such that $h\rhd(bb')=(h_1\rhd b)(h_2\rhd b')$ and $h\rhd 1=\varepsilon(h)1$ for all $h\in H$ and  $b,b'\in B$.
Then, the vector space $B\otimes H$ can be equipped with an algebra structure called the \emph{smash product}.

\begin{definition}
    Let $B$ be a left $H$-module algebra, with left $H$-action $\triangleright$. The product
  \begin{equation}
  \begin{aligned}
  \label{eq:smashproductalgebraproduct}
            \mu_{\#} \colon (B \otimes H) \otimes (B \otimes H) & \to B \otimes H,\\
            (b \# h) \otimes (b^{\prime} \# h^{\prime}) &\mapsto b(h_1 \triangleright b^{\prime}) \otimes h_2h^{\prime},
    \end{aligned}
  \end{equation}
structures $B\#H := (B \otimes H, \mu_{{\#}})$ as an associative unital algebra, with unit $1 \otimes 1$, called the \emph{smash product algebra}. 
\end{definition}

 We give $B\#H$ a right $H$-comodule algebra structure induced by the coproduct of $H$, i.e.,
\begin{equation}
    \Delta_{\#} := \mathrm{id}_B \otimes \Delta \colon B\#H \to B\#H \otimes H,
    \label{eq:naturalrightHcoactiononsmashproduct}
\end{equation}
with subalgebra of coinvariant elements $(B\#H)^{\mathrm{co}H} = B \otimes 1 \cong B$. Any smash product algebra $B\#H$ is a trivial extension with cleaving map

\begin{equation}\label{eq:smashproductalgebraimpliestrivialmap}
\begin{split}
    j \colon H &\to B\#H,\\
    h &\mapsto 1 \# h.
\end{split}    
\end{equation}

\noindent Conversely, given a trivial extension $B:=A^{\mathrm{co}H}\subseteq A$, one obtains a smash product algebra $B\#H$ with respect to the left $H$-action $h\rhd b:=j(h_1)\,b\,j(S(h_2))$ for all $h\in H$ and $b\in B$, giving an equivalence between smash product algebras and trivial Hopf--Galois extensions, see \cite{DoiTak}.

\subsection{Braided Hopf algebras}\label{subsection:braided_Hopf_algebras}

We assume that the reader is familiar with braided monoidal categories and refer to the textbooks \cite{KasselBook,MajidFoundation} for more details.
Throughout this section $(\underline{\mathcal{M}}, \otimes, I)$ denotes a monoidal category. By \emph{Mac Lane’s coherence theorem}, $(\underline{\mathcal{M}}, \otimes, I)$ is monoidally equivalent to a strict monoidal category and thus we are going to omit the associativity and unit constraints of $(\underline{\mathcal{M}}, \otimes, I)$. We further omit the monoidal unit $I$ whenever it is clear from the context. Recall that a \emph{braided monoidal category} $(\underline{\mathcal{M}}, \otimes,I, \sigma)$ is a monoidal category  $(\underline{\mathcal{M}},\otimes,I)$ together with a braiding $\sigma$, that is, a natural isomorphism with components
\begin{equation}\label{eq:braid}
    \begin{split}
        \sigma_{X,Y} \colon X \otimes Y &\xrightarrow[]{\cong} Y \otimes X,\\
        x \otimes y &\mapsto {}_{\alpha}y \otimes {}^{\alpha}x,
    \end{split}
\end{equation}
on any objects $X,Y \in \underline{\mathcal{M}}$, satisfying the hexagon relations. Above, ${}_\alpha y\otimes{}^\alpha x:=\sigma_{X,Y}(x\otimes y)$ is to be understood as a short notation, where $\alpha$ is the summation index of a finite sum. Omitting associators, as mentioned before, and adopting the previous short notation, the hexagon relations read    \begin{equation}\label{eq:hexagon_relations}
         \begin{aligned}
            {}_{\gamma}(z) \otimes {}^{\gamma}(x\otimes y)
            &=\sigma_{X\otimes Y,Z}((x\otimes y)\otimes z) 
            =(\sigma_{X,Z}\otimes\mathrm{id}_Y)(\mathrm{id}_X\otimes\sigma_{Y,Z})(x\otimes y\otimes z)
            ={}_{\alpha\beta}(z)\otimes {}^{\alpha}(x) \otimes {}^{\beta}(y),\\ 
            {}_{\gamma}(y\otimes z) \otimes {}^{\gamma}(x)
            &=\sigma_{X,Y\otimes Z}(x\otimes(y\otimes z))
            =(\mathrm{id}_Y\otimes\sigma_{X,Z})(\sigma_{X,Y}\otimes\mathrm{id}_Z)(x\otimes y\otimes z)
            ={}_{\alpha}(y) \otimes {}_{\beta}(z)\otimes {}^{\beta\alpha}(x)
         \end{aligned}
    \end{equation}
for all $x\in X$, $y\in Y$ and $z\in Z$, where $\beta$ is the summation index of the second braiding.
\begin{example}
The category ${}_H^H\mathcal{YD}$ of \emph{(left-left) Yetter--Drinfeld modules} over a Hopf algebra $H$ is the category with objects consisting of triples $(V,\triangleright, {}_V\Delta)$, where $(V,\triangleright \colon H \otimes V \to V)$ is a left $H$-module, $(V,{}_V\Delta \colon V \to H \otimes V)$ is a left $H$-comodule and the Yetter--Drinfeld compatibility
\[
(h \triangleright v)_{-1} \otimes (h \triangleright v)_0 = h_1v_{-1}S(h_3) \otimes h_2\triangleright v_0
\]
holds for any $h \in H$ and $v \in V$. The morphisms are left $H$-linear and left $H$-colinear maps. The category ${}_H^H\mathcal{YD}$ is monoidal with tensor product induced from the tensor product of vector spaces:
for any $V,W \in {}_H^H\mathcal{YD}$ the tensor product $V \otimes W$ becomes an object in ${}_H^H\mathcal{YD}$ via
    \[
    \begin{aligned}
        & h \triangleright_{\otimes} (v \otimes w) = h_1 \triangleright v \otimes h_2 \triangleright w, \;\; {}_\otimes\Delta(v \otimes w) = v_{-1}w_{-1} \otimes v_0 \otimes w_0. 
    \end{aligned}
    \]
The monoidal unit is the ground field $\Bbbk$ endowed with the trivial $H$-action and $H$-coaction. The monoidal category $({}_H^H\mathcal{YD},\otimes,\Bbbk)$ is braided with respect to the braiding given on objects $V,W \in {}_H^H\mathcal{YD}$ by
    \[
        \sigma_{V,W}(v \otimes w)
        = v_{-1} \triangleright w \otimes v_0.
    \]
for all $v,w \in V,W$.
\qed
\end{example}
Other examples of braided monoidal categories arise as the (co)representation categories of (co)quasitriangular bialgebras. We recall for the convenience of the reader that a bialgebra $(H,\Delta,\varepsilon,S)$ is called \emph{quasitriangular} if there is an invertible element $\mathcal{R}\in H\otimes H$, the \emph{$\mathcal{R}$-matrix}, such that $H$ is quasi-cocommutative, i.e.,
$$
\Delta^\mathrm{op}(\cdot)=\mathcal{R}\Delta(\cdot)\mathcal{R}^{-1}
$$
holds, and the hexagon relations
\begin{equation}
\begin{split}
    (\mathrm{id}\otimes\Delta)(\mathcal{R})
    &=\mathcal{R}_{13}\mathcal{R}_{12}\\
    (\Delta\otimes\mathrm{id})(\mathcal{R})
    &=\mathcal{R}_{13}\mathcal{R}_{23}
\end{split}
\end{equation}
are satisfied. If, in addition, $\mathcal{R}^{-1}=\mathcal{R}^\mathrm{op}$, we refer to $(H,\mathcal{R})$ as a \emph{triangular bialgebra}. On the other hand, we call a bialgebra $H$ \emph{coquasitriangular} if there is a convolution invertible map $\mathcal{R}\colon H\otimes H\to\Bbbk$ such that $H$ is quasi-commutative, i.e., the multiplication and the opposite multiplication $\mu,\mu^\mathrm{op}\colon H\otimes H\to H$ are related via
$$
\mu^\mathrm{op}=\mathcal{R}*\mu*\mathcal{R}^{-1}
$$
and the hexagon relations
\begin{equation}
\begin{split}
    \mathcal{R}\circ(\mathrm{id}\otimes \mu)
    &=\mathcal{R}_{13}*\mathcal{R}_{12}\\
    \mathcal{R}\circ(\mu\otimes\mathrm{id})
    &=\mathcal{R}_{13}*\mathcal{R}_{23}
\end{split}
\end{equation}
are satisfied. If, in addition, $\mathcal{R}^{-1}=\mathcal{R}^\mathrm{op}$, we call $(H,\mathcal{R})$ \emph{cotriangular}. Above, $*$ denotes the convolution product, e.g., the quasi-commutativity reads $gh=\mathcal{R}(h_1\otimes g_1)h_2g_2\mathcal{R}^{-1}(h_3\otimes g_3)$ on elements $h,g\in H$. For more information on (co)quasitriangular structures we refer to reader to \cite[Chapter 2]{MajidFoundation}.
\begin{example}
Let $H$ be a bialgebra and consider the category ${}_H\mathcal{M}$ of left $H$-modules with its monoidal structure defined on objects $M,N\in{}_H\mathcal{M}$ by
$$
h\cdot(m\otimes n)
:=\Delta(h)\cdot(m\otimes n)
=h_1\cdot m\otimes h_2\cdot n
$$
for all $h\in H$, $m\in M$, $n\in N$ and monoidal unit $\Bbbk$ endowed with the trivial $H$-action. Then, $({}_H\mathcal{M},\otimes,\Bbbk)$ is braided if and only if $H$ is quasitriangular, see \cite[Theorem 9.2.4 and paragraph thereafter]{MajidFoundation}. In this case, the braiding is given by
$$
\sigma^\mathcal{R}_{M,N}(m\otimes n)
=\mathcal{R}^\mathrm{op}\cdot(n\otimes m)
=\mathcal{R}^2\cdot n\otimes\mathcal{R}^1\cdot m
$$
for all $m\in M$, $n\in N$, where we used the short notation $\mathcal{R}=\mathcal{R}^1\otimes\mathcal{R}^2$. The category is symmetric if and only if $(H,\mathcal{R})$ is triangular.

Again, just assuming for the moment that $H$ is a bialgebra, the category $\mathcal{M}^H$ of right $H$-comodules is monoidal, with right $H$-coaction defined on the tensor product of two objects $M,N\in\mathcal{M}^H$ by
$$
\Delta_\otimes\colon M\otimes N\to M\otimes N\otimes H,\qquad
\Delta_\otimes(m\otimes n):=m_0\otimes n_0\otimes m_1n_1
$$
and with monoidal unit given by the ground field $\Bbbk$, endowed with the trivial right $H$-coaction. This category is braided if and only if $H$ is coquasitriangular, where we obtain a symmetric category if and only if $H$ is cotriangular, see \cite[Exercise 9.2.9]{MajidFoundation}. In these cases, the braiding reads
$$
\sigma^\mathcal{R}_{M,N}(m\otimes n)=n_0\otimes m_0\mathcal{R}(m_1\otimes n_1)
$$
for all $m\in M$ and $n\in N$. \qed
\end{example}

Let $(A,\underline{\mu},\underline{\eta})$ be a \emph{monoid} in a monoidal category $(\underline{\mathcal{M}},\otimes,I)$, i.e., $A$ is an object in $\underline{\mathcal{M}}$ and $\underline{\mu}\colon A\otimes A\to A$, $\underline{\eta}\colon I\to A$ are morphisms in $\underline{\mathcal{M}}$, satisfying the obvious associativity and unit relations. We informally refer to $(A,\underline{\mu},\underline{\eta})$ as an \emph{associative unital algebra object in $\underline{\mathcal{M}}$}. If $(\underline{\mathcal{M}},\otimes,I,\sigma)$ is braided, the tensor product $A \otimes A$ becomes an associative unital algebra in $\underline{\mathcal{M}}$ with respect to the unit $\underline{\eta}_\otimes=\underline{\eta}\otimes\underline{\eta}\colon I\to A\otimes A$ and the multiplication
\[
    (a \otimes b)(c \otimes d) = a{}_{\alpha}c \otimes {}^{\alpha}bd
\]
for all $a,b,c,d \in A$, where we employed the short notation introduced in \eqref{eq:braid}.

In duality, a \emph{comonoid} in a monoidal category $(\underline{\mathcal{M}},\otimes,I)$ is an object $C$ in $\underline{\mathcal{M}}$, together with morphisms $\underline{\Delta}\colon C\to C\otimes C$ and $\underline{\varepsilon}\colon C\to I$ in $\underline{\mathcal{M}}$ satisfying the obvious coassociativity and counit relations. We informally refer to $(C,\underline{\Delta},\underline{\varepsilon})$ as a \emph{coassociative counital coalgebra object in $\underline{\mathcal{M}}$}. If $(\underline{\mathcal{M}},\otimes,I,\sigma)$ is braided, the tensor product $C \otimes C$ becomes a coassociative counital coalgebra in $\underline{\mathcal{M}}$ with respect to the counit $\underline{\varepsilon}_\otimes=\underline{\varepsilon}\otimes\underline{\varepsilon}\colon C\otimes C\to I$ and the comultiplication
\[
\underline{\Delta}_\otimes(c\otimes c') = (c^1\otimes{}_{\alpha}(c'^1))\otimes({}^{\alpha}(c^2)\otimes c'^2),
\]
for all $c,c' \in C$, where we adopted the Sweedler notation $\underline{\Delta}(c)=:c^1\otimes c^2$ with upper indexes for the coproduct of an element $c\in C$, as well as the short notation \eqref{eq:braid} for the braiding.
\begin{definition}[{\cite{MajidLectureNotes}}]
A \emph{braided bialgebra} (or \emph{bimonoid}) in a braided monoidal category $(\underline{\mathcal{M}}, \otimes, I,\sigma)$ is a $5$-tuple $(\uB,\underline{\mu},\underline{\eta},\underline{\Delta},\underline{\varepsilon})$ with $(\uB,\underline{\mu},\underline{\eta})$ an algebra in $\underline{\mathcal{M}}$ and $(\uB,\underline{\Delta},\underline{\varepsilon})$ a coalgebra in $\underline{\mathcal{M}}$ such that $\underline{\Delta} \colon \uB \otimes \uB \to \uB$ and $\underline{\varepsilon} \colon \uB \to I$ are algebra maps. A braided bialgebra bialgebra $\uB$ is called a \emph{braided Hopf algebra} (or \emph{Hopf monoid}) in the category $\underline{\mathcal{M}}$ if there exists a morphism $\underline{S}\colon\underline{\mathcal{M}}\to\underline{\mathcal{M}}$, the \emph{antipode}, such that
\begin{equation*}
\begin{tikzcd}
    & \underline{B}\otimes\underline{B} \arrow{rr}{\underline{S}\otimes\mathrm{id}_{\underline{B}}} 
    & & \underline{B}\otimes\underline{B} \arrow{dr}{\underline{\mu}} & \\
    \underline{B} \arrow{ru}{\underline{\Delta}} \arrow{dr}[swap]{\underline{\Delta}} \arrow{rr}{\underline{\varepsilon}}
    & & I \arrow{rr}{\underline{\eta}}
    & & \underline{B}\\
    & \underline{B}\otimes\underline{B} \arrow{rr}{\mathrm{id}_{\underline{B}}\otimes\underline{S}}
    & & \underline{B}\otimes\underline{B} \arrow{ur}[swap]{\underline{\mu}} &
\end{tikzcd}
\end{equation*}
holds as equalities of morphisms in $\underline{\mathcal{M}}$. A morphism of braided Hopf algebras is a morphism of the underlying monoids and comonoids.
\end{definition}
Explicitly, the coproduct of a braided bialgebra is an algebra morphism via
$$
\underline{\Delta}(bb')
=b^1{}_{\alpha}(b'^1)\otimes{}^{\alpha}(b^2)b'^2
$$
for all $b,b'\in\underline{B}$. Moreover, the antipode is an anti-bialgebra morphism, i.e., it satisfies
\[
    \underline{S}(ab) = \underline{S}({}_{\alpha}b)\,\underline{S}({}^{\alpha}a) = {}_{\alpha}\underline{S}(b){}^{\alpha}\underline{S}(a), \;\;\;
    (\underline{S}(a))^{1}\otimes (\underline{S}(a))^{2} = {}_{\alpha}\underline{S}(a^{2})\otimes {}^{\alpha}\underline{S}(a^{1}) = \underline{S}({}_{\alpha}(a^2)) \otimes \underline{S}({}^{\alpha}(a^1)),
\]
for all $a,b\in \uB$ (see for example \cite{Andr-Grana}).

In the next lemma we highlight some properties following directly from the hexagon relations and from the fact that the braiding is a natural isomorphism. These equalities are useful in the rest of the paper.
\begin{lemma}
Let $(\uB,\underline{\mu},\underline{\eta},\underline{\Delta},\underline{\varepsilon}, \underline{S})$ be a braided Hopf algebra in a braided monoidal category $(\underline{\mathcal{M}}, \otimes, I, \sigma)$. Then it follows, by naturality of the braiding, that
    \[
        \begin{split}
            {}_{\alpha}(bc) \otimes {}^{\alpha}a 
            &={}_{\beta}b{}_{\gamma}c \otimes {}^{\gamma}({}^{\beta}a),\\
            {}_{\alpha}c \otimes {}^{\alpha}(ab) 
            &= {}_{\gamma}({}_{\beta}c) \otimes {}^{\gamma}a{}^{\beta}b,\\
            {}_{\alpha}(b^1 \otimes b^2) \otimes {}^{\alpha}a 
            &= ({}_{\alpha}b)^1 \otimes ({}_{\alpha}b)^2 \otimes {}^{\alpha}a,\\
            {}_{\alpha}b \otimes {}^{\alpha}(a^1 \otimes a^2) 
            &= {}_{\alpha}b \otimes ({}^{\alpha}a)^1 \otimes ({}^{\alpha}a)^2, \\
            {}_{\alpha}(\underline{S}(b)) \otimes {}^{\alpha}a &=\underline{S}({}_{\alpha}b) \otimes {}^{\alpha}a,\\
            {}_{\alpha}b \otimes {}^{\alpha}(\underline{S}(a)) 
            &= {}_{\alpha}b \otimes (\underline{S}({}^{\alpha}a)),
        \end{split}
    \]
    for all $a,b,c \in \uB$.
\end{lemma}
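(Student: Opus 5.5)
Each of the six identities follows from naturality of $\sigma$ applied to one structure morphism of $\uB$ ($\underline{\mu}$, $\underline{\Delta}$ or $\underline{S}$), in some cases combined with a hexagon relation \eqref{eq:hexagon_relations}. The approach is therefore to write each identity as an equality of morphisms and read it off elementwise in the short notation \eqref{eq:braid}.

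For the first identity I will use naturality of $\sigma_{-,\uB}$ in its first argument along $\underline{\mu}\colon \uB\otimes\uB\to\uB$. Braiding $\uB$ past $\uB$ in the second slot, naturality gives
\[
\sigma_{\uB,\uB}\circ(\underline{\mu}\otimes\mathrm{id})=(\mathrm{id}\otimes\underline{\mu})\circ\sigma_{\uB\otimes\uB,\uB}
\]
in one ordering, and $\sigma_{\uB,\uB}\circ(\mathrm{id}\otimes\underline{\mu})=(\underline{\mu}\otimes\mathrm{id})\circ\sigma_{\uB,\uB\otimes\uB}$ in the other. The first identity, in which $bc$ is moved to the left past $a$, is the second ordering. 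I then expand $\sigma_{\uB,\uB\otimes\uB}$ with the second hexagon relation as $(\mathrm{id}\otimes\sigma)(\sigma\otimes\mathrm{id})$, which gives ${}_\beta b\otimes{}_\gamma c\otimes{}^{\gamma}({}^{\beta}a)$. Applying $\underline{\mu}$ to the first two factors yields the claim.

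The second identity is the mirror statement. I use naturality $\sigma_{\uB,\uB}\circ(\underline{\mu}\otimes\mathrm{id})=(\mathrm{id}\otimes\underline{\mu})\circ\sigma_{\uB\otimes\uB,\uB}$ together with the first hexagon relation, $\sigma_{\uB\otimes\uB,\uB}=(\sigma\otimes\mathrm{id})(\mathrm{id}\otimes\sigma)$. This produces ${}_{\gamma}({}_{\beta}c)\otimes{}^{\gamma}a\otimes{}^{\beta}b$, and multiplying the last two factors gives the claim.

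The third and fourth identities use naturality along $\underline{\Delta}\colon\uB\to\uB\otimes\uB$:
\[
\sigma_{\uB\otimes\uB,\uB}\circ(\underline{\Delta}\otimes\mathrm{id})=(\mathrm{id}\otimes\underline{\Delta})\circ\sigma_{\uB,\uB},
\]
and its analogue in the other slot. Here the left-hand side ${}_\alpha(b^1\otimes b^2)\otimes{}^\alpha a$ is to be read as the braiding of the object $\uB\otimes\uB$ with $\uB$. No hexagon expansion is needed, because the statement keeps the composite braiding as a single symbol.

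The last two identities are naturality along $\underline{S}$, namely $\sigma\circ(\underline{S}\otimes\mathrm{id})=(\mathrm{id}\otimes\underline{S})\circ\sigma$ and $\sigma\circ(\mathrm{id}\otimes\underline{S})=(\underline{S}\otimes\mathrm{id})\circ\sigma$, read elementwise.

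The main obstacle I expect is bookkeeping rather than anything substantive. One has to match correctly which argument of $\sigma$ each identity concerns, since the left and right slots correspond to different naturality squares. One also has to be careful about how the short notation encodes composite braidings, in particular the index order ${}^{\gamma}({}^{\beta}a)$ versus ${}^{\beta\alpha}$ in \eqref{eq:hexagon_relations}. I will fix this by writing each identity first as a composite of morphisms and only then translating to elements.
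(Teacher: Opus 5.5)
Your proposal is correct and takes the same route as the paper. The paper gives no argument beyond invoking naturality of $\sigma$ along $\underline{\mu}$, $\underline{\Delta}$, $\underline{S}$ together with the hexagon relations \eqref{eq:hexagon_relations}, which is exactly what you do, and you rightly need the hexagon expansion only for the first two identities. One cosmetic slip: the first identity uses naturality of $\sigma_{\uB,-}$ in its second argument, $\sigma_{\uB,\uB}\circ(\mathrm{id}\otimes\underline{\mu})=(\underline{\mu}\otimes\mathrm{id})\circ\sigma_{\uB,\uB\otimes\uB}$, not naturality of $\sigma_{-,\uB}$ in its first argument as your opening phrase says; the equation you actually apply is nonetheless the correct one.
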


\medskip

In the following example, we describe the quantum plane as a braided Hopf algebra in the category of Yetter--Drinfeld modules of the $q$-deformed coordinate algebra of the complex general linear group of rank two. Constructions built upon this example occur throughout the paper.

\begin{example}
\label{ex:C_q(2)_braided}
   Fix a complex number $q\in\mathbb{C}$, where $q\neq 0$, $q^{n}\neq 1$ for all $n\in \mathbb{Z}^{+}$. Consider the quantum plane $\underline{\mathbb{C}}_q^2$, namely, the algebra 
    \begin{equation}
        \underline{\mathbb{C}}_q^2:=\mathbb{C}[x,y]/\langle xy-qyx\rangle,
        \label{eq:famigeratoquantumplane}
    \end{equation}
    together with $\mathcal{O}_q(\mathrm{GL}_{2})$, the Hopf algebra generated by $a,b,c,d$ and the formal inverse $D^{-1}$ of a central element $D = ad-qbc$,
    where $a,b,c,d$ satisfy the following relations:
    \[
        \begin{aligned}
            & ab = qba, \;\; ac=qca, \;\; bd = qdb, \;\; cd=qdc, \\
            &bc=cb, \;\; ad-da=(q-q^{-1})bc.
        \end{aligned}
    \]
    The coproduct, counit and antipode for $\mathcal{O}_q(\mathrm{GL}_2)$ are determined, on generators, as
   \begin{equation}\label{eq:GLq2operations}
    \begin{aligned}
        \Delta\colon\ 
        \begin{pmatrix} a & b \\ c & d \end{pmatrix}
        &\mapsto
        \begin{pmatrix} a & b \\ c & d \end{pmatrix}
        \dot{\otimes}
        \begin{pmatrix} a & b \\ c & d \end{pmatrix},
        &\qquad
        D^{-1} &\mapsto D^{-1}\otimes D^{-1}, \\[2mm]
        \varepsilon\colon\ 
        \begin{pmatrix} a & b \\ c & d \end{pmatrix}
        &\mapsto
        \begin{pmatrix} 1 & 0 \\ 0 & 1 \end{pmatrix},
        &\qquad
        D^{-1} &\mapsto 1, \\[2mm]
        S\colon\ 
        \begin{pmatrix} a & b \\ c & d \end{pmatrix}
        &\mapsto
        D^{-1}
        \begin{pmatrix} d & -q^{-1}b \\ -qc & a \end{pmatrix},
        &\qquad
        D^{-1} &\mapsto D.
    \end{aligned}
\end{equation}
    There exist a
    left $\mathcal{O}_q(\mathrm{GL}_2)$-action (see \cite{Hajac_plane})
    \begin{equation*}
        \begin{aligned}
            a\triangleright x & = q^{-2}x, & b\triangleright x &= 0, & c\triangleright x &= (q^{-2} -1)y, & d \triangleright x &= q^{-1}x, & D^{-1} \triangleright x &= q^3x,\\
            a \triangleright y &= q^{-1}y, & b \triangleright y&= 0, & c\triangleright y &=0, & d \triangleright y &= q^{-2}y,& D^{-1}\triangleright y & = q^3y,
        \end{aligned}
    \end{equation*}
    and a left $\mathcal{O}_q(\mathrm{GL_2})$-coaction 
    \[
    {}_{\underline{\mathbb{C}}^2_q}\Delta (x)=a\otimes x +b\otimes y, \quad {}_{\underline{\mathbb{C}}^2_q}\Delta (y)=c\otimes x +d\otimes y,
    \]
    on $\underline{\mathbb{C}}_q^2$, which, as can be readily checked, are Yetter--Drinfeld complatible. The Yetter--Drinfeld braiding, on the generators, reads 

    \begin{equation}
    \label{eq:braiding_quantum_plane}
        \begin{aligned}
            \sigma(x\otimes x)&=q^{-2}x\otimes x, &  \sigma(y\otimes y)&=q^{-2}y\otimes y,\\  \sigma(x\otimes y) &= q^{-1}y\otimes x, &  \sigma(y\otimes x) & =(q^{-2}-1)y\otimes x + q^{-1}x\otimes y. 
        \end{aligned}
    \end{equation}
    Moreover, $\underline{\mathbb{C}}_q^2$ is a braided Hopf algebra in ${}^{\mathcal{O}_q(\mathrm{GL}_{2})}_{\mathcal{O}_q(\mathrm{GL}_{2})}\mathcal{YD}$, with coproduct, counit and antipode given by
    \[
        \underline{\Delta}(z) = z \otimes 1 + 1 \otimes z, \quad \underline{\varepsilon}(z) = 0, \quad  \underline{S}(z) = -z,\,
    \]
    with $z \in \{x,y\}$ (see \cite[Example 10.2.2]{MajidFoundation} and \cite[Proposition 4.12]{AzizMajid}). We call $\underline{\mathbb{C}}_q^2$ with such braided Hopf algebra structure the \emph{braided quantum plane}.
    \qed 
    \end{example}

The next example provides an explicit description of algebra and Hopf algebra objects internal to the category of cochain complexes, together with their corresponding truncations, in a braided monoidal category.

\begin{example}\label{ex:cochain}
    Fix a braided monoidal category $(\underline{\mathcal{M}},\otimes,I,\sigma)$ and consider the category $\underline{\mathcal{M}}^\bullet$ of \emph{cochain complexes in $\underline{\mathcal{M}}$}. Explicitly, objects in $\underline{\mathcal{M}}^\bullet$ are direct sums $M^\bullet:=\bigoplus_{n\geq 0}M^n$, with $M^n\in\underline{\mathcal{M}}$, endowed with morphisms $\mathrm{d}_n\colon M^n\to M^{n+1}$ in $\underline{\mathcal{M}}$ such that $\mathrm{d}_{n+1}\circ\mathrm{d}_n=0$ for all $n\geq 0$. We often omit the index of $\mathrm{d}_n$. Morphisms $\Phi\colon M^\bullet\to N^\bullet$ in $\underline{\mathcal{M}}^\bullet$ are maps of degree $0$, i.e., $\Phi(M^n)\subseteq N^n$ for all $n\geq 0$, such that $\Phi|_{M^n}\colon M^n\to N^n$ is a morphism in $\underline{\mathcal{M}}$ for all $n\geq 0$ and such that $\Phi\circ\mathrm{d}_M=\mathrm{d}_N\circ\Phi$. Note that $\underline{\mathcal{M}}^\bullet$ is braided monoidal with respect to the tensor product and braiding induced from $\underline{\mathcal{M}}$:
    
    \begin{enumerate}
    \item[$\bullet$] its monoidal structure is $M^\bullet\otimes N^\bullet:=\bigoplus_{n\geq 0}\bigoplus_{k+\ell=n}(M^k\otimes N^\ell)$, where the latter is endowed with the differential
    \begin{equation}
    \begin{split}
        \mathrm{d}_\otimes\colon M^\bullet\otimes N^\bullet&\to M^\bullet\otimes N^\bullet,\\
        \omega\otimes\eta&\mapsto\mathrm{d}(\omega)\otimes\eta
        +(-1)^k\omega\otimes\mathrm{d}(\eta),
    \end{split}
    \end{equation}
    for homogeneous elements $\omega\in M^k$.
    
    \item[$\bullet$] the monoidal unit is given by $I$, viewed as a cochain complex concentrated in degree $0$ and endowed with the zero differential.
    
    \item[$\bullet$] its braiding, denoted by abuse of notation by the same symbol, is determined on homogeneous elements $\omega\in M^k$, $\eta\in N^\ell$ by
    \begin{equation}\label{eq:grad-braid}
    \begin{split}
        \sigma_{M^\bullet,N^\bullet}\colon M^\bullet\otimes N^\bullet&\to N^\bullet\otimes M^\bullet,\\
        \omega\otimes\eta&\mapsto(-1)^{k\cdot\ell}\sigma_{M,N}(\omega\otimes\eta).
    \end{split}
    \end{equation}
    \end{enumerate}
    Algebra objects in $\underline{\mathcal{M}}^\bullet$ are also referred to as \emph{differential graded algebras in $\underline{\mathcal{M}}$}. Similarly, we refer to bialgebra objects and Hopf algebra objects in $\underline{\mathcal{M}}^\bullet$ as \emph{differential graded bialgbras/Hopf algebras in $\underline{\mathcal{M}}$}. In particular, the structure maps of such objects have to be compatible with the degree and the differentials of the cochain complexes. For example, for a differential graded Hopf algebra $\underline{B}^\bullet$ in $\underline{\mathcal{M}}$ the coproduct and antipode make the diagrams
    \begin{equation}
    \begin{tikzcd}
        \underline{B}^\bullet\otimes\underline{B}^\bullet \arrow{r}{\mathrm{d}_\otimes} & \underline{B}^\bullet\otimes\underline{B}^\bullet\\
        \underline{B}^\bullet \arrow{u}{\underline{\Delta}^\bullet} \arrow{r}{\mathrm{d}} & \underline{B}^\bullet \arrow{u}[swap]{\underline{\Delta}^\bullet}
    \end{tikzcd}\qquad,\qquad
    \begin{tikzcd}
        \underline{B}^\bullet \arrow{r}{\underline{S}^\bullet} & \underline{B}^\bullet\\
        \underline{B}^\bullet \arrow{r}{\underline{S}^\bullet} \arrow{u}{\mathrm{d}} & \underline{B}^\bullet \arrow{u}[swap]{\mathrm{d}}
    \end{tikzcd}
    \end{equation}
    commute. We recall that $\underline{B}^\bullet\otimes\underline{B}^\bullet$ acquires its algebra structure because $\underline{\mathcal{M}}^\bullet$ is itself braided monoidal and moreover the tensor product of two algebra objects in a braided monoidal category is again an algebra object in a canonical way. As a final remark, we stress that for our case of interest the differential graded Hopf algebras arising from bicovariant calculi are \emph{cutoff}, meaning concentrated in finitely many degrees: there exists $N\geq 0$ such that $M^{n}=0$ for all $n>N$. We denote by $\underline{\mathcal{M}}^{\bullet\leq N}\subset\underline{\mathcal{M}}^\bullet$ the full subcategory of such cutoff complexes. It is important to note that $\underline{\mathcal{M}}^{\bullet\leq N}$ is \emph{not} monoidal with respect to the tensor product $\otimes$ of $\underline{\mathcal{M}}^\bullet$: if $M^\bullet,N^\bullet$ are cutoff at degree $N$, their tensor product $M^\bullet\otimes N^\bullet$, as defined above, is in general concentrated in degrees up to $2N$, since $(M^\bullet\otimes N^\bullet)^n=\bigoplus_{k+\ell=n}M^k\otimes N^\ell$ and non-zero summands can occur for any $n\leq 2N$. For this reason we endow $\underline{\mathcal{M}}^{\bullet\leq N}$ with the monoidal structure $M^{\bullet\leq N}\otimes N^{\bullet\leq N}:=\bigoplus_{0\leq n\leq N}\bigoplus_{k+\ell=n}M^k\otimes N^\ell$. Note that, in case $\underline{\mathcal{M}}$ is braided monoidal, the braiding in \eqref{eq:grad-braid} (co)restricts to a braiding on $\underline{\mathcal{M}}^{\bullet\leq N}$. Algebra, bialgebra and Hopf algebra objects in $\underline{\mathcal{M}}^{\bullet\leq N}$ are defined as in the case of $\underline{\mathcal{M}}^\bullet$. We refer to \cite{SchauenburgDG} for more information on cutoff differential graded Hopf algebra objects.
    \qed
\end{example}

A class of braided Hopf algebras is induced from (co)quasitriangular Hopf algebras via the so-called \emph{transmutation}, cf. \cite{Majid1991,MajidLectureNotes}.

\begin{proposition}{\cite{MajidBraid}}
\label{prop:transmutations} Consider a Hopf algebra $(H,\mu,\eta,\Delta,\varepsilon,S) \in {}_{\Bbbk}\mathrm{Vec}$.
\begin{enumerate}
\item  Let $H_{1}$ be a quasitriangular Hopf algebra with $\mathcal{R}$-matrix $\mathcal{R}=\mathcal{R}^1\otimes\mathcal{R}^2$, and let $f\colon H_{1}\to H$ be a Hopf algebra map. There is a braided Hopf algebra $\underline{H}=(H,\mu,\eta,\underline{\Delta},\varepsilon,\underline{S})$ in the category $({}_{H_{1}}\mathrm{Mod},\otimes_{\Bbbk},\sigma^\mathcal{R})$, which coincides with $H$ as an algebra and admits the same counit, but which is endowed with the comultiplication and antipode
\begin{equation}
    \underline{\Delta}(h):=h_1S(f(\mathcal{R}^2))\otimes\mathcal{R}^1\rhd h_2,\qquad
    \underline{S}(h):=f(\mathcal{R}^2)S(\mathcal{R}^1\rhd h),
\end{equation}
for all $h\in H$, where $H$ is viewed as an object in ${}_{H_{1}}\mathrm{Mod}$ via the action $\mathrm{Ad}_{\mathrm{L},f}:=k\rhd_{f} h:=f(k_1)hf(S(k_2))$.

\item Let $H_{2}$ be a coquasitriangular Hopf algebra with $\mathcal{R}$-form $\mathcal{R}\colon H\otimes H\to\Bbbk$, and let $g\colon H\to H_{2}$ be a Hopf algebra map. There is a braided Hopf algebra $\underline{H}=(H,\underline{\mu},\eta,\Delta,\varepsilon,\underline{S})$ in the category $(\mathrm{Mod}^{H_{2}},\otimes_{\Bbbk},\sigma^\mathcal{R})$, which coincides with $H$ as a coalgebra and which admits the same unit, but which is endowed with the multiplication and antipode
\begin{equation}
    \underline{\mu}(h\otimes k):=h_2k_2\mathcal{R}(g(S(h_1)h_3)\otimes g(S(k_1))),\qquad
    \underline{S}(h):=S(h_2)\mathcal{R}(g(S^2(h_3)S(h_1))\otimes g(h_4))
\end{equation}
for all $h,k\in H$. Above, $H$ is viewed as an object in $\mathrm{Mod}^{H_{2}}$ via the coaction $\mathrm{coAd}_{\mathrm{R},g}\colon H\to H\otimes H_{2}$, $h\mapsto h_2\otimes g(S(h_1)h_3)$.
\end{enumerate}
\end{proposition}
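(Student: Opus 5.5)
To prove Proposition~\ref{prop:transmutations}, I will treat part 1 in detail and obtain part 2 by dualising every step (reverse arrows, replace the $\mathcal{R}$-matrix by the $\mathcal{R}$-form and actions by coactions).

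\textbf{Step 1: the object.} I check that $\mathrm{Ad}_{\mathrm{L},f}$ makes $H$ a left $H_1$-module algebra. This holds because $f$ is a Hopf algebra map and $f(k_1)hh'f(S(k_2))=f(k_1)hf(S(k_2))f(k_3)h'f(S(k_4))$. Hence $\mu$ and $\eta$ are morphisms in ${}_{H_1}\mathrm{Mod}$. Since $\varepsilon(f(k_1)hf(S(k_2)))=\varepsilon(k)\varepsilon(h)$, the counit is a morphism as well.

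\textbf{Step 2: the structure maps are morphisms.} I show that $\underline{\Delta}$ and $\underline{S}$ are $H_1$-linear. For $\underline{\Delta}$, expand $\underline{\Delta}(k\rhd_f h)$ and use quasi-cocommutativity $\Delta^{\mathrm{op}}(k)\mathcal{R}=\mathcal{R}\Delta(k)$ to move $k$ past $\mathcal{R}$, after applying $f$ (and $S$) to the relevant legs. This yields $k_1\rhd_f(h_1S f(\mathcal{R}^2))\otimes k_2\rhd_f(\mathcal{R}^1\rhd_f h_2)$. The computation for $\underline{S}$ is analogous.

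\textbf{Step 3: the bialgebra axioms.} Coassociativity of $\underline{\Delta}$ follows from the hexagon identity $(\Delta\otimes\mathrm{id})\mathcal{R}=\mathcal{R}_{13}\mathcal{R}_{23}$ together with $\Delta$ applied to the adjoint action. Counitality follows from $(\varepsilon\otimes\mathrm{id})\mathcal{R}=1=(\mathrm{id}\otimes\varepsilon)\mathcal{R}$.

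The main obstacle is multiplicativity of $\underline{\Delta}$ with respect to the braided tensor product algebra. The right-hand side is
\[
h_1Sf(\mathcal{R}^2)\,{}_\alpha(h'_1Sf(\mathcal{R}'^2))\otimes{}^\alpha(\mathcal{R}^1\rhd h_2)\,\mathcal{R}'^1\rhd h'_2,
\]
where the braiding $\sigma^{\mathcal{R}}$ inserts a third copy of $\mathcal{R}$. I will first try to collapse it as follows. Write the action of $\mathcal{R}''^2$ as conjugation by $f(\mathcal{R}''^2)$. Then use $\mathcal{R}^{-1}=(S\otimes\mathrm{id})\mathcal{R}$ and the hexagon relations to cancel pairs, reducing the expression to $(hh')_1Sf(\mathcal{R}^2)\otimes\mathcal{R}^1\rhd(hh')_2$.

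\textbf{Step 4: the antipode.} Verify $\mu(\underline{S}\otimes\mathrm{id})\underline{\Delta}=\eta\varepsilon$ directly: the factors $f(\mathcal{R}^2)$ and $Sf(\mathcal{R}^2)$ meet, and the identity $(S\otimes S)\mathcal{R}=\mathcal{R}$ collapses the expression. The other side uses the same identity together with bijectivity of $S$.

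Alternatively, I could shortcut Steps 3–4 by invoking the categorical transmutation argument of \cite{MajidBraid}. There, $\underline{H}$ is the braided reconstruction of the forgetful functor ${}_H\mathrm{Mod}\to{}_{H_1}\mathrm{Mod}$ induced by pullback along $f$. Under this approach the braided bialgebra axioms are automatic, and only the explicit formulas need to be identified.
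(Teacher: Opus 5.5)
Your plan is viable, but it is not what the paper does. The paper gives no argument: it quotes the statement from Majid's transmutation theory. There $\underline{H}$ is, conceptually, the braided reconstruction along the functor ${}_H\mathrm{Mod}\to{}_{H_1}\mathrm{Mod}$ given by pulling back along $f$ (resp.\ $\mathrm{Mod}^H\to\mathrm{Mod}^{H_2}$, pushing forward along $g$); this is your closing alternative. That route makes the braided Hopf axioms automatic and treats both parts uniformly, leaving only the extraction of the explicit formulas. Your main route is fully elementary, using only quasi-cocommutativity, the hexagon identities and their standard consequences in $H_1\otimes H_1$, but it is long. For part 2, ``dualising'' must be read as reversing and mirroring string diagrams rather than taking linear duals, since $H$ and $H_2$ need not be finite-dimensional.

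If you carry out the direct route, Step 4 as sketched does not go through, because you have the two sides swapped.
\begin{itemize}
\item In $\mu(\mathrm{id}\otimes\underline{S})\underline{\Delta}$ the factors do meet. You get $h_1\,f\big(S(\mathcal{R}^2)\mathcal{R}'^2\big)\,S\big((\mathcal{R}'^1\mathcal{R}^1)\rhd_f h_2\big)$, and this collapses because $\mathcal{R}'^1\mathcal{R}^1\otimes S(\mathcal{R}^2)\mathcal{R}'^2=(\mathrm{id}\otimes S)(\mathcal{R}^{-1}\mathcal{R})=1\otimes 1$, with $\mathcal{R}^{-1}=(\mathrm{id}\otimes S^{-1})\mathcal{R}$.
\item In $\mu(\underline{S}\otimes\mathrm{id})\underline{\Delta}$, instead, $S(f(\mathcal{R}^2))$ lands inside $S(\mathcal{R}'^1\rhd_f\,\cdot\,)$. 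Expanding $\mathcal{R}'^1$ by $(\Delta\otimes\mathrm{id})\mathcal{R}=\mathcal{R}_{13}\mathcal{R}_{23}$ produces $f\big(\mathcal{R}''^2S^2(\mathcal{R}''^1)\big)=f(u^{-1})$, where $u=S(\mathcal{R}^2)\mathcal{R}^1$ is the Drinfeld element. The identity $(S\otimes S)\mathcal{R}=\mathcal{R}$ alone cannot remove this factor.
\end{itemize}
The missing ingredient is $S^2=u(\cdot)u^{-1}$. Use $u^{-1}S^2(x)=xu^{-1}$ to move $f(u^{-1})$ to the right, then contract an $\mathcal{R}^{-1}\mathcal{R}$ pair using $\mathcal{R}^{-1}=(S\otimes\mathrm{id})\mathcal{R}$. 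You are left with $\varepsilon(h)\,f\big(\mathcal{R}^2u^{-1}S(\mathcal{R}^1)\big)=\varepsilon(h)\,f\big(\mathcal{R}^2S^{-1}(\mathcal{R}^1)\big)f(u^{-1})=\varepsilon(h)$. Only this last step uses $(S\otimes S)\mathcal{R}=\mathcal{R}$, to get $\mathcal{R}^2S^{-1}(\mathcal{R}^1)=u$.

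The same device is needed for $H_1$-linearity of $\underline{S}$ in Step 2, so that step is not merely analogous to the $\underline{\Delta}$ case. Expanding via the hexagon, both $\underline{S}(k\rhd_f h)$ and $k\rhd_f\underline{S}(h)$ pick up a factor $u^{-1}$. After quasi-cocommutativity, both equal $f(k_1\mathcal{R}^2u^{-1})S(h)f(S(k_2\mathcal{R}^1))$.

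Finally, coassociativity in Step 3 also needs $(\mathrm{id}\otimes\Delta)\mathcal{R}=\mathcal{R}_{13}\mathcal{R}_{12}$, to expand $\underline{\Delta}\big(h_1S(f(\mathcal{R}^2))\big)$.
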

As a special case of the above proposition, we may consider the identity itself as the Hopf algebra map $ H\to H$ responsible for the transmutation of a (co)quasitriangular Hopf algebra $H$ into its own category of (co)modules with respect to the left adjoint $H$-action (right adjoint $H$-coaction). A relevant example of such construction, that we now illustrate for the quasitriangular setting, is the transmutation of Sweedler's Hopf algebra $E_{1}$ for a selected choice of $\mathcal{R}$-matrix (see also \cite{MajidFoundation}). 

\begin{example}
\label{ex:sweedler_transmutation}
 Let us consider the $4$-dimesional Sweedler's Hopf algebra 
\[
E_{1}:=\langle x,g,1\rangle \big/ 
\big\{x^{2},g^{2}-1,xg+gx \big\},
\]
with coproduct, counit and antipode given by 

\begin{equation}
    \begin{aligned}
        \Delta(x)& =x\otimes 1 + g\otimes x, &  \Delta(g)& =g\otimes g,\\ 
        \varepsilon(x)&= 0, & \varepsilon(g)& = 1, \\ 
        S(x)&=-gx, & S(g)&=g.
    \end{aligned}
\end{equation}

\noindent For $\lambda$ an arbitrary parameter, $E_{1}$ is a quasitriangular Hopf algebra \cite{MajidFoundation} with $\mathcal{R}$-matrix 
\begin{equation}
\label{eq:Rmatrix_Sweedler}
\mathcal{R}_{\lambda} = \frac{1}{2}\left( 1\otimes 1 + 1\otimes g + g\otimes 1 - g\otimes g\right) +\frac{\lambda}{2}\left(x\otimes x + x\otimes gx + gx\otimes gx - gx\otimes x \right).
\end{equation}
 Thus, fixing for simplicity $\lambda=0$, we may consider the braided Hopf algebra $\underline{E_{1}}:=(E_{1},\mu,\eta,\underline{\Delta},\varepsilon,\underline{S})$ given by the transmutation of $E_{1}$ into its own braided monoidal category of modules $\left({}_{E_{1}}\mathrm{Mod}, \otimes,\sigma_{\mathcal{R}_{0}}\right)$ with left $E_{1}$-module action, coproduct and antipode reading 
\[
h\triangleright b=h_{1}\,b\,S(h_{2}),\quad \underline{\Delta}(b)=b_{1}\, S((\mathcal{R}_{0})^{2})\otimes (\mathcal{R}_{0})^{1}\triangleright b_{2},\quad \underline{S}(b)=(\mathcal{R}_{0})^{2}\, S((\mathcal{R}_{0})^{1}\triangleright b),
\]
and in particular 
\[
\begin{aligned}
    x\triangleright x & =0, & x\triangleright g&=2xg, &  g\triangleright x& = -x, & g\triangleright g & = g,\\ 
    \underline{\Delta}(x) & = x\otimes 1 +1\otimes x, & \underline{\Delta}(g)&=g\otimes g, & \underline{S}(x) & = -x, & \underline{S}(g)&=g.
\end{aligned}
\]
The braiding in ${}_{E_{1}}\mathrm{Mod}$, which is symmetric, is induced from the $\mathcal{R}$-matrix of $E_{1}$ as 
\begin{equation}
    \begin{aligned}
        \sigma_{\mathcal{R}_{0}} \colon \underline{E_{1}}\otimes \underline{E_{1}} & \to \underline{E_{1}}\otimes \underline{E_{1}},\\ 
        a\otimes b & \mapsto (\mathcal{R}_{0})^{2}\triangleright b \otimes (\mathcal{R}_{0})^{1}\triangleright a, 
    \end{aligned}
\end{equation} 
and on the generators it reads  
\[
\sigma_{\mathcal{R}_{0}}(x\otimes x)=-x\otimes x,\quad \sigma_{\mathcal{R}_{0}}(x\otimes g)= g\otimes x,\quad \sigma_{\mathcal{R}_{0}}(g\otimes g) = g\otimes g.
\]
\qed 
 \end{example}

Otherwise, from the datum of a braided Hopf algebra in the category of Yetter--Drinfeld modules ${}^{H}_{H}\mathcal{YD}$ of a Hopf algebra $H\in{}_{\Bbbk}\mathrm{Vec}$ it is possible to realise a Hopf algebra structure on the tensor product $B\otimes H$. This is the content of the Radford--Majid biproduct construction. 
\begin{theorem}[\cite{Radford,MajidFoundation}]
\label{thm:bosonisation}
        Let $H$ be a Hopf algebra and $\underline{B}$ be a braided Hopf algebra in ${}_H^H\mathcal{YD}$. Then we can build a Hopf algebra $A:=\underline{B}\#H$, called \emph{Bosonisation} or \emph{Radford's biproduct}, with multiplication given by the smash product algebra multiplication, that is,
        \begin{equation}
            (b\#h)(b'\#h') = b(h_{1} \triangleright b')\#h_{2}h',
            \label{eq:smashmultiplication}
        \end{equation}
        coproduct $\Delta_{\#} \colon A \to A \otimes A$, counit $\varepsilon_{\#} \colon A \to \Bbbk$ and antipode $S_{\#} \colon A \to A$ given in terms of the braided coproduct counit and antipode of $\uB$ and in terms of the coproduct, counit and antipode of $H$ as
        \begin{equation}
            \begin{aligned}
                &\Delta_{\#}(b\#h) = \big(b^{1}\#(b^{2})_{-1}h_{1}\big)\otimes\big((b^{2})_{0}\#h_{2}\big);\\
                &\varepsilon_{\#}(b\#h) = \underline{\varepsilon}(b)\varepsilon(h);\\
                &S_{\#}(b\#h) = \big(1\#S(b_{-1}h)\big)\big(\underline{S}(b_{0})\#1\big).
            \end{aligned}
            \label{eq:Hopfbosonisation}
        \end{equation}
\end{theorem}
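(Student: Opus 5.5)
The plan is to check the Hopf algebra axioms for $A=\underline{B}\#H$ directly. Throughout, the only tools are the Yetter--Drinfeld compatibility, the fact that $\underline{\mu},\underline{\Delta},\underline{\varepsilon},\underline{S}$ are $H$-linear and $H$-colinear, and the YD braiding $\sigma(v\otimes w)=v_{-1}\triangleright w\otimes v_0$.

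The first step is to note that $\underline{B}$ is a left $H$-module algebra. This holds because $\underline{\mu}$ and $\underline{\eta}$ are $H$-linear, so $A$ is an associative unital algebra by the smash product construction. Dually, $\underline{B}$ is a left $H$-comodule coalgebra, since $\underline{\Delta}$ and $\underline{\varepsilon}$ are $H$-colinear.

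Next, I would show that $(A,\Delta_\#,\varepsilon_\#)$ is a coalgebra. Coassociativity of $\Delta_\#$ reduces to coassociativity of $\underline{\Delta}$ together with the colinearity identity
\[
(b^1)_{-1}(b^2)_{-1}\otimes (b^1)_0\otimes (b^2)_0=b_{-1}\otimes (b_0)^1\otimes (b_0)^2
\]
and coassociativity of the coaction. The counit property follows from $\underline{\varepsilon}$ being colinear.

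The central step, and the main obstacle, is multiplicativity of $\Delta_\#$. I would factor each element as $b\#h=(b\#1)(1\#h)$. Since $j\colon h\mapsto 1\#h$ and $\Delta_\#$ restricted to $j(H)$ is just $\Delta$, it suffices to check three identities:
\begin{enumerate}
\item $\Delta_\#\big((1\#h)(b\#1)\big)=\Delta_\#(1\#h)\,\Delta_\#(b\#1)$;
\item $\Delta_\#\big((b\#1)(b'\#1)\big)=\Delta_\#(b\#1)\,\Delta_\#(b'\#1)$;
\item the trivial case $\Delta_\#\big((b\#1)(1\#h)\big)=\Delta_\#(b\#1)\,\Delta_\#(1\#h)$.
\end{enumerate}
Identity (1) uses the $H$-linearity of $\underline{\Delta}$, $h\triangleright b^1\otimes h\triangleright b^2$ via $h_1,h_2$, together with the YD condition applied to $(h_2\triangleright b^2)_{-1}$. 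This produces $h_1 (b^2)_{-1}S(h_3)h_4=h_1(b^2)_{-1}$, which is exactly what the smash product in $A\otimes A$ gives.

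Identity (2) is where the braiding enters. The braided multiplicativity $\underline{\Delta}(bb')=b^1\,(b^2)_{-1}\triangleright b'^1\otimes (b^2)_0\,b'^2$ must be matched with the product
\[
\big(b^1\#(b^2)_{-1}\big)\big(b'^1\#(b'^2)_{-1}\big)\otimes\big((b^2)_0\#1\big)\big((b'^2)_0\#1\big).
\]
Expanding the first factor via the smash product gives $b^1\,((b^2)_{-1})_1\triangleright b'^1\#((b^2)_{-1})_2 (b'^2)_{-1}$. One then uses coassociativity of the coaction to redistribute $(b^2)_{-1}$, and the comodule algebra property $(cc')_{-1}\otimes(cc')_0=c_{-1}c'_{-1}\otimes c_0c'_0$ to recombine the $H$-legs. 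I expect careful Sweedler bookkeeping here, with the coaction leg being split between the action and the second tensor factor.

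For the counit, multiplicativity of $\varepsilon_\#$ is immediate from the fact that $\underline{\varepsilon}$ is $H$-linear.

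Finally, I would verify that $S_\#$ is a convolution inverse of the identity. Because $A$ is a bialgebra generated by $\underline{B}\#1$ and $1\#H$, and the given formula is $S_\#(b\#h)=S(h)\cdot S_\#(b\#1)$ in factored form, it suffices to treat $b\#1$. There one computes
\[
\big(b^1\#(b^2)_{-1}\big)\big(S((b^2)_{0})_{-1}\,\underline{S}((b^2)_{0})_0\#1\big)
=b^1\,(b^2)_{-2}S((b^2)_{-1})\triangleright\underline{S}((b^2)_0)\#1=b^1\underline{S}(b^2)\#1=\underline{\varepsilon}(b)1.
\]
The other convolution side is handled similarly, using colinearity of $\underline{S}$.
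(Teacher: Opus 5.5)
The paper does not prove this theorem. It quotes it from Radford and Majid and uses it as a black box, so there is no internal argument to compare against. Your direct verification is correct and follows Radford's original elementwise approach.

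\begin{itemize}
\item Coassociativity and the counit reduce exactly as you say.
\item In identity (2), both sides become
\[
\big(b^1((b^2)_{-2}\triangleright b'^1)\#(b^2)_{-1}(b'^2)_{-1}\big)\otimes\big((b^2)_0(b'^2)_0\#1\big).
\]
\item Reducing general multiplicativity to (1)--(3) is valid. Rewrite $(1\#h)(b'\#1)=h_1\triangleright b'\#h_2$ as a sum of products $(c\#1)(1\#k)$ and use associativity.
\end{itemize}

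Majid instead presents $\underline{B}\#H$ as the bosonisation of a braided group and checks the axioms diagrammatically; this explains where the formulas come from. Your route only uses the Yetter--Drinfeld and braided Hopf algebra axioms elementwise. Radford's framework also gives the converse characterisation of Hopf algebras with a projection onto $H$.

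Three points in your antipode step should be tightened.

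\begin{enumerate}
\item Generation by $\underline{B}\#1$ and $1\#H$ does not by itself justify reducing to $b\#1$, because you have not shown that $S_\#$ is anti-multiplicative. What does the job is the identity $S_\#(a(1\#k))=(1\#S(k))\,S_\#(a)$ for \emph{all} $a\in A$. It follows from $S(c_{-1}mk)=S(k)S(c_{-1}m)$. Combine it with (3), and write $\Delta_\#(a)=a_1\otimes a_2$. For $x=y(1\#h)$ with $y=b\#1$ this gives
\[
S_\#(x_1)x_2=(1\#S(h_1))\,S_\#(y_1)y_2\,(1\#h_2),\qquad x_1S_\#(x_2)=\varepsilon(h)\,y_1S_\#(y_2).
\]
The identity is needed at $a=y_1=b^1\#(b^2)_{-1}$, which has a nontrivial $H$-leg.

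\item $S_\#(c\#1)=S(c_{-1})\triangleright\underline{S}(c_0)\#S(c_{-2})$ is not of the form $(\cdot)\#1$, as your display suggests. The clean computation is
\[
(b^1\#(b^2)_{-1})\,S_\#((b^2)_0\#1)=(b^1\#(b^2)_{-2}S((b^2)_{-1}))(\underline{S}((b^2)_0)\#1)=b^1\underline{S}(b^2)\#1 .
\]

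\item The other convolution identity uses colinearity of $\underline{\Delta}$ and $\underline{\varepsilon}$, not of $\underline{S}$:
\[
S_\#(b^1\#(b^2)_{-1})((b^2)_0\#1)=(1\#S((b^1)_{-1}(b^2)_{-1}))(\underline{S}((b^1)_0)(b^2)_0\#1)=(1\#S(b_{-1}))(\underline{S}((b_0)^1)(b_0)^2\#1)=\underline{\varepsilon}(b)1 .
\]
\end{enumerate}
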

\begin{example}
    Building on Example \ref{ex:C_q(2)_braided}, we provide a natural realisation of a Radford--Majid biproduct.  We define $\mathrm{Aff}_q(2):=\underline{\mathbb{C}}_{q}^{2}\# \mathcal{O}_{q}(\textrm{GL}_{2})$, whose Hopf algebra structure, according to Theorem \ref{thm:bosonisation}, is outlined below in matrix notation
    
    {\footnotesize $$
        \begin{array}{rcl}
        \Delta_{\#} \begin{pmatrix}
            1\#1 &0 &0\\
            x\#1 &1\#a &1\#b\\
            y\#1 &1\#c &1\#d
        \end{pmatrix} &=& 
        \begin{pmatrix}
            1\#1 &0 &0\\
            x\#1 &1\#a &1\#b\\
            y\#1 &1\#c &1\#d
        \end{pmatrix} \dot{\otimes} \begin{pmatrix}
            1\#1 &0 &0\\
            x\#1 &1\#a &1\#b\\
            y\#1 &1\#c &1\#d
        \end{pmatrix},\\[8mm]
       \varepsilon_{\#}\begin{pmatrix}
            1\#1 &0 &0\\
            x\#1 &1\#a &1\#b\\
            y\#1 &1\#c &1\#d
        \end{pmatrix} &=&\begin{pmatrix}
            1 &0 &0\\ 0 &1 &0 \\0 &0 &1
        \end{pmatrix},\\[8mm]
          S_{\#}\begin{pmatrix}
            1\#1 &0 &0\\
            x\#1 &1\#a &1\#b\\
            y\#1 &1\#c &1\#d
        \end{pmatrix}& =&(1\#D^{-1}) \begin{pmatrix}
            (1\# D)(1\#1) &0 &0\\ q^{-1}(1\#b)(y\#1) -(1\#d)(x\#1)  &1\#d &-q^{-1}(1\# b) \\  q(1\#c)(x\#1) - (1\#a)(y\# 1) & - q(1\#c) &1\#a
        \end{pmatrix}.
        \end{array}
    $$}

    \medskip 
    \noindent
    This construction provides a noncommutative generalisation of the affine extension of the $2$-dimensional plane. In fact, notice how the coproduct in Equation \ref{eq:Hopfbosonisation} can be written on the generators of $\mathrm{Aff}_q(2)$ in terms of matrix multiplication of the affine extension Lie group (see Equation \ref{eq:affineextension} of the appendix), as well as the antipode being given by the inverse matrix.
    \qed
\end{example}

\subsection{Braided fundamental theorem of Hopf modules}\label{subsection:braided_fundamental_theorem_of_Hopf_modules}
The aim of the following Section is to generalise the fundamental theorem of Hopf modules to the general setting of Hopf modules of a Hopf algebra in a braided monoidal category. The discussion is based on \cite{BespalovBraidedFundThm}. 

\medskip

Let $(\mathcal{C},\otimes,I)$ be a monoidal category, where we omit the associativity and unit constraints, as argued before. Fix an associative unital algebra $(A,\mu,\eta)$ in $(\mathcal{C},\otimes,I)$. Internal to $(\mathcal{C},\otimes,I)$ we define a \emph{left $A$-module} as an object $M$ in $\mathcal{C}$ endowed with a morphism $\cdot\colon A\otimes M\to M$ in $\mathcal{C}$, the \emph{left $A$-action}, such that $a\cdot(b\cdot m)=(ab)\cdot m$ and $1\cdot m=m$ for all $a,b\in A$ and $m\in M$. A \emph{morphism of left $A$-modules} $M,N$ internal to $(\mathcal{C},\otimes,I)$ is a morphism $\phi\colon M\to N$ in $\mathcal{C}$ such that $\phi(a\cdot m)=a\cdot\phi(m)$ for all $a\in A$ and $m\in M$. We frequently call $\phi$ a \emph{left $A$-linear map}. The category of left $A$-modules internal to $(\mathcal{C},\otimes,I)$ is denoted by ${}_A\mathcal{C}$. Similarly, one defines the categories $\mathcal{C}_A$ of right $A$-modules internal to $(\mathcal{C},\otimes,I)$ and ${}_A\mathcal{C}_A$ of $A$-bimodules internal to $(\mathcal{C},\otimes,I)$.

Fix a coassociative counital coalgebra $(C,\Delta,\varepsilon)$ in $(\mathcal{C},\otimes,I)$. Internal to $(\mathcal{C},\otimes,I)$ we define a \emph{right $C$-comodule} as an object $M$ in $\mathcal{C}$ endowed with a morphism $\underline{\Delta}_M\colon M\to M\otimes C$ in $\mathcal{C}$, the \emph{right $C$-coaction}, such that $(m^0)^0\otimes(m^0)^1\otimes m^1=m^0\otimes(m^1)^1\otimes(m^1)^2$ and $m^0\varepsilon(m^1)=m$ for all $m\in M$, where we employ the Sweedler-type notation
$$
\underline{\Delta}_M(m)=:m^0\otimes m^1,\qquad(\mathrm{id}\otimes\underline{\Delta}_M)(\underline{\Delta}_M(m))=:m^0\otimes m^1\otimes m^2
$$
for all $m\in M$ and similarly for higher iterations of the coaction. A \emph{morphism of right $C$-comodules} $M,N$ internal to $(\mathcal{C},\otimes,I)$ is a morphism $\psi\colon M\to N$ in $\mathcal{C}$ such that $\psi(m)^0\otimes\psi(m)^1=\psi(m^0)\otimes m^1$ for all $m\in M$. We frequently refer to $\psi$ as a \emph{right $C$-colinear map}. The category of right $C$-comodules internal to $(\mathcal{C},\otimes,I)$ is denoted by $\mathcal{C}^C$. Similarly, one defines the categories ${}^C\mathcal{C}$ of left $C$-comodules internal to $(\mathcal{C},\otimes,I)$ and ${}^C\mathcal{C}^C$ of $C$-bicomodules internal to $(\mathcal{C},\otimes,I)$. Left $C$-coactions of a left $C$-comodule $M$ are usually denoted by ${}_M\underline{\Delta}\colon M\to C\otimes M$ and we employ the Sweedler-like notation ${}_M  \underline{\Delta}(m)=:m^{-1}\otimes m^0$ for all $m\in M$.

From now on we assume, in addition, that $(\underline{\mathcal{M}},\otimes,I,\sigma)$ is braided monoidal and that $\underline{\mathcal{M}}$ be a preadditive category such that equalizers and coequalizers exist. If $(\underline{B},\underline{m},\underline{\eta},\underline{\Delta},\underline{\varepsilon})$ is a braided bialgebra in $(\underline{\mathcal{M}},\otimes,I,\sigma)$ and $(M,\Delta_M)$ a right $\underline{B}$-comodule in $(\underline{\mathcal{M}},\otimes,I)$, we denote by

\begin{equation}
    M^{\underline{B}}:=\{m\in M \mid \underline{\Delta}_M(m)=m\otimes1_{\underline{B}}\}
\end{equation}

\noindent the space of \emph{right $\underline{B}$-coinvariant elements} of $M$. Note that, by definition, $M^{\underline{B}}=\ker(\underline{\Delta}_M-\mathrm{id}\otimes\underline{\eta})$, i.e., $M^{\underline{B}}$ is the equalizer of the morphisms $\underline{\Delta}_M,\mathrm{id}\otimes\underline{\eta}\colon M\to M\otimes\underline{B}$ in $\underline{\mathcal{M}}$:
$$
\begin{tikzcd}
    M^{\underline{B}} \arrow{r}{\iota} & M \arrow[rr, shift left, "\underline{\Delta}_M"] \arrow[rr, shift right, "\mathrm{id}\otimes\underline{\eta}"'] & & M\otimes\underline{B}
\end{tikzcd}
$$
In the next definition we endow $M$ with an additional right $\underline{B}$-module structure, compatible with the right $\underline{B}$-coaction. 
\begin{definition}
Let $\underline{B}$ be a braided bialgebra in $\underline{\mathcal{M}}$. A \emph{right-right Hopf module} is a right $\underline{B}$-module $M$ endowed with a right $\underline{B}$-coaction $\underline{\Delta}_M\colon M\to M\otimes\underline{B}$ that is also a morphism of right $\underline{B}$-modules, where $M\otimes\underline{B}$ is structured as a right $\underline{B}$-module via $(m\otimes b)\cdot c:=m\cdot {}^{\alpha}(c^1)\otimes{}_\alpha(b)c^2$ for all $m\in M$ and $b,c\in\underline{B}$. Explicitly,
\begin{equation}
    (m\cdot b)^0\otimes(m\cdot b)^1
    =m^0\cdot{}^\alpha(b^1)\otimes{}_\alpha(m^1)b^2
\end{equation}
for all $m\in M$ and $b\in\underline{B}$.
A morphism of right-right Hopf modules is a right $\underline{B}$-linear and right $\underline{B}$-colinear map. The category of right-right Hopf modules is denoted by $\underline{\mathcal{M}}_{\underline{B}}^{\underline{B}}$.
\end{definition}
In case $\underline{B}$ is even a braided Hopf algebra in $\underline{\mathcal{M}}$ and $M$ a right-right Hopf module, the space $M^{\underline{B}}$ of right $\underline{B}$-coinvariant elements can also be understood as the coequalizer of the morphisms $\cdot,\mathrm{id}\otimes\underline{\varepsilon}\colon M\otimes\underline{B}\to$ in $\underline{\mathcal{M}}$:
$$
\begin{tikzcd}
    M\otimes\underline{B} \arrow[rr, shift left, "\cdot"] \arrow[rr, shift right, "\mathrm{id}\otimes\underline{\varepsilon}"'] & & M \arrow[r, "\mathrm{pr}"] & M^{\underline{B}}
\end{tikzcd}
$$
where $\mathrm{pr}$ is the corestriction of the idempotent
$$
\omega_M\colon M\to M,\qquad m\mapsto m^0\cdot \underline{S}(m^1).
$$
In fact, one proves that $M^{\underline{B}}\cong M/M\underline{B}^+$ as objects in $\underline{\mathcal{M}}$, see \cite[Proposition 3.2.1]{BespalovBraidedFundThm} for the left-left Hopf module versions of these statements. 
Then, we structure $M^{\underline{B}}\otimes\underline{B}$ as an object in $\underline{\mathcal{M}}^{\underline{B}}_{\underline{B}}$ via
\begin{equation}
\begin{split}
    (M^{\underline{B}}\otimes\underline{B})\otimes\underline{B}&\to M^{\underline{B}}\otimes\underline{B}\\
    (m\otimes b)\otimes b'&\mapsto m\otimes bb'
\end{split},\qquad
\begin{split}
    M^{\underline{B}}\otimes\underline{B}&\to(M^{\underline{B}}\otimes\underline{B})\otimes\underline{B}\\
    m\otimes b&\mapsto m\otimes\underline{\Delta}(b)
\end{split}.
\end{equation}
It turns out that, up to isomorphism, every object in $\underline{\mathcal{M}}^{\underline{B}}_{\underline{B}}$ is of this form. This is know as the \emph{fundamental theorem of Hopf modules}, which can be phrased as the following categorical equivalence, employing the following functors, for which we fix a braided Hopf algebra $\underline{B}$ in $\underline{\mathcal{M}}$:
\begin{enumerate}
\item[$\bullet$] $\Phi\colon\underline{\mathcal{M}}^{\underline{B}}_{\underline{B}}\to\underline{\mathcal{M}}$ is a functor defined on objects $M\in\underline{\mathcal{M}}^{\underline{B}}_{\underline{B}}$ by $\Phi(M):=M^{\underline{B}}$ and on morphisms $\phi\colon M\to N$ in $\underline{\mathcal{M}}^{\underline{B}}_{\underline{B}}$ by the restriction and corestriction $\Phi(\phi):=\phi|_{M^{\underline{B}}}\colon M^{\underline{B}}\to N^{\underline{B}}$.

\item[$\bullet$] $\Psi\colon\underline{\mathcal{M}}\to\underline{\mathcal{M}}^{\underline{B}}_{\underline{B}}$ is a functor defined on objects $V\in\underline{\mathcal{M}}$ by $\Psi(V):=V\otimes\underline{B}$, seen as an object in $\underline{\mathcal{M}}^{\underline{B}}_{\underline{B}}$ via the multiplication and comultiplication of $\underline{B}$, and on morphisms $\psi\colon V\to W$ in $\underline{\mathcal{M}}$ by $\Psi(\psi):=\psi\otimes\mathrm{id}\colon V\otimes\underline{B}\to W\otimes\underline{B}$.
\end{enumerate}
These functors are inverse to each other, which is the statement of the following result (compare with \cite[Theorem 3.5.2]{BespalovBraidedFundThm}, which describes the left-left Hopf module version).

\begin{theorem}
Let $(\underline{\mathcal{M}},\otimes,I,\sigma)$ be a braided monoidal and preadditive category such that equalizers and coequalizers exist and let $\underline{B}$ be a braided Hopf algebra in $\underline{\mathcal{M}}$. Then, there is an equivalence

\begin{equation}
\begin{tikzcd}
    \underline{\mathcal{M}}_{\underline{B}}^{\underline{B}} \arrow[rr, bend left, "\Phi"] & \cong & \underline{\mathcal{M}} \arrow[ll, bend left, "\Psi"]
\end{tikzcd}
\end{equation}
of categories.
\end{theorem}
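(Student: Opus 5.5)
We will exhibit the unit and counit of the adjunction explicitly and check that both are isomorphisms, following the strategy of \cite[Theorem 3.5.2]{BespalovBraidedFundThm} adapted to right-right Hopf modules. For $M\in\underline{\mathcal{M}}^{\underline{B}}_{\underline{B}}$ we define
$$
\theta_M\colon M^{\underline{B}}\otimes\underline{B}\to M,\qquad m\otimes b\mapsto m\cdot b,
$$
and
$$
\theta_M^{-1}\colon M\to M^{\underline{B}}\otimes\underline{B},\qquad m\mapsto \omega_M(m^0)\otimes m^1=m^0\cdot\underline{S}(m^1)\otimes m^2 .
$$
For $V\in\underline{\mathcal{M}}$ we use the natural map $\nu_V\colon V\to(V\otimes\underline{B})^{\underline{B}}$, $v\mapsto v\otimes 1$. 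All these maps are composites of structure morphisms in $\underline{\mathcal{M}}$, so they are morphisms in $\underline{\mathcal{M}}$, natural in $M$ and $V$. The corestrictions to coinvariants exist by the universal property of the equalizer $M^{\underline{B}}=\ker(\underline{\Delta}_M-\mathrm{id}\otimes\underline{\eta})$.

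\textbf{Step 1.} We check that $\omega_M$ lands in $M^{\underline{B}}$. Using the Hopf module compatibility $(m\cdot b)^0\otimes(m\cdot b)^1=m^0\cdot{}^\alpha(b^1)\otimes{}_\alpha(m^1)b^2$, together with the braided anti-comultiplicativity of $\underline{S}$ and the naturality identities of the preceding lemma, we obtain
$$
\underline{\Delta}_M(m^0\cdot\underline{S}(m^1))=m^0\cdot\underline{S}(m^3)\otimes m^1\underline{S}(m^2),
$$
up to braidings that cancel. The antipode axiom then collapses the second factor to $1$. This computation is the main obstacle, since the braidings must be tracked carefully. We would carry it out with string diagrams, where the braided antipode identity $\underline{\Delta}\circ\underline{S}=\sigma\circ(\underline{S}\otimes\underline{S})\circ\underline{\Delta}$ makes the two crossings appearing from the module compatibility and from $\underline{S}$ inverse to each other. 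Similarly, $\theta^{-1}_M$ is right colinear by coassociativity, and $\theta_M$ is right linear and colinear directly from the Hopf module axiom, using that $\underline{\Delta}_M(m)=m\otimes1$ for coinvariant $m$ so that no braiding of $m$ survives.

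\textbf{Step 2.} We show that $\theta_M$ and $\theta_M^{-1}$ are mutually inverse. In one direction,
$$
\theta_M\theta_M^{-1}(m)=m^0\cdot\underline{S}(m^1)m^2=m
$$
by the antipode axiom and counitality. In the other direction, for $m$ coinvariant we compute $\theta_M^{-1}(m\cdot b)$ using the compatibility: it equals $m\cdot b^1\underline{S}(b^2)\otimes b^3$ with trivial braidings because $m^1=1$. This gives $m\otimes b$, using $\underline{\varepsilon}$ and the fact that $\sigma(1\otimes x)=x\otimes 1$, which holds by naturality applied to $\underline{\eta}\colon I\to\underline{B}$. Hence $\Psi\Phi\cong\mathrm{id}$.

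\textbf{Step 3.} We show $\Phi\Psi\cong\mathrm{id}$. Since $\underline{\Delta}_{V\otimes\underline{B}}=\mathrm{id}_V\otimes\underline{\Delta}$, the element $\sum v_i\otimes b_i$ is coinvariant iff applying $\mathrm{id}\otimes\underline{\Delta}$ gives $\sum v_i\otimes b_i\otimes 1$. Applying $\mathrm{id}\otimes\underline{\varepsilon}\otimes\mathrm{id}$ to this condition yields $\sum v_i\otimes b_i=\sum v_i\underline{\varepsilon}(b_i)\otimes 1$. Thus $\mathrm{id}\otimes\underline{\varepsilon}$ restricted to coinvariants inverts $\nu_V$, and this map is a morphism in $\underline{\mathcal{M}}$. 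Preadditivity is used throughout to form differences of morphisms, and hence the equalizers, inside $\underline{\mathcal{M}}$. Naturality of $\theta$ and $\nu$ in morphisms of Hopf modules and in morphisms of $\underline{\mathcal{M}}$ is immediate, which completes the equivalence.
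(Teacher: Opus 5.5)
Your route is the Bespalov--Drabant argument that the paper relies on for this theorem: $\theta_M$ with inverse built from the idempotent $\omega_M$, and $\nu_V$ with inverse $(\mathrm{id}\otimes\underline{\varepsilon})\circ\iota$. It does prove the statement, but two steps are justified incorrectly or incompletely.

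\textbf{Step 1: the braidings do not cancel.} The crossings are not inverse to each other. Write $\rho_M\colon M\otimes\underline{B}\to M$ for the action and expand $\underline{\Delta}_M(\omega_M(m))$. The leg $m^3$, which ends up as $\underline{S}(m^3)$ acting on $m^0$, is braided twice:
\begin{enumerate}
\item first past $m^2$, by the anti-comultiplicativity of $\underline{S}$;
\item then past $m^1$, by the Hopf module compatibility.
\end{enumerate}
Both times it is $\sigma$, with $m^3$ in the second slot. By the hexagon relation the two crossings compose to $\sigma_{\underline{B}\otimes\underline{B},\underline{B}}$ on $(m^1\otimes m^2)\otimes m^3$. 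Naturality of $\sigma$ with respect to $f:=\underline{\mu}\circ(\mathrm{id}\otimes\underline{S})$ then gives
\[
\underline{\Delta}_M\circ\omega_M=(\rho_M\otimes\mathrm{id})\circ(\mathrm{id}_M\otimes\underline{S}\otimes\mathrm{id})\circ(\mathrm{id}_M\otimes\sigma_{\underline{B},\underline{B}})\circ\bigl(\mathrm{id}_M\otimes(f\circ\underline{\Delta})\otimes\mathrm{id}\bigr)\circ(\underline{\Delta}_M\otimes\mathrm{id})\circ\underline{\Delta}_M .
\]
Only at this point does the antipode axiom $f\circ\underline{\Delta}=\underline{\eta}\circ\underline{\varepsilon}$ apply. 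The surviving crossing is then with the unit, hence trivial, leaving $\omega_M\otimes\underline{\eta}$. So the braiding survives to the end and disappears only because it becomes a braiding with $1$. This also shows that both crossings must be $\sigma$ rather than one $\sigma$ and one $\sigma^{-1}$; with mixed crossings $f$ could not be slid through.

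\textbf{Corestriction and Step 2.} The universal property of the equalizer $\iota\colon M^{\underline{B}}\to M$ only corestricts morphisms with target $M$. It yields no morphism into $M^{\underline{B}}\otimes\underline{B}$, because $-\otimes\underline{B}$ is not assumed to preserve equalizers. For the same reason, your elementwise check of $\theta_M^{-1}\circ\theta_M=\mathrm{id}$ on $M^{\underline{B}}\otimes\underline{B}$ needs justification.

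Both issues are settled by the idempotent:
\begin{enumerate}
\item Let $\mathrm{pr}\colon M\to M^{\underline{B}}$ be the corestriction of $\omega_M$; this is what Step 1 is needed for.
\item Set $\theta_M^{-1}:=(\mathrm{pr}\otimes\mathrm{id})\circ\underline{\Delta}_M$.
\item Since $\underline{S}\circ\underline{\eta}=\underline{\eta}$, one has $\omega_M\circ\iota=\iota$, hence $\mathrm{pr}\circ\iota=\mathrm{id}$.
\item Therefore $\iota\otimes\mathrm{id}_{\underline{B}}$ is a split monomorphism, and identities of morphisms out of $M^{\underline{B}}\otimes\underline{B}$ may be tested after composing with it.
\end{enumerate}
This is the role of $\omega_M$ and $\mathrm{pr}$ in the setup the paper recalls: split idempotents are preserved by $-\otimes\underline{B}$. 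With these two repairs, Step 2, Step 3 and the naturality claims go through as you wrote them.
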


\noindent A similar version of the fundamental theorem of Hopf modules can be proven for the category of left-left Hopf modules ${}_{\underline{B}}^{\underline{B}}\underline{\mathcal{M}}$ internal to $\underline{\mathcal{M}}$. In the following we are particularly interested in tetra-modules and covariant bimodules internal to a braided monoidal category.
\begin{definition}
Let $\underline{B}$ be a braided bialgebra in $\underline{\mathcal{M}}$. A \emph{tetra-module} is a $\underline{B}$-bimodule $M$ endowed with commuting right and left $\underline{B}$-coactions $\underline{\Delta}_M\colon M\to M\otimes\underline{B}$ and ${}_M\underline{\Delta}\colon M\to\underline{B}\otimes M$ such that ${}_M\underline{\Delta},\underline{\Delta}_M$ are $\underline{B}$-bilinear. Explicitly, the bilinearity reads
\begin{equation}
\begin{split}
    \underline{\Delta}_M(b\cdot m\cdot c)
    &=b^1\cdot{}^\alpha(m^0)\cdot{}^\beta(c^1)\otimes{}_\beta({}_\alpha(b^2)m^1)c^2\\
    {}_M\underline{\Delta}(b\cdot m\cdot c)
    &=b^1~{}^\alpha(m^{-1})~{}^\beta(c^1)\otimes{}_\beta({}_\alpha(b^2)\cdot m^0)\cdot c^2
\end{split}
\end{equation}
on elements $m\in M$ and $b,c\in\underline{B}$. Morphisms of tetra-modules are $\underline{B}$-bilinear and $\underline{B}$-bicolinear maps. We write ${}_{\underline{B}}^{\underline{B}}\underline{\mathcal{M}}_{\underline{B}}^{\underline{B}}$ for the category of tetra-modules internal to $\underline{\mathcal{M}}$. When only requiring the existence of one of the compatible coactions we obtain the categories ${}_{\underline{B}}^{\underline{B}}\underline{\mathcal{M}}_{\underline{B}}$ and ${}_{\underline{B}}\underline{\mathcal{M}}_{\underline{B}}^{\underline{B}}$, respectively. 
\end{definition}
Assume that $\underline{B}$ is a braided Hopf algebra in $\underline{\mathcal{M}}$ such that its antipode $\underline{S}\colon\underline{B}\to\underline{B}$ is an isomorphism in $\underline{\mathcal{M}}$. Then, the category ${}_{\underline{B}}^{\underline{B}}\underline{\mathcal{M}}_{\underline{B}}^{\underline{B}}$ is braided monoidal with respect to the tensor product $M\otimes_{\underline{B}}N$ of tetra-modules $M,N$, where $M\otimes_{\underline{B}}N$ is endowed with the left $\underline{B}$-action on the first tensor factor and the right $\underline{B}$-action on the second tensor factor and with the $\underline{B}$-coactions

\begin{equation}
\begin{split}
    M\otimes_{\underline{B}}N&\to(M\otimes_{\underline{B}}N)\otimes\underline{B}\\
    m\otimes_{\underline{B}}n&\mapsto m^0\otimes_B{}^\alpha(n^0)\otimes{}_\alpha(m^1)n^1
\end{split}\qquad\qquad
\begin{split}
    M\otimes_{\underline{B}}N&\to\underline{B}\otimes(M\otimes_{\underline{B}}N)\\
    m\otimes_{\underline{B}}n&\mapsto m^{-1}~{}^\alpha(n^{-1})\otimes{}_\alpha(m^0)\otimes_{\underline{B}}n^0
\end{split}
\end{equation}

\noindent and with respect to the braiding

\begin{equation}
\begin{split}
    \sigma_{M,N}\colon M\otimes_{\underline{B}}N&\to N\otimes_{\underline{B}}M\\
    m\otimes_{\underline{B}}n&\mapsto m^{-2}\cdot{}^\alpha(n^0\cdot \underline{S}(n^1))\otimes_{\underline{B}}{}_\alpha(\underline{S}(m^{-1})\cdot m^0)\cdot n^2
\end{split}
\end{equation}

\noindent where we refer to \cite[Theorem 4.3.1]{BespalovBraidedFundThm}.

From the braided Hopf algebra $\underline{B}$ with invertible antipode we can build another braided monoidal category, namely, the category ${}_{\underline{B}}^{\underline{B}}\mathcal{YD}$ of \emph{left-left Yetter--Drinfel'd modules internal to $\underline{\mathcal{M}}$}. An object in ${}_{\underline{B}}^{\underline{B}}\mathcal{YD}$ is a left $\underline{B}$-module and left $\underline{B}$-comodule $M$ satisfying the Yetter--Drinfel'd compatibility

\begin{equation}
    b^1~{}^\alpha(m^{-1})\otimes{}_\alpha(b^2)\cdot m^0
    =(b^1\cdot{}^\alpha m)^{-1}~{}^\beta{}_\alpha(b^2)\otimes{}_\beta((b^1\cdot{}^\alpha m)^0),
\end{equation}

\noindent for all $m\in M$ and $b\in\underline{B}$, while a morphism in ${}_{\underline{B}}^{\underline{B}}\mathcal{YD}$ is a morphism in $\underline{\mathcal{M}}$ which is also left $\underline{B}$-linear and left $\underline{B}$-colinear, in addition. The category ${}_{\underline{B}}^{\underline{B}}\mathcal{YD}$ is monoidal with respect to the tensor product induced from $\underline{\mathcal{M}}$. Explicitly, $V\otimes W\in{}_{\underline{B}}^{\underline{B}}\mathcal{YD}$ for $V,W\in{}_{\underline{B}}^{\underline{B}}\mathcal{YD}$ via the $\underline{B}$-(co)action

\begin{equation}
\begin{split}
    b\cdot(v\otimes w)&
    :=b^1\cdot {}^\alpha v\otimes{}_\alpha(b^2)\cdot w\\
    (v\otimes w)^{-1}\otimes(v\otimes w)^0
    &:=v^{-1}~{}^\alpha(w^{-1})\otimes{}_\alpha(v^0)\otimes w^0,
\end{split}
\end{equation}

\noindent for all $b\in\underline{B}$, $v\in V$ and $w\in W$. The braiding of ${}_{\underline{B}}^{\underline{B}}\mathcal{YD}$ is determined by

\begin{equation}
\begin{split}
    \sigma_{V,W}\colon V\otimes W&\to W\otimes V\\
    v\otimes w&\mapsto {}^\alpha(v^{-1})\cdot{}_\alpha w\otimes v^0
\end{split}
\end{equation}

\noindent and we refer to \cite{BespalovCrossed} for proofs and more information about Yetter--Drinfel'd modules internal to braided monoidal categories.

According to the fundamental theorem of Hopf modules, see \cite[Theorem 4.3.2]{BespalovBraidedFundThm}, the categories of tetra-modules and Yetter--Drinfel'd modules internal to a braided monoidal category are equivalent as braided monoidal categories.

\begin{theorem}
Let $(\underline{\mathcal{M}},\otimes,I,\sigma)$ be a braided monoidal and preadditive category such that equalizers and coequalizers exist and let $\underline{B}$ be a braided Hopf algebra with invertible antipode in $\underline{\mathcal{M}}$. Then, there is an equivalence

\begin{equation}
\begin{tikzcd}
    ({}_{\underline{B}}^{\underline{B}}\underline{\mathcal{M}}_{\underline{B}}^{\underline{B}},\otimes_{\underline{B}},\underline{B},\sigma^\mathcal{W}) \arrow[rr, bend left, "\Phi"] & \cong & ({}_{\underline{B}}^{\underline{B}}\mathcal{YD},\otimes,I,\sigma^{\mathcal{YD}}) \arrow[ll, bend left, "\Psi"]
\end{tikzcd}
\end{equation}

\noindent of braided monoidal categories.
\end{theorem}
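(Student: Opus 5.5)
The plan is to reduce the statement to the braided fundamental theorem of Hopf modules proved just above. The equivalence between $\underline{\mathcal{M}}^{\underline{B}}_{\underline{B}}$ and $\underline{\mathcal{M}}$ will be upgraded by carrying along the extra left action and left coaction. Its left-left mirror, for ${}^{\underline{B}}_{\underline{B}}\underline{\mathcal{M}}$, will also be used.

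\emph{The functor $\Phi$.} For a tetra-module $M$, set $\Phi(M):=M^{\underline{B}}$, the right coinvariants. Since the left coaction commutes with the right one, ${}_M\underline{\Delta}$ corestricts to a left coaction $M^{\underline{B}}\to\underline{B}\otimes M^{\underline{B}}$. This is checked directly on the equalizer, using that ${}_M\underline{\Delta}$ is a morphism in $\underline{\mathcal{M}}$. The left $\underline{B}$-module structure on $M^{\underline{B}}$ is the braided adjoint action
\[
b\triangleright m:=\omega_M(b\cdot m)=b^1\cdot{}^\alpha m\cdot\underline{S}({}_\alpha(b^2)),
\]
with $\omega_M$ the idempotent projection onto coinvariants. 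This expression is coinvariant by the right Hopf module compatibility.

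The main computational step is to verify the braided Yetter--Drinfel'd condition for $(M^{\underline{B}},\triangleright,{}_M\underline{\Delta})$. It follows from the left bilinearity formula for ${}_M\underline{\Delta}$ in the definition of tetra-module, the anti-coalgebra property of $\underline{S}$, and the naturality identities of the Lemma on braidings. I expect this to be the main obstacle: keeping track of the braid indices. I will check it diagrammatically, with string diagrams, rather than with indices.

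\emph{The functor $\Psi$.} For $V\in{}_{\underline{B}}^{\underline{B}}\mathcal{YD}$, set $\Psi(V):=V\otimes\underline{B}$ with the right structures as before. The left structures are the diagonal ones:
\[
b\cdot(v\otimes c)=b^1\cdot{}^\alpha v\otimes{}_\alpha(b^2)c,\qquad
{}_{\Psi(V)}\underline{\Delta}(v\otimes c)=v^{-1}\,{}^\alpha(c^1)\otimes{}_\alpha(v^0)\otimes c^2.
\]
Associativity and coassociativity are immediate. That these form a tetra-module comes down to the Yetter--Drinfel'd condition on $V$, used exactly once, in the left bilinearity of the left coaction.

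\emph{Equivalence and braided monoidal structure.} The unit and counit of the underlying equivalence are
\[
V\to(V\otimes\underline{B})^{\underline{B}},\quad v\mapsto v\otimes1,\qquad
M^{\underline{B}}\otimes\underline{B}\to M,\quad m\otimes b\mapsto m\cdot b.
\]
These are already isomorphisms by the preceding theorem, so it remains to check that they preserve the left action and left coaction. This is a short computation using $\underline{\varepsilon}$ and $\underline{S}$.

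For monoidality, the structure isomorphism is
\[
\Psi(V)\otimes_{\underline{B}}\Psi(W)\cong(V\otimes\underline{B})\otimes_{\underline{B}}(W\otimes\underline{B})\cong V\otimes W\otimes\underline{B}=\Psi(V\otimes W),
\]
given by $(v\otimes b)\otimes_{\underline{B}}(w\otimes c)\mapsto v\otimes b\cdot(w\otimes c)$. It is compatible with the actions and coactions precisely because the tetra-module tensor structure and the Yetter--Drinfel'd tensor structure are defined by the same diagonal formulas.

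Finally, compatibility with the braidings is checked on generators $(v\otimes1)\otimes_{\underline{B}}(w\otimes1)$. These generate as a right module, so they suffice. On them $\sigma^{\mathcal W}$ reduces, using $\underline{S}(1)=1$ and coinvariance of $v\otimes1$ and $w\otimes1$, to ${}^\alpha(v^{-1})\cdot{}_\alpha w\otimes v^0$, which is $\sigma^{\mathcal{YD}}$. This is where invertibility of $\underline{S}$ enters: it guarantees that $\sigma^{\mathcal W}$ is well defined and invertible. Where needed I will cite \cite{BespalovBraidedFundThm} for the well-definedness of the tetra-module braiding.
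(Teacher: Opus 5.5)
Your plan is correct and matches the paper's route. The paper gives no argument of its own: it obtains the statement from the braided fundamental theorem of Hopf modules by citing \cite[Theorem 4.3.2]{BespalovBraidedFundThm}, which is exactly the upgrade of the coinvariants/induction equivalence you describe (adjoint action and restricted left coaction on $M^{\underline{B}}$, diagonal left structures on $V\otimes\underline{B}$, and comparison of the braidings on $V\otimes W\otimes 1$).

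One correction to your last step. On $(v\otimes 1)\otimes_{\underline{B}}(w\otimes 1)$, only the legs $n^0\cdot\underline{S}(n^1)$ and $n^2$ collapse by coinvariance. The legs $m^{-2}$ and $\underline{S}(m^{-1})\cdot m^0$ do not simplify; the latter is in general a nontrivial element of $V\otimes\underline{B}$. You recover $\sigma^{\mathcal{YD}}$ only after balancing across $\otimes_{\underline{B}}$, i.e.\ pushing through $\Psi(W)\otimes_{\underline{B}}\Psi(V)\cong\Psi(W\otimes V)$, and then using $b^1\underline{S}(b^2)=\underline{\varepsilon}(b)$.
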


\section{Braided first order differential calculi} 
\label{First order braided differential calculi}
In this Section we generalise the notion of differential calculus from $\Bbbk$-vector spaces to general braided monoidal categories. We shall always assume the category to be abelian, so as to retain most of the fundamental results of the classical theory. Unless otherwise stated, we denote such a category by $\underline{\mathcal{M}}:=(\mathcal{M},\otimes,\sigma)$.
In Section \ref{section:definition_examples_and_constructions} we introduce the main definitions to be used for the remainder of the section. In particular, we give the notion of first order braided covariant differential calculus over a bialgebra in a braided monoidal category, and we show the existence of such a calculus having a universal property.  
In Section \ref{section:classification_theorem} we discuss how first order braided differential calculi on a braided Hopf algebra are classified by (braided) ideals in the kernel of the counit. The result is conceptually  analogous to the celebrated Woronowicz's classification theorem, originally appearing in \cite{Woronowicz1989} for the case of Hopf algebras in the category of vector spaces. In Section \ref{section:examples_of_braided_calculi} we discuss some examples of first order braided differential calculi on braided Hopf algebras, notably on the Sweedler's $E_{n}$ Hopf algebras and on the braided group realised as the transmutation of $\mathcal{O}_{q}(\mathrm{SL}_{2})$.

\subsection{Definition, examples and constructions}
\label{section:definition_examples_and_constructions}
We start recalling the notion of a first order differential calculus for an algebra in an abelian monoidal category $(\mathcal{C},\otimes)$. 

\begin{definition}\label{def:braided_fodci}
    Let $B$ be an algebra in $(\mathcal{C},\otimes)$.  A \emph{first order differential calculus} (FODC) over $B$ is the datum of: 
    \begin{enumerate}
        \item a $B$-bimodule $\Omega^{1}(B) \in \mathcal{C}$, 
        \item a derivation $\left(\,\mathrm{d}\colon B \to \Omega^{1}(B)\,\right)\in\mathrm{Hom}_{\mathcal{C}}(B,\Omega^{1}(B))$, i.e., a morphism $\mathrm{d}\colon B \to \Omega^{1}(B)$ in $\mathcal{C}$ satisfying the Leibniz rule $\mathrm{d}(bb')=\mathrm{d}(b)b'+b\mathrm{d}(b')$ for all $b,b'\in B$, 
        \item such that the morphism in $\mathcal{C}$
        $$
        B\otimes B\to\Omega^1(B),\qquad b\otimes b'\mapsto b\mathrm{d}(b')
        $$
        is surjective.
    \end{enumerate}
 A morphism $\varphi\colon \Omega^{1}(B)\to \widetilde\Omega^{1}(B)$ of FODCi over $B$ is a $B$-bimodule map such that the diagram 
 
\begin{equation}\label{eq:morph_of_fobdci}
\begin{tikzcd}
\Omega^{1}(B) \arrow[rr, "\varphi"] &                                                                    & \widetilde{\Omega}^{1}(B) \\
                                    &                                                                    &                           \\
                                    & B \arrow[luu, "\mathrm{d}"] \arrow[ruu, "\widetilde{\mathrm{d}}"'] &                          
\end{tikzcd}
\end{equation}
commutes. We denote the category of FODCi over an algebra $B\in\mathcal{C}$ by ${}_{\mathcal{C}}\mathrm{Diff}(B)$.
\end{definition}

\noindent One verifies that a morphism of FODCi is automatically unique and surjective, see \cite[Proposition 3.2.6]{Keegan}. 

\begin{proposition}[{\cite[Proposition 3.2.3]{Keegan}}]
\label{prop:braided_universal_is_initial}
	Let $B$	be an algebra in $(\mathcal{C},\otimes)$ with multiplication $\mu\colon B\otimes B\to B$. Consider the object $\Omega^{1}_{u}(B):=\ker\mu$ and the morphism 
    \begin{equation}
    \begin{aligned}
	    \mathrm{d}_{u}\colon B &\to \Omega^{1}_{u}(B), \\ 
        b & \mapsto 1\otimes b - b\otimes 1\,
    \end{aligned}
	\end{equation}
    in $\mathcal{C}$. Then $(\Omega^{1}_{u}(B),\mathrm{d}_{u})$ is a FODC over $B$ in $\mathcal{C}$ and it follows that $(\Omega^{1}_{u}(B),\mathrm{d}_{u})$ is the initial object in ${}_{\mathcal{C}}\mathrm{Diff}(B)$. In particular, for any FODC  $(\Omega^{1}(B),\mathrm{d})\in {}_{\mathcal{C}}\mathrm{Diff}(B)$ there exists a (necessarily unique and surjective) morphism of FODCi $\Pi\colon \Omega^{1}_{u}(B)\to\Omega^{1}(B)$ such that $\Pi\circ \mathrm{d}_{u}=\mathrm{d}$, and thus $\Omega^{1}(B)\cong \Omega^{1}_{u}(B)\big/ \ker\Pi$ in ${}_{\mathcal{C}}\mathrm{Diff}(B)$.
\begin{proof}
  It is an easy check that the pair $(\Omega^{1}_{u}(B),\mathrm{d}_{u})$ is an object in ${}_{\mathcal{C}}\mathrm{Diff}(B)$, and in fact the proof of the latter in essentially the same as in ${}_{\Bbbk}\mathrm{Vec}$. In particular, the $B$-bimodule sturcture on $\Omega^{1}_{u}(B)$ is given as left and right multiplication on the tensor product, $\mathrm{d}_{u}$ is clearly a derivation and the surjectivity condition holds.  
    Let now $(\Omega^{1}(B), \mathrm{d})$ be any object in ${}_{\mathcal{C}}\mathrm{Diff}(B)$, and consider the map defined as 
    
\begin{equation} 
\begin{aligned}
\Pi\colon \Omega^{1}_{u}(B)& \to \Omega^{1}(B) \\ 
b\,\mathrm{d}_{u}c  & \mapsto b\,\mathrm{d}c.
\end{aligned}
\end{equation}

\noindent This map is manifestly surjective, and moreover it is morphism in $\mathrm{Hom}_{{}_{\mathcal{C}}\mathrm{Diff}(B)}(\Omega^{1}_{u}(B),\Omega^{1}(B))$, since it is compatible with $\mathrm{d}_{u}$ by construction and a morphism of $B$-bimodules in $\mathcal{M}$. Thus, the isomorphism $\Omega^{1}(B)\cong \Omega^{1}_{u}(B)\big/\ker\Pi$ follows. Morphism of FODCi are unique, as previously commented.
\end{proof}
\end{proposition}

\noindent We call the object $(\Omega^{1}_u(B),\mathrm{d}_{u})$ the \emph{universal first order differential calculus} on the algebra $B\in\mathcal{C}$.

We now discuss two canonical constructions featuring FODCi.

\begin{definition}
\label{def:quotient_and_pullback_calculi}
Let $A,A_0$ and $A'$ be algebras in $(\mathcal{C},\otimes)$.
\begin{itemize} 
  \item Let $\phi\colon A\to A_{0}$ be an algebra epimorphism in $\mathrm{Hom}_{\mathcal{C}}(A,A_{0})$, and let $(\Omega^{1}(A),\mathrm{d})\in{}_{\mathcal{C}}\mathrm{Diff}(A)$. For $I:=\ker\phi$, we construct a FODC in $ {}_{\mathcal{C}}\mathrm{Diff}{(A_{0})}$, the \emph{quotient calculus}, as the object $(\Omega^{1}(A_0):=\Omega^{1}(A)/N,\mathrm{d}_0)$, where $N$ is the $A$-subbimodule generated by elements $I\,\mathrm{d}A + A\,\mathrm{d}I$, and $\mathrm{d}'\phi(a)=[\mathrm{d}a]$. 
\item Consider an algebra map $\iota\colon A' \rightarrow A\in \mathrm{Hom}_{\mathcal{C}}(A',A)$. Given a FODC $(\Omega^{1}(A),\mathrm{d})\in {}_{\mathcal{C}}\mathrm{Diff}(A)$, we canonically get a \emph{pullback calculus} 
$(\Omega^1(A'),\mathrm{d}')\in {}_{\mathcal{C}}\mathrm{Diff}(A')$ via 

\begin{equation}\label{pullback}
\Omega^1(A'):=\iota(A')\,\mathrm{d}(\iota(A'))\subseteq\Omega^1(A),
\end{equation}

\noindent where $\mathrm{d}':=\mathrm{d}\circ \iota \colon
A'\to\Omega^1(A')$. In the following we omit the $\iota$ map when dealing with the pullback calculus whenever it is clear from the context.
\end{itemize}
\end{definition}

The notion of a first order differential calculus admits a natural generalisation to that of a (higher order) differential calculus on an algebra in $(\mathcal{C},\otimes)$. Although the paper deals almost exclusively with first order differential calculi, we nevertheless find it useful to record here an explicit definition, particularly in view of some of the examples discussed below.

\begin{definition}
\label{def:higher_order_dc}
Let $B$ be an algebra in $(\mathcal{C},\otimes)$, and let $(\Omega^{1}(B),\mathrm{d})$ be a first order differential calculus over $B$ in the sense of Definition \ref{def:braided_fodci}. A \emph{differential calculus} (or \emph{higher order differential calculus}) extending $(\Omega^{1}(B),\mathrm{d})$ is the datum of:
\begin{enumerate}
    \item a graded algebra $(\Omega^{\bullet}(B)=\bigoplus_{n\geq 0}\Omega^{n}(B),\wedge)$ in $(\mathcal{C},\otimes)$, with $\Omega^{0}(B)=B$ and with multiplication morphisms
    \[
    \begin{aligned}
    \Omega^{n}(B)\otimes\Omega^{m}(B)& \to\Omega^{n+m}(B), \\ 
    \omega \otimes \gamma & \mapsto \omega \wedge \gamma,
    \end{aligned}
    \]
    restricting on $\Omega^{0}(B)\otimes\Omega^{0}(B)$ to the product of $B$ and on $\Omega^{0}(B)\otimes\Omega^{1}(B)$, respectively $\Omega^{1}(B)\otimes\Omega^{0}(B)$, to the left, respectively right, $B$-module structure of $\Omega^{1}(B)$,
    \item a degree-one morphism $\left(\,\mathrm{d}\colon\Omega^{n}(B)\to\Omega^{n+1}(B)\,\right)_{n\geq 0}\in\mathrm{Hom}_{\mathcal{C}}(\Omega^{\bullet}(B),\Omega^{\bullet+1}(B))$, restricting on $\Omega^{0}(B)=B$ to the derivation $\mathrm{d}\colon B\to\Omega^{1}(B)$ of Definition \ref{def:braided_fodci}, and satisfying
        \[
        \mathrm{d}(\omega\wedge\omega')=\mathrm{d}(\omega)\wedge \omega'+(-1)^{n}\,\omega\,\wedge \mathrm{d}(\omega'), \qquad \mathrm{d}^{2}=0,
        \]
    for all $\omega\in\Omega^n(B)$ and $\omega'\in\Omega^\bullet(B)$,
    \item such that, for every $n\geq 2$, the morphism in $(\mathcal{C},\otimes)$
    \[
    \begin{aligned}
    \Omega^{1}(B)^{\otimes n}& \to\Omega^{n}(B),\\ \omega_{1}\otimes\cdots\otimes\omega_{n}&\mapsto\omega_{1}\wedge\cdots \wedge \omega_{n},
    \end{aligned}
    \]
    is surjective, i.e., $\Omega^{\bullet}(B)$ is generated, as an algebra, by $\Omega^{0}(B)$ and $\Omega^{1}(B)$.
\end{enumerate}
A morphism $\Phi\colon\Omega^{\bullet}(B)\to\widetilde\Omega^{\bullet}(B)$ of differential calculi over $B$ is a morphism of graded algebras in $(\mathcal{C},\otimes)$, of degree zero, such that the diagram

\begin{equation}\label{eq:morph_of_higher_dc}
\begin{tikzcd}
\Omega^{\bullet}(B) \arrow[rr, "\Phi"] &  & \widetilde{\Omega}^{\bullet}(B) \\
                                        &  &                                  \\
                                        & B \arrow[luu, "\mathrm{d}"] \arrow[ruu, "\widetilde{\mathrm{d}}"'] &
\end{tikzcd}
\end{equation}

\noindent commutes.
\end{definition}

\noindent Notice that it is standard to define higher order differential calculi $(\Omega^{\bullet}(B),\wedge,\mathrm{d})$ as differential graded algebras that are generated in degree zero $\Omega^{0}(B)=B$. Here we choose to adopt a more explicit, yet equivalent, definition in terms of $(\Omega^{1}(B),\mathrm{d})$, as FODCi are the main object of discussion of the paper. 

We now introduce the notion of covariant first order differential calculus in a braided monoidal category.

\begin{definition}
Let $\underline{\mathcal{M}}$ be an abelian braided monoidal category. 
\begin{enumerate}
    \item Let $\uB$ be a bialgebra in $\underline{\mathcal{M}}$. A first order differential calculus $(\Omega^1(\uB),\ud)\in {}_{\uM}\mathrm{Diff}(\uB)$ in $\underline{\mathcal{M}}$ is called \emph{braided left covariant} (respectively \emph{braided right covariant}) if $\Omega^1(\uB)$ is a left (respectively right) $\uB$-covariant $\uB$-bimodule and the differential $\ud \colon \uB \to \Omega^{1}(\uB)$ is left (respectively right) $\uB$-colinear. We call $(\Omega^1(\uB),\ud )$ \emph{braided bicovariant} if it is both braided left and braided right covariant.
    \item Let $\uB$ be a left (right) $\underline{H}$-comodule algebra of a bialgebra  $\underline{H}\in\uM$. We say that $\Omega^{1}(\uB)$ is a braided left (right) $\underline{H}$-\emph{covariant} FODC on $\uB$ if $\Omega^{1}(\uB)$ is a left (right) $\underline{H}$-covariant $B$-bimodule and the differential is left (right) $\underline{H}$-colinear, and we say $\Omega^{1}(\uB)$ is $\underline{H}$-bicovariant if it is both a left and right $\underline{H}$-covariant $\uB$-bimodule and the differential is $\underline{H}$-bicolinear.
\end{enumerate}
\end{definition}

We denote the categories of braided left/right/bi-covariant FODCi on $\uB\in \underline{\mathcal{M}}$ by 

\begin{equation}
\label{eq:categories_of_braided_calculi}
{}^{\uB}_{\underline{\mathcal{M}}}\mathrm{Diff}(\uB),\quad  {}_{\underline{\mathcal{M}}}\mathrm{Diff}(\uB)^{\uB},\quad \text{and} \quad {}^{\uB}_{\underline{\mathcal{M}}}\mathrm{Diff}(\uB)^{\uB}.
\end{equation}

We sometimes colloquially refer to braided covariant FODCi as \emph{braided differential calculi}. The notion of braided covariant higher order differential calculus can be defined in a similar fashion, though this is not the main focus of this paper.

The universal FODC $(\Omega^{1}(\uB),\ud _{u})$ over a braided bialgebra $\uB$ in $\underline{\mathcal{M}}$ is again initial in the categories of braided covariant calculi. This is the content of the next proposition, which we state and prove at the level of a braided bialgebra, and which specialises to braided Hopf algebras (see also {\cite[Proposition 3.5.8]{Keegan}}). 

\begin{proposition}	Let $\uB$ be a bialgebra in $\underline{\mathcal{M}}$. The universal first order differential calculus over $\uB$ is a braided bicovariant FODC and it is an initial object in all the categories displayed in \eqref{eq:categories_of_braided_calculi}.
\proof  We prove the statement showing that there are maps 

	\begin{equation}
			\begin{aligned}
				{}_{\Omega^{1}_{u}(\uB)}\underline{\Delta}\colon \Omega^{1}_{u}(\uB)&\to \uB\otimes \Omega^{1}_{u}(\uB), \\ 
				b\otimes c &\mapsto b^{1}\sigma(b^{2}\otimes c^{1})\otimes c^{2}
		\end{aligned}
	\end{equation} 
    
    \begin{equation}\label{eq:right_coaction_on_universal_braided}
        \begin{aligned}
				\underline{\Delta}_{\Omega^{1}_{u}(\uB)}\colon \Omega^{1}_{u}(\uB)&\to \Omega^{1}_{u}(\uB)\otimes\uB, \\ 
				b\otimes c &\mapsto b^{1}\otimes \sigma(b^{2}\otimes c^{1}) c^{2}.
        \end{aligned}
    \end{equation} 
    
\noindent that are respectively well-defined left and right $\uB$-coactions on $\Omega^{1}_{u}(\uB)$ extending the coproduct $\underline{\Delta}\colon \uB\to \uB\otimes \uB$. Consider the following commutative diagram 

    \begin{equation*}
         \begin{tikzcd}
            \uB\otimes \uB \arrow[rr, "{}_{\Omega^{1}_{u}(\uB)}\uDelta"] \arrow[dd, "\underline{\mu}"] &  & \uB\otimes \uB\otimes \uB \arrow[dd, "(\mathrm{id}\otimes \underline{\mu})"] \\
                                                                                           &  &                                                            \\
         \uB  \arrow[rr, "\underline{\Delta}"]                                                       &  & \uB\otimes \uB                                                
        \end{tikzcd}.
    \end{equation*}
    
\noindent We have $(\mathrm{id}\otimes \underline{\mu})\,\circ \,{}_{\Omega^{1}_{u}(\uB)}\uDelta (\ker\underline{\mu}) \subseteq \uB\otimes \ker\underline{\mu} = \uB\otimes \Omega^{1}_{u}(\uB)$. Next, we explicitly show the coassociativity properties. First of all 

    \begin{equation}
        \begin{aligned}
            \label{equation:coaction_on_omega_1}
            (\mathrm{id}\otimes {}_{\Omega^{1}_{u}(\uB)}\uDelta )\circ {}_{\Omega^{1}_{u}(\uB)}\uDelta (b\otimes c) & = (\mathrm{id}\otimes {}_{\Omega^{1}_{u}(\uB)}\uDelta )(b^{1}\, \sigma(b^{2}\otimes c^{1})\otimes c^{2})\\ 
             & =  (\mathrm{id}\otimes {}_{\Omega^{1}_{u}(\uB)}\uDelta )(b^{1} {}_{\alpha}c^{1}\otimes {}^{\alpha}b^{2}\otimes c^{2}) \\ 
             & = b^{1} {}_{\alpha}c^{1}\otimes ({}^{\alpha}b^{2})^{1} {}_{\beta}((c^{2})^{1})\otimes {}^{\beta}(({}^{\alpha}b^{2})^{2})\otimes c^{2},  
        \end{aligned}
    \end{equation}
    
    whereas 
    
    \begin{equation}
    \begin{aligned}
    \label{equation:coaction_on_omega_2}
    (\underline{\Delta}\otimes \mathrm{id})\circ {}_{\Omega^{1}_{u}(\uB)}\uDelta (b\otimes c) & = (\underline{\Delta}\otimes \mathrm{id})(b^{1}{}_{\alpha}c^{1}\otimes {}^{\alpha}b^{2}\otimes c^{2})\\ 
    & = (b^{1})^{2}\sigma((b^{1})^{2}\otimes ({}_{\alpha}c^{1})^{1})({}_{\alpha}c^{1})^{2}\otimes {}^{\alpha}b^{2}\otimes c^{2} \\ 
    & = (b^{1})^{1} {}_{\beta}(({}_{\alpha}c^{1})^{1})\otimes {}^{\beta}((b^{1})^{2}) ({}_{\alpha}c^{1})^{2}\otimes {}^{\alpha}b^{2}\otimes c^{2}.
    \end{aligned}
    \end{equation}
    
\noindent As coassociativity of the coproduct $\underline{\Delta}$ on $B$ reads 
    \begin{align*}
        (\underline{\Delta}\otimes\mathrm{id})\circ \underline{\Delta}(b\otimes c) & = (\underline{\Delta}\otimes \mathrm{id})(b^{1}\otimes {}_{\alpha}(c^{1})\otimes {}^{\alpha}(b^{2})\otimes c^{2}) \\ 
        & = \underline{\Delta}(b^{1}\otimes {}_{\alpha}(c^{1})) \otimes {}^{\alpha}(b^{2})\otimes c^{2}\\ 
        & = (b^{1})^{1} \otimes {}_{\beta}(({}_{\alpha}c^{1})^{1})\otimes {}^{\beta}((b^{1})^{2})\otimes ({}_{\alpha}c^{1})^{2}\otimes {}^{\alpha}(b^{2})\otimes c^{2}\\ 
        & = b^{1}\otimes {}_{\alpha}c^{1}\otimes ({}^{\alpha}(b^{2}))^{1}\otimes {}_{\beta}((c^{2})^{1})\otimes {}^{\beta}(({}^{\alpha}(b^{2}))^{2}\otimes (c^{2})^{2}\\ 
        & = b^{1}\otimes {}_{\alpha}c^{1}\otimes \underline{\Delta}({}^{\alpha}(b^{2})\otimes c^{2}) \\ 
        & = (\mathrm{id}\otimes \underline{\Delta})\circ \underline{\Delta}(b\otimes c), 
    \end{align*}
    
\noindent we deduce that the expressions in Equations \eqref{equation:coaction_on_omega_1} and  \eqref{equation:coaction_on_omega_2} coincide. Moreover, the differential $\ud _{u}\colon  \uB\to \Omega^{1}_{u}(\uB)$ is left $\uB$-colinear with respect to $\underline{\Delta}$ and ${}_{\Omega^{1}_{u}(\uB)}\uDelta $.
Finally, we check that $\Omega^{1}_{u}(\uB)$ is a (left-left) Hopf module in $\underline{\mathcal{M}}$. 
In fact 

\begin{align*}
{}_{\Omega^{1}_{u}(\uB)}\uDelta (b\, \omega) &= {}_{\Omega^{1}_{u}(\uB)}\uDelta (b\, c \otimes e)  \\
  &= (bc)^{1}\, {}_{\alpha} e^{1} \otimes {}^{\alpha}((bc)^{2}) \otimes e^{2}  \\
   \quad  &= (b^{1}\, {}_{\beta} c^{1})\, {}_{\alpha} e^{1} \otimes {}^{\alpha}({}^{\beta}(b^{2})\, c^{2}) \otimes e^{2} 
 \\
 \quad  &= (b^{1}\, {}_{\beta}(c^{1}))\, {}_{\alpha\gamma}(e^{1}) \otimes {}^{\alpha\beta}(b^{2})\, {}^{\gamma}(c^{2}) \otimes e^{2} 
    \\ 
 \quad  & = b^{1} {}_{\beta}(c^{1}) {}_{\gamma\alpha}(e^{1}) \otimes {}^{\gamma\beta} b^{2} {}^{\alpha}(c^{2}) \otimes e^{2}  \\  
 \quad  & = b^{1} {}_{\beta}(c^{1}) {}_{\alpha}(e^{1}) \otimes {}^{\beta}(b^{2}) {}^{\alpha}(c^{2}) \otimes e^{2}  \\ 
  & = \underline{\Delta}(b) \, {}_{\Omega^{1}_{u}(\uB)}\uDelta (c\otimes e) \\ 
  & = \underline{\Delta}(b) \, {}_{\Omega^{1}_{u}(\uB)}\uDelta (\omega),
\end{align*}

\noindent where we used that the coproduct on two elements $b,c\in B$ reads 
$\underline{\Delta}(bc)= b^{1} {}_{\alpha}c^{1}\otimes {}^{\alpha} b^{2}\otimes c^{2}$, alongside the hexagon relations in Equation \eqref{eq:hexagon_relations}.

Finally, let $\Omega^1(\uB)$ be any braided  left covariant first order differential calculus on $\uB$. From Proposition \ref{prop:braided_universal_is_initial}, we have an isomorphism $\Omega^{1}_{u}(\uB)/\ker\Pi\cong \Omega^1(\uB)$ of $\uB$-bimodules. We define a map ${}_{\Omega^{1}(\uB)}\uDelta\colon \Omega^{1}(\uB)\to \uB\otimes \Omega^{1}(\uB)$ in terms of the commutativity of the following diagram

\begin{equation}\label{equation:diagram_definition_of_gamma_Delta}
    \begin{tikzcd}
        \Omega^{1}_{u}(\uB)/\ker\Pi  \arrow[rr, "{}_{\Omega^{1}_{u}(\uB)}\uDelta  "]        &  & \uB\otimes (\Omega^{1}_{u}(\uB)/\ker\Pi) \arrow[dd, "\mathrm{id}\otimes \Pi"] \\
                                                                   &  &                                                                           \\
    \Omega^1(\uB)  \arrow[uu, "\Pi^{-1}"'] \arrow[rr, "{}_{\Omega^{1}(\uB)}\uDelta", dashed] &  & \uB\otimes \Omega^1(\uB)                                                          
        \end{tikzcd},
    \end{equation}
namely 
    \[
        {}_{\Omega^{1}(\uB)}\uDelta(b\,\ud c):= b^{1}\sigma(b^{2}\otimes c^{1})\ud c^{2}:=b^{1} \, {}_{\alpha}c^{1}\otimes {}^{\alpha}b^{2}\, \ud c^{2},
    \] 
for any element $b\, \ud c\in \Omega^1(\uB)$. One can check that this assignment gives a well-defined map. 

With a completely analogous reasoning we have that the assignment in Equation \eqref{eq:right_coaction_on_universal_braided} provides a well-defined right $\uB$-coaction on $\Omega^{1}_{u}(\uB)$ that correctly descend to any braided right $\uB$-covariant FODC on $\uB$ via $\Pi$, which therefore is a morphism in ${}_{\uM}\mathrm{Diff}(\uB)$. \qed 
\end{proposition}

As a direct consequence we get the following corollary. 

\begin{corollary}\label{cor:liftofcoproductcovariance}
    Let $\uB$ be a braided  bialgebra in a braided monoidal category $\underline{\mathcal{M}}$. A FODC $(\Omega^1(\uB),\ud ) \in {}_{\underline{\mathcal{M}}}\mathrm{Diff}(\uB)$ is braided left covariant if and only if
    
    \begin{equation}
    \label{eq:left_coaction_braided_calculus}
        {}_{\Omega^{1}(\uB)}\uDelta (b\ud b') = b^1{}_\alpha b'^1 \otimes {}^{\alpha}b^2\ud  b'^2
    \end{equation}
    
    \noindent is well-defined. Similarly, $\Omega^1(\uB)$ is braided right covariant if and only if
    
    \begin{equation}
    \label{eq:right_coaction_braided_calculus}
        \uDelta_{\Omega^{1}(\uB)}(b\ud b') = b^1\ud {}_\alpha b'^1 \otimes {}^\alpha b^2b'^2
    \end{equation}
    
   \noindent is well-defined. The FODC is braided bicovariant if and only if the above coactions are well-defined.
\end{corollary}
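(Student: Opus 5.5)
We prove the left covariant case; the right covariant case is identical with the roles of the tensor factors exchanged, and the bicovariant case is the conjunction of the two.

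\emph{Necessity.} Suppose $(\Omega^1(\uB),\ud)$ is braided left covariant with coaction ${}_{\Omega^1(\uB)}\uDelta$. Since $\Omega^1(\uB)$ is a left $\uB$-covariant bimodule, the coaction satisfies the braided Hopf-module rule ${}_{\Omega^1(\uB)}\uDelta(b\cdot\omega)=\underline{\Delta}(b)\,{}_{\Omega^1(\uB)}\uDelta(\omega)$. Here the product on the right is taken in the braided tensor product algebra $\uB\otimes\uB$ acting on $\uB\otimes\Omega^1(\uB)$. Because $\ud$ is left colinear, we have ${}_{\Omega^1(\uB)}\uDelta(\ud b')=b'^1\otimes \ud b'^2$. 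Combining the two gives
\[
{}_{\Omega^1(\uB)}\uDelta(b\,\ud b')=(b^1\otimes b^2)(b'^1\otimes\ud b'^2)=b^1{}_\alpha b'^1\otimes{}^\alpha b^2\,\ud b'^2,
\]
which is formula \eqref{eq:left_coaction_braided_calculus}. Since elements $b\,\ud b'$ span $\Omega^1(\uB)$ by the surjectivity axiom, this formula is forced, and in particular it is well defined.

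\emph{Sufficiency.} Suppose the assignment \eqref{eq:left_coaction_braided_calculus} is well defined. This means exactly that, for the canonical epimorphism $\Pi\colon\Omega^1_u(\uB)\to\Omega^1(\uB)$ of Proposition \ref{prop:braided_universal_is_initial}, we have $(\mathrm{id}\otimes\Pi)\circ{}_{\Omega^1_u(\uB)}\uDelta(\ker\Pi)=0$. Indeed, the formula is precisely $(\mathrm{id}\otimes\Pi)\circ{}_{\Omega^1_u(\uB)}\uDelta$ evaluated on $b\,\ud_u b'$. Since $\underline{\mathcal{M}}$ is abelian, the map therefore factors through the cokernel $\Omega^1_u(\uB)/\ker\Pi\cong\Omega^1(\uB)$, and the factorisation is a morphism in $\underline{\mathcal{M}}$. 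We then transport structure along $\Pi$, using that $\Pi$ and $\mathrm{id}\otimes\Pi\otimes$-type maps are epimorphisms:
\begin{itemize}
\item coassociativity and counitality follow from the corresponding identities for the coaction on $\Omega^1_u(\uB)$ established in the preceding proposition;
\item the left Hopf-module compatibility with the bimodule structure follows likewise, because $\Pi$ is a bimodule map;
\item colinearity of $\ud=\Pi\circ\ud_u$ follows from colinearity of $\ud_u$.
\end{itemize}

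The only delicate point is the sufficiency step: one must check that the well-defined map genuinely coincides with the factorisation through $\Pi$, and that right $\uB$-linearity of the coaction also descends, which involves the braiding. Both follow because $\mathrm{id}\otimes\Pi$ commutes with the braided bimodule actions by naturality of $\sigma$, together with the hexagon relations \eqref{eq:hexagon_relations}.
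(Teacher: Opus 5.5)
Your proof is correct and follows essentially the same route as the paper. The paper reads the corollary off the preceding proposition: the coaction on the universal calculus $\Omega^1_u(\uB)$ is descended along the epimorphism $\Pi\colon\Omega^1_u(\uB)\to\Omega^1(\uB)$, and well-definedness of the formula is exactly the condition that $(\mathrm{id}\otimes\Pi)\circ{}_{\Omega^1_u(\uB)}\uDelta$ vanishes on $\ker\Pi$, as you observe. Your necessity computation (Hopf-module rule plus colinearity of $\ud$) is the explicit form of $\Pi$ being colinear, and your transport of coassociativity, bilinearity and colinearity of $\ud$ along the epimorphism spells out what the paper leaves as ``one can check''.
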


Later on, we make use of the following lemma, which links together the notions of braided Hopf algebras and braided bicovariant first order differential calculi, also employing the monoidal structure of cutoff categories of cochain complexes (see Example \ref{ex:cochain}).

\begin{lemma}\label{lem:gradHopf}
    Let $\uB$ be a braided Hopf algebra in $\underline{\mathcal{M}}$, and let $(\Omega^1(\uB),\ud) \in {}_{\underline{\mathcal{M}}}\mathrm{Diff}(\uB)$. Then, $(\Omega^1(\uB),\ud)$ is braided bicovariant if and only if $\uB^\bullet := \uB \oplus \Omega^1(\uB) \in \underline{\mathcal{M}}^{\bullet \leq1}$ is a differential graded Hopf algebra in $\underline{\mathcal{M}}$, that is, a braided Hopf algebra in $\underline{\mathcal{M}}^{\bullet\leq 1}$. In particular, it has multiplication given by
    \begin{equation}\label{eq:mu_bullet}
        \begin{aligned}
        \underline{\mu}^\bullet \colon \uB^\bullet \otimes \uB^\bullet &\to \uB^\bullet,\\
            b \otimes b' + \omega \otimes b' + b \otimes \omega' &\mapsto bb' + (b\cdot \omega +\omega'\cdot b'),
        \end{aligned}
    \end{equation}
    unit given by the unit of $\uB$ seen in $\uB^\bullet$, comultiplication
    \begin{equation}\label{eq:delta_bullet}
        \begin{split}
            \underline{\Delta}^\bullet \colon \uB^\bullet &\to \uB^\bullet \otimes \uB^\bullet,\\
            b + \omega &\mapsto b^1 \otimes b^2 + (\omega^{-1} \otimes \omega^0 + \omega^0 \otimes \omega^1),
        \end{split}
    \end{equation}
    and counit given by counit of $\uB$ on elements of degree $0$ and vanishing on elements of degree $1$, and  antipode 
    \begin{equation}\label{S_bullet}
        \begin{split}
            \underline{S}^\bullet \colon \uB^\bullet &\to \uB^\bullet,\\
            b + \omega &\mapsto \underline{S}(b) + (-\underline{S}(\omega^{-1})\cdot \omega^0 \cdot \underline{S}(\omega^1)).
        \end{split}
    \end{equation}
 \label{lem:BbulletDGHA}
\end{lemma}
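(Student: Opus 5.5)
We prove the two implications separately. Throughout, the structure maps on $\uB^\bullet$ split into their degree-$0$ and degree-$1$ components. The degree-$0$ components are exactly the Hopf structure of $\uB$. In $\underline{\mathcal{M}}^{\bullet\leq 1}$ the tensor product $\uB^\bullet\otimes\uB^\bullet$ is truncated: its degree-$1$ part is $\Omega^1(\uB)\otimes\uB\oplus\uB\otimes\Omega^1(\uB)$, and $\Omega^1\otimes\Omega^1$ is discarded. Consequently every axiom reduces to a statement about $\uB$ alone, which holds automatically, plus a statement that is linear in one element of $\Omega^1(\uB)$.

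\emph{($\Rightarrow$)} Assume $(\Omega^1(\uB),\ud)$ is braided bicovariant. We define $\underline{\mu}^\bullet$, $\underline{\Delta}^\bullet$, $\underline{\varepsilon}^\bullet$ and $\underline{S}^\bullet$ by \eqref{eq:mu_bullet}, \eqref{eq:delta_bullet} and \eqref{S_bullet}, using the coactions of Corollary \ref{cor:liftofcoproductcovariance}.

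\begin{enumerate}
\item Associativity and unitality of $\underline{\mu}^\bullet$ are precisely the bimodule axioms for $\Omega^1(\uB)$.
\item Coassociativity of $\underline{\Delta}^\bullet$ in degree $1$ splits into three components, lying in $\Omega^1\otimes\uB\otimes\uB$, $\uB\otimes\Omega^1\otimes\uB$ and $\uB\otimes\uB\otimes\Omega^1$. The first and last are the coassociativity of the right and left coactions. The middle one is the statement that the two coactions commute.
\item For multiplicativity of $\underline{\Delta}^\bullet$ we use the braided product on $\uB^\bullet\otimes\uB^\bullet$, including the sign in \eqref{eq:grad-braid}; the sign is trivial here because one of the two degrees is always $0$. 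We need $\underline{\Delta}^\bullet(b\omega)=\underline{\Delta}(b)\underline{\Delta}^\bullet(\omega)$, and similarly for $\omega b$. Projecting onto $\uB\otimes\Omega^1$ gives the left Hopf-module condition for ${}_{\Omega^1}\uDelta$, and projecting onto $\Omega^1\otimes\uB$ gives the right Hopf-module condition. Both hold because $\Omega^1(\uB)$ is a tetra-module.
\item The differential $\ud\colon\uB\to\Omega^1(\uB)$ must be compatible with the structure maps. For the product this is the Leibniz rule. For the coproduct the condition reads $\underline{\Delta}^\bullet(\ud b)=\ud b^1\otimes b^2+b^1\otimes\ud b^2$, which is equivalent to bicolinearity of $\ud$; indeed, \eqref{eq:left_coaction_braided_calculus} and \eqref{eq:right_coaction_braided_calculus} evaluated on $1\cdot\ud b$ give exactly this.
\item The counit axioms in degree $1$ read $(\underline{\varepsilon}\otimes\mathrm{id})\,{}_{\Omega^1}\uDelta=\mathrm{id}$ and the analogous right-hand identity; these are the counitality of the coactions.
\end{enumerate}

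The main obstacle is the antipode. We must check $\underline{\mu}^\bullet(\underline{S}^\bullet\otimes\mathrm{id})\underline{\Delta}^\bullet=\underline{\eta}\,\underline{\varepsilon}^\bullet$ on $\omega$, i.e.
\[
\underline{S}(\omega^{-1})\cdot\omega^0+\underline{S}^\bullet(\omega^0)\cdot\omega^1=0 .
\]
Using the iterated coactions, the second term becomes $-\underline{S}(\omega^{-1})\cdot\omega^0\cdot\underline{S}(\omega^1)\omega^2$. By the antipode axiom applied to $\omega^1\otimes\omega^2$ this collapses to $-\underline{S}(\omega^{-1})\cdot\omega^0$, which cancels the first term. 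The only delicate point is confirming that no braiding appears when the left and right coactions are combined, because they commute as morphisms. The opposite antipode identity is handled symmetrically. We must also show that $\underline{S}^\bullet$ commutes with $\mathrm{d}$, namely $\underline{S}^\bullet(\ud b)=\ud\,\underline{S}(b)$. We verify this by applying the degree-$1$ formula to $\ud b$, using colinearity of $\ud$ and the Leibniz rule, and reducing via $\underline{S}(b^1)b^2=\underline{\varepsilon}(b)$. Alternatively, this follows from the general fact that the convolution inverse of the identity is unique, so it is compatible with any structure-preserving differential.

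\emph{($\Leftarrow$)} Assume $\uB^\bullet$ is a braided Hopf algebra in $\underline{\mathcal{M}}^{\bullet\leq1}$. We take the components of $\underline{\Delta}^\bullet|_{\Omega^1}$ in $\uB\otimes\Omega^1$ and in $\Omega^1\otimes\uB$ as the left and right coactions. Since $\underline{\Delta}^\bullet$ is a degree-zero morphism of $\underline{\mathcal{M}}$, these are morphisms of $\underline{\mathcal{M}}$. Reading the computations of steps 2, 3 and 5 backwards, coassociativity and counitality of $\underline{\Delta}^\bullet$ make these components coassociative, counital and mutually commuting coactions, and multiplicativity of $\underline{\Delta}^\bullet$ makes them bilinear. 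As in step 4, compatibility of $\underline{\Delta}^\bullet$ with $\mathrm{d}$ gives bicolinearity of $\ud$. This is exactly braided bicovariance; equivalently, the coactions \eqref{eq:left_coaction_braided_calculus} and \eqref{eq:right_coaction_braided_calculus} are well defined, as required by Corollary \ref{cor:liftofcoproductcovariance}.
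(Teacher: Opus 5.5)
Your proposal is correct and follows the same route as the paper. The paper only sketches the argument, deferring to \cite[Lemma 5.8 \& Corollary 6.3]{SchauenburgDG} and recording the key equivalences (the Leibniz rule iff $\underline{\mu}^\bullet$ is a chain map, bicolinearity of $\ud$ iff $\underline{\Delta}^\bullet$ is a chain map, and $\underline{S}^\bullet\circ\ud=\ud\circ\underline{S}$); your degree-by-degree verification, including the antipode computation, supplies the checks it leaves out. The one thing you assert without proof is that the left and right coactions commute, which you need for the middle component in step 2. It follows from the explicit formulas of Corollary \ref{cor:liftofcoproductcovariance} together with coassociativity of $\underline{\Delta}$, as in the paper's proposition on the universal calculus.
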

\begin{proof}
    In one direction, given $(\Omega^1(\uB),\ud) \in {}_{\underline{\mathcal{M}}}^{\uB}\mathrm{Diff}(\uB)^{\uB}$, to show that $\uB^\bullet$ is a graded Hopf algebra, one can follow the arguments used to prove the same statement in the category of vector spaces (\cite[Lemma 5.8 \& Corollary 6.3]{SchauenburgDG}). Clearly, the graded Hopf algebra structure maps are morphisms in $\underline{\mathcal{M}}$, since they are written as compositions of morphisms in $\underline{\mathcal{M}}$.
    In addition, $\underline{\mu}^\bullet$ being a morphism in $\underline{\mathcal{M}}^{\bullet\leq 1}$ is equivalent to $\ud$ being a derivation, and 
    braided bicovariance of $(\Omega^1(\uB),\ud)$ is equivalent to $\underline{\Delta}^\bullet$ being a morphism in $\underline{\mathcal{M}}^{\bullet\leq 1}$. Finally, one quickly verifies that the equality $\underline{S}^1 \circ \ud = \ud \circ \underline{S}$  holds, and thus $\underline{S}^\bullet$ is a morphism in $\underline{\mathcal{M}}^{\bullet \leq 1}$ as well. 
    
    Conversely, given a Hopf algebra in $\underline{\mathcal{M}}^{\bullet\leq 1}$, its degree-$1$ component defines a braided bicovariant FODC over its degree-$0$ component.
\end{proof}

Notice that for higher order differential calculi the above correspondence no longer holds: in general, a differential graded Hopf algebra gives rise to a braided bicovariant calculus, but not every braided bicovariant calculus arises in this way (see for example \cite{delDLW} for the analogous result in the category of vector spaces).

\subsection{A classification theorem for braided differential calculi on Hopf algebras}
\label{section:classification_theorem}

It is well-known that, for the case in which $H$ is a Hopf algebra in ${}_{\Bbbk}\mathrm{Vec}$, there is an isomorphism of FODCi between the universal first order differential calculus on $H$ and $H \otimes H^+$, where $H^{+}$ denotes the kernel of the counit of $H$ (see, e.g., \cite{Woronowicz1989}), and $H\otimes H^{+}$ is equipped with suitable $H$-bimodule structure and differential. We now show that the same isomorphism holds for the universal first order differential calculus over a Hopf algebra $(\uB,\underline{\mu},\underline{\eta},\underline{\Delta},\underline{\varepsilon},\underline{S})$ in an abelian braided monoidal category $\underline{\mathcal{M}}$. More precisely, the first order differential calculus $(\uB \otimes \uB^+,\ud)$, where $\uB^+ = \ker\underline{\varepsilon}$ and $\ud(b) = b^1 \otimes b^2 - b \otimes 1$, is isomorphic to the universal FODC introduced in Proposition \ref{prop:braided_universal_is_initial} by the following assignment:

\begin{equation}
\label{eq:universal_is_iso_to_bb+}
\begin{aligned}
\chi\colon \Omega^{1}_{u}(\uB) & \stackrel{\cong}{\to} \uB\otimes \uB^{+}, \\ 
     b\otimes c &\mapsto b\, c^{1}\otimes c^{2}, \\ 
b\, \underline{S}(c^{1})\otimes c^{2} & \mapsfrom   \, b\otimes c.
\end{aligned}
\end{equation}

\begin{proposition}
\label{prop:universal_iso_bb+}
    The differential calculus $(\uB\otimes \uB^{+},\ud )$ over a braided Hopf algebra $\uB\in\underline{\mathcal{M}}$ is  braided bicovariant, with:
    \begin{enumerate}
        \item left $\uB$-action given by the left $\uB$ multiplication $a\triangleright(b\otimes c):= ab\otimes c$; 
        \item right $\uB$-action given by the braided diagonal action $(a\otimes b)\triangleleft c  := a\, \sigma(b\otimes c^{1})\, c^{2}$; 
        \item left $\uB$-coaction  ${}_{\uB\otimes \uB^{+}}\uDelta(a\otimes b) :=a^{1}\otimes a^{2}\otimes b$; 
        \item right $\uB$-coaction $\uDelta_{\uB\otimes \uB^{+}}(a\otimes b) :=a^{1}\otimes {}_{\alpha}(b^{2})\otimes {}^{\alpha}(a^{2}\, \underline{S}(b^{1}))b^{3}$.
    \end{enumerate}
 Moreover, the map $\chi$ in Equation \ref{eq:universal_is_iso_to_bb+} is an isomorphism in ${}^{\uB}_{\underline{\mathcal{M}}}\mathrm{Diff}(\uB)^{\uB}$.
\begin{proof}
    We discuss the proof of the above statements in order.  
\begin{enumerate}
        \item   Left $\uB$-multiplication clearly gives   rise to a well-defined left $\uB$-action on $\uB\otimes \uB^{+}$ with respect to which $\chi$ is left $\uB$-linear.
         \item The right $\uB$-action on $\uB\otimes \uB^{+}$ is induced via the isomorphism in Equation \eqref{eq:universal_is_iso_to_bb+} as $(a\otimes b) \triangleleft c := a \, \sigma(b\otimes c^{1})\, c^{2}$, which is in fact an action
          \begin{align*}
         ((a\otimes b)\triangleleft c)\triangleleft d & = (a\, \sigma(b\otimes c^{1})\, c^{2})\triangleleft d \\ 
        & = (a\, {}_{\beta}c^{1} \otimes {}^{\beta}b \, c^{2})\triangleleft d  \\ 
        & = a \, {}_{\beta}c^{1} \, {}_{\alpha}d^{1} \otimes {}^{\alpha}({}^{\beta}b \, c^{2}) d^{2}\\
        & = a \, {}_\beta c^{1} {}_{\alpha\gamma}d^{1} \otimes {}^{\alpha\beta}b \, {}_{\gamma} c^{2} \\ 
        & = a \, {}_{\beta}(c^{1}\,{}_{\alpha}d^{1})\otimes {}^{\beta}b\, {}^{\alpha}c^{2}\, d^{2} \\
        & = a \, \sigma (b\otimes c^{1}\, {}_{\alpha}d^{1})({}^{\alpha}c^{2}d^{2}) \\ 
        & = a\, \sigma(b\otimes (cd)^{1})\, (cd)^{2} \\ 
         & = (a\otimes b)\triangleleft (cd) 
    \end{align*}
    with respect to which $\chi$ is right $\uB$-linear, as 
    \begin{equation*}
    \chi(a\otimes bc) =a(bc)^{1}\otimes (bc)^{2}= ab^{1}\sigma(b^{2}\otimes c^{1})c^{2}= (ab^{1}\otimes b^{2})\triangleleft c = \chi(a\otimes b)\triangleleft c.
    \end{equation*}
    \item The map
    \begin{equation*}
    \begin{aligned}
    {}_{\uB\otimes \uB^{+}}\uDelta\colon \uB\otimes \uB^{+} & \to \uB\otimes(\uB\otimes \uB^{+}),\\
    a\otimes b & \mapsto a^{1}\otimes a^{2}\otimes b\,
    \end{aligned}
    \end{equation*}
     clearly gives a well defined left $\uB$-coaction on $\uB\otimes \uB^{+}$, and it is a routine check that $\chi$ is left $\uB$-colinear with respect to the latter.  
    
     \item Finally, consider the map 
     
    \begin{equation*}
    \begin{aligned}
    \uDelta_{\uB\otimes \uB^{+}}\colon \uB\otimes \uB^{+} & \to (\uB\otimes \uB^{+})\otimes \uB,\\
    a\otimes b & \mapsto a^{1}\otimes {}_{\alpha}(b^{1})\otimes {}^{\alpha}(a^{2}\, S(b^{1}))b^{3}.
    \end{aligned}
    \end{equation*}
    
    We show that the one of the assignments in Equation \eqref{eq:universal_is_iso_to_bb+} is right $\uB$-colinear, in particular $\uDelta_{\uB\otimes \uB^{+}}\circ \chi = (\chi\otimes \mathrm{id})\circ \uDelta_{\Omega^{1}_{u}(\uB)}$. As $\sigma\colon \uB\otimes\uB \to \uB\otimes\uB$ is a natural isomorphism between the functors $\otimes\colon (\uB\otimes-) \to \uB$ and $\otimes^{\mathrm{op}}\colon (-\otimes\uB) \to \uB$, the following diagram 
    \begin{equation*}
    \begin{tikzcd}
    \uB\otimes \uB \arrow[rr, "\sigma"] \arrow[dd, "\underline{\Delta}\otimes \mathrm{id}"'] &  & \uB\otimes \uB \arrow[dd, "\mathrm{id}\otimes \underline{\Delta}"] \\
                                                                         &  &                                                    \\
    (\uB\otimes \uB)\otimes \uB \arrow[rr, "\sigma"]                               &  & \uB\otimes (\uB\otimes \uB)                             
    \end{tikzcd}\, 
    \end{equation*}
    commutes, that is 
    \begin{equation*}
    \sigma\circ (\underline{\Delta}\otimes \mathrm{id})(a\otimes b)=(\mathrm{id}\otimes \underline{\Delta})\circ \sigma (a\otimes b) \quad \Rightarrow \quad ({}_{\alpha}b)^{1}\otimes ({}_{\alpha}b)^{2}\otimes {}^{\alpha}a ={}_{\alpha}b^{1}\otimes {}_{\beta}b^{2}\otimes {}^{\beta\alpha}a.
    \end{equation*}
    
   \noindent Therefore we have 
    \begin{align*}
    \uDelta_{\uB\otimes \uB^{+}}\circ \chi(a\otimes b) & = \uDelta_{\uB\otimes \uB^{+}}(ab^{1}\otimes b^{2}) \\ 
    & = (ab^{1})^{1}\otimes {}_{\alpha}(b^{22})\otimes {}^{\alpha}((ab^{1})^{2}S(b^{21}))b^{23} = \\
    & = a^{1} \, {}_{\beta}b^{11}\otimes {}_{\alpha}b^{22}\otimes {}^{\alpha}({}^{\beta}a^{2}b^{12}S(b^{21}))b^{23} \\
    & = a^{1}\, {}_{\beta}b^{1}\otimes {}_{\alpha}b^{2}\otimes {}^{\alpha\beta}(a^{2})b^{3} \\ 
    & = a^{1}({}_{\alpha}b^{1})^{1}\otimes ({}_{\alpha}b^{1})^{2}\otimes {}^{\alpha}(a^{2})b^{2} \\ 
    & = (\chi\otimes\mathrm{id})\left((a^{1}\otimes {}_{\alpha}b^{1})\otimes {}^{\alpha}(a^{2})b^{2}\right)\\ 
    & = (\chi\otimes \mathrm{id})\circ \uDelta_{\Omega^{1}_{u}(\uB)}(a\otimes b).
    \end{align*}
    That $\uDelta_{\uB\otimes \uB^{+}}$ is a right $\uB$-coaction on $\uB\otimes \uB^{+}$ now follows from the fact that $\uDelta_{\Omega^{1}_{u}(\uB)}$ is a coaction on $\Omega^{1}_{u}(\uB)$ and $\chi\colon \Omega^{1}_{u}(\uB)\to \uB\otimes \uB^{+}$ is an isomorphism in $\underline{\mathcal{M}}$.  
\end{enumerate}
We deduce that $\chi$ is a morphism in ${}^{\uB}_{\underline{\mathcal{M}}}\mathrm{Diff}(\uB)^{\uB}$ with inverse $\chi^{-1}$.
\end{proof}
\end{proposition}

Recall that, given a braided left covariant FODC, its subspace of (left) $\uB$-coinvariant elements is 
\begin{equation}\label{eq:left_coinvariants_FOBDC}
 {}^{\mathrm{co}\uB}\Omega^{1}(\uB):=\left\{\gamma\in\Omega^{1}(\uB)\mid {}_{\Omega^{1}(\uB)}\uDelta(\gamma)=1\otimes\gamma\right\}.
\end{equation}
\noindent In the next proposition we discuss some properties of the \emph{quantum Maurer--Cartan} form $\varpi\colon\ker\underline{\varepsilon} \to {}^{\mathrm{co}\uB}\Omega^{1}(\uB)$, which plays a fundamental role in our next classification theorem. As we will see, just as in ${}_{\Bbbk}\mathrm{Vec}$, covariant calculi on a braided Hopf algebra in $\underline{\mathcal{M}}$ are classified by $\ker\underline{\varpi}$.
\begin{proposition}
    Let $\uB$ be a Hopf algebra in $\underline{\mathcal{M}}$, and let $(\Omega^{1}(\uB),\ud )$ in ${}^{\uB}_{\underline{\mathcal{M}}}\mathrm{Diff}(\uB)$. Let $\uB^{+}:=\ker\underline{\varepsilon}$. The quantum Maurer--Cartan form 
    \begin{equation}
    \begin{aligned}
\underline{\varpi}\colon \uB^{+}&\to {}^{\mathrm{co}\uB}\Omega^{1}(\uB) \\ 
 b&\mapsto \underline{S}(b^{1})\ud b^{2}
    \end{aligned}  \label{eq:quantumMCformingeneral}
    \end{equation}
    is a surjective, right $\uB$-linear morphism in $\underline{\mathcal{M}}$. 
\begin{proof}
First of all, $\underline{\varpi}$ is a well-defined morphism in $\underline{\mathcal{M}}$, as it is defined as a composition of morphisms in $\underline{\mathcal{M}}$, and moreover 
    \begin{align*}
    {}_{\Omega^{1}(\uB)}\uDelta(\underline{S}(b^{1})\ud b^{2}) & = (\underline{S}(b^{1}))^{1} \, \sigma((\underline{S}(b^{1}))^{2}\otimes (b^{2})^{1})\, \ud ((b^{2})^{2}) \\ 
    & = {}_{\alpha}\underline{S}(b^{2})\, \sigma({}^{\alpha}\underline{S}(b^{1})\otimes b^{3}) \, \ud b^{4} \\ 
    & = {}_{\alpha}\underline{S}(b^{2}) \, {}_{\beta}b^{3} \otimes {}^{\beta\alpha}\underline{S}(b^{1})\ud b^{4} \\ 
    & = {}_{\alpha}(\underline{S}(b^{2})b^{3})\otimes {}^{\alpha}\underline{S}(b^{1})\ud b^{4} \\ 
    & = {}_{\alpha}1\otimes {}^{\alpha}\underline{S}(b^{1})\ud b^{2}\\ 
    & =\sigma(\underline{S}(b^{1})\otimes 1)\ud b^{2}\\ 
    & = 1\otimes \underline{\varpi}(b),
    \end{align*}
    for any $b\in\uB^{+}$. Now, let $a\,\ud b\in {}^{\mathrm{co}\uB}\Omega^{1}(\uB)$, that is 
    \[
    {}_{\Omega^{1}(\uB)}\uDelta(a\, \ud b):=a^{1}\, {}_{\alpha}b^{1}\otimes {}^{\alpha}a^{2}\, \ud b^{2}= 1\otimes a\, \ud b.
    \]
    Applying $\underline{\mu}\circ \left(\underline{S}\otimes \mathrm{id}\right)$ to both sides of the last identity we find 
    \begin{align*}
    a\,\ud b & = \underline{S}(a^{1}\, {}_{\alpha}b^{1})\, {}^{\alpha}a^{2}\,\ud b^{2} \\ 
    & = \underline{S}({}_{\beta\alpha}b^{1}) \, \underline{S}({}^{\beta}a^{1})\,{}^{\alpha}a^{2}\,\ud b^{2}\\ 
    & = \underline{S}({}_{\beta\alpha}b^{1}) \, {}^{\beta}(\underline{S}(a^{1}))\,{}^{\alpha}a^{2}\,\ud b^{2}\\ 
    & = \underline{S}({}_{\alpha}b^{1}){}^{\alpha}(\underline{S}(a^{1})a^{2})\ud b^{2} \\ 
    & = \underline{\varepsilon}(a)\,\underline{\varpi}(b) \\
    & = \underline{\varpi}(\underbrace{\underline{\varepsilon}(a)b-\underline{\varepsilon}(ab)1}_{\in\, \uB^{+}}),
    \end{align*}
    and thus surjectivity follows. Finally, we prove that $\underline{\varpi}$ is right $\uB$-linear with respect to the right $\uB$-module structure on ${}^{\mathrm{co}\uB}\Omega^{1}(\uB)$ given by the braided right adjoint $\uB$-action $\underline{\mathrm{Ad}}_{\mathrm{R}}(b)(\omega):={}_{\alpha}\underline{S}(b^{1}){}^{\alpha}\, \omega \, b^{2}$. 
    We find 
    \begin{align*}
    \underline{\varpi}(a\triangleleft b) & = \underline{\varpi}(ab) \\ 
    & = \underline{S}((ab)^{1})\ud ((ab)^{2}) \\ 
    & = \underline{S}(a^{1}\, {}_{\alpha}b^{1})\ud ({}^{\alpha}(a^{2})\, b^{2}) \\ 
    & = \underline{S}({}_{\beta\alpha}b^{1})\underline{S}({}^{\beta}a^{1}) \ud ({}^{\alpha}a^{2})b^{2} + \underline{S}({}_{\beta\alpha}b^{1})\underline{S}({}^{\beta}a^{1}){}^{\alpha}a^{2} \ud b^{2} \\ 
    & = \underline{S}({}_{\beta\alpha}b^{1})\underline{S}({}^{\beta}a^{1}) \ud ({}^{\alpha}a^{2})b^{2} + \underline{S}({}_{\gamma}b^{1}){}^{\gamma}(\underline{S}(a^{1})a^{2})  \ud b^{2}\\ 
    & = \underline{S}({}_{\beta\alpha}b^{1})\underline{S}({}^{\beta}a^{1}) \ud ({}^{\alpha}a^{2})b^{2}  \\ 
    & = \underline{S}({}_{\beta\alpha}b^{1})\, {}^{\beta}\underline{S}(a^{1}) \, {}^{\alpha}\ud (a^{2})b^{2} \\ 
    & = \underline{S}({}_{\alpha}b^{1})\, {}^{\alpha}(\underline{S}(a^{1})\ud a^{2})b^{2} \\ 
    & = {}_{\alpha}\underline{S}(b^{1})\, {}^{\alpha}\underline{\varpi}(a) \, b^{2}\\
    & = \underline{\mathrm{Ad}}_{R}(b)(\underline{\varpi}(a)).
    \end{align*} 
    \end{proof}
\end{proposition}
\begin{remark}
    Note that the map $b \mapsto \underline{S}(b^1)\ud b^2$ in Equation \ref{eq:quantumMCformingeneral} is well-defined on the whole braided Hopf algebra $\uB$, and in particular one may consider the corresponding restriction to the kernel of the counit $\uB^+$. In fact, for all $b \in \uB$, we have that
    \[
        \underline{S}(b^1)\ud b^2 = \underline{S}(b^1)\ud b^2 - \underline{\varepsilon}(b)\underline{S}(1)\ud 1 = \underline{\varpi}(b^+),
    \]
    where $b^+=b-\underline{\varepsilon}(b)$ denotes the canonical projection to the kernel of the counit. \qed 
\end{remark}

\noindent We now prove the main theorem of this section, namely the analogue of Woronowicz's classification theorem for covariant calculi over Hopf algebras in the more general context of Hopf algebras in braided monoidal categories. In particular, we show that  FODCi in ${}^{\uB}_{\mathcal{M}}\mathrm{Diff}(\uB)$ over a braided Hopf algebra $\uB\in\underline{\mathcal{M}}$ are in 1-1 correspondence with right ideals in the kernel of the counit of $\uB$, and that  FODCi in ${}^{\uB}_{\mathcal{M}}\mathrm{Diff}(\uB)^{\uB}$ are classified by those ideals that are invariant under 
\begin{equation}
    \begin{aligned}
        \underline{\mathrm{coAd}}_{\mathrm{R}}\colon \uB&\to \uB\otimes \uB, \\ 
        b & \mapsto  {}_{\alpha}(b^{2}) \otimes \underline{S}\left({}^{\alpha}(b^{1})\right)b^{3},
    \end{aligned}
\end{equation}
i.e., the braided adjoint coaction of $\uB$ on itself. 
\begin{theorem}\label{thm:classification_theorem_braided}
Let $\uB$ be a Hopf algebra in $\underline{\mathcal{M}}$. For any right ideal $I\subseteq \uB^{+}:=\ker\varepsilon$ we obtain a braided left covariant FODC $(\Omega^{1}(\uB),\ud )$ on $\uB$  by $\Omega^{1}(\uB):=\uB\otimes (\uB^{+}/I)$ and $\ud _{I}b:=(\mathrm{id}\otimes \pi)\circ (\underline{\Delta}(b)-b\otimes 1)$, where $\pi\colon \uB^{+}\to \uB^{+}/I$ is the quotient map, and where $\Omega^{1}(\uB)$ is a $\uB$-bimodule via 
\begin{equation}
    a\triangleright (b\otimes [c]):= ab\otimes [c], \qquad (a\otimes [b])\triangleleft c:= a\, {}_{\alpha}c^{1} \otimes {}^{\alpha}[b]\,c^{2}.
\end{equation}
The left $\uB$-coaction on $\Omega^{1}(\uB)$ is ${}_{\uB\otimes \uB^{+}/I}\uDelta:=\underline{\Delta}\otimes\mathrm{id}_{\uB^{+}/I}$. If $\underline{\mathrm{coAd}}_{\mathrm{R}}(I)\subseteq I\otimes \uB$ then $\Omega^{1}(\uB)$ is a braided bicovariant FODC, with right action 
\begin{equation} \label{equation:right_coaction_on_covariant_calc}
    \uDelta_{\uB\otimes \uB^{+}/I}(a\otimes [b]):=a^{1}\otimes {}_{\alpha}[b^{2}] \otimes {}^{\alpha}(a^{2} \underline{S}(b^{1}))\, b^{3}. 
\end{equation}
Moreover, every (left) braided covariant FODC over $\uB$ is of this form.
\begin{proof} $\Omega^{1}(\uB)$ is clearly a $\uB$-bimodule with left and right actions defined as above, and the differential is well-defined as a derivation $\ud \colon \uB\to \uB\otimes (\uB^{+}/I)$ in $\underline{\mathcal{M}}$, as it can be easily checked. The surjectivity axiom holds since, given any element $a\otimes [b]\in \Omega^{1}(\uB)$, we have 
\begin{align*}
a\,\underline{\varpi}(b) = a \,  \underline{S}(b^{1})\ud b^{2}  = a \, \underline{S}(b^{1})b^{2}\otimes [b^{3}] - a\,  \underline{S}(b^{1})b^{2}\otimes 1  = a\otimes [b] - a\otimes \underline{\varepsilon}(b)  = a\otimes [b].
\end{align*}
 The map ${}_{\uB\otimes \uB^{+}/I}\uDelta \colon \Omega^{1}(\uB) \to \uB\otimes \Omega^{1}(\uB)$ is clearly a left $\uB$-coaction, and the compatibility with the $\uB$-bimodule structure of $\Omega^{1}(\uB)$ follows from the isomorphism $\chi\colon \Omega^{1}_{u}(\uB)\to \uB\otimes \uB^{+}$ in Equation \eqref{eq:universal_is_iso_to_bb+}, restricting to an isomorphism $\widetilde{\chi}\colon \Omega^{1}_{u}(\uB)\big/\ker(\mathrm{id}\otimes \pi) \to  \uB\otimes \uB^{+}/I$, where $\pi\colon \uB\to\uB^{+}/I$ is the quotient map.  
 It is a straightforward check that the differential is a morphism of left $\uB$-comodules, and therefore the FODC $(\Omega^{1}(\uB),\ud_{I})$ is braided left covariant. Assume in addition that $\underline{\mathrm{coAd}}_{\mathrm{R}}(I)\subseteq I\otimes \uB$. The map $\uDelta_{\uB\otimes \uB^{+}/I}$ in Equation \eqref{equation:right_coaction_on_covariant_calc} is well-defined and compatible with the differential. Moreover, it is a right $\uB$-coaction, since the corresponding map in \eqref{eq:universal_is_iso_to_bb+} is, and the compatibility of $\uDelta_{\uB\otimes\uB^{+}/I}$ with the $\uB$-bimodule structure of $\Omega^{1}(\uB)$ is again induced by same isomorphism. It is a straightforward check that the differential is a morphism of right $\uB$-comodules, and therefore $(\Omega^{1}(\uB),{\ud} _{I})$ is a braided right covariant FODC, and in particular is a braided FODC.

Let now $(\Omega^{1}(\uB),{\ud} )$ be a braided left covariant FODC on a braided Hopf algebra $\uB$ in $\underline{\mathcal{M}}$. Consider $I:=\ker\underline{\varpi}\subseteq \uB^{+}$, which is a right $\uB$-ideal inside $\uB^+$ as $\underline{\varpi}$ is right $\uB$-linear (see Proposition \ref{eq:quantumMCformingeneral}). Note that since $\underline{\varpi}$ is surjective, it follows that  ${}^{\mathrm{co}\uB}\Omega^{1}(\uB)\cong \uB^{+}/I$. We claim that the following map,
\begin{equation}
\begin{aligned}
    \varphi\colon \Omega^{1}(\uB) & \to \uB\otimes (\uB^{+}/I), \\ 
                \gamma & \mapsto \gamma^{-2}\otimes \underline{\varpi}^{-1}\left(S(\gamma^{-1}\right)\gamma^{0}), \\ 
       a\, \underline{\varpi}(b) &\mapsfrom a\otimes [b],
    \end{aligned}
\end{equation}
provides an isomorphism of braided FODCi. First of all, $\varphi$ and $\varphi^{-1}$ are morphisms in $\underline{\mathcal{M}}$, since defined as a composition of morphisms in $\underline{\mathcal{M}}$, and one easily checks $\varphi\circ\varphi^{-1}=\mathrm{id}_{\uB\otimes (\uB^{+}/I)}$ and $\varphi^{-1}\circ\varphi=\mathrm{id}_{\Omega^{1}(\uB)}$ in $\underline{\mathcal{M}}$. Moreover, $\varphi^{-1}$ is manifestly left $\uB$-linear and right $\uB$-linear, and it is also left $\uB$-colinear, since we have 
\begin{align*}
{}_{\Omega^{1}(\uB)}\uDelta\circ \varphi^{-1}(a\otimes [b]) &={}_{\Omega^{1}(\uB)}\uDelta(a\, \underline{\varpi}(b)) \\ 
& = {}_{\Omega^{1}(\uB)}\uDelta(a\, \underline{S}(b^{1}){\ud} b^{2}) \\ 
& = (a\, \underline{S}(b^{1}))^{1}\, {}_{\alpha}(b^{21})\otimes {}^{\alpha}(a\, \underline{S}(b^{1}))^{2}\, {\ud} b^{22} \\ 
& =a^{1}\, {}_{\beta}(\underline{S}(b^{1}))^{1}\, {}_{\alpha}(b^{21})\otimes {}^{\alpha}({}^{\beta}a^{2}(\underline{S}(b^{1}))^{2})\, {\ud} b^{22}\\ 
& = a^{1}\, {}_{\beta}(\underline{S}(b^{2}))\, {}_{\alpha}(b^{3}) \otimes {}^{\alpha\beta}(a^{2}\, \underline{S}(b^{1}))\, {\ud} b^{4}\\ 
& = a^{1}\, {}_{\alpha}(\underline{S}(b^{2})b^{3}) \otimes {}^{\alpha}(a^{2}\underline{S}(b^{1})){\ud} b^{2} \\ 
& = a^{1}\otimes a^{2}\, \underline{S}(b^{1}){\ud} b^{2}\\ 
& = (\mathrm{id}\otimes \varphi^{-1})\circ \uDelta_{\uB\otimes \uB^{+}/I} (a\otimes [b]),
\end{align*}
Finally, the differentials on $(\Omega^{1}(\uB),{\ud} )$ and $(B\otimes (B^{+}/I),{\ud} _{I})$ satisfy $\varphi^{-1}\circ{\ud} _{I} ={\ud} $, and so $\varphi^{-1}$ is a morphism in ${}_{\underline{\mathcal{M}}}\mathrm{Diff}(\uB)$. Since $\varphi$ is the inverse of $\varphi^{-1}$ it is also $\uB$-bilinear and left $\uB$-colinear, and moreover ${\ud}_{I} ={\ud}\circ \varphi$. Therefore $\varphi$ is an isomorphism in ${}^{\uB}_{\underline{\mathcal{M}}}\mathrm{Diff}(\uB)$. If, furthermore, we assume $(\uB\otimes (\uB^{+}/I),{\ud}_{I})$ to be bicovariant, then the ideal $I$ satisfies $\underline{\mathrm{coAd}}_{\mathrm{R}}(I)\subseteq I\otimes \uB$, and the isomorphism $\varphi^{-1}$ is also right $\uB$-colinear, and thus a morphism in ${}^{\uB}_{\underline{\mathcal{M}}}\mathrm{Diff}(\uB)^{\uB}$, since

\begin{align*}
\uDelta_{\Omega^{1}(\uB)}\circ \varphi^{-1}(a\otimes [b]) & = \uDelta_{\Omega^{1}(\uB)}(a\, \underline{\varpi}(b)) \\ 
& = \uDelta_{\Omega^{1}(\uB)} (a \, \underline{S}(b^{1}){\ud}b^{2}) \\ 
& = (a \, \underline{S}(b^{1}))^{1}\, {\ud}({}_{\alpha}b^{21}) \otimes {}^{\alpha}(a\, \underline{S}(b^{1}))^{2} \, b^{22} \\ 
& = a^{1}\, {}_{\beta}\underline{S}(b^{1})^{1} {\ud}({}_{\alpha}b^{21})\otimes {}^{\alpha}({}^{\beta}a^{2}\, \underline{S}(b^{1})^{2})\, b^{22} \\ 
& = a^{1} {}_{\beta\gamma} \underline{S}(b^{12}){\ud}({}_{\alpha}b^{21})\otimes {}^{\alpha}({}^{\beta}a^{2} \, {}^{\gamma}\underline{S}(b^{11}))b^{22} \\ 
& = a^{1}\, {}_{\beta}\underline{S}(b^{2}){\ud}({}_{\alpha}b^{3})\otimes {}^{\alpha\beta}(a^{2}\underline{S}(b^{1}))b^{4}  \\ 
& = a^{1}\, {}_{\alpha}(\underline{S}(b^{2}){\ud}b^{3}) \otimes {}^{\alpha}(a^{2}\underline{S}(b^{1}))b^{4}\\ 
& = a^{1}\, {}_{\alpha}\underline{\varpi}(b^{2})\otimes {}^{\alpha}(a^{2}\underline{S}(b^{1}))b^{3}\\
& = a^{1}\, \underline{\varpi}({}_{\alpha}b^{2})\otimes {}^{\alpha}(a^{2}\underline{S}(b^{1}))b^{3}\\ 
& = (\varphi^{-1}\otimes \mathrm{id})(a^{1}\otimes [{}_{\alpha}b^{2}] \otimes {}^{\alpha}(a^{2}\underline{S}(b^{1}))b^{3})\\ 
& = (\varphi^{-1}\otimes \mathrm{id}) \circ \uDelta_{\uB\otimes \uB^{+}/I}(a\otimes [b]).
\end{align*}
\end{proof}
\end{theorem}

Theorem \ref{thm:classification_theorem_braided} provides a very convenient setup to understand which differential structures on a braided Hopf algebra are covariant or even bicovariant. Essentially the problem is reduced to finding right ideals in the category of choice, eventually checking whether or not those are invariant under the braided adjoint coaction. To showcase the usefulness of this approach, we now show that it is impossible to have non-trivial braided bicovariant differential structures on the braided quantum plane.
\begin{proposition}
    The only braided bicovariant FODCi on $\underline{\mathbb{C}}_q^2$ in ${}_{\mathcal{O}_q(\mathrm{GL}_2)}^{\mathcal{O}_q(\mathrm{GL}_2)}\mathcal{YD}$ are the universal calculus and the zero calculus.
\end{proposition}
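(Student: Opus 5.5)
By Theorem \ref{thm:classification_theorem_braided}, braided bicovariant FODCi on $\underline{\mathbb{C}}_q^2$ correspond bijectively to right ideals $I\subseteq(\underline{\mathbb{C}}_q^2)^+$ which are subobjects in ${}_{\mathcal{O}_q(\mathrm{GL}_2)}^{\mathcal{O}_q(\mathrm{GL}_2)}\mathcal{YD}$ and satisfy $\underline{\mathrm{coAd}}_{\mathrm{R}}(I)\subseteq I\otimes\underline{\mathbb{C}}_q^2$. The universal calculus corresponds to $I=0$ and the zero calculus to $I=(\underline{\mathbb{C}}_q^2)^+$. The plan is therefore to show that any nonzero such $I$ is all of $(\underline{\mathbb{C}}_q^2)^+=\langle x,y\rangle$.

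\emph{Step 1: reduction to homogeneous elements.} First I use the Yetter--Drinfeld module structure. The generators $a,d,D^{-1}$ act on the monomials $x^iy^j$ by scalars which are monomials in $q$. For instance, $D^{-1}$ acts on degree-$n$ polynomials by $q^{3n}$, and $a$ acts on $x^iy^j$ by $q^{-2i-j}$ up to terms with fewer $x$'s, coming from the non-diagonal $c\triangleright x$. Since $q$ is not a root of unity, these eigenvalues separate the total degree $n=i+j$. Therefore $I$, being stable under the action, is a graded subspace: $I=\bigoplus_{n\ge1}I_n$.

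\emph{Step 2: lowering the degree via $\underline{\mathrm{coAd}}_{\mathrm{R}}$.} Next I compute $\underline{\mathrm{coAd}}_{\mathrm{R}}$ on homogeneous polynomials, using the primitive coproduct. On generators, $\underline{\mathrm{coAd}}_{\mathrm{R}}(z)=z\otimes1$ for $z\in\{x,y\}$, because the two terms $1\otimes z$ and $1\otimes\underline{S}(z)$ cancel. For a monomial $x^iy^j$, the braided binomial expansion of $\underline{\Delta}^{(2)}$ in terms of $q$-binomial coefficients (Majid's braided binomial theorem for the quantum plane) gives the $\underline{\mathrm{coAd}}_{\mathrm{R}}$-image as a sum of terms $p_k\otimes r_k$ with $\deg p_k+\deg r_k=n$. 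I expect the component with first leg of degree $n-1$ and second leg of degree $1$ to be
\[
\textstyle\sum(\text{braided commutator coefficients})\,\partial_x p\otimes x+\partial_y p\otimes y ,
\]
where $\partial_x,\partial_y$ are braided partial derivatives. For $p\in I_n$, invariance then forces these degree-$(n-1)$ first legs into $I_{n-1}$.

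Now take $n$ minimal with $I_n\neq0$ and $0\neq p\in I_n$. If $n\ge2$, minimality forces all lower components to vanish, so every braided partial derivative of $p$ vanishes, or at least a nonzero combination of them does. The $q$-integers $[k]_{q^{-2}}$ that appear are nonzero because $q$ is not a root of unity, so this is impossible for $n\ge 2$. Hence $I_1\neq0$.

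\emph{Step 3: degree one.} Finally, $I_1\subseteq\mathrm{span}\{x,y\}$ is a left $\mathcal{O}_q(\mathrm{GL}_2)$-subcomodule. Under the coaction $x\mapsto a\otimes x+b\otimes y$, $y\mapsto c\otimes x+d\otimes y$, the space $\mathrm{span}\{x,y\}$ is a simple comodule: the matrix coefficients $a,b,c,d$ are linearly independent, so any nonzero $\lambda x+\mu y$ generates both $x$ and $y$. Thus $I_1=\mathrm{span}\{x,y\}$. Since $I$ is a right ideal, it follows that $I\supseteq x\,\underline{\mathbb{C}}_q^2+y\,\underline{\mathbb{C}}_q^2=(\underline{\mathbb{C}}_q^2)^+$.

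The main obstacle is Step 2: explicitly extracting the degree-$(n-1)\otimes 1$ component of $\underline{\mathrm{coAd}}_{\mathrm{R}}(x^iy^j)$. This requires the braiding \eqref{eq:braiding_quantum_plane}, including the non-diagonal term in $\sigma(y\otimes x)$, and a careful bookkeeping of the braided binomial coefficients, in order to confirm that the coefficients are nonvanishing $q$-integers. I would first try to do this by using the primitivity of $x,y$ together with the braided Leibniz rule, computing the $\underline{\mathrm{coAd}}_{\mathrm{R}}$ of products inductively in $n$, rather than expanding full coproducts.
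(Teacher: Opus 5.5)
\textbf{The gap is in Step 2, which carries the whole content of the statement.} Your overall architecture is sound, and Steps 1 and 3 are fine. Because $D^{-1}$ acts on $\uB_n$ by $q^{3n}$, you get $I=\bigoplus_n I_n$, and $\uB_1$ is a simple comodule. Step 2 needs the map $T_n\colon\uB_n\to\uB_{n-1}\otimes\uB_1$, sending $p$ to the bidegree-$(n-1,1)$ component of $\underline{\mathrm{coAd}}_{\mathrm{R}}(p)$, to be injective for $n\ge 2$. You only conjecture this, and the reason you give cannot work. From $\underline{\mathrm{coAd}}_{\mathrm{R}}(b)={}_\alpha(b^2)\otimes\underline{S}({}^\alpha(b^1))b^3$ and $\underline{S}=-\mathrm{id}$ in degree one, one finds
\[
T_n(p)=\underline{\Delta}_{(n-1,1)}(p)-\sigma\big(\underline{\Delta}_{(1,n-1)}(p)\big),
\]
where $\underline{\Delta}_{(k,l)}$ denotes the $\uB_k\otimes\uB_l$-component of $\underline{\Delta}$. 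This is not a combination of braided partial derivatives with $q$-integer coefficients. It is a difference of two terms of the same shape, and these can cancel: for $n=1$ they cancel identically, which is why $\underline{\mathrm{coAd}}_{\mathrm{R}}(x)=x\otimes 1$. So the nonvanishing of the $[k]_{q^{-2}}$ is beside the point. What you must show is that $\sigma$ does not fix $\underline{\Delta}_{(1,n-1)}(p)$. For a general $p$ this runs into the off-diagonal term of $\sigma(y\otimes x)$, which is exactly where your plan stops.

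\textbf{Fix: use simplicity in every degree, not only in degree one.} $T_n$ is a morphism of left $\mathcal{O}_q(\mathrm{GL}_2)$-comodules, since it is built from structure morphisms in ${}^H_H\mathcal{YD}$ and projections onto the subobjects $\uB_s\otimes\uB_t$. For $q$ not a root of unity, $\uB_n$ is the simple $(n+1)$-dimensional comodule. Hence $I_n=\uB_n$ in the minimal degree $n$. Since $I$ is a right ideal, it follows that $I=\bigoplus_{m\ge n}\uB_m$, so it suffices to test $x^n$. Using $\sigma(x\otimes x^m)=q^{-2m}x^m\otimes x$ and $\sigma(x^m\otimes x)=q^{-2m}x\otimes x^m$, you get
\[
T_n(x^n)=[n]_{q^{-2}}\big(1-q^{-2(n-1)}\big)\,x^{n-1}\otimes x\neq 0,
\]
which is not in $I_{n-1}\otimes\uB_1=0$ when $n\ge2$.

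With this patch your argument becomes the paper's. The paper reduces to $I=\langle x^n,\dots,y^n\rangle$ and exhibits the $x^{n-1}\otimes x$ term in $\underline{\mathrm{coAd}}_{\mathrm{R}}(x^n)$. Your Steps 1 and 3, together with simplicity, justify that reduction, which the paper only asserts.
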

\begin{proof}
    Building on Example \ref{ex:C_q(2)_braided}, let $\uB:=\underline{\mathbb{C}}_q^2$ and let $H:=\mathcal{O}_{q}(\mathrm{GL}_{2})$. The braided Hopf algebra $\uB$ is  $\mathbb{Z}_{\geq 0}$-graded, with grading given by the degree of the corresponding monomial. As the $H$-action and $H$-coaction on $\uB$ are degree preserving, any non-trivial right $\uB$-ideal $I \subsetneq \uB^+$ in ${}_H^H\mathcal{YD}$ is generated by multiplication from the right (equivalently left) of the following homogeneous set of generators
    \[
        \{x^ay^b \mid a + b = n, \quad a,b \in \mathbb{Z}_{\geq 0}, \ n \in \mathbb{Z}_{\geq 2} \}.
    \]
    Therefore, in view of Theorem \ref{thm:classification_theorem_braided}, for any such $I$ we have a braided left covariant FODC on $\uB$ in ${}^{H}_{H}\mathcal{YD}$. In light of \cite[Example 10.2.2, page 543]{MajidFoundation}, we have a convenient formula to compactly express the coproduct of any combination of elements in $\uB$.  Furthermore, using that 
    
    \[
    \sigma(x^{s}\otimes x^{r})=q^{-2rs}x^{r}\otimes x^{s}, \quad \underline{S}(x^{r})=(-1)^{r}q^{-r(r-1)}x^r, 
    \]
    
  \noindent  we get the following expression for the braided adjoint $\uB$-coaction on the element $x^n$:
  
    \begin{equation}
    \label{eq:braided_adjoint_coaction_quantum_plane}
        \underline{\mathrm{coAd}}_{\mathrm{R}}(x^n) = \sum_{r+s+t = n}(-1)^rq^{-r(r-1)}q^{-2rs}\frac{[n]_{q^{-2}}!}{[r]_{q^{-2}}![s]_{q^{-2}}![t]_{q^{-2}}!}x^{s} \otimes x^{r+t},
    \end{equation}
    where 
    \[
        [k]_{v} := 1+v+v^2+ \dots +v^{k-1}, \quad [k]_v! := [k]_v[k-1]_v \cdots [2]_v
    \]
    for any $k \in \mathbb{Z}_{\geq 0}$. Similar expressions may be obtained for general combinations of $x$ and $y$. Now, consider $I$ as generated by homogeneous elements of degree $n\in \mathbb{Z}_{\geq 2}$, explicitly $I:=\langle x^{n},x^{n-1}y,\dots, xy^{n-1},y^{n}\rangle$. Using Equation \eqref{eq:braided_adjoint_coaction_quantum_plane}, we show that the ideal $I$ is not $\underline{\mathrm{coAd}}_{\mathrm{R}}$-invariant. Namely, the braided adjoint $\uB$-coaction does not close, for instance, on the generator $x^n$.  Setting $s=n-1$ in Equation \ref{eq:braided_adjoint_coaction_quantum_plane} isolates the unique contribution that can produce a tensor of the form $x^{n-1} \otimes x$: 
    \[
        (1-q^{-2}) x^{n-1} \otimes x\notin I\otimes \uB. 
    \]
    Thus, the trivial and the universal FODCi are the only braided bicovariant ones on the braided Hopf algebra $\underline{\mathbb{C}}_q^2$. 
\end{proof}

Notwithstanding the impossibility of having a bicovariant differential structure, plenty of braided left covariant FODCi on the braided quantum plane do exists. In the next example we explicitly describe the one sharing the same dimension with the space of differential 1-forms of the underlying classical manifold. 

\begin{example}
\label{example:2D_calculus_on_the_quantum_plane}
Let $\underline{B}:=\underline{\mathbb{C}}_{q}^{2}\in {}^{H}_{H}\mathcal{YD}$, where $H:=\mathcal{O}_{q}(\mathrm{GL}_{2})$. In accordance with Theorem \ref{thm:classification_theorem_braided}, we construct a braided left covariant FODC $(\Omega^{1}(\uB),\underline{{\ud}})$ on $\uB$. We consider $I$ to be the $\uB$-ideal in $\uB^{+}$ generated, under right multiplication, by the generators $\{x^{2},y^{2},xy\}$. A direct computation on these generators shows that $I$ is a subobject of $\uB^{+}$ in ${}^{H}_{H}\mathcal{YD}$, and hence that the associated first order differential calculus is braided left covariant; as noted in the previous proposition, it fails to be braided bicovariant. The $\uB$-bimodule structure on $\Omega^{1}(\uB)=\mathrm{span}_{\uB}\{\underline{\mathrm{d}}x,\underline{\mathrm{d}}y\}$ is obtained by pulling back the bimodule structure of $\uB\otimes \uB^{+}/I$ along the isomorphism $\chi\colon \Omega^{1}(\uB)\to\uB\otimes \uB^{+}/I$ of Proposition \ref{prop:universal_iso_bb+}. Explicitly, it reads

    \begin{equation}
    \label{eq:bimodule_strucure_2d_calc_C_q(2)}
    x\,{\ud}x = q^{-2}{\ud}x \, x, \qquad y\,{\ud}y = q^{-2}{\ud}y \, y,\qquad y\,{\ud}x = q \,{\ud}x\, y,\qquad {\ud}y\,x = q^{-1}x\,{\ud}y + (q^{-2}-1)y\,{\ud}x.
    \end{equation}

\noindent Furthermore, the left $\uB$-coaction, on the generators, reads 

    \begin{equation}
        {}_{\Omega^{1}(\uB)}\underline{\Delta}(\underline{\mathrm{d}}x)  = 1\otimes \underline{\mathrm{d}}x, \qquad {}_{\Omega^{1}(\uB)}\underline{\Delta}(\underline{\mathrm{d}}y)  = 1\otimes \underline{\mathrm{d}}y.
    \end{equation}

\noindent Finally, the comodule structure of $\Omega^{1}(\uB)$ seen as an object in ${}^{H}_{H}\mathcal{YD}$ is 

    \begin{equation}
    {}_{\Omega^{1}(\uB)}\Delta(\underline{\mathrm{d}}x) =a\otimes \underline{\mathrm{d}}x + b\otimes \underline{\mathrm{d}}y, \qquad    {}_{\Omega^{1}(\uB)}\Delta(\underline{\mathrm{d}}y) =c\otimes \underline{\mathrm{d}}x + d\otimes \underline{\mathrm{d}}y.
    \end{equation}
    
    Notice that the same differential calculus was studied with a different procedure in \cite[Example 2.79]{bm}, where, while the underlying bimodule is seen as an object in ${}^{H}_{H}\mathcal{YD}$, the calculus is not understood to be braided left $\uB$-covariant.
    \qed
\end{example}

\subsection{Braided calculi on transmuted Hopf algebras}
\label{section:examples_of_braided_calculi}
In this Section we discuss some relevant examples of braided FODCi on braided Hopf algebras.  As already outlined in Section \ref{section:braided_hopf_algebras}, given a (co)quasitriangular Hopf algebra, we may consider the corresponding transmutation in the braided monoidal category of its (co)modules. This gives a braided Hopf algebra in the latter category, on which we may study braided FODCi in terms of classifying ideals. With the intent of understanding transmutations at the level of differential calculi on (co)quasitriangular Hopf algebras, we point the reader to Lemma \ref{lem:BbulletDGHA}, where the relation between differential graded bialgebras, graded Hopf algebras and bicovariant FODCi on Hopf algebras is discussed.

It turns out that starting with a bicovariant FODC on a (co)quasitriangular Hopf algebra naturally gives rise, under certain circumstances, to a braided bicovariant FODC on the corresponding transmutation.

\begin{proposition}\label{prop:transmutationcalc}
The following statements hold.
\begin{enumerate}
\item Let $H$ be a quasitriangular Hopf algebra in ${}_{\Bbbk}\mathrm{Vec}$, let $f\in\mathrm{End}_{\mathrm{Hopf}}(H)$ and let $\underline{H}_{f}$ denote the transmutation of $H$ by $f$ in the braided monoidal category ${}_{H}\mathrm{Mod}$. For each bicovariant FODC $(\Omega^{1}(H),\mathrm{d})$ on $H$ such that $(\mathrm{d}\colon H \to \Omega^{1}(H))\in {}_{H}\mathrm{Mod}$, there is a braided bicovariant FODC $(\Omega^{1}(\underline{H}_{f}),\mathrm{d})$ on $\underline{H}_{f}$ in ${}_{H}\mathrm{Mod}$, provided that the condition 
\begin{equation}
\label{eq:auxiliary_condition_for_bicovariance_quasitriangular}
    h_{1}\mathrm{d}(f(S(\mathcal{R}^{2})))\otimes \mathcal{R}^{1}\triangleright h_{2}=0
\end{equation}
holds for any $h\in H$. 
\item Let $H$ be a coquasitriangular Hopf algebra in ${}_{\Bbbk}\mathrm{Vec}$, let $g\in\mathrm{End}_{\mathrm{Hopf}}(H)$ and let $\underline{H}_{g}$ denote the transmutation of $H$ by $g$ in the braided monoidal category $\mathrm{Mod}^{H}$. For each bicovariant FODC $(\Omega^{1}(H),\mathrm{d})$ on $H$ there is braided bicovariant FODC $(\Omega^{1}(\underline{H}_{g}),\mathrm{d})$ on $\underline{H}_{g}$ in $\mathrm{Mod}^{H}$.
\end{enumerate}
\end{proposition}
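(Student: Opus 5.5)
Both parts have the same architecture. We keep the underlying bimodule $\Omega^1(H)$ and differential $\mathrm{d}$ unchanged, regard them as living in the relevant braided category, and apply Corollary~\ref{cor:liftofcoproductcovariance}. Concretely, we must exhibit well-defined braided left and right coactions
\[
b\,\mathrm{d}b'\mapsto b^{\underline 1}\,{}_\alpha(b'^{\underline 1})\otimes{}^\alpha(b^{\underline 2})\,\mathrm{d}b'^{\underline 2},\qquad b\,\mathrm{d}b'\mapsto b^{\underline 1}\,\mathrm{d}({}_\alpha b'^{\underline 1})\otimes{}^\alpha(b^{\underline 2})\,b'^{\underline 2}.
\]
These must be built from the ordinary coactions of the bicovariant calculus together with the transmutation data of Proposition~\ref{prop:transmutations}. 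Equivalently, by Lemma~\ref{lem:gradHopf}, it suffices to show that the transmuted object $\underline{H}^\bullet$ is a Hopf algebra in $\underline{\mathcal M}^{\bullet\le1}$. The strategy is to transmute the ordinary differential graded Hopf algebra $H^\bullet=H\oplus\Omega^1(H)$ of Lemma~\ref{lem:gradHopf} (specialised to ${}_\Bbbk\mathrm{Vec}$) and read off the degree-one part.

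\textbf{Part 2 (coquasitriangular case).} Here $\underline H_g$ has the same coalgebra as $H$ and a deformed product $\underline\mu$. The steps are as follows.

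\begin{enumerate}
\item Extend the adjoint coaction $\mathrm{coAd}_{\mathrm R,g}$ to $\Omega^1(H)$ by
\[
\omega\mapsto\omega_0\otimes g\bigl(S(\omega_{-1})\,\omega_1\bigr).
\]
This uses the two ordinary coactions and is well defined precisely because the calculus is bicovariant. The differential $\mathrm{d}$ is then automatically a morphism in $\mathrm{Mod}^H$, since it is bicolinear.

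\item Define the braided bimodule structure by the same formula as $\underline\mu$, for instance
\[
h\,\underline{\cdot}\,\omega:=h_2\,\omega_0\,\mathcal R\bigl(g(S(h_1)h_3)\otimes g(S(\omega_{-1}))\bigr),
\]
and similarly on the right. One checks that this agrees with $\underline\mu$ whenever $\omega=\mathrm{d}k$, using colinearity of $\mathrm{d}$.

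\item Verify the Leibniz rule for $\mathrm{d}$ with respect to $\underline\mu$. This follows by applying $\mathrm{d}$ to the defining formula of $\underline\mu$, because $\mathcal R$ only sees coefficients and $\mathrm{d}$ commutes with both coactions.

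\item Check surjectivity. Since the correspondence $h\,\mathrm{d}k\leftrightarrow h\,\underline\cdot\,\mathrm{d}k$ is invertible via $\mathcal R^{-1}$, surjectivity is inherited from the original calculus.
\end{enumerate}

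\noindent The braided coactions are then the original ones, because $\Delta$ is unchanged. Their well-definedness follows from the fact that Majid's transmutation is functorial: it is the general transmutation construction applied to the Hopf algebra $H^\bullet$ in $\mathrm{Vec}^{\bullet\le1}$ with coquasitriangular structure pulled back along the projection $H^\bullet\to H\to H_2$. That pullback exists because the projection is a dg Hopf map. Transmutation of a (cutoff) dg Hopf algebra is again a dg Hopf algebra in $(\mathrm{Mod}^H)^{\bullet\le1}$, which is exactly the criterion of Lemma~\ref{lem:gradHopf}.

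\textbf{Part 1 (quasitriangular case).} We proceed dually: the algebra is kept and the coproduct is deformed to
\[
\underline\Delta(h)=h_1\,S(f(\mathcal R^2))\otimes\mathcal R^1\rhd h_2 .
\]
The hypothesis that $\mathrm{d}$ is a morphism in ${}_H\mathrm{Mod}$ makes the adjoint action extend compatibly to $\Omega^1(H)$. The candidate braided coactions are then
\[
\omega\mapsto\omega_{-1}\,S(f(\mathcal R^2))\otimes\mathcal R^1\rhd\omega_0,\qquad \omega\mapsto\omega_0\,S(f(\mathcal R^2))\otimes\mathcal R^1\rhd\omega_1 .
\]
The obstruction now differs from Part 2. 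In the graded picture, the degree-one part of $\underline\Delta^\bullet(\mathrm{d}h)$ computed via the Leibniz rule contains the additional term
\[
h_1\,\mathrm{d}\bigl(S(f(\mathcal R^2))\bigr)\otimes\mathcal R^1\rhd h_2,
\]
because the $\mathcal R$-legs inside $\underline\Delta$ themselves get differentiated. This term is exactly the left side of \eqref{eq:auxiliary_condition_for_bicovariance_quasitriangular}. So the plan is to expand $\underline\Delta^\bullet\circ\mathrm{d}-\mathrm{d}_\otimes\circ\underline\Delta$ and show that it equals this expression. Under the hypothesis it vanishes, and compatibility with $\mathrm{d}$ holds. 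Bimodule compatibility of the coactions should then follow, as in Part 2, from the fact that the ordinary coactions are algebra-module maps together with the quasitriangularity identities.

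\textbf{Main obstacle.} The hardest step is this last verification in Part 1. There one has to track how the $\mathcal R$-matrix legs interact with $\mathrm{d}$ and with the adjoint action, and confirm that \eqref{eq:auxiliary_condition_for_bicovariance_quasitriangular} is precisely the leftover term.
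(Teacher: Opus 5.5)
Your proposal is correct and follows essentially the same route as the paper. You transmute the cutoff graded Hopf algebra $H^\bullet=H\oplus\Omega^1(H)$ along $h\mapsto f(h)\oplus 0$, respectively $h\oplus\omega\mapsto g(h)$, and invoke Lemma~\ref{lem:gradHopf}; the only nontrivial checks are then $\underline\Delta^\bullet\circ\mathrm{d}=\mathrm{d}_\otimes\circ\underline\Delta$ in Part~1, where the leftover term is exactly \eqref{eq:auxiliary_condition_for_bicovariance_quasitriangular}, and colinearity of $\mathrm{d}$ together with the Leibniz rule for the transmuted product in Part~2. One small slip, which does not affect the argument: the extra term $h_1\,\mathrm{d}(f(S(\mathcal R^2)))\otimes\mathcal R^1\rhd h_2$ arises in $\mathrm{d}_\otimes(\underline\Delta(h))$, when the $\mathcal R$-legs are differentiated, not in $\underline\Delta^\bullet(\mathrm{d}h)$, so $\underline\Delta^\bullet\circ\mathrm{d}-\mathrm{d}_\otimes\circ\underline\Delta$ equals minus that term.
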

\begin{proof}
    We discuss the two cases in order.
    \begin{enumerate}
        \item Given any $f\in \mathrm{End}_{\mathrm{Hopf}}(H)$ we naturally have an induced Hopf algebra map 
        \[
        \begin{aligned}
          H & \to H^{\bullet}=H\oplus \Omega^{1}(H),\\
        h& \mapsto f(h)\oplus 0,
        \end{aligned}
        \]
        which we continue to denote by $f$, by abuse of notation. Thus, via transmutation, we have a braided graded Hopf algebra $(\underline{H}_{f}^\bullet=\underline{H}_{f}\oplus \Omega^{1}(\underline{H}_{f}),\wedge)$ in ${}_{H}\mathrm{Mod}$, whose coproduct and antipode, for any $\omega\in \underline{H}_{f}^{\bullet}$, explicitly read 
        \begin{equation}
            \underline{\Delta}^{\bullet}(\omega)= \omega_{1} f(S(\mathcal{R}^{2}))\otimes \mathcal{R}^{1}\triangleright_{f} \omega_{2}, \qquad \underline{S}^{\bullet}(\omega)=f(\mathcal{R}^{2})S^{\bullet}(\mathcal{R}^{1}\triangleright_{f} \omega),
        \end{equation}
        and whose multiplication, unit and counit are the same as for $(H^{\bullet},\wedge,\mathrm{d})$.
    Moreover, the map $\mathrm{d}\colon H\to \Omega^{1}(H)$ is left $H$-linear by assumption, and thus we automatically have a derivation $\mathrm{d}\colon \underline{H}_{f}\to \Omega^{1}(\underline{H}_{f})$ in  ${}_{H}\mathrm{Mod}$. We now show that $(\underline{H}^{\bullet}_{f},\wedge,\mathrm{d})$ is a differential graded bialgebra in ${}_{H}\mathrm{Mod}$: the only non-trivial check to perform regards the commutativity of the diagram 
    \[
    \begin{tikzcd}
\underline{H}_{f} \arrow[rr, "\underline{\Delta}^{\bullet}"] \arrow[dd, "\mathrm{d}"] &  & \underline{H}_{f}\otimes \underline{H}_{f} \arrow[dd, "\mathrm{d}_{\otimes}"]                                   \\
                                                                            &  &                                                                                                                 \\
\Omega^{1}(\underline{H}_{f}) \arrow[rr, "\underline{\Delta}^{\bullet}",swap]    &  & \Omega^{1}(\underline{H}_{f})\otimes \underline{H}_{f} \oplus \underline{H}_{f}\otimes \Omega^{1}(\underline{H}_{f}),
\end{tikzcd}
\]
namely, that $\underline{\Delta}^{\bullet}$ is a morphism in $({}_H\mathrm{Mod})^{\bullet\leq1}$. We find 
\begin{align*}
\underline{\Delta}^{\bullet}\circ \mathrm{d}(h) &  = (\mathrm{d}h)_{1}f(S(\mathcal{R}^{2}))\otimes \mathcal{R}^{1}\triangleright_{f} (\mathrm{d}h)_{2} \\ 
& = \mathrm{d}h_{1}f(S(\mathcal{R}^{2}))\otimes \mathcal{R}^{1}\triangleright_{f} h_{2} + h_{1}f(S(\mathcal{R}^{2}))\otimes \mathcal{R}^{1}\triangleright_{f} \mathrm{d}h_{2} \\   \scriptstyle{(\text{Eq.}\eqref{eq:auxiliary_condition_for_bicovariance_quasitriangular})} &=\mathrm{d}h_{1}f(S(\mathcal{R}^{2}))\otimes \mathcal{R}^{1}\triangleright_{f} h_{2} +  h_{1}\mathrm{d}f(S(\mathcal{R}^{2}))\otimes \mathcal{R}^{1}\triangleright_{f} h_{2} + h_{1}f(S(\mathcal{R}^{2}))\otimes \mathcal{R}^{1}\triangleright_{f} \mathrm{d}h_{2} \\ 
& = \mathrm{d}_{\otimes}\left(h_{1}f(S(\mathcal{R}^{2}))\otimes \mathcal{R}^{1}\triangleright_{f} h_{2}\right) \\ 
& = \mathrm{d}_{\otimes}\circ\underline{\Delta}^{\bullet}(h).
\end{align*}
Thus, $(\underline{H}_{f}^{\bullet},\wedge,\mathrm{d})$ is a differential graded bialgebra in ${}_{H}\mathrm{Mod}$ and consequently $(\Omega^{1}(\underline{H}_{f}),\mathrm{d})$ is a braided bicovariant FODC on $\underline{H}_{f}$ in ${}_{H}\mathrm{Mod}$. 
\item In a similar fashion, given $g\in \mathrm{End}_{\mathrm{Hopf}}(H)$ as above, we naturally have an induced Hopf algebra map 
\[
\begin{aligned}
H^\bullet = H\oplus \Omega^{1}(H) &\to H, \\ 
h\oplus \omega & \mapsto g(h),
\end{aligned}
\]
that again, by abuse of notation, we continue to denote the above by $g$. The corresponding braided graded Hopf algebra $(\underline{H}_{g}^{\bullet} = \underline{H}_g \oplus \Omega^1(\underline{H}_g),\underline{\wedge})  \in \mathrm{Mod}^H$, obtained via transmutation, shares the same coproduct, counit and unit with $H$, whereas the product and antipode read 
\[
\omega\underline{\wedge}\gamma =\omega_{2}\,\gamma_{2}\,\mathcal{R}(g(S(\omega_{1})\omega_{3})\otimes g(S(\gamma_{1}))), \qquad \underline{S}^{\bullet}(\omega) = S^{\bullet}(\omega_{2})\, \mathcal{R}(g(S^{2}(\omega_{3})S(\omega_{1}))\otimes g(\omega_{4}))\,.
\]
Since $(\Omega^{1}(H),\mathrm{d})$ is a bicovariant FODC, the differential $\mathrm{d}\colon H \to \Omega^{1}(H)$ is a morphism in $\mathrm{Mod}^{H}$: indeed, from bicolinearity of $\mathrm{d}$ we have
\begin{align*}
\mathrm{coAd}_{\mathrm{R},g} \circ (k + \mathrm{d}h) & = k_{2}\otimes g(S(k_{1})k_{3}) + (\mathrm{d}h)_{2}\otimes g\left(S^{\bullet}((\mathrm{d}h)_{1}) (\mathrm{d}h)_{3}\right) \\ 
& = k_{2}\otimes g(S(k_{1})k_{3}) + \mathrm{d}h_{2}\otimes g(S(h_{1})h_{3})\\
& = \mathrm{coAd}_{\mathrm{R},g}(k) + (\mathrm{d} \otimes \mathrm{id}) \circ \mathrm{coAd}_{\mathrm{R},g}(h)\,,
\end{align*}
where we used that

\[
(\mathrm{id}\otimes \Delta^{\bullet})\circ \Delta^{\bullet}(\mathrm{d}h) = \mathrm{d}h_{1}\otimes h_{2} \otimes h_{3} + h_{1}\otimes \mathrm{d}h_{2}\otimes h_{3} + h_{1}\otimes h_{2}\otimes \mathrm{d}h_{3},
\]

\noindent alongside the definition of $g\colon {H}^{\bullet}\to H$. 
Moreover, $\mathrm{d}\colon\underline{H}_{g}\to \Omega^{1}(\underline{H}_{g})$ is also a derivation, giving the structure of a differential graded bialgebra to $(\underline{H}_g^\bullet,\underline{\wedge})$: the only check to perform is that $\underline{\wedge}$ is a morphism in $(\mathrm{Mod}^H)^{\bullet \leq 1}$ and for this it is enough to show the commutativity of the following diagram 
\[
\begin{tikzcd}
\underline{H}_{f} \arrow[dd, "\mathrm{d}"'] &  & \underline{H}_{f}\otimes \underline{H}_{f} \arrow[dd, "\mathrm{d}_{\otimes}"] \arrow[ll, "\underline{\wedge}"']                                   \\
                                            &  &                                                                                                                                                  \\
\Omega^{1}(\underline{H}_{f})               &  & \Omega^{1}(\underline{H}_{f})\otimes \underline{H}_{f} + \underline{H}_{f}\otimes \Omega^{1}(\underline{H}_{f}), \arrow[ll, "\underline{\wedge}"]
\end{tikzcd}
\]
which automatically follows by the property of $\mathrm{d}\colon H\to \Omega^{1}(H)$ being a derivation of $H$, indeed 
\begin{align*}
    \mathrm{d}(h\underline{\wedge}k) & = \mathrm{d}(h_{2}k_{2})\, \mathcal{R}(g(S(h_1)h_3) \otimes g(S(k_1))) \\ 
    & = \left(\mathrm{d}(h_{2})k_{2}+ h_{2}\mathrm{d}k_{2}\right)\, \mathcal{R}(g(S(h_1)h_3) \otimes g(S(k_1)))\\ 
    & = (\mathrm{d}h)_{2}k_{2}\mathcal{R}(g(S((\mathrm{d}h)_1)(dh)_3) \otimes g(S(k_1))) + h_{2}(\mathrm{d}k)_{2}\mathcal{R}(g(S(h_1)h_3) \otimes g(S((\mathrm{d}k)_1))) \\ 
    & = \mathrm{d}h\underline{\wedge}k + h\underline{\wedge}\mathrm{d}k.
\end{align*}
\end{enumerate}
\end{proof}

\begin{remark}
There is a striking asymmetry in Proposition \ref{prop:transmutationcalc}: transmuting a bicovariant FODC via a quasitriangular structure requires linearity of the differential, as well as condition \eqref{eq:auxiliary_condition_for_bicovariance_quasitriangular} to hold, while the transmutation of a bicovariant FODC via a \emph{co}quasitriangular structures can be performed without additional assumptions. This is of course expected since, firstly, the differential is colinear but not linear by assumption and, secondly, the transmutation maps are concentrated in degree zero, thus ensuring the differentiability of the transmuted wedge product but not of the transmuted coproduct in general. There is a notion of first order \emph{co}differential calculus $(\mathcal{W},\delta)$ on a coalgebra $C$, where $\mathcal{W}$ is a $C$-bicomodule and $\delta\colon\mathcal{W}\to C$ is a coderivation, satisfying an injectivity condition \cite{Julius}, and if $C$ is a Hopf algebra one is able to define bicovariant first order codifferential calculi \cite{Borowiec}. We expect that an analog of Proposition \ref{prop:transmutationcalc} can be proven for bicovariant first order codifferential calculi, with additional conditions now arising for coquasitriangular structures, but not for quasitriangular structures. It would be interesting to combine the notions of differential and codifferential calculi, to study their compatibility, (bi)covariance and to perform simultaneous transmutation.
\end{remark}

\begin{example}
\label{ex:sweedler_bicovariant_transmutation}
Consider the Sweedler Hopf algebra $E_{1}$ as in Example \ref{ex:sweedler_transmutation}, together with the FODC $(\Omega^{1}(E_{1}),\mathrm{d})$ on $E_{1}$ given by the $E_{1}$-bimodule span of the generator $\{\mathrm{d}x\}$ with relations 
\[
x\,\mathrm{d}x=-\mathrm{d}x\,x,\quad g\,\mathrm{d}x=-\mathrm{d}x\,g.
\]
The latter is an $E_{1}$-bicovariant FODC (see also \cite{paolo}), as the coproduct $\Delta\colon E_{1}\to E_{1}\otimes E_{1}$ lifts to well-defined left and right $H$-coactions on $\Omega^{1}(E_{1})$ by the assignment given on the generator $\mathrm{d}x$ as
\begin{equation}\label{eq:coaction_sweedler_calculus_bicovariant}
    \begin{aligned}
        {}_{\Omega^{1}(E_{1})}\Delta\colon \Omega^{1}(E_{1}) & \to E_{1}\otimes \Omega^{1}(E_{1}),  & \Delta_{\Omega^{1}(E_{1})}\colon \Omega^{1}(E_{1}) &\to \Omega^{1}(E_{1})\otimes E_{1}, \\ 
        \mathrm{d}x & \mapsto g\otimes \mathrm{d}x, & \mathrm{d}x &\mapsto \mathrm{d}x\otimes 1,\\ 
    \end{aligned}
\end{equation}
that we linearly extend to $\Omega^{1}(E_{1})$ as 
\[
{}_{\Omega^{1}(E_{1})}\Delta(a\,\mathrm{d}x) = \Delta(a)(\mathrm{id}\otimes\mathrm{d})\Delta(x), \qquad  \Delta_{\Omega^{1}(E_{1})}(a\,\mathrm{d}x) = \Delta(a)(\mathrm{d}\otimes\mathrm{id})\Delta(x),\qquad a\in E_{1}. \\ 
\]
It is an easy exercise to show that $\mathrm{d}\colon E_{1}\to \Omega^{1}(E_{1})$ is $E_{1}$-linear with respect to the left adjoint $E_{1}$-action and that Equation \eqref{eq:auxiliary_condition_for_bicovariance_quasitriangular} automatically holds. Thus, considering the transmutation $\underline{E_{1}}$ as in Example \ref{ex:sweedler_transmutation}, we have a braided bicovariant FODC $(\Omega^{1}(\underline{E_{1}}),\mathrm{d})$ in $\left({}_{E_{1}}\mathrm{Mod}, \otimes,\sigma_{\mathcal{R}_{0}}\right)$, whose coactions on the generator read
\begin{equation}
    \begin{aligned}
        {}_{\Omega^{1}(E_{1})}\Delta\colon \Omega^{1}(E_{1}) & \to E_{1}\otimes \Omega^{1}(E_{1}),  & \Delta_{\Omega^{1}(E_{1})}\colon \Omega^{1}(E_{1}) &\to \Omega^{1}(E_{1})\otimes E_{1}, \\ 
        \mathrm{d}x & \mapsto 1\otimes \mathrm{d}x, & \mathrm{d}x &\mapsto \mathrm{d}x\otimes 1.\\ 
    \end{aligned}
\end{equation}
\qed 
\end{example}

The last proposition is a partial symmetry statement between the notion of bicovariant differential calculi on (co)quasitriangular Hopf algebras and braided differential calculi on the corresponding transmutations, and Example \ref{ex:sweedler_bicovariant_transmutation} provides an explicit realisation of the above construction. However, it turns out that braided bicovariant differential calculi can also be obtained while requiring less on the differential structure for the original (co)quasitriangular Hopf algebra, e.g. considering left/right covariant calculi instead of bicovariant ones. 
On this front, we find the next example, again based on Sweedler's Hopf algebra, to be particularly enlightening.

\begin{example}
\label{ex:sweedler_hopf}
Building again on Example \ref{ex:sweedler_transmutation}, consider the FODC $(\Omega^{1}(E_{1}),\mathrm{d})$ on $E_{1}$ given by the $E_{1}$-bimodule span of generators $\{\mathrm{d}x,\mathrm{d}g\}$ with relations 
\[
x\,\mathrm{d}x=-\mathrm{d}x\,x,\quad g\,\mathrm{d}g=-\mathrm{d}g\,g,\quad \mathrm{d}g\,x=-x\,\mathrm{d}g,\quad \mathrm{d}x\,g=-g\,\mathrm{d}x.
\]
The FODC $(\Omega^{1}(E_{1}),\mathrm{d})$ is left $E_{1}$-covariant FODC, as the coproduct $\Delta\colon E_{1}\to E_{1}\otimes E_{1}$ lifts to a well defined left $H$-coaction on $\Omega^{1}(E_{1})$ by the assignment given on generators as 
\begin{equation}\label{eq:coaction_sweedler_calculus}
    \begin{aligned}
        {}_{\Omega^{1}(E_{1})}\Delta\colon \Omega^{1}(E_{1})& \to E_{1}\otimes \Omega^{1}(E_{1}), \\ 
        \mathrm{d}x & \mapsto g\otimes \mathrm{d}x,\\ 
        \mathrm{d}g & \mapsto g\otimes \mathrm{d}g,
    \end{aligned}
\end{equation}
that we linearly extend to $\Omega^{1}(E_{1})$ as 
\[
{}_{\Omega^{1}(E_{1})}\Delta(a\,\mathrm{d}x + b\,\mathrm{d}g) = \Delta(a)(\mathrm{id}\otimes\mathrm{d})\Delta(x) + \Delta(b)(\mathrm{id}\otimes\mathrm{d})\Delta(g),\quad a,b\in E_{1}. 
\]
On the other hand, it a easy check that the same calculus in not right $E_{1}$-covariant. Indeed, as the kernel of the quantum Maurer--Cartan gives the classifying ideal of $\Omega^{1}(E_{1})$ we find $I=\langle x+xg\rangle=\{x+xg\}$, and $I$ does not close under the right adjoint $E_{1}$-coaction. 

We now show the existence of a braided analogue of the  differential calculus $(\Omega^{1}(E_{1}),\mathrm{d})$ for the transmutation $\underline{E_{1}}$ of $E_{1}$ in $\left({}_{E_{1}}\mathrm{Mod}, \otimes,\sigma_{\mathcal{R}_{0}}\right)$ with respect to the identity map, and in particular we argue that such calculus is braided bicovariant. Proceeding as above, consider the FODC $(\Omega^{1}(\underline{E_{1}}),\ud)\in {}_{E_{1}}\mathrm{Mod}$ on $\underline{E_{1}}$ given by the $\underline{E_{1}}$-bimodule span of generators $\{\ud x,\ud g\}$ with same relations. As it can be readily checked, $(\Omega^{1}(\underline{E_{1}}),\ud )$ is  braided left $\underline{E_{1}}$-covariant, with $\underline{E_{1}}$-coaction to be given with the same assignment as in Equation \eqref{eq:coaction_sweedler_calculus}. Moreover, it is also right $\underline{E_{1}}$-braided, with right $\underline{E_{1}}$-coaction given as 
\begin{equation}\label{eq:r_coaction_sweedler_calculus}
    \begin{aligned}
        \uDelta_{\Omega^{1}(\underline{E_{1}})}\colon \Omega^{1}(\underline{E_{1}})& \to  \Omega^{1}(\underline{E_{1}})\otimes \underline{E_{1}}, \\ 
        \ud x & \mapsto \ud x\otimes 1 ,\\ 
        \ud g & \mapsto \ud g\otimes 1.
    \end{aligned}
\end{equation}
In light of Theorem \ref{thm:classification_theorem_braided}, we prove the latter statement by showing that the classifying ideal $I$ of  $\Omega^{1}(\underline{E_{1}})= \underline{E_{1}}\otimes \left(\underline{E_{1}}^{+}\big/I\right)$ is invariant under the adjoint $\underline{E_{1}}$-coaction and thus it is a braided FODC over $\underline{E_{1}}$. Given that $I:=\ker\underline{\varpi}$, and given that 
\[
\begin{aligned}
    \underline{\varpi}(x)& = \ud x,&\underline{\varpi}(g-1)&= (g-1)\,\ud g, & \underline{\varpi}(xg)&= -\ud x,
\end{aligned}
\]
we have $I=\langle x+xg\rangle =\{x+xg\}$. Finally we find $\underline{\mathrm{coAd}}_{\mathrm{R}}(I)\subseteq I\otimes \underline{E_{1}}$, indeed 
\[
\underline{\mathrm{coAd}}_{\mathrm{R}}(x+xg) = (x+xg)\otimes 1.
\]

The construction outlined above can be generalised to the higher dimensional Sweedler's Hopf algebra
\[
E_{n}:=\langle x_{1},\dots,x_{n},g,1\rangle \big/ 
\big\{x_{i}^{2},\,g^{2}-1,\,x_{i}g+gx_{i}, x_{i}x_{j}+x_{j}x_{i} \big\}.
\]
 Indeed, as discussed in \cite{Bottegoni_Sciandra}, the $\mathcal{R}$-matrix for Sweedler's Hopf algebra $E_{1}$ in Equation \eqref{eq:Rmatrix_Sweedler} is again an $\mathcal{R}$-matrix for $E_{n}$ for the case $\lambda=0$. It is now a routine check that, for the transmuted Hopf algebra $\underline{E_{n}}$ in ${}_{E_{n}}\mathrm{Mod}$, we have the braided Hopf algebra structure
         \[
\begin{aligned}
    x_{i}\triangleright x_{j} & =0, & x_{i}\triangleright g&=2x_{i}g, &  g\triangleright x_{i}& = -x_{i}, & g\triangleright g & = g,\\ 
    \underline{\Delta}(x_{i}) & = x_{i}\otimes 1 +1\otimes x_{i}, & \underline{\Delta}(g)&=g\otimes g, & \underline{S}(x_{i}) & = -x_{i}, & \underline{S}(g)&=g,
\end{aligned}
\]
with braiding reading
\[
    \sigma_{\mathcal{R}_{0}}(x_{i}\otimes x_{j})=-x_{j}\otimes x_{i}, \qquad \sigma_{\mathcal{R}_{0}}(x_{i}\otimes g) = g\otimes x_{i}, \qquad  \sigma_{\mathcal{R}_{0}}(g\otimes g)= g\otimes g.
\]
We consider the first order differential calculus 
$(\Omega^{1}(E_{n}),\mathrm{d} )$ on $E_{n}$ given by the $E_{n}$-bimodule span of generators $\{\mathrm{d} x_{1},\dots,\mathrm{d} x_{n},\mathrm{d} g\}$ with relations 
\[
x_{i}\,\mathrm{d} x_{j}=-\mathrm{d} x_{j}\,x_{i},\quad g\,\mathrm{d} g=-\mathrm{d} g\,g,\quad \mathrm{d} g\,x_{i}=-x_{i}\,\mathrm{d} g,\quad \mathrm{d} x_{i}\,g=-g\,\mathrm{d} x_{i}.
\]
The latter is again left covariant, whereas the corresponding analogue in $_{E_{n}}\mathrm{Mod}$ is braided bicovariant, with the right coaction on the generators being analogous to the one presented in Equation \eqref{eq:r_coaction_sweedler_calculus}. \qed

\end{example}
Generally speaking, the procedure outlined in the last example fails to recover a bicovariant calculus in the braided monoidal category of modules of a quasitriangular Hopf algebra given the initial datum of a left covariant calculus. For this claim we now exhibit an explicit counterexample.
\begin{example}
Let us consider the Sweedler's Hopf algebra 
\[
E_{2}:=\langle x,y,g,1\rangle \big/ 
\big\{x^{2},g^{2}-1,xg+gx,yg+gy,xy+yx \big\},
\]
whose corresponding $R$-matrices are classified, see e.g. \cite{Bottegoni_Sciandra}, as 
\begin{equation}
\label{eq:R_matrix_Sweedler_E2}
\begin{aligned}
\mathcal{R}_{f} & = \mathcal{R}_{0} + \frac{1}{2}\Big(\alpha(x\otimes x -gx\otimes x +x \otimes gx + gx \otimes gx) \\ 
& \quad + \beta (x\otimes y + x\otimes gy - gx \otimes y +gx\otimes gy )  \\ 
& \quad + \gamma(y\otimes x + y \otimes gx -gy\otimes x +gy\otimes gx ) \\ 
& \quad + \delta(y\otimes y - gy\otimes y + y \otimes gy + gy\otimes gy) \\ & \quad  + (\beta\gamma-\alpha\delta) (xy\otimes gxy -gxy \otimes gxy + xy \otimes gy + gxy \otimes xy) \Big),
\end{aligned}
\end{equation}
for $\alpha,\beta,\gamma,\delta\in \Bbbk$, where $\mathcal{R}_{0}$ is as in  Equation \eqref{eq:Rmatrix_Sweedler}. Motivated by the last example we perform the transmutation of $E_{2}$ in $\left({}_{E_{2}}\mathrm{Mod},\otimes,\sigma\right)$ with the choice of $\mathcal{R}$-matrix corresponding, for simplicity, to the case in which the only non vanishing coefficient in Equation \eqref{eq:R_matrix_Sweedler_E2} is $\alpha$. The $E_{2}$-action, transmuted coproduct and antipode read the same as before, with the exeption of the coproduct on the group-like generator which now reads 
\[
\underline{\Delta}(g) = g\otimes g + 2\alpha( x\otimes gx).
\]
Obviously, the braiding is now non-symmetric. Indeed, we have 
\begin{align*}
\sigma(g\otimes g) & = g\otimes g +4\alpha\, xg\otimes xg,
\end{align*}
while the other combinations read as before. Considering the differential calculus on $E_{2}$ defined as in the previous example, we can consider the corresponding transmutation in ${}_{\underline{{E}_{n}}}\mathrm{Mod}$. A calculation now reveals that the kernel of the Maurer--Cartan form is not invariant under the braided right adjoint $\underline{E_{2}}$-coaction. In particular, we find  
\[
\underline{\mathrm{coAd}}_{\mathrm{R}}(y+yg)\in 2\alpha (ygx\otimes xg -2 x g\otimes xy ) + \ker\underline{\varpi}\otimes \underline{E_{2}}. 
\]
Thus, the resulting first order differential calculus is not braided bicovariant, contrarily to what we observed in the previous example for the case of the symmetric $\mathcal{R}$-matrix. \qed
\end{example}

\section{Smash product calculi on the Radford--Majid biproduct}
\label{chapter:smash_calculi}
In this Section we discuss  first order differential calculi on smash product Hopf algebras arising from the Radford--Majid biproduct. As recalled in Section \ref{section:braided_hopf_algebras}, given a Hopf algebra $H \in {}_{\Bbbk}\mathrm{Vec}$ and a braided Hopf algebra $\uB \in {}^{H}_{H}\mathcal{YD}$, the bosonisation endows the smash product $A := \uB\# H$ with a Hopf algebra structure, and a natural question is whether first order calculi on $H$ and $\uB$ induce a compatible one on $A$. This problem was addressed in \cite{PflaumSchauenburg} for smash product algebras and in \cite{SciWeb} for crossed product algebras, leading to the notion of a \emph{smash (crossed) product calculus}, which endows $A$ with a rich differential structure admitting natural analogues of several features familiar from classical differential geometry.
In Section \ref{section:smash_product_calculi} we study first order smash product differential calculi on $A=\uB\# H$ in the setting at hand, establishing their covariance properties and giving explicit formulas for the corresponding coactions.
In Section \ref{subsection:Maurer_Cartan} we examine in detail the coinvariant $1$-forms of a covariant first order smash product calculus, and show that these decompose as a direct sum of the coinvariant $1$-forms of the two constituent calculi on $\uB$ and $H$. We discuss the resulting rich $(\text{co})$module structures, and describe how the Maurer--Cartan form of the calculus splits accordingly.   

\subsection{The smash product calculus}
\label{section:smash_product_calculi}
We recall the definition and bimodule structure of the smash product calculus $\Omega^1_{\#}(B\#H)$ in the following theorem.
\begin{theorem}
Let $H$ be a Hopf algebra and $B$ a left $H$-module algebra. Let $(\Omega^1(B),\mathrm{d}_B )$ a FODC over $B$ in the category of left $H$-modules, and $(\Omega^1(H),\mathrm{d} )$ a left covariant FODC on $H$. The $\Bbbk$-module
    \begin{equation}
    \Omega^1_{\#}(B\#H) := \left(\Omega^1(B) \otimes H\right) \oplus \left(B \otimes \Omega^1(H)\right)
    \label{eq:smashproductcalculusmaybe}
    \end{equation}
    is a $B\#H$-bimodule. The left $B\#H$-action on $\Omega^1_{\#}(B\#H)$ is
    \begin{equation}
        (b\#h)\cdot (\omega_B \otimes h^{\prime} + b^{\prime} \otimes \omega) := b(h_1 \triangleright \omega_B) \otimes h_2h^{\prime} + b(h_1 \triangleright b^{\prime}) \otimes h_2\omega,
        \label{eq:leftBsmashHactiononGammasmash}
    \end{equation}
    and the right $B\#H$-action on $\Omega^1_{\#}(B\#H)$ is
    \begin{equation}
        (\omega_B \otimes h^{\prime} + b^{\prime} \otimes \omega) \cdot (b \# h) := \omega_B (h^{\prime}_1 \triangleright b) \otimes h^{\prime}_2h + b^{\prime}\big(\omega_{-1} \triangleright b\big) \otimes \omega_0h,
        \label{eq:rightBsmashHactiononGammasmash}
    \end{equation}
    for all $b,b^{\prime} \in B$, $h,h^{\prime} \in H$, $\omega_B \in \Omega^1(B)$, $\omega \in \Omega^1(H)$, where $h\mathrel{\scriptstyle \rhd}(b\mathrm{d}_Bb') = (h_1\triangleright b)\mathrm{d}_B(h_2 \triangleright b')$ denotes the left $H$-action on $\Omega^1(B)$. Moreover, $\Omega^1_{\#}(B\#H)$ is a FODC on the smash product algebra $B\#H$, with differential defined by
    \[
        \mathrm{d}_{\#}(b\#h) := \mathrm{d}_B(b) \otimes h + b \otimes \mathrm{d}(h),
    \]
    for all $b \in B, h \in H$. We call $(\Omega^1_{\#}(B\#H),\mathrm{d}_{\#})$ a smash product calculus on $B\#H$.
    \label{thm:thesmashproductcalculus}
\end{theorem}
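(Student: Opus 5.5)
The plan is to check, in order, that the two formulas define a left and a right $B\#H$-action, that the two actions commute, that $\mathrm{d}_{\#}$ satisfies the Leibniz rule, and that the surjectivity axiom holds.

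\textbf{The two actions.} Associativity of the left action \eqref{eq:leftBsmashHactiononGammasmash} on the summand $\Omega^1(B)\otimes H$ reduces to three facts: $H$ acts on $\Omega^1(B)$ by a module-algebra-type rule, $h\rhd(b\,\omega_B)=(h_1\rhd b)(h_2\rhd\omega_B)$, which follows from the definition of the action on $b\,\mathrm{d}_Bb'$; that action is well defined because $\mathrm{d}_B$ is a morphism in ${}_H\mathrm{Mod}$; and $\Delta$ is coassociative. This is exactly the computation showing that the smash product \eqref{eq:smashmultiplication} is associative. On $B\otimes\Omega^1(H)$ the same computation applies, using only the left $H$-module structure of $\Omega^1(H)$.

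For the right action \eqref{eq:rightBsmashHactiononGammasmash}, the first summand is again a smash-type formula. The second summand uses the left $H$-coaction $\omega\mapsto\omega_{-1}\otimes\omega_0$, which exists because $\Omega^1(H)$ is left covariant. Associativity there, $((b'\otimes\omega)\cdot(b\#h))\cdot(c\#k)=(b'\otimes\omega)\cdot((b\#h)(c\#k))$, comes down to the left-covariant bimodule compatibility $(\omega h)_{-1}\otimes(\omega h)_0=\omega_{-1}h_1\otimes\omega_0h_2$. One then needs $\omega_{-1}\rhd\big(b(h_1\rhd c)\big)=(\omega_{-2}\rhd b)(\omega_{-1}h_1\rhd c)$, which holds because $B$ is a module algebra. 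Unitality of both actions is immediate.

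The bimodule compatibility $((b\#h)\cdot\Xi)\cdot(c\#k)=(b\#h)\cdot(\Xi\cdot(c\#k))$ splits along the two summands. On $B\otimes\Omega^1(H)$ it uses the colinearity of the left multiplication of $\Omega^1(H)$, namely $(h\omega)_{-1}\otimes(h\omega)_0=h_1\omega_{-1}\otimes h_2\omega_0$, together with the module-algebra property. I expect this to be the main obstacle, since it is where the coaction-twisted right action has to be reconciled with the left action on $B$. I will handle it by expanding both sides in Sweedler notation and matching terms.

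\textbf{Differential and surjectivity.} For the Leibniz rule, first expand
\[
\mathrm{d}_{\#}\big((b\#h)(c\#k)\big)=\mathrm{d}_{\#}\big(b(h_1\rhd c)\#h_2k\big).
\]
Applying the Leibniz rules of $\mathrm{d}_B$ and $\mathrm{d}$ gives four terms. The term $\mathrm{d}_B(b)(h_1\rhd c)\otimes h_2k$ belongs to $\mathrm{d}_{\#}(b\#h)\cdot(c\#k)$. The term $b\,\mathrm{d}_B(h_1\rhd c)\otimes h_2k=b(h_1\rhd\mathrm{d}_Bc)\otimes h_2k$ belongs to $(b\#h)\cdot\mathrm{d}_{\#}(c\#k)$, using that $\mathrm{d}_B$ is $H$-linear. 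The term $b(h_1\rhd c)\otimes h_2\mathrm{d}k$ also belongs to $(b\#h)\cdot\mathrm{d}_{\#}(c\#k)$. The remaining term $b(h_1\rhd c)\otimes\mathrm{d}(h_2)k$ must be matched against $(b\otimes\mathrm{d}h)\cdot(c\#k)=b\big((\mathrm{d}h)_{-1}\rhd c\big)\otimes(\mathrm{d}h)_0k$. These agree because $\mathrm{d}$ is left colinear, $(\mathrm{d}h)_{-1}\otimes(\mathrm{d}h)_0=h_1\otimes\mathrm{d}h_2$.

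For surjectivity, compute directly:
\[
(b\#1)\,\mathrm{d}_{\#}(1\#h)=b\otimes\mathrm{d}h,\qquad(b\#1)\,\mathrm{d}_{\#}(b'\#1)\,(1\#h)=b\,\mathrm{d}_Bb'\otimes h.
\]
Since $\Omega^1(H)$ is spanned by elements $h\,\mathrm{d}h'$ and $\Omega^1(B)$ by elements $b\,\mathrm{d}_Bb'$, the left $B\#H$-span of $\mathrm{d}_{\#}(B\#H)$ is all of $\Omega^1_{\#}(B\#H)$. Here the right factor $(1\#h)$ is absorbed by writing $\mathrm{d}_{\#}(b')(1\#h)=\mathrm{d}_{\#}(b'\#h)-b'\otimes\mathrm{d}h$.
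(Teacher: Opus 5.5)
Your proposal is correct. The paper does not prove this theorem itself; it defers entirely to \cite{PflaumSchauenburg} and \cite[Theorem 3.14]{aflw}. Your direct check of the bimodule axioms, the Leibniz rule and surjectivity is therefore a self-contained substitute rather than a reconstruction of an argument given in the text.

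What your route adds is transparency about where each hypothesis enters:
\begin{itemize}
\item Left colinearity of $\mathrm{d}$ is needed only to match the cross term $b(h_1\rhd c)\otimes\mathrm{d}(h_2)k$ with $(b\otimes\mathrm{d}h)\cdot(c\#k)$. This is why left covariance of $\Omega^1(H)$ suffices.
\item $H$-linearity of $\mathrm{d}_B$ handles the term $b\,\mathrm{d}_B(h_1\rhd c)\otimes h_2k$.
\item Your surjectivity identity $b\,\mathrm{d}_Bb'\otimes h=(b\#1)\mathrm{d}_\#(b'\#h)-(bb'\#1)\mathrm{d}_\#(1\#h)$ is exactly the one the paper later uses in the proof of Proposition \ref{pro:leftBsmashHcovariance}.
\end{itemize}

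Two minor points:
\begin{itemize}
\item You do not spell out the bimodule compatibility on $\Omega^1(B)\otimes H$. It also uses $h\rhd(\omega_Bc)=(h_1\rhd\omega_B)(h_2\rhd c)$, i.e.\ $H$-linearity of the right $B$-action on $\Omega^1(B)$. This is part of the hypothesis that $\Omega^1(B)$ is a FODC in ${}_H\mathrm{Mod}$, or it can be derived from your formula for the action via the Leibniz rule.
\item The step you flag as the main obstacle is immediate: both sides equal $b(h_1\rhd b')(h_2\omega_{-1}\rhd c)\otimes h_3\omega_0k$.
\end{itemize}
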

\begin{proof}
    See \cite{PflaumSchauenburg} and \cite[Theorem 3.14]{aflw}.
\end{proof}
Given a bicovariant FODC on $H$, the right $H$-coaction on $B\#H$ defined by $b\#h \mapsto b\#h_1 \otimes h_2$ is differentiable.
\begin{corollary}
\label{cor:thesmashproductcalculusisrightHcovariant}
    Let $H$, $B$ and $(\Omega^1(B), \mathrm{d}_B)$ as above, and $(\Omega^1(H), \mathrm{d})$ a bicovariant FODC on $H$. Then the smash product calculus $(\Omega^1_{\#}(B\#H), \mathrm{d}_{\#})$ is right $H$-covariant, with right $H$-coaction
    \begin{equation}
        \begin{split}
            \Delta_{H} \colon \Omega^1_{\#}(B\#H) &\to \Omega^1_{\#}(B\#H) \otimes H,\\
            \omega_B \otimes h + b \otimes \omega &\mapsto \omega_B \otimes h_1 \otimes h_2 + b \otimes \omega_0 \otimes \omega_1.
            \end{split}\label{eq:rightHcoactiononsmashcalctomakeitrightcovariant}
    \end{equation}

\end{corollary}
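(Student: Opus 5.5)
We have to show that the map $\Delta_H$ in \eqref{eq:rightHcoactiononsmashcalctomakeitrightcovariant} is well defined, is a right $H$-coaction, is a morphism of $B\#H$-bimodules into $\Omega^1_{\#}(B\#H)\otimes H$ (with the tensor product bimodule structure given by the coaction $\Delta_{\#}=\mathrm{id}_B\otimes\Delta$ on $B\#H$), and that $\mathrm{d}_{\#}$ is right $H$-colinear.

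\emph{Well-definedness and coaction axioms.} On the first summand $\Omega^1(B)\otimes H$, $\Delta_H$ is just $\mathrm{id}_{\Omega^1(B)}\otimes\Delta$. On the second summand $B\otimes\Omega^1(H)$ it is $\mathrm{id}_B\otimes\Delta_{\Omega^1(H)}$, where $\Delta_{\Omega^1(H)}$ is the right coaction coming from bicovariance. Both are therefore well defined linear maps. Coassociativity and counitality follow summandwise from those of $\Delta$ and of $\Delta_{\Omega^1(H)}$.

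\emph{Colinearity of the differential.} For $b\in B$, $h\in H$ we have
$$\Delta_H(\mathrm{d}_{\#}(b\#h))=\mathrm{d}_Bb\otimes h_1\otimes h_2+b\otimes(\mathrm{d}h)_0\otimes(\mathrm{d}h)_1 .$$
Right colinearity of $\mathrm{d}$ on $H$ gives $(\mathrm{d}h)_0\otimes(\mathrm{d}h)_1=\mathrm{d}h_1\otimes h_2$, so the expression equals $\mathrm{d}_{\#}(b\#h_1)\otimes h_2=(\mathrm{d}_{\#}\otimes\mathrm{id})\Delta_{\#}(b\#h)$.

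\emph{Bimodule compatibility.} This is the main step, and I would check it on each summand with the explicit actions \eqref{eq:leftBsmashHactiononGammasmash} and \eqref{eq:rightBsmashHactiononGammasmash}. For the left action on $b'\otimes\omega$, one needs
$$\Delta_H\big(b(h_1\triangleright b')\otimes h_2\omega\big)=b(h_1\triangleright b')\otimes h_2\omega_0\otimes h_3\omega_1 .$$
This uses $\Delta_{\Omega^1(H)}(h\omega)=h_1\omega_0\otimes h_2\omega_1$, and the right-hand side is exactly $\Delta_{\#}(b\#h)\cdot\Delta_H(b'\otimes\omega)$. On the first summand the left-action check is the same, using multiplicativity of $\Delta$.

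The right action on $b'\otimes\omega$ is the delicate case, because it involves the left coaction: $b'(\omega_{-1}\triangleright b)\otimes\omega_0h$. Applying $\Delta_H$ produces $b'(\omega_{-1}\triangleright b)\otimes(\omega_0)_0h_1\otimes(\omega_0)_1h_2$. The target is $(b'\otimes\omega_0)\cdot(b\#h_1)\otimes\omega_1h_2$, which expands to $b'((\omega_0)_{-1}\triangleright b)\otimes(\omega_0)_0h_1\otimes\omega_1h_2$. The two expressions agree precisely because the left and right coactions on $\Omega^1(H)$ commute, i.e.\ $\Omega^1(H)$ is a bicomodule. This is exactly where bicovariance (and not merely right covariance) is needed.

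For the right action on the first summand, $\omega_B(h'_1\triangleright b)\otimes h'_2h$, one immediately obtains $\omega_B(h'_1\triangleright b)\otimes h'_2h_1\otimes h'_3h_2$, which matches. This completes the argument.
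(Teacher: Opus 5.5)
Your proof is correct and complete. The summandwise definition of $\Delta_H$ is sound. Colinearity of $\mathrm{d}_{\#}$ follows from $(\mathrm{d}h)_0\otimes(\mathrm{d}h)_1=\mathrm{d}h_1\otimes h_2$. The four bimodule checks all go through, and the right action on $B\otimes\Omega^1(H)$ is the only place where the two coactions on $\Omega^1(H)$ have to interact.

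The paper gives no argument of its own here. It simply refers to \cite[Corollary 3.15]{aflw} and \cite[Theorem 3.7]{SciWeb}, the latter working in the more general setting of crossed product algebras, where the smash product is the case of trivial cocycle. Your route is therefore a self-contained replacement for that citation. It costs only a few lines of Sweedler calculus and shows exactly where each hypothesis enters:
\begin{itemize}
\item multiplicativity of $\Delta$ on the summand $\Omega^1(B)\otimes H$;
\item right covariance of $\Omega^1(H)$ for the left action and for $\mathrm{d}_{\#}$;
\item commutation of the left and right coactions for the right action on $B\otimes\Omega^1(H)$.
\end{itemize}
The cited approach instead buys generality (cocycle-twisted products), but leaves the verification invisible to a reader of this paper.

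One small nuance in your closing remark: left covariance of $\Omega^1(H)$ is already needed just to define the right $B\#H$-action. What the right-action check uses beyond the hypotheses of the smash product theorem is therefore the right coaction together with its commutation with the left one. That commutation holds automatically for a bicovariant FODC, since both coactions are lifts of $\Delta$.
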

\begin{proof}
    See \cite[Corollary 3.15]{aflw} and \cite[Theorem 3.7]{SciWeb}.
\end{proof}
Henceforth, let $\underline{B}$ be a braided Hopf algebra in ${}_H^H\mathcal{YD}$ and let $\uB\#H$ denote corresponding Radford's biproduct (Theorem \ref{thm:bosonisation}). We study the $\underline{B}\#H$-covariance of smash product calculi on the bosonisation $\underline{B}\#H$. Notice that we consistently adopt lower-index Sweedler's notation for $H$-coactions and upper-index Sweedler's notation for $\uB$-coactions.
\begin{proposition}
 \label{pro:leftBsmashHcovariance}
    Let $H$ be a Hopf algebra and $\underline{B}$ a braided Hopf algebra in ${}_H^H\mathcal{YD}$. Let $(\Omega^1(H),\mathrm{d})$ be a left $H$-covariant $FODC$ on $H$, and $(\Omega^1(\underline{B}),\underline{\mathrm{d}}) \in {}_H^H\mathcal{YD}$ a braided left $\underline{B}$-covariant FODC on $\underline{B}$. Then, the smash product calculus $(\Omega^1_{\#}(\underline{B}\#H),\mathrm{d}_{\#})$ on the Hopf algebra $\underline{B}\#H$ is left $\underline{B}\#H$-covariant, with coaction
    \begin{equation}
        \begin{aligned}
            {}_{\uB\#H}\Delta \colon \Omega^1_{\#}(\underline{B}\#H) & \to  \underline{B}\#H \otimes \Omega^1_{\#}(\underline{B}\#H),\\
            \underline{\omega} \otimes h + b \otimes \omega & \mapsto  \underline{\omega}^{-1}\#(\underline{\omega}^0)_{-1}h_1 \otimes (\underline{\omega}^0)_0 \otimes h_2  \\
            & \quad +b^1\#(b^2)_{-1}\omega_{-1} \otimes (b^2)_0 \otimes\omega_0,
        \end{aligned}
        \label{eq:leftBsmashHcoaction}
    \end{equation}
    for any $\omega \in \Omega^1(H)$, $\underline{\omega} \in \Omega^1(\uB)$, $h \in H$, $b \in \uB$.
\end{proposition}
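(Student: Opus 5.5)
The natural route is to reduce left covariance to a single check: does the coproduct $\Delta_{\#}$ lift to $\Omega^1_{\#}(\uB\#H)$ compatibly with $\mathrm{d}_{\#}$? By the ordinary (non-braided) version of Corollary \ref{cor:liftofcoproductcovariance}, this holds exactly when the assignment
\[
{}_{\uB\#H}\Delta\big(a\,\mathrm{d}_{\#}a'\big):=\Delta_{\#}(a)\,(\mathrm{id}\otimes\mathrm{d}_{\#})\Delta_{\#}(a')
\]
is well defined. So I would first show that formula \eqref{eq:leftBsmashHcoaction} agrees with this expression on a spanning set. Then I would verify directly that \eqref{eq:leftBsmashHcoaction} is a well-defined left coaction, that it is compatible with both actions \eqref{eq:leftBsmashHactiononGammasmash} and \eqref{eq:rightBsmashHactiononGammasmash}, and that it makes $\mathrm{d}_{\#}$ colinear. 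Working directly with \eqref{eq:leftBsmashHcoaction} avoids the well-definedness issue, since that formula is written through the given coactions ${}_{\Omega^1(\uB)}\uDelta$, the $H$-coaction on $\Omega^1(\uB)$ (coming from $\Omega^1(\uB)\in{}_H^H\mathcal{YD}$), and ${}_{\Omega^1(H)}\Delta$. It is therefore manifestly a linear map.

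The steps, in order, are as follows.

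\emph{(i) Colinearity of $\mathrm{d}_{\#}$.} Write $\mathrm{d}_{\#}(b\#h)=\ud b\otimes h+b\otimes\mathrm{d}h$. Apply \eqref{eq:leftBsmashHcoaction}, using ${}_{\Omega^1(\uB)}\uDelta(\ud b)=b^1\otimes\ud b^2$, the $H$-colinearity of $\ud$, and ${}_{\Omega^1(H)}\Delta(\mathrm{d}h)=h_1\otimes\mathrm{d}h_2$. The result should be $(\mathrm{id}\otimes\mathrm{d}_{\#})\Delta_{\#}(b\#h)$, which one reads off from Theorem \ref{thm:bosonisation}.

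\emph{(ii) Coassociativity and counitality.} These I would check separately on the two summands. On $\uB\otimes\Omega^1(H)$ the computation is literally the coassociativity of $\Delta_{\#}$ with $h$ replaced by $\omega$, because $\omega\mapsto\omega_{-1}\otimes\omega_0$ is a coaction. On $\Omega^1(\uB)\otimes H$ the same is true once one knows that ${}_{\Omega^1(\uB)}\uDelta$ is $H$-colinear (it is a morphism in ${}_H^H\mathcal{YD}$). This mirrors exactly the proof that $\Delta_{\#}$ is coassociative.

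\emph{(iii) Compatibility with the bimodule structure.} This is the main obstacle. For the left action it suffices to test $b\#1$ and $1\#h$ separately, since these generate $\uB\#H$.
\begin{itemize}
\item For $1\#h$ acting, one needs the Yetter--Drinfeld compatibility on $\Omega^1(\uB)$ and on $\uB$, together with the fact that ${}_{\Omega^1(\uB)}\uDelta$ is $H$-linear.
\item For $b\#1$ acting, one uses that $\Omega^1(\uB)$ is a braided left Hopf module: ${}_{\Omega^1(\uB)}\uDelta(b\underline{\omega})=b^1\,{}^{\alpha}(\underline{\omega}^{-1})\otimes{}_\alpha(b^2)\underline{\omega}^0$, with the braiding of ${}_H^H\mathcal{YD}$ written as $\sigma(v\otimes w)=v_{-1}\rhd w\otimes v_0$.
\end{itemize}
The right action is handled similarly, testing $b\#1$ and $1\#h$. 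The delicate term is $b'\otimes\omega$ acted on by $b\#1$, which produces $b'(\omega_{-1}\rhd b)\otimes\omega_0$. Here left $H$-covariance of $\Omega^1(H)$ and the Yetter--Drinfeld condition are needed to match the result with $\Delta_{\#}(b'\#\cdot)$ times $\Delta_{\#}(b\#1)$.

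In each of these checks, the strategy I would try first is to convert every braiding occurring in the braided Hopf module axiom into the explicit YD form $v_{-1}\rhd w\otimes v_0$. Both sides then become ordinary Sweedler expressions in $H$, and they can be compared exactly as in Radford's proof that $\Delta_{\#}$ is an algebra map. That proof shows the analogous identity with $\underline{\omega}$ or $\omega$ in place of one tensor factor.
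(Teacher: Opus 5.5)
Your plan is correct, but it is organised differently from the paper's proof.

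\textbf{The paper's route.} It also starts from the observation that \eqref{eq:leftBsmashHcoaction} is manifestly a linear map. It then only checks that this map reproduces the lift $\Delta_\#(a)(\mathrm{id}\otimes\mathrm{d}_\#)\Delta_\#(a')$ on two families of representatives: $b\,\ud b'\otimes h=(b\#1)\mathrm{d}_\#(b'\#h)-(bb'\#1)\mathrm{d}_\#(1\#h)$ and $b\otimes h\,\mathrm{d}h'=(b\#h)\mathrm{d}_\#(1\#h')$. The non-braided version of Corollary \ref{cor:liftofcoproductcovariance} then supplies coassociativity, bimodule compatibility and colinearity of $\mathrm{d}_\#$.

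\textbf{Your route.} You verify these axioms directly on the algebra generators $b\#1$ and $1\#k$, mirroring Radford's proofs for $\Delta_\#$.

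\textbf{What each buys.} The paper's argument is shorter. Yours is self-contained, and it supplies a step the paper leaves implicit. The lift criterion needs the identity for \emph{all} pairs $(a,a')$, and the two families above do not contain, e.g., $(1\#k)\,\mathrm{d}_\#(b'\#h)$. That case is exactly your check of left linearity for $1\#k$ on $\Omega^1(\uB)\otimes H$. It is also where the $H$-linearity of ${}_{\Omega^1(\uB)}\uDelta$ and the Yetter--Drinfeld condition on $\Omega^1(\uB)$ are used.

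\textbf{Trimming your plan.} Step (i) together with left $\uB\#H$-linearity on generators already gives ${}_{\uB\#H}\Delta(a\,\mathrm{d}_\#a')=\Delta_\#(a)(\mathrm{id}\otimes\mathrm{d}_\#)\Delta_\#(a')$ for all $a,a'$. Step (ii) and the right-action checks then follow from the standard argument. Your preliminary ``agreement on a spanning set'' step is subsumed; on its own it would not establish well-definedness of the lift.

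\textbf{Minor notation.} In the paper's convention $\sigma(x\otimes y)={}_\alpha y\otimes{}^\alpha x$, the Hopf module identity reads $b^1\,{}_\alpha(\underline{\omega}^{-1})\otimes{}^\alpha(b^2)\underline{\omega}^0$, with the $\alpha$-positions swapped from what you wrote. Via $\sigma(v\otimes w)=v_{-1}\triangleright w\otimes v_0$ this is $b^1((b^2)_{-1}\triangleright\underline{\omega}^{-1})\otimes(b^2)_0\underline{\omega}^0$, which is the meaning you intended.
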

\begin{proof}
    The coaction in Equation \eqref{eq:leftBsmashHcoaction} is well-defined, since it is given in terms of well-defined coactions on $\Omega^1(\underline{B})$ and $\Omega^1(H)$ by assumption. Hence, it is enough to prove that the coaction in Equation \eqref{eq:leftBsmashHcoaction} is obtained as a lift the coproduct on $\underline{B}\#H$, that is,
    \[
        {}_{B\#H}\Delta((b\#h)\mathrm{d}_{\#}(c\#k)) = (b\#h)_1(c\#k)_1 \otimes (b\#h)_2\,\mathrm{d}_{\#}(c\#k)_2
    \]
    has to hold for all $(b\#h)\mathrm{d}_{\#}(c\#k) \in \Omega^1_{\#}(\underline{B}\#H)$. Since the smash product calculus is a direct sum $\Omega^1_{\#}(\underline{B}\#H) = \Omega^1(\underline{B}) \otimes H \oplus \underline{B} \otimes\Omega^1(H)$, we split the proof into two parts.
    \begin{enumerate}
        \item Let $\underline{\omega} \otimes h := b\underline{\mathrm{d}}b' \otimes h \in \Omega^1(\underline{B}) \otimes H$ be arbitrary. Recall here that the braided left $\underline{B}$-coaction on $\Omega^1(\underline{B}) \in {}_H^H\mathcal{YD}$ reads (see Corollary \ref{cor:liftofcoproductcovariance}):
        \[
            {}_{B}\Delta(b\underline{\mathrm{d}}b') = b^1{}_{\alpha}b'^1 \otimes {}^{\alpha}b^2\underline{\mathrm{d}}b'^2 = b^1( (b^2)_{-1} \triangleright b'^1) \otimes (b^2)_0\underline{\mathrm{d}}b'^2 =: (b\underline{\mathrm{d}}b')^{-1} \otimes (b\underline{\mathrm{d}}b')^0.
        \]
        Thus, we obtain
        \[
            \begin{split}
                {}_{B\#H}\Delta(b\mathrm{d_B}b' \otimes h) & = (b\mathrm{d_B}b')^{-1}\#((b\mathrm{d_B}b')^0)_{-1}h_1 \otimes ((b\mathrm{d_B}b')^0)_0 \otimes h_2 \\
                & = b^1( (b^2)_{-1} \triangleright b'^1)\#((b^2)_0\underline{\mathrm{d}}b'^2)_{-1}h_1 \otimes ((b^2)_0\underline{\mathrm{d}}b'^2)_0 \otimes h_2\\
                &= b^1((b^2)_{-1}\triangleright b'^1)\#((b^2)_{0})_{-1}(b'^2)_{-1}h_1 \otimes ((b^2)_0)_0\underline{\mathrm{d}}(b'^2)_0 \otimes h_2.
            \end{split}
        \]
        On the other hand, one can check that by the bimodule structure of $\Omega^1_{\#}(\underline{B}\#H)$, we have that
        \[
            b\underline{\mathrm{d}}b' \otimes h = (b\#1)\mathrm{d}_{\#}(b'\#h) - (bb'\#1)\mathrm{d}_{\#}(1\#h),
        \]
        and
       {\small 
        \begin{align*}
           & (b\#1)_1(b'\#h)_1 \otimes (b\#1)_2\mathrm{d}_{\#}(b'\#h)_2 - (bb'\#1)_1(1\#h)_1 \otimes (bb'\#1)_2\mathrm{d}_{\#}(1\#h)_2 \\
            & = (b^1\#(b^2)_{-1})(b'^1\#(b'^2)_{-1}h_1) \otimes ((b^2)_0\#1)\mathrm{d}_{\#}((b'^2)_0\#h_2) \\
            &\quad -((bb')^1\#((bb')^2)_{-1})(1\#h_1) \otimes (((bb')^2)_0\#1)\mathrm{d}_{\#}(1\#h_2) \\
            & = b^1(((b^2)_{-1})_1 \triangleright b'^1)\#((b^2)_{-1})_2(b'^2)_{-1}h_1 \otimes \big( (b^2)_0\underline{\mathrm{d}}(b'^2)_0 \otimes h_2 + (b^2)_0(b'^2)_0 \otimes \mathrm{d}h_2\big) \\
            &\quad -b^1((b^2)_{-1} \triangleright b'^1)\#((b^2)_0b'^2)_{-1}h_1 \otimes ((b^2)_0b'^2)_0 \otimes \mathrm{d}h_2 \\
            & = b^1((b^2)_{-2}\triangleright b'^1)\#(b^2)_{-1}(b'^2)_{-1}h_1 \otimes \big((b^2)_0\underline{\mathrm{d}}(b'^2)_0 \otimes h_2 + (b^2)_0(b'^2)_0 \otimes \mathrm{d}h_2 \big)\\
            &\quad -b^1((b^2)_{-1} \triangleright b'^1)\#((b^2)_0)_{-1}(b'^2)_{-1}h_1 \otimes ((b^2)_0)_0(b'^2)_0 \otimes \mathrm{d}h_2 \\
            & = b^1((b^2)_{-2}\triangleright b'^1)\#(b^2)_{-1}(b'^2)_{-1}h_1 \otimes \big((b^2)_0\underline{\mathrm{d}}(b'^2)_0 \otimes h_2 + (b^2)_0(b'^2)_0 \otimes \mathrm{d}h_2 - (b^2)_0(b'^2)_0 \otimes \mathrm{d}h_2 \big)  \\
            & = b^1((b^2)_{-2}\triangleright b'^1)\#(b^2)_{-1}(b'^2)_{-1}h_1 \otimes (b^2)_0\underline{\mathrm{d}}(b'^2)_0 \otimes h_2 \\
            & =b^1((b^2)_{-1}\triangleright b'^1)\#((b^2)_{0})_{-1}(b'^2)_{-1}h_1 \otimes ((b^2)_0)_0\underline{\mathrm{d}}(b'^2)_0 \otimes h_2.
        \end{align*}}
    \item Let $b \otimes \omega = b \otimes h\mathrm{d}h' \in \underline{B} \otimes \Omega^1(H)$ be arbitrary. We have
    \[
        b \otimes h\mathrm{d}h' = (b\#h)\mathrm{d}_{\#}(1\#h'),
    \]
    thus we can write
    \[
        \begin{split}
            (b\#h)_1(1\#h')_1 \otimes (b\#h)_2\mathrm{d}_{\#}(1\#h')_2 &= (b^1\#(b^2)_{-1}h_1)(1\#h'_1) \otimes ((b^2)_0\#h_2)\mathrm{d}_{\#}(1\#h'_2)\\
            &=b^1\#(b^2)_{-1}h_1h'_1 \otimes (b^2)_0 \otimes h_2\mathrm{d}h'_2 \\
            &=b^1\#(b^2)_{-1}(h\mathrm{d}h')_{-1} \otimes (b^2)_0 \otimes (h\mathrm{d}h')_0,
        \end{split}
    \]
    completing the proof.
    \end{enumerate}
\end{proof}
Right $\uB\#H$-covariance of smash product calculi on a Radford--Majid biproduct is more subtle. In the following proposition, we determine the conditions under which the smash product calculus $\Omega^1_{\#}(\uB\#H)$ is $\uB\#H$-bicovariant.
\begin{theorem}
    Let $(\Omega^1(H),\mathrm{d})$ be a bicovariant FODC on a Hopf algebra $H$, and let $(\Omega^1(\underline{B}),\underline{\mathrm{d}}) \in {}_H^H\mathcal{YD}$ be a braided bicovariant FODC on the braided Hopf algebra $\underline{B}\in{}_H^H\mathcal{YD}$. Let $\Omega^1_{\#}(\underline{B}\#H)$ be the corresponding smash product calculus. Then, the following are equivalent:
    \begin{enumerate}
        \item $\Omega^1_{\#}(\underline{B}\#H)$ is a bicovariant FODC on $\underline{B}\#H$, with left $\underline{B}\#H$-coaction as in Equation \ref{eq:leftBsmashHcoaction}, and right $\underline{B}\#H$-coaction given by
        \begin{equation}
            \begin{aligned}
                \Delta_{\uB\#H} \colon \Omega^1_{\#}(\underline{B}\#H) & \to \Omega^1_{\#}(\underline{B}\#H) \otimes \underline{B}\#H,\\
                c\underline{\mathrm{d}}c' \otimes h + b \otimes k\mathrm{d}k' & \mapsto \big(c^1\underline{\mathrm{d}}((c^2)_{-1} \triangleright c'^1) \otimes((c^2)_0)_{-1}(c'^2)_{-1}h_1 \\
                &\quad +c^1((c^2)_{-1} \triangleright c'^1) \otimes ((c^2)_0)_{-1}(\mathrm{d}(c'^2)_{-1})h_1 \big) \otimes (c^2)_0(c'^2)_0\#h_2 \\
                &\quad + b^1 \otimes (b^2)_{-1}k_1\mathrm{d}k'_1 \otimes (b^2)_0\#k_2k'_2.
            \end{aligned}
            \label{eq:rightBsmashHcoaction}
        \end{equation}
        \item The map
        \begin{equation}
            \begin{aligned}
                \mathrm{ver} \colon \Omega^1(\underline{B}) &\to \Omega^1(H) \otimes \underline{B},\\
                \underline{\omega} := b\underline{\mathrm{d}}b' & \mapsto (\underline{\omega})_{[-1]} \otimes (\underline{\omega})_{[0]} :=b_{-1}\mathrm{d}b'_{-1} \otimes b_0b'_0,
            \end{aligned}
            \label{eq:vermap}
        \end{equation}
        is well-defined.
        \item $\underline{B} \oplus \Omega^1(\underline{B})$ is a $H \oplus \Omega^1(H)$-comodule algebra object in the category of cochain complexes ${}_\Bbbk\mathrm{Vec}^\bullet$.
    \end{enumerate}
    \label{thm:onbicovarianceofsmash}
\end{theorem}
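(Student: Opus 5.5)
The plan is to prove the cycle $(1)\Rightarrow(2)\Rightarrow(3)\Rightarrow(1)$, reducing everything to the question of whether the degree-zero $H$-coaction ${}_{\uB}\Delta$ on $\uB$ lifts to $\Omega^1(\uB)$. The reason is that the right coproduct of $\uB\#H$ produces the factor $(c^2)_{-1}$. Right covariance therefore forces a lift of the $H$-coaction of $\uB$ to forms, valued in $\Omega^1(H)\otimes\uB$; this lift is exactly the map $\mathrm{ver}$.

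For $(1)\Rightarrow(2)$, assume $\Delta_{\uB\#H}$ is a well-defined coaction lifting $\Delta_\#$. Apply it to $\underline{\omega}\otimes 1=(b\#1)\mathrm{d}_\#(b'\#1)-(bb'\#1)\mathrm{d}_\#(1\#1)$. Then project with $\underline{\varepsilon}\otimes\mathrm{id}$ onto the $\uB\otimes\Omega^1(H)$ summand, tensored with the second factor, followed by $\mathrm{id}\otimes\mathrm{pr}_{\uB}$ where $\mathrm{pr}_{\uB}(b\#h)=b\varepsilon(h)$. A direct computation with Theorem \ref{thm:bosonisation} should give $b_{-1}\mathrm{d}b'_{-1}\otimes b_0b'_0$ after using $\underline{\varepsilon}(c^1)c^2=c$. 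Because it is the image of a well-defined map, it depends only on $\underline{\omega}$, so $\mathrm{ver}$ is well-defined.

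For $(2)\Rightarrow(3)$, define the coaction $\uB\oplus\Omega^1(\uB)\to(H\oplus\Omega^1(H))\otimes(\uB\oplus\Omega^1(\uB))$ as follows. In degree $0$ it is ${}_{\uB}\Delta$. In degree $1$ it is $\underline{\omega}\mapsto \underline{\omega}_{-1}\otimes\underline{\omega}_0+\mathrm{ver}(\underline{\omega})$, where the first term is the $H$-coaction of $\Omega^1(\uB)\in{}^H_H\mathcal{YD}$. We then check three things. First, compatibility with the differential: $\Delta(\ud b)=b_{-1}\otimes\ud b_0+\mathrm{d}b_{-1}\otimes b_0$, which is the definition of $\mathrm{ver}(\ud b)$ (take $b=1$ in the formula). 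Second, multiplicativity, which follows from the Leibniz-type formula defining $\mathrm{ver}$ and from $\Omega^1(\uB)$ being an $H$-comodule algebra in ${}^H_H\mathcal{YD}$. Third, coassociativity against $\Delta^\bullet$ of $H^\bullet$ (Lemma \ref{lem:gradHopf}), which can be checked on generators $b\ud b'$ using bicovariance of $\Omega^1(H)$. For the converse direction $(3)\Rightarrow(2)$, $\mathrm{ver}$ is simply the $\Omega^1(H)\otimes\uB$ component of the degree-one coaction, so well-definedness is automatic.

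For $(3)\Rightarrow(1)$, which is the main step, I would write $\Delta_{\uB\#H}$ as a composite of well-defined maps. On $\Omega^1(\uB)\otimes H$ it is built from three pieces: the braided right coaction $\underline{\Delta}_{\Omega^1(\uB)}$ (given by braided bicovariance, Corollary \ref{cor:liftofcoproductcovariance}), the degree-one $H^\bullet$-coaction from (3) applied to the first leg, and $\Delta$ on $H$. On $\uB\otimes\Omega^1(H)$ it uses $\underline{\Delta}$ together with the right $H$-coaction of Corollary \ref{cor:thesmashproductcalculusisrightHcovariant}. One must then verify that on elements $(b\#h)\mathrm{d}_\#(c\#k)$ this agrees with $(b\#h)_1\mathrm{d}_\#(c\#k)_1\otimes(b\#h)_2(c\#k)_2$, splitting into the two summands exactly as in the proof of Proposition \ref{pro:leftBsmashHcovariance}; this expresses formula \eqref{eq:rightBsmashHcoaction}. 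Bimodule compatibility and coassociativity then follow, since a lift of $\Delta_\#$ through the surjection $\Omega^1_u\to\Omega^1_\#$ is automatically a coaction. Bicovariance, meaning that the two coactions commute, follows the same way.

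The obstacle I anticipate is the bookkeeping in the $(3)\Rightarrow(1)$ verification. The Yetter--Drinfeld braiding ${}_\alpha x\otimes{}^\alpha y=y_{-1}\triangleright x\otimes y_0$ intertwines with the $H$-coactions. The cross term $c^1((c^2)_{-1}\triangleright c'^1)\otimes((c^2)_0)_{-1}\mathrm{d}(c'^2)_{-1}h_1$ has to be identified with the $\mathrm{ver}$-contribution, with the $\mathrm{d}$ landing only on the $c'$ factor, because $\mathrm{ver}(c\ud c')$ differentiates only $c'_{-1}$. My first attempt would be to reduce to the generators $\ud c'$ and $c\in\uB$ by the Leibniz rule, and to treat general elements by the Yetter--Drinfeld condition.
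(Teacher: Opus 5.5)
Your $(1)\Rightarrow(2)$ is correct. Applying $\underline{\varepsilon}$ to the $\underline{B}$-leg of the $\underline{B}\otimes\Omega^1(H)$-component, and $\mathrm{id}\otimes\varepsilon$ on $\underline{B}\#H$, extracts $\mathrm{ver}$ more explicitly than the paper does. Your $(2)\Rightarrow(3)$ is the paper's argument. In $(3)\Rightarrow(2)$, ``simply the component'' needs the two lines the paper supplies:
\begin{itemize}
\item the cochain-map property forces the $\Omega^1(H)\otimes\underline{B}$-component to be $\mathrm{d}b'_{-1}\otimes b'_0$ on $\underline{\mathrm{d}}b'$;
\item multiplicativity then gives $b_{-1}\mathrm{d}b'_{-1}\otimes b_0b'_0$ on $b\,\underline{\mathrm{d}}b'$.
\end{itemize}

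The genuine gap is in the step you call the main one, $(3)\Rightarrow(1)$ (the paper's $(2)\Rightarrow(1)$). This is not bookkeeping. Well-definedness means independence of the presentation $\underline{\omega}=\sum c\,\underline{\mathrm{d}}c'$, so the whole content of the step is to write \eqref{eq:rightBsmashHcoaction} as a composite of maps already known to be well defined on $\Omega^1(\underline{B})$. The ingredients you list do not assemble to that formula, for two reasons.
\begin{itemize}
\item In the first term, the $H$-coaction acts on the degree-zero leg $(c^2)_0c'^2$ of $\underline{\Delta}_{\Omega^1(\underline{B})}(c\,\underline{\mathrm{d}}c')=c^1\underline{\mathrm{d}}((c^2)_{-1}\triangleright c'^1)\otimes(c^2)_0c'^2$. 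It does not act on the form leg.
\item Applying $\mathrm{ver}$ (the $\Omega^1(H)\otimes\underline{B}$-part of your $H^\bullet$-coaction) to that form leg differentiates the coaction of $(c^2)_{-1}\triangleright c'^1$, i.e.\ of the first coproduct leg of $c'$. The cross term instead differentiates $(c'^2)_{-1}$.
\end{itemize}
Your fallback of reducing to $c$ and $\underline{\mathrm{d}}c'$ via Leibniz and the Yetter--Drinfeld condition does not address presentation-independence at all.

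The missing observation is that the cross term comes from the braided \emph{left} coaction, not the right one. By \eqref{eq:left_coaction_braided_calculus},
\[
{}_{\Omega^1(\underline{B})}\underline{\Delta}(c\,\underline{\mathrm{d}}c')=c^1((c^2)_{-1}\triangleright c'^1)\otimes(c^2)_0\,\underline{\mathrm{d}}c'^2 .
\]
Hence, for $\underline{\omega}=c\,\underline{\mathrm{d}}c'$,
\[
c^1((c^2)_{-2}\triangleright c'^1)\otimes(c^2)_{-1}(\mathrm{d}(c'^2)_{-1})h_1\otimes(c^2)_0(c'^2)_0\#h_2=\underline{\omega}^{-1}\otimes(\underline{\omega}^0)_{[-1]}h_1\otimes(\underline{\omega}^0)_{[0]}\#h_2 ,
\]
while the first term equals $\underline{\omega}^0\otimes(\underline{\omega}^1)_{-1}h_1\otimes(\underline{\omega}^1)_0\#h_2$. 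Add the term $b^1\otimes(b^2)_{-1}\omega_0\otimes(b^2)_0\#\omega_1$ on $\underline{B}\otimes\Omega^1(H)$ and you obtain the paper's \eqref{eq:rightBsmashHcoactionver}. Both braided coactions exist by braided bicovariance, and $\mathrm{ver}$ exists by (2), so this map is manifestly well defined. Your remark that a well-defined lift of $\Delta_\#$ is automatically a bimodule-compatible coaction then completes the argument.
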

\begin{proof} 
    The first statement implies the second. In fact, assuming $\Omega^1_{\#}(\uB\#H)$ is bicovariant, which precisely means that the right $\uB\#H$-coaction in Equation \ref{eq:rightBsmashHcoaction} is well-defined (as it is obtained as the lift of the coproduct of $\uB\#H$ at the level of $\Omega^1_\#(\uB\#H)$ by a similar procedure to the proof of Propostion \ref{pro:leftBsmashHcovariance}), we have that
    \[
    \begin{split}
        \mathrm{pr}_{\uB \otimes \Omega^1(H)} \circ \Delta_{\uB\#H}(c\underline{\mathrm{d}}c' \otimes h + b \otimes k\mathrm{d}k')& = c^1((c^2)_{-1} \triangleright c'^1) \otimes ((c^2)_0)_{-1}(\mathrm{d}(c'^2)_{-1})h_1 \big) \otimes (c^2)_0(c'^2)_0\#h_2\\
        &\quad + b^1 \otimes (b^2)_{-1}k_1\mathrm{d}k'_1 \otimes (b^2)_0\#k_2k'_2,\\
        & = (c\ud c')^{-1} \otimes ((c\ud c')^0)_{[-1]}h_1 \otimes ((c\ud c')^0)_{[0]}\#h_2 +\\
        &\quad + b^1 \otimes (b^2)_{-1}k_1\mathrm{d}k'_1 \otimes (b^2)_0\#k_2k'_2,
    \end{split}
    \]
    and thus, also $\mathrm{ver}$ is well-defined. Conversely, if $\mathrm{ver}$ is well-defined, the coaction in \ref{eq:rightBsmashHcoaction} can be written as
    \begin{equation}         
            \begin{aligned}
                \Delta_{B\#H}\colon \Omega^1_{\#}(\underline{B}\#H) & \to \Omega^1(\underline{B}\#H) \otimes \underline{B}\#H,\\
                \underline{\omega} \otimes h + b \otimes \omega & \mapsto \underline{\omega}^0 \otimes(\underline{\omega}^1)_{-1}h_1 \otimes (\underline{\omega}^1)_0\#h_2\\
                &\quad +\underline{\omega}^{-1} \otimes (\underline{\omega}^0)_{[-1]}h_1 \otimes (\underline{\omega}^0)_{[0]}\#h_2 \\
                &\quad + b^1 \otimes (b^2)_{-1}\omega_{0} \otimes (b^2)_0\#\omega_1,
            \end{aligned}
            \label{eq:rightBsmashHcoactionver}
    \end{equation}
    which is obviously well-defined.
    
    We now prove that $2)$ implies $3)$. Let $\uB^{\bullet}:=\uB\oplus \Omega^{1}(\uB)$ be viewed as an object in ${}_{\Bbbk}\mathrm{Vec}^{\bullet\leq 1}$ with the corresponding cutoff monoidal structure as in Example \ref{ex:cochain}, and let $H^{\bullet}:=H\oplus \Omega^{1}(H)$ be a Hopf algebra object in the same category (see Lemma \ref{lem:gradHopf}).  Consider the following graded map of degree $0$:
    \begin{equation}
        \begin{split}
            {}_{\uB^\bullet}\Delta^\bullet \colon \uB \oplus \Omega^1(\uB) &\to (H \oplus \Omega^1(H)) \otimes (\uB \oplus \Omega^1(\uB)),\\
            b + \underline{\omega} &\mapsto b_{-1} \otimes b_0 + (\underline{\omega})_{-1} \otimes (\underline{\omega})_0 + (\underline{\omega})_{[-1]} \otimes (\underline{\omega})_{[0]}.
        \end{split}
        \label{eq:Hbulletcoaction}
    \end{equation}
    This defines a left $H^\bullet$-coaction on $B^\bullet$. In fact, on degree $0$ elements, this is just the left $H$-coaction on $\uB$, while on degree $1$ elements we verify that
    \[
        \begin{split}
            (\mathrm{id} \otimes {}_{\uB^\bullet}\Delta) \circ {}_{\uB^\bullet}\Delta^\bullet(\underline{\omega}) &= (\underline{\omega})_{-1} \otimes ((\underline{\omega})_0)_{-1} \otimes ((\underline{\omega})_0)_0 \\
            &+(\underline{\omega})_{-1} \otimes ((\underline{\omega})_0)_{[-1]} \otimes ((\underline{\omega})_0)_{[0]} + (\underline{\omega})_{[-1]} \otimes ((\underline{\omega})_{[0]})_{-1} \otimes ((\underline{\omega})_{[0]})_0 \\
            &=((\underline{\omega})_{-1})_1 \otimes ((\underline{\omega})_{-1})_2 \otimes (\underline{\omega})_0 \\
            &+ ((\underline{\omega})_{[-1]})_{-1} \otimes ((\underline{\omega})_{[-1]})_0 \otimes (\underline{\omega})_{[0]} + ((\underline{\omega})_{[-1]})_0 \otimes ((\underline{\omega})_{[-1]})_1 \otimes (\underline{\omega})_0  \\
            &= (\Delta^\bullet \otimes \mathrm{id}) \circ {}_{\uB^\bullet}\Delta^\bullet(\underline{\omega}),
        \end{split}
    \]
    where $\Delta^\bullet(h + \omega) := h_1 \otimes h_2 + \omega_{-1} \otimes \omega_0 + \omega_0 \otimes \omega_1$ is the comultiplication of the differential graded Hopf algebra $H^\bullet$. The counit property of the coaction in Equation \ref{eq:Hbulletcoaction} is quickly verified, as the counit on $\uB^\bullet$ vanishes on elements of degree greater than $0$. Moreover, the following diagram
    \begin{center}
        \begin{tikzcd}[column sep=1cm, row sep=0.8cm]
\Omega^1(\underline{B}) \arrow[rrr, "{}_{\uB^\bullet}\Delta^\bullet"]                                                  &  &  & H \otimes \Omega^1(\underline{B}) \oplus \Omega^1(H) \otimes \underline{B}                                      \\
                                                                                                             &  &  &                                                                                                                 \\
\underline{B} \arrow[uu, "\underline{\mathrm{d}}"] \arrow[rrr, "{}_{\uB^\bullet}\Delta^\bullet"] &  &  & H \otimes \uB \arrow[uu, "\mathrm{d}_\otimes"']
\end{tikzcd}
    \end{center}
    commutes, since
    \[
        \begin{split}
            {}_{\uB^\bullet}\Delta^\bullet\circ \underline{\mathrm{d}}(b + \underline{\omega}) &= (\underline{\mathrm{d}}b)_{-1} \otimes (\underline{\mathrm{d}}b)_0 + (\underline{\mathrm{d}}b)_{[-1]} \otimes (\underline{\mathrm{d}}b)_{[0]}\\
            &=b_{-1} \otimes \underline{\mathrm{d}}b_0 + \mathrm{d}b_{-1} \otimes b_0\\
            &=\mathrm{d}_\otimes \circ {}_{\uB^\bullet}\Delta^\bullet(b+\underline{\omega}),
        \end{split}
    \]
    where we used that $\underline{\mathrm{d}}$ is left $H$-colinear as $\Omega^1(\uB) \in {}_H^H\mathcal{YD}$ and well-definedness of the $\mathrm{ver}$ map. Thus, the left $H^\bullet$-coaction ${}_{\uB^\bullet}\Delta^\bullet$ is a morphism of cochain complexes. According to Lemma \ref{lem:BbulletDGHA}, the multiplication $\underline{\mu}^\bullet$ in Equation \ref{eq:mu_bullet} is a morphism of cochain complexes in ${}_H^H\mathcal{YD}^{\bullet\leq 1}$, thus, it is a morphism in ${}_{\Bbbk}\mathrm{Vec}^{\bullet\leq 1}$. In addition, $\underline{\mu}^\bullet$ is a morphism of $H^\bullet$-comodules. In fact, consider the following diagram:
    \begin{center}
        \begin{tikzcd}[column sep=1cm, row sep=0.8cm]
\underline{B}^\bullet \otimes \underline{B}^\bullet \arrow[dd, "\underline{\mu}^\bullet"'] \arrow[rrr, "{}_{B^\bullet}\Delta^\bullet_\otimes"] &  &  & H^\bullet \otimes \underline{B}^\bullet \otimes \underline{B}^\bullet \arrow[dd, "\mathrm{id} \otimes \underline{\mu}^\bullet"] \\
                                                                                                                                       &  &  &                                                                                                                                 \\
\underline{B}^\bullet \arrow[rrr, "{}_{B^\bullet}\Delta^\bullet"]                                                                      &  &  & H^\bullet \otimes \underline{B}^\bullet                                                                                        
\end{tikzcd},
    \end{center}
    
   \noindent  where
    
    \[
        \begin{aligned}
            {}_{\uB^\bullet}\Delta^\bullet_\otimes \colon \uB^\bullet \otimes \uB^\bullet & \to H^\bullet \otimes \uB^\bullet \otimes \uB^\bullet,\\
            b \otimes b' + c \otimes \underline{\omega}' + \underline{\omega} \otimes c' & \mapsto b_{-1}b'_{-1} \otimes b_0 \otimes b'_0 \\
            &\quad + c_{-1}(\underline{\omega})_{-1} \otimes c_0 \otimes (\underline{\omega})_0 + c_{-1} \cdot (\underline{\omega}')_{[-1]} \otimes c_0 \otimes (\underline{\omega}')_{[0]}\\
            & \quad +(\underline{\omega})_{-1}c'_{-1} \otimes (\underline{\omega})_0 \otimes c'_0 + (\underline{\omega})_{[-1]}\cdot c'_{-1} \otimes (\underline{\omega})_{[0]} \otimes c'_0
        \end{aligned}
    \]
    
   \noindent is the diagonal graded coaction of $H^\bullet$ on $\uB^\bullet \otimes \uB^\bullet$. On elements of degree $0$, the above diagram commutes by $H$-colinearity of the multiplication of $\uB$. On the other hand, on elements $c \otimes \underline{\omega}' + \underline{\omega} \otimes c' \in \uB \otimes \Omega^1(\uB) \oplus \Omega^1(\uB) \otimes \uB$, the above diagram commuting is equivalent to the following:
    \[
        \begin{split}
            &(c \cdot \underline{\omega}')_{-1} \otimes (c \cdot \underline{\omega}')_0 + (\underline{\omega} \cdot c')_{-1} \otimes (\underline{\omega} \cdot c')_0 + (c \cdot \underline{\omega}')_{[-1]} \otimes (c \cdot \underline{\omega}')_{[0]} + (\underline{\omega} \cdot c')_{[-1]} \otimes (\underline{\omega} \cdot c')_{[0]} =\\
            &=c_{-1}(\underline{\omega}')_{-1} \otimes c_0 \cdot (\underline{\omega}')_0 + (\underline{\omega})_{-1}c'_{-1} \otimes (\underline{\omega})_0 \cdot c'_0 + c_{-1} \cdot (\underline{\omega}')_{[-1]} \otimes c_0(\underline{\omega}')_{[0]} + (\underline{\omega})_{[-1]}\cdot c'_{-1} \otimes (\underline{\omega})_{[0]}c'_0.
        \end{split}
    \]
    The first two terms of the left-hand side coincide with the first two terms of the right-hand side by $H$-colinearity of the bimodule actions. The remaining terms also coincide, since, letting $\underline{\omega} = a\underline{\mathrm{d}}b$ and $\underline{\omega}'=a'\underline{\mathrm{d}}b'$, we can write
    \[
        \begin{split}
            (c \cdot \underline{\omega}')_{[-1]} &\otimes (c \cdot \underline{\omega}')_{[0]} + (\underline{\omega} \cdot c')_{[-1]} \otimes (\underline{\omega} \cdot c')_{[0]}  \\&=((ca')\underline{\mathrm{d}}b')_{[-1]} \otimes (ca'\underline{\mathrm{d}}b')_{[0]} + ((a\underline{\mathrm{d}}b)\cdot c')_{[-1]} \otimes ((a\underline{\mathrm{d}}b) \cdot c')_{[0]}\\
            &=((ca')\underline{\mathrm{d}}b')_{[-1]} \otimes (ca'\underline{\mathrm{d}}b')_{[0]} + (a\underline{\mathrm{d}}(bc'))_{[-1]} \otimes ((a\underline{\mathrm{d}}(bc'))_{[0]} - ((ab)\underline{\mathrm{d}}c')_{[-1]} \otimes ((ab)\underline{\mathrm{d}}c')_{[0]}\\
            &= c_{-1}a'_{-1}\mathrm{d}b'_{-1} \otimes c_0a'_0b'_0 + (a_{-1}\mathrm{d}(b_{-1}c'_{-1}) - (a_{-1}b_{-1})\mathrm{d}c'_{-1}) \otimes a_0b_0c'_0 \\
            &=c_{-1} \cdot (a'_{-1}\mathrm{d}b'_{-1}) \otimes c_0a'_0b'_0 + (a_{-1}\mathrm{d}b_{-1}) \cdot c'_{-1} \otimes a_0b_0c'_0\\
            &=(\underline{\omega})_0 \cdot c'_0 + c_{-1} \cdot (\underline{\omega}')_{[-1]} \otimes c_0(\underline{\omega}')_{[0]} + (\underline{\omega})_{[-1]}\cdot c'_{-1} \otimes (\underline{\omega})_{[0]}c'_0,
        \end{split}
    \]
    where we used well-definedness of $\mathrm{ver}$ as a linear map. \\
    Conversely, if $\uB^\bullet$ is a left $H^\bullet$-comodule algebra object in the category of cochain complexes, we have in particular a well-defined left $H^\bullet$-coaction ${}_{\uB^\bullet}\Delta^\bullet$ on $\uB^\bullet$. Thus, the projection of such coaction onto $\Omega^1(H) \otimes \uB$, that is,
    \[
        f := \mathrm{pr}_{\Omega^1(H) \otimes \uB} \circ {}_{\uB^\bullet}\Delta^\bullet|_{\Omega^1(\uB)}
    \]
    is well-defined. Since ${}_{\uB^\bullet}\Delta^\bullet$ is a morphism of cochain complexes by assumption, the map $f$ coincides with $\mathrm{ver}$ in Equation \ref{eq:vermap} on exact forms. Finally, since in addition $\uB^\bullet$ is an $H^\bullet$-comodule algebra, we can write, on a general element $b\underline{\mathrm{d}}b' \in \Omega^1(\uB)$
    \[
        \begin{split}
            f(b\underline{\mathrm{d}}b') :&= \mathrm{pr}_{\Omega^1(H) \otimes \uB} \circ {}_{\uB^\bullet}\Delta^\bullet(b\underline{\mathrm{d}}b') = \mathrm{pr}_{\Omega^1(H) \otimes \uB} \circ {}_{\uB^\bullet}\Delta^\bullet(b){}_{\uB^\bullet}\Delta^\bullet(\underline{\mathrm{d}}b')\\
            &=\mathrm{pr}_{\Omega^1(H) \otimes \uB} \circ {}_{\uB^\bullet}\Delta^\bullet(b)(\mathrm{id} \otimes d_B + \mathrm{d} \otimes \mathrm{id}){}_{\uB^\bullet}\Delta^\bullet(b') \\
            &=b_{-1}\mathrm{d}b'_{-1} \otimes b_0b'_0,
        \end{split}
    \]
    which is precisely $\mathrm{ver}(b\underline{\mathrm{d}}b')$. Thus, the map in Equation \ref{eq:vermap} is well-defined.
\end{proof}
\begin{corollary}
    If one, and hence all, of the three conditions in Theorem \ref{thm:onbicovarianceofsmash} holds, then $\uB \oplus \Omega^1(\uB)$ is in addition both a differential graded algebra and a differential graded coalgebra in the category of left $H \oplus \Omega^1(H)$-modules, where the $\Omega^1(H)$-component of the $H \oplus \Omega^1(H)$-action on $\uB$ is trivial.
\end{corollary}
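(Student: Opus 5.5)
The plan is to dualise the comodule-algebra statement of Theorem \ref{thm:onbicovarianceofsmash}(3), working throughout in the cutoff category ${}_\Bbbk\mathrm{Vec}^{\bullet\leq 1}$ of Example \ref{ex:cochain}. First I will define the $H^\bullet:=H\oplus\Omega^1(H)$-action on $\uB^\bullet:=\uB\oplus\Omega^1(\uB)$. In degree $0$, $h\in H$ acts by the given Yetter--Drinfeld action on $\uB$ and by the $H$-action on $\Omega^1(\uB)$, which exists because $\Omega^1(\uB)\in{}_H^H\mathcal{YD}$. In degree $1$, $\Omega^1(H)\otimes\uB\to\Omega^1(\uB)$ is set to zero, and the $\Omega^1(H)\otimes\Omega^1(\uB)$ component lands in degree $2$, so it vanishes by the cutoff.

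I then check, in order, the following.

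\begin{enumerate}
\item \emph{Module axiom.} Associativity $(xy)\rhd v=x\rhd(y\rhd v)$ is only nontrivial when all entries have degree $0$. If any entry has degree $1$, both sides vanish: either the $\Omega^1(H)$ part acts trivially, or the total degree exceeds $1$.
\item \emph{Compatibility with the differential.} The action map must be a morphism of cochain complexes, so I need
\[
\ud(h\rhd b)=\mathrm{d}h\rhd b+h\rhd\ud b=h\rhd\ud b .
\]
This is exactly $H$-linearity of $\ud$, which is part of the hypothesis $\Omega^1(\uB)\in{}_H^H\mathcal{YD}$.
\item \emph{Algebra and coalgebra structure.} Lemma \ref{lem:gradHopf} gives $\uB^\bullet$ as a differential graded Hopf algebra in ${}_H^H\mathcal{YD}^{\bullet\leq1}$, because bicovariance of $\Omega^1(\uB)$ is assumed. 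Hence $\underline{\mu}^\bullet$, $\underline{\Delta}^\bullet$, $\underline{\eta}$ and $\underline{\varepsilon}$ are $H$-linear chain maps.
\item \emph{$H^\bullet$-linearity of $\underline{\mu}^\bullet$.} I must show $x\rhd(vw)=(x_{(1)}\rhd v)(x_{(2)}\rhd w)$ using $\Delta^\bullet$ of $H^\bullet$. For $x=h$ in degree $0$ this is the $H$-module algebra property. For $x=\omega\in\Omega^1(H)$ the left side is $0$. On the right, $\Delta^\bullet(\omega)=\omega_{-1}\otimes\omega_0+\omega_0\otimes\omega_1$, so each term has an $\Omega^1(H)$ factor acting on a degree-$0$ element, which gives $0$, or on a degree-$1$ element, which exceeds the cutoff. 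So the right side vanishes too.
\item \emph{$H^\bullet$-linearity of $\underline{\Delta}^\bullet$.} The same degree count handles $\underline{\Delta}^\bullet$. For degree-$0$ $x$ it reduces to $H$-linearity of $\underline{\Delta}^\bullet$ from step 3. The unit and counit go the same way.
\end{enumerate}

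The main obstacle is deciding which monoidal structure on $H^\bullet$-modules is intended. The check in step 5 uses the tensor product of $H^\bullet$-modules with the Koszul sign, and one must confirm that the degree-$1$ action on a tensor product, namely $\omega\rhd(v\otimes w)$ expanded via $\Delta^\bullet$, is consistently zero rather than only zero after truncation. I would verify this directly from the formula for $\Delta^\bullet$ in Lemma \ref{lem:gradHopf}.

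I would also point out where the equivalent conditions of Theorem \ref{thm:onbicovarianceofsmash} enter. They guarantee, via condition (3), that the trivial $\Omega^1(H)$-action is the dual counterpart of the nontrivial coaction component $\mathrm{ver}$. For that reason I would phrase the corollary's proof as observing that the module data are independent of $\mathrm{ver}$, while the chain-map property of $\underline{\Delta}^\bullet$, already supplied by Lemma \ref{lem:gradHopf}, needs no further input.
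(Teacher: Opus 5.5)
Your proposal is correct and follows essentially the paper's argument. You use the same $H^\bullet$-action with vanishing $\Omega^1(H)$-component, the chain-map condition reduces to $H$-linearity of $\ud$, and Lemma \ref{lem:gradHopf} supplies the $H$-linearity of $\underline{\mu}^\bullet$ and $\underline{\Delta}^\bullet$; your explicit degree count spells out the $\Omega^1(H)$-part that the paper leaves implicit.

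As in the paper, the equivalent conditions of Theorem \ref{thm:onbicovarianceofsmash}, and $\mathrm{ver}$ in particular, are never used. The ``obstacle'' you flag is harmless: in ${}_\Bbbk\mathrm{Vec}^{\bullet\leq 1}$ every term of $\omega\triangleright(v\otimes w)$ either involves the trivial action of $\Omega^1(H)$ on $\uB$ or lands in degree $2$, which the cutoff removes.
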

\begin{proof}
    Let $\uB^\bullet := \uB \oplus \Omega^1(\uB)$ and $H^\bullet := H \oplus \Omega^1(H)$. Consider the following graded map of degree $0$:
    \[
        \begin{split}
            \triangleright^\bullet \colon (H \oplus \Omega^1(H)) \otimes (\uB \oplus \Omega^1(\uB)) &\to \uB \oplus \Omega^1(\uB),\\
        h \otimes b + h \otimes \underline{\omega} +  \omega \otimes b &\mapsto h \triangleright b + h \triangleright \underline{\omega},
        \end{split}
    \]
    where we defined the $\Omega^1(H)$ component of the $H^\bullet$-action on $\uB$ to identically vanish. The above defines a left $H^\bullet$-action on $\uB^\bullet$, as it is written in terms of the actions of $H$ on $\uB$ and $\Omega^1(\uB)$. Moreover, the following diagram
    \begin{center}
        \begin{tikzcd}
\Omega^1(H) \otimes \underline{B} \oplus H \otimes \Omega^1(\underline{B}) \arrow[rrr, "\triangleright^\bullet"]                                     &  &  & \Omega^1(\underline{B})                            \\
                                                                                                                                             &  &  &                                                                         \\
H \otimes \uB \arrow[uu, "\mathrm{d}_\otimes"] \arrow[rrr, "\triangleright^\bullet"] &  &  & \uB \arrow[uu, "\underline{\mathrm{d}}"']
\end{tikzcd}
    \end{center}
    commutes precisely when the $\Omega^1(H)$ component of the action on $\uB$ identically vanishes, since $\Omega^1(\uB) \in {}_H^H\mathcal{YD}$ and thus $\underline{\mathrm{d}}$ is in particular left $H$-linear. Hence, $\uB^\bullet$ is a left $H^\bullet$-module in the category of cochain complexes with respect to the action $\triangleright^\bullet$. Moreover, by Lemma \ref{lem:BbulletDGHA}, the multiplication $\underline{\mu}^\bullet$ in Equation \ref{eq:mu_bullet} is left $H^\bullet$-linear. Thus $\uB^\bullet$ is a $H^\bullet$-module algebra in ${}_{\Bbbk}\mathrm{Vec}^{\bullet\leq 1}$. Additionally, the comultiplication in Equation \ref{eq:delta_bullet} is $H^\bullet$-linear and $\uB^\bullet$ is a left $H^\bullet$-module coalgebra in ${}_\Bbbk\mathrm{Vec}^\bullet$. Finally, the differential $\underline{\mathrm{d}}$ is clearly left $H^\bullet$-linear.
\end{proof}

\begin{remark}
    In this remark we highlight some of the key differences between the construction presented here and the approach developed in \cite{AzizMajid}. In our setting, the starting datum is a braided Hopf algebra $\uB\in {}^{H}_{H}\mathcal{YD}$, together with a braided covariant FODC in the same category, from which we build a smash product FODC on the corresponding Radford--Majid biproduct. In Theorem \ref{thm:onbicovarianceofsmash}, we showed that in order for the smash product calculus to be bicovariant it is enough that the $H^\bullet$-coaction ${}_{\uB^\bullet}\Delta^\bullet$ in Equation \ref{eq:Hbulletcoaction} is an algebra morphism in ${}_{\Bbbk}\mathrm{Vec}^{\bullet\leq 1}$, while it is not needed for it to be a coalgebra morphism in general. In contrast, in \cite{AzizMajid}, $\uB^\bullet$ is required to be a braided graded Hopf algebra in ${}^{H^{\bullet}}_{H^{\bullet}}\mathcal{YD}$, while on the other hand $(\Omega^1(\uB),\underline{\mathrm{d}})$ is not assumed to be a braided covariant FODC in ${}_H^H\mathcal{YD}$, meaning in particular that $\underline{\mathrm{d}}$ is not $H$-linear. Thus, in \cite{AzizMajid}, the bimodule structure of the smash product FODC is different from the one described in Theorem \ref{thm:thesmashproductcalculus}, being the $\Omega^1(H)$ component of the $H^\bullet$-action on $\uB^\bullet$ is different from zero in general. 
    \qed
\end{remark}

\subsection{The Radford--Majid Maurer--Cartan form }
\label{subsection:Maurer_Cartan}
In this Section we study properties of the quantum Maurer--Cartan form associated to a smash product calculus on the Radford--Majid biproduct. 
We start by observing how the kernel of the counit $\varepsilon_{\#}$, denoted by $(\uB\# H)^{+}$, decomposes. 

Let $\uB\#H$ be a Radford--Majid biproduct, and let
    \[
    \begin{aligned}
    \pi_{+}\colon \uB\# H & \to (\uB\# H)^{+},\\ 
     b\#h & \mapsto (b\# h)^{+}:= b\# h - \varepsilon_{\#}(b\#h)1_{\#}
    \end{aligned}
    \]
    denote the projection to the kernel of the counit of $\uB\# H$. For any $b\# h\in \uB \# H$, it holds that 
    \begin{equation}
    \label{eq:counit_kernel_biproduct}
    (b\# h)^{+} = b^{+}\# h + \underline{\varepsilon}(b)1\# h^{+},
    \end{equation}
    where $b^+ = b- \underline{\varepsilon}(b)1$ and $h^+ = h-\varepsilon(h)1$.

We now prove a lemma showing how the Maurer--Cartan form $\varpi_\#$ decomposes accordingly.  

\begin{lemma}\label{lemma:splitting_formula}
Let $(\Omega^1(\uB),\underline{\mathrm{d}})$ and $(\Omega^1(H),\mathrm{d})$ be FODCi on the braided Hopf algebra $\uB \in {}_H^H\mathcal{YD}$ and on the Hopf algebra $H$ respectively, such that the smash product calculus $(\Omega^1_\#(\uB\#H),\mathrm{d}_\#)$ on the Radford--Majid biporduct $\uB\#H$ is left $\uB\#H$-covariant according to Proposition \ref{pro:leftBsmashHcovariance}. Consider 
\begin{equation}
    \begin{split}
            \varpi_{\#} \colon (\underline{B}\#H)^+ &\to \Lambda^1_{\#}(\underline{B}\#H),\\
            b\#h &\mapsto S_\#((b\#h)_1)\mathrm{d}_\#(b\#h)_2,
    \end{split}
    \label{eq:varpismash}
\end{equation}
the corresponding quantum Maurer--Cartan form. Then, the following formula
    \begin{equation}
        \varpi_{\#}((b\#h)^+) = \underline{\varpi}( (S(b_{-1}h_1))_1 \triangleright b^+_0) \otimes (S(b_{-1}h_1))_2h_2 + \underline{\varepsilon}(b)(1 \otimes \varpi_H(h^+)),
        \label{eq:splitting}
    \end{equation}
    holds for any $b\#h \in \underline{B}\#H$.
    \label{lem:splittingformula}
\end{lemma}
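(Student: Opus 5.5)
Since $\varpi_\#$ and the right-hand side of \eqref{eq:splitting} are both linear in $b\#h$, I would first split $(b\#h)^+$ via \eqref{eq:counit_kernel_biproduct} as $b^+\#h + \underline{\varepsilon}(b)\,1\#h^+$. I would then compute $\varpi_\#$ on the two summands separately, using the Remark after the definition of $\underline{\varpi}$ (in its non-braided form): the expression $S_\#(a_1)\mathrm{d}_\#a_2$ is defined on all of $\uB\#H$ and equals $\varpi_\#(a^+)$. Concretely, I would prove the identity
\[
S_\#((b\#h)_1)\mathrm{d}_\#((b\#h)_2) = \underline{\varpi}\big((S(b_{-1}h_1))_1\triangleright b_0^+\big)\otimes (S(b_{-1}h_1))_2h_2 + \underline{\varepsilon}(b)\,(1\otimes\varpi_H(h^+))
\]
directly for every $b\#h$.

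\emph{Step 1 (the $H$-part).} For $b=1$ we have $\Delta_\#(1\#h)=(1\#h_1)\otimes(1\#h_2)$ and $S_\#(1\#h)=1\#S(h)$. Also $\mathrm{d}_\#(1\#h_2)=1\otimes\mathrm{d}h_2$. By the left action \eqref{eq:leftBsmashHactiononGammasmash},
\[
(1\#S(h_1))\cdot(1\otimes\mathrm{d}h_2)=1\otimes S(h_1)\mathrm{d}h_2 = 1\otimes\varpi_H(h^+).
\]
This gives the second term.

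\emph{Step 2 (factorisation).} I would write $b\#h=(b\#1)(1\#h)$ and use that $\Delta_\#$ is multiplicative and $S_\#$ antimultiplicative. The Leibniz rule for $\mathrm{d}_\#$ then gives the standard identity
\[
\varpi_\#(xy)=S_\#(y_1)\,\varpi_\#(x)\,y_2+\varepsilon_\#(x)\,\varpi_\#(y).
\]
Applied with $x=b\#1$ and $y=1\#h$, the $\varepsilon_\#(x)$ term reproduces Step 1. It remains to compute $\varpi_\#(b\#1)$ and then conjugate it by $1\#h$.

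\emph{Step 3 (the $\uB$-part).} By Theorem \ref{thm:bosonisation}, $\Delta_\#(b\#1)=(b^1\#(b^2)_{-1})\otimes((b^2)_0\#1)$. Also $\mathrm{d}_\#((b^2)_0\#1)=\underline{\mathrm{d}}(b^2)_0\otimes1$. For the antipode,
\[
S_\#(b^1\#(b^2)_{-1})=(1\#S(b^1_{-1}(b^2)_{-1}))(\underline{S}(b^1_0)\#1).
\]
Multiplying out with \eqref{eq:leftBsmashHactiononGammasmash} gives
\[
(S(b_{-1}))_1\triangleright\big(\underline{S}(b_0^1)\,\underline{\mathrm{d}}\,b_0^2\big)\otimes (S(b_{-1}))_2 .
\]
Here I use that the coproduct $\underline{\Delta}$ is $H$-colinear, so $b^1_{-1}(b^2)_{-1}\otimes b^1_0\otimes b^2_0=b_{-1}\otimes b_0^1\otimes b_0^2$. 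The resulting expression is $(S(b_{-1}))_1\triangleright\underline{\varpi}(b_0^+)\otimes(S(b_{-1}))_2$. Since $\underline{\varpi}$ is $H$-linear (it is built from $H$-linear maps in ${}_H^H\mathcal{YD}$), the action can be moved inside $\underline{\varpi}$.

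\emph{Step 4 (conjugation by $1\#h$).} The factor $S_\#(1\#h_1)=1\#S(h_1)$ acts from the left through \eqref{eq:leftBsmashHactiononGammasmash}, and $1\#h_2$ acts from the right by multiplying the $H$-leg. This turns $S(b_{-1})$ into $S(h_1)S(b_{-1})=S(b_{-1}h_1)$ and appends $h_2$, which yields the first term of \eqref{eq:splitting}.

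The main obstacle I expect is the Sweedler bookkeeping in Step 3. There one must correctly combine the $H$-coaction indices coming from $\Delta_\#$ and $S_\#$, using colinearity of $\underline{\Delta}$ and of $\underline{S}$, so that they collapse to a single $b_{-1}$. One must also justify moving the $H$-action through $\underline{\varpi}$ and through the projection $b\mapsto b^+$; the latter is harmless because $\underline{\varepsilon}$ is $H$-linear.
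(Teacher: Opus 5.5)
Your proof is correct, but it is organised differently from the paper's.

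\textbf{The paper's route.} It makes a single direct computation of $S_\#((b\#h)_1)\mathrm{d}_\#(b\#h)_2$ for arbitrary $b\#h$. It expands $\Delta_\#$ and $S_\#$ and collapses the coaction indices by colinearity of $\underline{\Delta}$, exactly as in your Step 3. It then gets rid of the cross term $\big(S(b_{-1}h_1)_1\triangleright \underline{S}(b_0^1)b_0^2\big)\otimes S(b_{-1}h_1)_2\mathrm{d}h_2$ using $\underline{S}(b_0^1)b_0^2=\underline{\varepsilon}(b_0)$. Only then does it restrict to elements $b^+\#h+\underline{\varepsilon}(b)1\#h^+$.

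\textbf{Your route.} You factor $b\#h=(b\#1)(1\#h)$ and use the product rule $\varpi_\#(xy)=S_\#(y_1)\varpi_\#(x)y_2+\varepsilon_\#(x)\varpi_\#(y)$. This rule holds for the extension $a\mapsto S_\#(a_1)\mathrm{d}_\#a_2$ to all of $\uB\#H$, and it is the ordinary-Hopf analogue of the right $\mathrm{Ad}$-linearity the paper proves for $\underline{\varpi}$. This buys you three things:
\begin{itemize}
\item The Yetter--Drinfeld bookkeeping is needed only for $h=1$.
\item The cross term never appears, since it is absorbed into $\varepsilon_\#(x)\varpi_\#(y)$.
\item Your Step 4 shows the $h$-dependence of the first term is the right adjoint action of $1\#h$, namely $\varpi_\#(b^+\#h)=(1\#S(h_1))\,\varpi_\#(b^+\#1)\,(1\#h_2)$. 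This is exactly the right $H$-action \eqref{eq:action_on_coA} that the paper introduces later.
\end{itemize}

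\textbf{What each buys.} The paper's version is self-contained and needs no auxiliary identity, at the cost of heavier index bookkeeping. Yours is more structural and makes the subsequent splitting \eqref{eq:splitting_formula} and the $H$-linearity statements more transparent. The essential inputs are the same in both: $H$-colinearity of $\underline{\Delta}$, the formula for $S_\#$, the smash bimodule structure, and $H$-linearity of $\ud$ and of the $\uB$-actions on $\Omega^1(\uB)$, hence of $\underline{\varpi}$.
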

\begin{proof}
    Let $b\#h \in \uB\#H$. Then
    \[
        \begin{split}
        S_\#((b\#h)_1)\cdot \mathrm{d}_\#(b\#h)_2 & = S_\#(b^1\# (b^2)_{-1}h_1)\cdot\mathrm{d}_\#((b^2)_0\#h_2)\\
            & = (1\#S((b^1)_{-1}(b^2)_{-1}h_1))(\underline{S}((b^1)_0)\#1)\cdot \mathrm{d}_\#((b^2)_0\#h_2)\\
            & = \big(S((b^1)_{-1}(b^2)_{-1}h_1)_1\triangleright \underline{S}((b^1)_0)\#S((b^1)_{-1}(b^2)_{-1}h_1)_2 \big)\cdot (\underline{\mathrm{d}}(b^2)_0 \otimes h_2 + (b^2)_0 \otimes \mathrm{d}h_2)\\
            & =\big(S(b_{-1}h_1)_1 \triangleright \underline{S}((b_0)^1)\#S(b_{-1}h_1)_2\big) \cdot (\underline{\mathrm{d}}(b_0)^2 \otimes h_2 + (b_0)^2 \otimes \mathrm{d}h_2) \\
            & =\big(S(b_{-1}h_1)_1\triangleright \underline{S}((b_0)^1)\big)\underline{\mathrm{d}}\big(S(b_{-1}h_1)_2 \triangleright (b_0)^2\big) \otimes S(b_{-1}h_1)_3h_2 + \\
            &\quad +\big(S(b_{-1}h_1)_1\triangleright \underline{S}((b_0)^1)\big)\big(S(b_{-1}h_1)_2 \triangleright (b_0)^2\big) \otimes S(b_{-1}h_1)_3\mathrm{d}h_2\\
            & = S(b_{-1}h_1)_1 \triangleright \big(\underline{S}((b_0)^1)\underline{\mathrm{d}}(b_0)^2\big) \otimes S(b_{-1}h_1)_2h_2 \\
            &\quad + S(b_{-1}h_1)_1 \triangleright (\underline{S}((b_0)^1)(b_0)^2) \otimes S(b_{-1}h_1)_2\mathrm{d}h_2 \\
            & = S(b_{-1}h_1)_1 \triangleright \big(\underline{S}((b_0)^1)\underline{\mathrm{d}}(b_0)^2\big) \otimes S(b_{-1}h_1)_2h_2 + \underline{\varepsilon}(b) \otimes S(h_1)\mathrm{d}h_2,
        \end{split}
    \]
    where we used the definition of the antipode $S_\#$ (Equation \ref{eq:Hopfbosonisation}) and that of the left $\uB\#H$-action on $\Omega^1(\uB\#H)$ (Equation \ref{eq:leftBsmashHactiononGammasmash}), and we exploited the fact that $\underline{\mathrm{d}}$ and $\underline{\varepsilon}$ are morphisms in ${}_H^H\mathcal{YD}$ by assumption. Restricting our attention now to elements in $(\uB\#H)^+$ which, according to Equation \ref{eq:counit_kernel_biproduct}, are of the form $b^+\#h + \underline{\varepsilon}(b)1\#h^+$, we conclude
    \[
        \begin{split}
           \varpi_{\#}((b\# h)^{+})& =\underline{\varpi}( (S(b_{-1}h_1))_1 \triangleright b^+_0) \otimes (S(b_{-1}h_1))_2h_2 + \underline{\varepsilon}(b)(1 \otimes \varpi_H(h^+)).
        \end{split}
    \]
\end{proof}

Note that in classical differential geometry the Maurer--Cartan form on the affine extension $\mathbb{R}^n \rtimes \textrm{G}$ splits as the direct sum of two $1$-forms taking values on the Lie algebras $\mathbb{R}^n$ and $\mathfrak{g}$.  We now show that in our setting an analogous result holds.
\begin{proposition}
    Let $H$ be a Hopf algebra and $\underline{B} \in {}_H^H\mathcal{YD}$ a braided Hopf algebra. Then, for any left $\underline{B}\#H$-covariant smash product calculus $(\Omega^1_{\#}(\underline{B}\#H),\mathrm{d}_{\#})$ on $\underline{B}\#H$, we have
    \begin{equation}
    \label{eq:splitting_formula}
        \varpi_{\#}((b\#h)^+) = \varpi_{\#}( S^{-1}(h) \triangleright b^+ \#1) + \underline{\varepsilon}(b)(1 \otimes \varpi_H(h^+)).
    \end{equation}
    It follows that the quantum Maurer--Cartan form splits as
    \begin{equation}
        \varpi_{\#} = \varpi_{\#}|_{\underline{B}^+\#1} \oplus \varpi_{\#}|_{1\#H^+}.
        \label{eq:okokok}
    \end{equation}
\end{proposition}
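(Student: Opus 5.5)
The plan is to derive \eqref{eq:splitting_formula} directly from Lemma \ref{lemma:splitting_formula}. The lemma already separates off the $\underline{\varepsilon}(b)(1\otimes\varpi_H(h^+))$ term, so what remains is to show that
\[
\underline{\varpi}\big((S(b_{-1}h_1))_1\triangleright b^+_0\big)\otimes (S(b_{-1}h_1))_2h_2
\]
equals $\varpi_\#(S^{-1}(h)\triangleright b^+\#1)$.

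First I evaluate the lemma at an element $c\#1$ with $c\in\uB^+$. Then $h=1$ and the second term vanishes, and it gives
\[
\varpi_\#(c\#1)=\underline{\varpi}\big(S(c_{-1})_1\triangleright c_0\big)\otimes S(c_{-1})_2 .
\]
I then substitute $c=S^{-1}(h)\triangleright b^+$ and use the Yetter--Drinfeld compatibility. Writing $k=S^{-1}(h)$, we have
\[
c_{-1}\otimes c_0=k_1b_{-1}S(k_3)\otimes k_2\triangleright b^+_0 .
\]
Hence
\[
S(c_{-1})=S^2(k_3)S(b_{-1})S(k_1)=h_1S(b_{-1})S(k_1),
\]
where $S^2(S^{-1}(h)_3)=S(h_1)$ up to the reversal of Sweedler indices $S^{-1}(h)_i=S^{-1}(h_{4-i})$. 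Careful bookkeeping is needed to check that the factors of $h$ which reach $\underline{\varpi}$ through the module action collapse via $S(h_i)h_j$ or $h_iS^{-1}(h_j)$ contractions. After this collapse the coefficient of the $\Omega^1(\uB)$ factor should be $S(b_{-1}h_1)_1\triangleright b^+_0$, and the $H$ factor should be $S(b_{-1}h_1)_2h_2$, exactly as in the lemma. I expect this index bookkeeping, with antipode reversal and the Yetter--Drinfeld condition applied once, to be the main obstacle. If it fails as stated, I would try $S^{-1}$ versus $S$ conventions or place $h$ on the right. I would also use the $H$-linearity of $\underline{\varpi}$, which follows from $\underline{\mathrm{d}}$, $\underline{S}$ and $\underline{\Delta}$ being morphisms in ${}_H^H\mathcal{YD}$, to move the action through $\underline{\varpi}$.

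For the splitting \eqref{eq:okokok}, note that \eqref{eq:splitting_formula} shows that every value of $\varpi_\#$ on $(\uB\#H)^+$ is a sum of a value on $\uB^+\#1$ and a value on $1\#H^+$. Both of these are subspaces of $(\uB\#H)^+$, and by \eqref{eq:counit_kernel_biproduct} they span it. The images are $\Omega^1(\uB)\otimes H$-valued and $1\otimes\Omega^1(H)$-valued respectively. These components of $\Omega^1_\#=(\Omega^1(\uB)\otimes H)\oplus(\uB\otimes\Omega^1(H))$ intersect trivially, so the sum is direct. This is what the decomposition $\varpi_\#=\varpi_\#|_{\uB^+\#1}\oplus\varpi_\#|_{1\#H^+}$ means.
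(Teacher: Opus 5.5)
Your route is the paper's own: specialise Lemma~\ref{lemma:splitting_formula} to $c\#1$ with $c=S^{-1}(h)\triangleright b^+$, apply the Yetter--Drinfeld condition once, and contract. The bookkeeping closes with the conventions as stated, but your displayed $S(c_{-1})=h_1S(b_{-1})S(k_1)$ should read $S(c_{-1})=S(h_1)S(b_{-1})h_3=S(b_{-1}h_1)h_3$ (since $S(k_1)=S(S^{-1}(h_3))=h_3$). Together with $c_0=S^{-1}(h_2)\triangleright b^+_0$, the only contraction needed after expanding the coproduct of $S(b_{-1}h_1)h_3$ is $h_3S^{-1}(h_2)=\varepsilon(h_2)$ (after renumbering), which yields exactly $\underline{\varpi}(S(b_{-1}h_1)_1\triangleright b^+_0)\otimes S(b_{-1}h_1)_2h_2$.

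For the splitting, drop the claim that $\uB^+\#1$ and $1\#H^+$ span $(\uB\#H)^+$. It is false, since $(\uB\#H)^+=\uB^+\#H\oplus 1\#H^+$, and it is also unnecessary: \eqref{eq:splitting_formula} already shows that the image of $\varpi_\#$ is the sum of the two restricted images, and these lie in the complementary summands $\Omega^1(\uB)\otimes H$ and $\uB\otimes\Omega^1(H)$ of $\Omega^1_\#(\uB\#H)$.
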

\begin{proof}
    To prove this, it is enough to show that
    \[
        \varpi_{\#}(S^{-1}(h) \triangleright b\#1) = \underline{\varpi}( S(b_{-1}h_1)_1 \triangleright b_0) \otimes S(b_{-1}h_1)_2h_2
    \]
    for any $b\#h \in \uB^+\#H$. By Eq. \ref{eq:splitting}, we have that 
    $$
        \varpi_{\#}(b\#1) = \underline{\varpi}(S(b_{-1})_1\triangleright b_0) \otimes S(b_{-1})_2
    $$
    for all $b \in B$. Thus,
    \[
        \varpi_{\#}(S^{-1}(h) \triangleright b\#1) = \underline{\varpi}\big( S(S^{-1}(h) \triangleright b)_{-1})_1 \triangleright (S^{-1}(h) \triangleright b)_0\big) \otimes S(S^{-1}(h) \triangleright b)_{-1})_2.
    \]
    By the Yetter--Drinfeld compatibility we have that
    \[
    \begin{split}
        (S^{-1}(h) \triangleright b)_{-1} \otimes (S^{-1}(h) \triangleright b)_0 = &S^{-1}(h)_1b_{-1}S(S^{-1}(h)_3) \otimes S^{-1}(h)_2 \triangleright b_0 \\
        = &S^{-1}(h_3)b_{-1}S(S^{-1}(h_1)) \otimes S^{-1}(h_2) \triangleright b_0 \\
        = &S^{-1}(h_3)b_{-1}h_1 \otimes S^{-1}(h_2) \triangleright b_0,
    \end{split}
    \]
    thus
    \[
    \begin{aligned}
        \varpi_{\#}(S^{-1}(h) \triangleright b\#1) & = \underline{\varpi}\big( S(S^{-1}(h_3)b_{-1}h_1)_1 \triangleright S^{-1}(h_2) \triangleright b_0 \big) \otimes S(S^{-1}(h_3)b_{-1}h_1)_2 \\
        & = \underline{\varpi}\big( (S(b_{-1}h_1)h_3)_1S^{-1}(h_2) \triangleright b_0 \big) \otimes (S(b_{-1}h_1)h_3)_2 \\
        & = \underline{\varpi}\big( S(b_{-1}h_1)_1h_{31}S^{-1}(h_2) \triangleright b_0 \big) \otimes S(b_{-1}h_1)_2h_{32} \\
        & = \underline{\varpi}\big( S(b_{-1}h_1)_1h_{3}S^{-1}(h_2) \triangleright b_0 \big) \otimes S(b_{-1}h_1)_2h_{4} \\
        & = \underline{\varpi}\big( S(b_{-1}h_1)_1 \triangleright b_0 \big) \otimes S(b_{-1}h_1)_2h_2.
    \end{aligned}
    \]
\end{proof}

\begin{example}
    We spell out the quantum Maurer--Cartan form for the example of the quantum affine extension of $\mathcal{O}_{q}(\textrm{GL}_{2})$, that is the Radford--Majid biproduct $\underline{\mathbb{C}}_q^2\#\mathcal{O}_{q}(\textrm{GL}_{2})$. For example, considering the element $x\# 1 \in \ker\varepsilon_{\#}$, and exploiting the decomposition of the Maurer--Cartan form as explained in Equation \eqref{eq:splitting}, we get 
    \begin{align*}
    \varpi_{\#}(x\# 1) & = S(x_{-1})\cdot(\underline{\varpi}(x_{0})\#1)\\ 
    & = S(a)\cdot(\underline{\varpi}(x)\#1) + S(b)\cdot(\underline{\varpi}(y)\#1) \\ 
    & = D^{-1}d \cdot(\underline{\varpi}(x)\# 1) - q^{-1}D^{-1}b\cdot(\underline{\varpi}(y)\# 1) \\ 
    & = -q^{3}\underline{\varpi}(y)\# D^{-1}b + q^{2}\underline{\varpi}(x)\# D^{-1}d.
    \end{align*}
   Similarly, one may evaluate the remaining combinations in $\ker\varepsilon_{\#}$. We only list the other non-vanishing contributions:
    \begin{align*}
        \varpi_{\#}(y\#1)& = q^4 \underline{\varpi}(y) \otimes D^{-1}a - q^3\underline{\varpi}(x) \otimes D^{-1}c\,;\\
        \varpi_{\#}(x\#a)& = q^2\varpi_{\#}(x\#1)\,;\\
        \varpi_{\#}(y\#a)& = q\,\varpi_{\#}(y\#1)\,;\\
        \varpi_{\#}(x\#c)& = (q^2-1)\varpi_{\#}(y\#1)\,;\\
        \varpi_{\#}(x\#d)& = q\, \varpi_{\#}(x\#1)\,;\\
        \varpi_{\#}(y\#d)& = q^2\varpi_{\#}(y\#1).
    \end{align*}
    \qed
\end{example}

We now move to the study of the left coinvariant $1$-forms on the trivial Hopf--Galois extension $\uB\subseteq \uB\# H$. The main result we prove here is that, given a left covariant smash product calculus on the total space Hopf algebra $A:=\uB\# H$, the left coinvariant forms of the bundle decompose, as right $H$-covariant right $H$-modules, into the direct sum of the left coinvariant forms of the base space and the left coinvariant forms of the structure Hopf algebra.

We first recall some known facts on differential calculi on a general quantum principal bundle $B := A^{\mathrm{co}H} \subseteq A$ (see Definition \ref{def:QPB}). Let $(\Omega^1(A), \rm d_A)$ be a FODC on a right $H$-comodule algebra
$A$. We call the left $A$ and right $B$-bimodule
$\Omega^1_\mathrm{hor}(A):=A\Omega^1(B)$ \emph{horizontal forms}. Moreover, when $\Omega^1(A)$ is right $H$-covariant, we have that $\Omega^1(B)=\Omega^1_\mathrm{hor}(A)\cap \Omega^1(A)^{\mathrm{co}H}$ (see for example \cite{aflw} for more details).
When the following sequence is exact
\begin{equation}\label{eq134}
    0\longrightarrow \Omega^{1}_{\mathrm{hor}}\stackrel{\iota}{\longrightarrow}\Omega^1(A)\stackrel{\pi_{\mathrm{ver}}\ \  }{\longrightarrow}
    A\otimes\Lambda^1_{H}\longrightarrow 0
\end{equation}
where the \emph{vertical map}
\begin{align*} \pi_{\mathrm{ver}}:\Omega^1(A)&\to A\otimes \Omega^1(H), \\a\mathrm{d}_Aa'&\mapsto a_0a'_0\otimes a_1\mathrm{d}a'_1
\end{align*}
is assumed to be well-defined, we name the calculus \textit{principal}. 

As a trivial Hopf--Galois extension, $\uB\subseteq\uB\#H$ is a particular case of a quantum principal bundle. From this point on we consider:
 \begin{enumerate}
     \item $\Omega^{1}(\uB)$ a braided left $\uB$-covariant FODC on a braided Hopf algebra $\uB\in {}^{H}_{H}\mathcal{YD}$, whose classifying ideal we denote by $I_{\uB}$ according to Theorem \ref{thm:classification_theorem_braided}, and whose left coinvariant $1$-forms are $\Lambda^{1}_{\uB}:=\uB^{+}/I_{\uB}$. 
     \item $\Omega^{1}(H)$ a bicovariant FODC on the Hopf algebra $H\in{}_{\Bbbk}\mathrm{Vec}$, whose classyfing ideal we denote by $I_{H}$, and whose left coinvariant $1$-forms are $\Lambda^{1}_{H}:=H^{+}/I_{H}$.
     \item $\Omega^{1}_{\#}(A)$ the corresponding right $H$-covariant (Corollary \ref{cor:thesmashproductcalculusisrightHcovariant}) and left $A$-covariant (Proposition \ref{pro:leftBsmashHcovariance}) smash product FODC on the Hopf algebra in ${}_{\Bbbk}\mathrm{Vec}$ given by the Radford--Majid biproduct $A:=\uB\# H$, whose classifying ideal we denote by $I_{\#}$ and whose left coinvariant $1$-forms are $\Lambda^{1}_{A}:=A^{+}/I_{\#}$. 
     \item Also, $\Omega_{u}^{1}(-)$ denotes the universal FODCi, with $\mathcal{N}_{H}$, $\mathcal{N}_{\uB}$ and $\mathcal{N}_{\#}$ the corresponding subbimodules realising $\Omega^{1}(H)$, $\Omega^{1}(\uB)$ and $\Omega^{1}_{\#}(A)$ as quotients. 
\end{enumerate}
One easily verifies that for the setting at hand we have $\Omega^{1}_{\mathrm{hor}}(A)\cong \Omega^{1}(\uB)\otimes H$. Moreover, any such smash product calculus is principal (\cite{aflw}, Example 2.20). Therefore, by means of the exactness of the Atiyah sequence and by Theorem \ref{section:classification_theorem}, we have the following commuting diagram, which gives a clear interpretation of the horizontal and vertical forms of the trivial Hopf--Galois extension $\uB\subseteq \uB\#H$. 

\begin{equation}
\label{equation:diagram_smash}
\begin{tikzcd}
0 \arrow[r] & (\Omega^{1}_{u}(\uB)/\mathcal{N}_{\uB})\otimes H \arrow[r,"\iota^{u}"] \arrow[d,"\cong"]                       & \Omega^{1}_{u}(\uB\# H)/\mathcal{N}_{\#} \arrow[r,"\pi_{ver}^{u}"] \arrow[d,"\cong"]                                  & \uB\otimes(\Omega^{1}_{u}(H)/\mathcal{N}_{H}) \arrow[r] \arrow[d,"\cong"]      & 0 \\
0 \arrow[r] & \Omega^{1}(\uB)\otimes H \arrow[r, "\iota"] \arrow[d,"\cong"]         & \Omega^{1}_{\#}(\uB\# H) \arrow[r, "\pi_{\mathrm{ver}}"] \arrow[d,"\cong"]       & \uB\otimes \Omega^{1}(H) \arrow[r] \arrow[d,"\cong"] & 0 \\
0 \arrow[r] & \Lambda^{1}_{\uB}\otimes \uB\otimes H \arrow[r, , dashed] & (\uB\# H)\otimes (\uB\# H)^{+}/I_{\#} \arrow[r, dashed] & \uB\otimes H  \otimes\Lambda^{1}_{H} \arrow[r]      & 0.
\end{tikzcd}
\end{equation}
Moreover, according to Equation \eqref{eq:splitting_formula}, we have 
\[
\begin{tikzcd}
\uB^{+} \# H   \oplus  1\# H^{+}\cong(\uB\# H)^{+} \arrow[rrr, "\varpi_{\#}"] &  &  & {}^{\mathrm{co}A}(\Lambda^{1}_{\uB}\otimes H) \oplus  1\otimes \Lambda^{1}_{H}.
\end{tikzcd}
\]

Although the Maurer--Cartan form of the bundle provides a clear identification of the coinvariant 1-forms on the structure Hopf algebra, the corresponding coinvariant 1-forms on the base space are less transparent as a consequence of the product structure with which the Radford--Majid biproduct is endowed. The following proposition clarifies this point.  

\begin{proposition}
\label{prop:iso_of_base_forms_with_horizontals}
We have $\Lambda^{1}_{\uB} \cong {}^{\mathrm{co}A}(\Lambda^{1}_{\uB}\otimes H)$, as vector spaces. Explicitly, the isomorphism is given by the assignments 
\begin{equation}
	\begin{aligned}
    \label{eq:horizontal_iso}
		\psi\colon\Lambda^{1}_{\uB} &\to {}^{\mathrm{co}A}(\Lambda^{1}_{\uB}\otimes H) \\ 
		\underline{\varpi}(b)&\mapsto \underline{\varpi}(b_{0})\otimes S^{-1}(b_{-1}),\\ 
		\varepsilon(h)\underline{\varpi}(b) &\mapsfrom \underline{\varpi}(b)\otimes h.
	\end{aligned}
\end{equation}

\begin{proof}
	Let $A=\uB\# H$. First of all, notice that 
	\[
	\Omega^{1}(\uB)\otimes H \cong \Lambda^{1}_{\uB}\otimes \uB\otimes H \cong \Lambda^{1}_{\uB}\otimes A.
	\] 
    By the fundamental theorem of Hopf modules, together with Equation \eqref{eq:splitting_formula}, we have 
	\begin{align*}
	\Omega^{1}_{\#}(\uB\# H)& \cong \Lambda^{1}_{A}\otimes A\\ 
    &\cong \left({}^{\mathrm{co}A}(\Lambda^{1}_{\uB}\otimes H)\otimes A \right) \oplus \left((\Bbbk\otimes \Lambda^{1}_{H})\otimes A\right)\\ 
    &\cong \left({}^{\mathrm{co}A}(\Lambda^{1}_{\uB}\otimes H)\otimes A \right) \oplus \left(\Lambda^{1}_{H}\otimes A\right). 
	\end{align*}
    
\noindent 	As it can be easily deduced, the isomorphism of vector spaces $\Bbbk\otimes\Lambda^{1}_{H}\otimes A\cong \uB\otimes \Omega^{1}(H)$ holds. Moreover, as by any such choice of smash product FODC we have split-exactness of the sequences in Equation \eqref{equation:diagram_smash}, we have 

	  \[
	{}^{\mathrm{co}A}(\Lambda^{1}_{\uB}\otimes H)\otimes A \cong \Lambda^{1}_{\uB} \otimes A.
	\] 
    
  \noindent  Finally, as any Hopf algebra realises a quantum principal bundle  (see Section \ref{subsection:Hopf--Galois_extensions_and_smash_product_algebras}) on the underlying ground field,  $A$ is faithfully flat as a $(A^{\mathrm{co}A}=\Bbbk)$-module, and thus the claim holds. 

    More explicitly, we may show that the maps in Equation \eqref{eq:horizontal_iso} indeed gives an isomorphism of vector spaces. First of all we find  
        \begin{align*}
    {}_{\#}\Delta(\underline{\varpi}(b)\otimes 1) & = (\underline{S}(b^{1})^{1} \# \underline{S}(b^{1})^{2}_{-1})\cdot_{\#}(b^{21}\# b^{22}_{-1})\otimes (\underline{S}(b^{1})^{2}_{0}\underline{\mathrm{d}}b^{22}_{0}\otimes 1) \\ 
    & = (\underline{S}(b^{1})_{-2}\triangleright \underline{S}(b^{2})\# \underline{S}(b^{1})_{-1})\cdot_{\#}(b^{3}\# b^{4}_{-1})\otimes (\underline{S}(b^{1})_{0}\underline{\mathrm{d}}b^{4}_{0}\otimes 1)\\ 
    & = \left((\underline{S}(b^{1})_{-3}\triangleright S(b^{2}))\, (\underline{S}(b^{1})_{-2}\triangleright b^{3})\# \underline{S}(b^{1})_{-1}b^{4}_{-1}\right) \otimes (\underline{S}(b^{1})_{0}\underline{\mathrm{d}}b^{4}_{0}\otimes 1)\\ 
    & = \left( \underline{S}(b^{1})_{-2}\triangleright(\underline{S}(b^{2})b^{3})\# \underline{S}(b^{1})_{-1}b^{4}_{-1}\right) \otimes (\underline{S}(b^{1})_{0}\underline{\mathrm{d}}b^{4}_{0}\otimes 1)\\ 
    & = (\underline{S}(b^{1})_{-2}\triangleright 1_{B})\# \underline{S}(b^{1})_{-1}b^{2}_{-1})\otimes \underline{S}(b^{1})_{0}\underline{\mathrm{d}}b^{2}_{0}\otimes 1 \\ 
    & = (1\# \underline{S}(b^{1})_{-1}b^{2}_{-1})\otimes \underline{S}(b^{1})_{0}\underline{\mathrm{d}}b^{2}_{0}\otimes 1\\
    & = (1\# b^{1}_{-1}b^{2}_{-1})\otimes \underline{S}(b^{1}_{0})\underline{\mathrm{d}}b^{2}_{0}\\ 
    & =(1\# b_{-1})\otimes (\underline{S}(b_{0}^{1})\underline{\mathrm{d}}b_{0}^{2}\otimes 1)\\
    & = (1\# b_{-1})\otimes (\underline{\varpi}(b_{0})\otimes 1),
    \end{align*}
    from which we deduce
    \begin{align*}
    {}_{\#}\Delta(\underline{\varpi}(b_{0})\otimes S^{-1}(b_{-1})) & = {}_{\#}\Delta((\underline{\varpi}(b_{0})\otimes 1)(1\# S^{-1}(b_{-1}))   \\
    & = {}_{\#}\Delta((\underline{\varpi}(b_{0})\otimes 1)) \Delta_{\#}(1\# S^{-1}(b_{-1}))  \\
    & = \left((1\# b_{-1})\otimes (\underline{\varpi}(b_{0})\otimes 1)\right)\left((1\# S^{-1}(b_{-2})_{1})\otimes (1\# S^{-1}(b_{-2})_{2})\right) \\ 
    & = \left((1\# b_{-1}S^{-1}(b_{-2})_{1})\otimes (\underline{\varpi}(b_{0})\otimes S^{-1}(b_{-2})_{2})\right) \\ 
    & = (1\# 1)\otimes (\underline{\varpi}(b_{0})\otimes S^{-1}(b_{-1})).
    \end{align*}
    Thus, the map $\psi\colon \Lambda^{1}_{\uB}\otimes \Bbbk \to {}^{\mathrm{co}A}(\Lambda^{1}_{\uB}\otimes H)$ is well defined as a $\Bbbk$-linear map. It is immediate to check that $\psi^{-1}\circ \psi=\mathrm{id}_{\Lambda^{1}_{\uB}}$. 
    
    On the other hand, let  $\gamma\otimes k \in {}^{\mathrm{co}A}(\Lambda^{1}_{\uB}\otimes H)$.  We may write 
    \[
    \gamma\otimes k = \underline{\varpi}(S(h_{2})S(b_{-1})\triangleright b_{0})\otimes S(h_{1})S(b_{-2})h_{3},
    \]
    according to Equation \eqref{eq:splitting_formula}. We have
    \begin{align*}
   \psi\circ \psi^{-1}\circ \left(S(b_{-1}h_{1})\triangleright(\underline{\varpi}(b_0)\otimes h_{2})\right) & = \psi \circ \underline{\varpi}(S(b_{-1}h)\triangleright b_{0})\\ 
   & = \underline{\varpi}\left((S(b_{-1}h)\triangleright b_{0})_{0}\right) \otimes S^{-1}\left(S(b_{-1}h)\triangleright b_{0}\right)_{-1}\\ 
   & =\underline{\varpi}(S(b_{-3}h_{2})\triangleright b_{0}) \otimes S^{-1}\left(S(b_{-2}h_{3})\,b_{-1}\, S^{2}(b_{-4}h_{1})\right) \\ 
   & = \underline{\varpi}(S(b_{-3}h_{2})\triangleright b_{0}) \otimes S(b_{-4}h_{1})\, S^{-1}(b_{-1})b_{-2}\, h_{3} \\ 
   & = \underline{\varpi}(S(h_{2})S(b_{-1})\triangleright b_{0})\otimes S(h_{1})S(b_{-2})h_{3},
    \end{align*}
    and thus $\psi\circ\psi^{-1}=\mathrm{id}_{{}^{\,\mathrm{co}A}(\Lambda^{1}_{\uB}\otimes H)}$. 
\end{proof}
\end{proposition}

Together with Equation \eqref{equation:diagram_smash}, this result recovers the classical geometric picture: the coinvariant 1-forms on the total space $A := \uB \# H$ decompose into horizontal and vertical components. The former are identified with 1-forms on the base space $\uB$, while the latter correspond to 1-forms on the structure Hopf algebra $H$. This decomposition faithfully reproduces the classical differential geometric splitting of 1-forms on a principal bundle. Geometrically, the vertical forms encode the infinitesimal directions along the fibers, whereas the horizontal forms arise as pullbacks of forms from the base manifold. Ultimately, the interplay between the Radford–Majid biproduct structure and smash product calculus provides a genuine noncommutative analogue of this classical decomposition,  separating the bundle's coinvariant 1-forms into contributions from the homogeneous space and the group itself. Let us point out again that, classically, the homogeneous space obtained by extension of a Lie group $\textrm{G}$, is an example of reductive Klein geometry, meaning that the corresponding Lie algebra splits as a direct sum of $\mathrm{G}$-modules. Inspired by this observation we describe how the splitting obtained above is consistent with the category of (co)modules in which the braided Hopf algebra $\uB$ is considered and, in particular, showing that the Maurer--Cartan form $\varpi_{\#}\colon (B\#H)^{+}\to \Lambda^{1}_{A}$ is a morphism of right $H$-covariant right $H$-modules as expected. 

 In the next lemma we show the module and comodule structure on the horizontal forms of the bundle are compatible with the splitting in Proposition \ref{prop:iso_of_base_forms_with_horizontals}.
    
\begin{lemma}\label{lemma:right_comodule_and_module_structures_on_coinvariants}
The following statements hold.
\begin{enumerate}
    \item ${}^{\mathrm{co}A}(\Lambda^{1}_{\uB}\otimes H)$ is an object in $\mathrm{Mod}^{H}\cap\mathrm{Mod}_H$, with right action 
    \begin{equation}  \label{eq:action_on_coA} 
        \begin{aligned}
    \triangleleft_{\mathrm{co}A}\colon {}^{\mathrm{co}A}(\Lambda^{1}_{\uB}\otimes H)\otimes H & \to {}^{\mathrm{co}A}(\Lambda^{1}_{\uB}\otimes H),\\
            \gamma\otimes h\otimes k & \mapsto (1\# S(k_{1}))(\gamma\otimes h)(1\# k_{2}).
        \end{aligned}
    \end{equation}
    and with right coaction
    \begin{equation}
        \begin{aligned}
        \label{eq:coaction_on_coA} \Delta_{H,\mathrm{co}A}\colon {}^{\mathrm{co}A}(\Lambda^{1}_{\uB}\otimes H)& \mapsto {}^{\mathrm{co}A}(\Lambda^{1}_{\uB}\otimes H)\otimes H,\\ 
        \gamma\otimes h &\mapsto \gamma\otimes h_{1}\otimes h_{2}.
        \end{aligned}
    \end{equation}
    \item $\Lambda^{1}_{\uB}$ is an object in $\mathcal{YD}^{H}_{H}$, with right action 
    \begin{equation}\label{eq:action_on_Lambda1B}
        \begin{aligned}
\triangleleft_{\Lambda^{1}_{\uB}}\colon  \Lambda^{1}_{\uB}\otimes H & \to \Lambda^{1}_{\uB},\\ 
        \gamma \otimes h & \mapsto S(h)\triangleright \gamma. 
      \end{aligned}
    \end{equation}
    and right coaction 
    \begin{equation}
    \label{eq:coaction_on_Lambda1B} 
        \begin{aligned}
\Delta_{H,\Lambda^{1}_{\uB}}\colon  \Lambda^{1}_{\uB} & \to  \Lambda^{1}_{\uB}\otimes H,\\ 
        \underline{\varpi}(b)&\mapsto \underline{\varpi}(b_{0})\otimes S^{-1}(b_{-1}).
        \end{aligned}
    \end{equation}
\end{enumerate}
\begin{proof} In order:
\begin{enumerate}
\item Considering an element  of ${}^{\mathrm{co}A}(\Lambda^{1}_{\uB}\otimes H)$ (according to Equation \eqref{eq:splitting_formula}) we have 
\begin{align*}
\left(\varpi_{\#}|_{{}^{\mathrm{co}A(\Lambda^{1}_{\uB}\otimes H)}}(b\# h)\right) \triangleleft_{\mathrm{co}A}(1\# k)& =  \left(\varpi_{\#}|_{{}^{\mathrm{co}A(\Lambda^{1}_{\uB}\otimes H)}}\left((b\# h)\cdot_{\#}(1\# k)\right)\right) \\ 
& = \left(\varpi_{\#}|_{{}^{\mathrm{co}A(\Lambda^{1}_{\uB}\otimes H)}}(b\# hk)\right). 
\end{align*}
The axioms of an action are understood. Thus, Equation \eqref{eq:action_on_coA} provides a well defined action.  Moreover, the assignment in Equation \eqref{eq:coaction_on_coA} gives a well-defined $H$-coaction. Indeed,  ${}^{\mathrm{co}A}(\Lambda^{1}_{\uB}\otimes H)$ closes as a right $H$-comodule under $\Delta_{H,\mathrm{co}A}$, as 
\begin{align*}
\Delta_{H,\mathrm{co}A}(S(b_{-1}h_{2})\triangleright\underline{\varpi}(b_{0})\otimes S(b_{-2}h_{1})h_{3}) & = (S(b_{-1}h_{2})\triangleright\underline{\varpi}(b_{0})\otimes (S(b_{-2}h_{1})h_{3})_{1}\otimes (S(b_{-2}h_{1})h_{3})_{2} \\ 
& = (S(b_{-1}h_{3})\triangleright\underline{\varpi}(b_{0})\otimes S(b_{-2}h_{2})h_{4}\otimes S(b_{-3}h_{1})h_{5} \\ 
& = S(b_{-1}h_{2})\triangleright(\underline{\varpi}(b_{0})\otimes h_{3})\otimes S(b_{-2}h_{1})h_{4} \\ 
& = \varpi_{\#}|_{{}^{\mathrm{co}A}(\Lambda^{1}_{\uB}\otimes H)}(b_{0}\#h_{2}) \otimes S(b _{-1}h_{1})h_{3},
\end{align*}
and as coassociativity and counitality of $\Delta_{H,\mathrm{co}A}$ are obvious we are done. 
\item All the different flavors of categories of Yetter--Drinfeld modules are equivalent, and in particular the assignments in Equations \eqref{eq:action_on_Lambda1B} and \eqref{eq:coaction_on_Lambda1B} are precisely the module and comodule structures that realise $\Lambda^{1}_{\uB}$ as an object of $\mathcal{YD}^{H}_{H}$, starting from its given structure in ${}^{H}_{H}\mathcal{YD}$, and thus we are done. However, we still find it instructive to present an explicit realization of the $\mathcal{YD}^{H}_{H}$ structure on $\Lambda^{1}_{\uB}$. It is an immediate check that the assignment in Equation \eqref{eq:action_on_Lambda1B} defines an action on $\Lambda^{1}_{\uB}$. Moreover, the assignment in Equation \eqref{eq:coaction_on_Lambda1B} is in fact induced by the diagram 
\[
\begin{tikzcd}
{}^{\mathrm{co}A}(\Lambda^{1}_{\uB}\otimes H) \arrow[rr, "{\Delta_{H,\mathrm{co}A}}"] &  & {}^{\mathrm{co}A}(\Lambda^{1}_{\uB}\otimes H)\otimes H \arrow[dd, "(\psi^{-1}\otimes \mathrm{id})"] \\
                                                                                    &  &                                                                                                   \\
\Lambda^{1}_{\uB} \arrow[rr, "{\Delta_{H,\Lambda^{1}_{\uB}}}",dashed] \arrow[uu, "\psi"]       &  & \Lambda^{1}_{\uB}\otimes H                                                                         
\end{tikzcd},
\]
indeed 
\begin{align*}
\Delta_{H,\Lambda^{1}_{\uB}}(\underline{\varpi}(b)) & := (\psi^{-1}\otimes \mathrm{id})\circ \Delta_{H,\mathrm{co}A}\circ \psi (\underline{\varpi}(b))\\ 
& = (\psi^{-1}\otimes \mathrm{id})\circ \Delta_{H,\mathrm{co}A}\left(\underline{\varpi}(b_{0})\otimes S^{-1}(b_{-1})\right) \\ 
& = \psi^{-1}\left(\underline{\varpi}(b_{0})\otimes S^{-1}(b_{-1})\right)\otimes S^{-1}(b_{-2})\\ 
& = \underline{\varpi}(b_{0}) \otimes S^{-1}(b_{-1}).
\end{align*}

\noindent Thus, we have a well-defined right $H$-coaction on $\Lambda^{1}_{\uB}$.  Finally, we have 

\begin{align*}
\Delta_{H,\Lambda^{1}_{\uB}}(\gamma\triangleleft h) & = \Delta_{H,\Lambda^{1}_{\uB}}(S(h)\triangleright \underline{\varpi}(b)) \\ 
& = \Delta_{H,\Lambda^{1}_{\uB}}( \underline{\varpi}(S(h)\triangleright b)) \\ 
& = \underline{\varpi}((S(h)\triangleright b)_{0}) \otimes S^{-1}(S(h)\triangleright b)_{-1} \\ 
& = \underline{\varpi}(S(h)_{2}\triangleright b_{0}) \otimes S^{-1}(S(h)_{1}b_{-1}S(S(h)_{3})) \\ 
& = S(h_{2})\triangleright\underline{\varpi}( b_{0}) \otimes S^{-1}(S(h_{3})b_{-1}S^{2}(h_1)) \\
& = \underline{\varpi}(S(h_{2})\triangleright b_{0}) \otimes S(h_{1})S^{-1}(b_{-1})h_3 \\ 
& = \underline{\varpi}(b_{0})\triangleleft h_{2}\otimes S(h_{1})S^{-1}(b_{-1})h_{3} \\ 
& = \gamma_{0}\triangleleft h_{2}\otimes S(h_{1})\gamma_{-1}h_{3}, 
\end{align*}
and thus the Yetter--Drinfeld property holds. 
\end{enumerate}
\end{proof}
\end{lemma}

The last result turns out to be crucial to better understand the properties of the Maurer--Cartan form for the Radford--Majid biproduct.

\begin{proposition}\label{prop:varpi_smash_is_H_comodule_map}
   The map $\varpi_{\#}\colon (\uB\# H)^{+} \to \Lambda^{1}_{A}$ is a morphism in $\mathrm{Mod}^{H}\cap\mathrm{Mod}_{H}$, and in particular both the restrictions of $\varpi_{\#}|_{\uB^{+}\#H}$ and $\varpi_{\#}|_{1\#H^{+}}$ are. 
    \begin{proof}
    First of all, the map $\varpi_{\#}\big|_{\uB^{+}\# H}\colon \uB^{+}\# H \to {}^{\mathrm{co}A}(\Lambda^{1}_{\uB}\#H)$ is a morphism in $\mathrm{Mod}^{H}$. Indeed, we consider the relative right adjoint $H$-coaction on $\uB^{+}\#H$ as induced by the adjoint $A$-coaction via the Hopf algebra surjection $\pi\colon \uB \#H\to H$, namely 
        \begin{equation}
            \begin{aligned}
                \mathrm{coAd}_{H,\uB^{+}\#H}\colon \uB^{+}\# H &\to \uB^{+}\# H \otimes H, \\
                b\# h & \mapsto (b_{0}\# h_{2}) \otimes S(b_{-1}h_{1})h_{3},
            \end{aligned}
        \end{equation}
        which in particular restricts to a well-defined right $H$-coaction on $\uB^{+}\#H$. By a direct calculation we now find

        \begin{align*}
          (\varpi_{\#}\big|_{\uB^{+}\#H}\otimes \mathrm{id}_{H})\circ   \mathrm{coAd}_{H,\uB^{+}\#H}(b\#h)& = \varpi_{\#}\big|_{\uB^{+}\#H}(b_{0}\# h_{2})\otimes S(b_{-1}h_{1})h_{3} \\ 
          & = S(b_{-1}h_{3})\triangleright \underline{\varpi}(b_{0})\otimes S(b_{-2}h_{2})h_{4} \otimes S(b_{-3}h_{1})h_{5}\\ 
         & = \Delta_{\mathrm{co}A,H}\left(S(b_{-1}h_{1})_{1}\triangleright \underline{\varpi}(b_{0}) \otimes S(b_{-1}h_{1})_{2}h_{2}\right) \\ 
          & = \Delta_{\mathrm{coA},H} \circ\varpi_{\#}\big|_{\uB^{+}\#H}(b\#h).
        \end{align*}
        Arguing similarly, one can promptly show that $\varpi_{\#}|_{1\# H^{+}}\colon 1\# H^{+} \to 1\# \Lambda^{1}_{H}$ is a right $H$-comodule map, where $1\# H^{+}$ and $1\# \Lambda^{1}_{H}$ are endowed with the same comodule structure outlined above. Moreover, right $\uB\# H$-linearity of $\varpi_{\#}$ is automatic for any left covariant FODC on $\uB\#H$ and thus, in particular, right $H$-linearity follows.
    \end{proof}
    \end{proposition}

\noindent Finally, we present the main result of the section. Given the module and comodule structures outlined throughout the section, we have a splitting of the coinvariant $1$-forms on $A:=B\# H$ in the category $\mathrm{Mod}^{H}\cap\mathrm{Mod}_{H}$. 
\begin{theorem}
The space of left coinvariant $1$-forms $\Lambda^{1}_{A}:={}^{\mathrm{co}A}\Omega^{1}_{\#}(A)$ splits, in $\mathrm{Mod}^{H}\cap\mathrm{Mod}_{H}$, into the direct sum 

    \begin{equation}
    \Lambda^{1}_{A}\cong (\Lambda^{1}_{\uB}\otimes 1) \oplus (1\otimes \Lambda^{1}_{H}).
    \end{equation}
    
    \begin{proof}
           First, we show $\psi^{-1}\colon {}^{\mathrm{co}A}(\Lambda^{1}_{\uB}\otimes H) \to \Lambda^{1}_{\uB}$ is a morphism of right $H$-modules with respect to the right $H$-actions outlined in Lemma \ref{lemma:right_comodule_and_module_structures_on_coinvariants}. Indeed, let 
    \[
    \gamma\otimes c =S(b_{-1}h_{1})\triangleright(\underline{\varpi}(b_{0})\otimes h_{2})=\varpi_{\#}|_{{}^{\mathrm{co}A}(\Lambda^{1}_{\uB}\otimes H)}(b\#h)\in {}^{\mathrm{co}A}(\Lambda^{1}_{\uB}\otimes H)
    \] 
    for $(b\# h)\in \ker\varepsilon_{\#}$, and let $k\in H$. We have 
    \begin{align*}
    \psi^{-1}((\gamma\otimes c)\triangleleft_{\mathrm{co}A} k)) & =  \psi^{-1}((\gamma\otimes c)\triangleleft_{\mathrm{ad}}  (1\# k)) \\ 
    & = \psi^{-1} \left(\varpi_{\#}|_{{}^{\mathrm{co}A}(\Lambda^{1}_{\uB}\otimes H)}(b\#h)\triangleleft_{\mathrm{ad}}  (1\# k)\right) \\ 
    & = \psi^{-1}\left(\varpi_{\#}|_{{}^{\mathrm{co}A}(\Lambda^{1}_{\uB}\otimes H)}(b\#hk)\right)\\ 
    & = \psi^{-1}\left( S(b_{-1}h_{1}k_{1})\triangleright\left(\underline{\varpi}(b_{0})\otimes h_{2}k_{2} \right)\right) \\ 
    & = S(b_{-1}h\, k)\triangleright \underline{\varpi}(b_{0})\\ 
    & = S(k)\triangleright \left(S(b_{-1}h)\triangleright \underline{\varpi}(b_{0})\right)\\ 
    & = \left(S(b_{-1}h)\triangleright \underline{\varpi}(b_{0})\right)\triangleleft_{\mathrm{\Lambda^1}}k\\ 
     & = \psi^{-1}(\gamma\otimes c)\triangleleft_{\Lambda^1}k.
    \end{align*}
     Moreover, $\psi^{-1}\colon {}^{\mathrm{co}A}(\Lambda^{1}_{\uB}\otimes H) \to \Lambda^{1}_{\uB}$ is also a morphism of right $H$-comodules. Indeed the  diagram  
     \[
     \begin{tikzcd}
{}^{\mathrm{co}A}(\Lambda^{1}_{\uB}\otimes H) \arrow[dd, "\psi^{-1}"] \arrow[rr, "\Delta_{H,\mathrm{co}A}"] &  & {}^{\mathrm{co}A}(\Lambda^{1}_{\uB}\otimes H)\otimes H \arrow[dd, "(\psi^{-1}\otimes \mathrm{id})"] \\
                                                                                                                                           &  &                                                                                                   \\
\Lambda^{1}_{\uB} \arrow[rr, "{\Delta_{H,\Lambda^{1}_{\uB}}}"]                                                                                 &  & \Lambda^{1}_{\uB}\otimes H                                                                         
\end{tikzcd}
\]
commutes by the very definitions of the coactions $\Delta_{H,\Lambda^{1}_{\uB}}$ and $\Delta_{H,\mathrm{co}A}$ supplied in Lemma \ref{lemma:right_comodule_and_module_structures_on_coinvariants}. 
Finally, the claim now follows from Proposition \ref{prop:varpi_smash_is_H_comodule_map}. 
    \end{proof}
\end{theorem}

As observed before, we have that the domain of the Maurer--Cartan form $\varpi_\#$ decomposes as $A^+=B^+\# H\oplus 1\# H^+$. The first component is of particular interest, since it encodes the non-trivial interplay between $\underline{B}$ and $H$ induced by the smash product structure. 

\begin{proposition}\label{prop:chi_is_H_comodule}
We have a surjective morphism in $\mathrm{Mod}^{H}\cap\mathrm{Mod}_{H}$ given by
\begin{equation}\label{eq:definition_of_chi}
   \begin{aligned}
       \chi\colon \underline{B}^{+}\#H & \to \underline{B}^{+},\\ 
       b\# h &\mapsto S(b_{-1}h)\triangleright b_{0}. 
       \end{aligned}
   \end{equation}
    Moreover, the diagram 
    \[
        \begin{tikzcd}
        \underline{B}^{+}\# H \arrow[rr, "\varpi_{\#}|_{\underline{B}^{+}\# H}"] \arrow[dd, "\chi"] &  & {}^{\mathrm{co}A}(\Lambda^{1}_{\underline{B}}\otimes H) \arrow[dd, "\psi^{-1}"] \\
                                                                                                &  &                                                                            \\
        \underline{B}^{+} \arrow[rr, "\underline{\varpi}"]                                                      &  & \Lambda^{1}_{\underline{B}}                                               
        \end{tikzcd}
        \] 
        commutes in $\mathrm{Mod}^{H}\cap \mathrm{Mod}_{H}$. 
    \begin{proof}
        The above diagram commutes in ${}_{\Bbbk}\mathrm{Vec}$, as it can readily be checked, and thus $\chi$ is surjective. More explicitly we have
        \[
        \chi\left(b_{0}\# S^{-1}(b_{-1})\right)  =  S\left(b_{-1}S^{-1}(b_{-2})\right)\triangleright b_{0} =b 
        \]
        for any $b\in \uB^{+}$. For the equivariance and covariance properties of $\chi$, consider the module and comodule structures on $\underline{B}^{+}$ and $\underline{B}^{+}\# H$, respectively, which follow readily from Lemma \ref{lemma:right_comodule_and_module_structures_on_coinvariants} and Proposition \ref{prop:varpi_smash_is_H_comodule_map}. We have 
        \begin{align*}
        \chi((b\#h)\cdot_{\#}(1\# k)) & =\chi(b\#hk) \\ 
        & = S(b_{-1}hk)\triangleright b_{0} \\ 
        & = S(k)(S(b_{-1}h)\triangleright b_{0})\\ 
        & = (S(b_{-1}h)\triangleright b_{0})\triangleleft k,
        \end{align*}
        and thus $\chi$ is right $H$-linear. Right $H$-colinearity follows as well, since 
        \begin{align*}
        \Delta_{\uB^{+}}\circ \chi(b\#h) & =\Delta_{\uB^{+}}(S(b_{-1}h)\triangleright b_{0}) \\ 
        & = (S(b_{-1}h)\triangleright b_{0})_{0}\otimes S^{-1}((S(b_{-1}h)\triangleright b_{0})_{-1}) \\ 
        & = S(b_{-3}h_{2})\triangleright b_{0}\otimes S^{-1}(S(b_{-2}h_{3})b_{-1}S^{2}(b_{-4}h_{1})) \\ 
        & = S(b_{-1}h_{2})\triangleright b_{0} \otimes S(b_{-2}h_{1})h_{3}\\ & = (\chi \otimes \mathrm{id})(b_{0}\# h_{2} \otimes S(b_{-1}h_{1})h_{3}) \\ 
        & = (\chi\otimes \mathrm{id})\circ\mathrm{coAd}_{H,\uB^{+}\#H}(b\#h).
        \end{align*} 
        Finally, from Proposition \ref{prop:varpi_smash_is_H_comodule_map} and Lemma \ref{lemma:right_comodule_and_module_structures_on_coinvariants}, it follows that the above is a commuting diagram in $\mathrm{Mod}^{H}\cap\mathrm{Mod}_{H}$.
        \end{proof}
    \end{proposition}

\noindent We now give an explicit presentation of the ideal $I_\#$ in terms of the ideals $I_{\uB}$ and $I_{H}$. 

\begin{lemma}
 The classifying ideal $I_{\#}$ of any right $H$-covariant and left $\underline{B}\#H$-covariant smash product calculus is given by 
 \begin{equation}\label{eq:ideal_smash}
        I_{\#}:=\left\{b\#h \in (\uB\#H)^+ \mid S(b_{-1}h)\triangleright b_{0}^{+}\in I_{\uB}, \, \underline{\varepsilon}(b)h^{+}\in I_{H}\right\}.
    \end{equation}
    \begin{proof}
        Consider the commutative diagram in Proposition \ref{prop:chi_is_H_comodule}. The ideal $I_{\#}$ is the kernel of $\varpi_{\#}\colon (\uB\# H)^{+} \to \Lambda^{1}_{\#}$, and as  $\psi$ is an isomorphism in $\mathrm{Mod}^{H}$ we have $I_{\#}:=\ker\varpi_{\#}=\ker(\psi^{-1}\circ\varpi_{\#})$. The condition in Equation \eqref{eq:ideal_smash} can indeed be obtained by Equation \eqref{eq:splitting_formula} together with the definition of $\psi$ itself.
        
        We now argue the previous statement more explicitly. Consider a left covariant FODC $\Omega^{1}(\uB)$ on $\uB$ and its corresponding classifying right ideal $I_{\uB}$, which is an object in ${}^{H}_{H}\mathcal{YD}$. We prove that the preimage of $I_{\uB}$ via $\chi\colon \uB^{+}\# H \to \uB^{+}$ is a right $\uB\#H$-ideal in $\uB^{+}\#H$.  Given any element $S(b_{-1}h)\triangleright b_{0}\in I_{\uB}$, we have 
        
        \begin{equation}
            \begin{split}
                {}_{H}\Delta(S(b_{-1}h)\triangleright b_{0}) & = S(b_{-1}h)_1(b_0)_{-1}S(S(b_{-1}h_1)_3) \otimes S(b_{-1}h)_2 \triangleright (b_0)_0\\
                &=S(h_3)S((b_{-1})_3)(b_{-1})_4S^2((b_{-1})_1h_1) \otimes S((b_{-1})_2h_2) \triangleright (b_0)_0 \\
                &= S\left(S(b_{-2}h_{1})h_{3}\right)\otimes S(b_{-1}h_{2})\triangleright b_{0}  
            \in H\otimes I_{\uB}.
            \end{split}
            \label{eq:coactionimageofchi}
        \end{equation}
        Moreover, given any $b\#h\in I_{\#}$ and $c\#k\in \uB\# H$ we find that  
        \begin{align*}
        \chi\left((b\# h)\cdot_{\#}(c\#k)\right) & = \chi( b(h_{1}\triangleright c) \# h_{2}k ) \\ 
        & = S\left( b_{-1}(h_{1}\triangleright c)_{-1}h_{2}k\right)\triangleright b_{0}(h_{1}\triangleright c)_{0}\\ 
        & = S(b_{-1}h_{1}c_{-1}k)\triangleright \left(S(b_{-1}) \triangleright (b_{0}(h_{2}\triangleright c_{0})\right) \\ 
         & = S(b_{-1}h_{1}c_{-1}k)\triangleright\left( \left(S(b_{-1}) \triangleright b_{0}\right)( S(b_{-2})h_{2}\triangleright c_{0})\right) \\ 
         & = S(c_{-1}k)S(h_{1})\triangleright \left(\left(S(b_{-1}) \triangleright b_{0}\right)( S(b_{-2})h_{2}\triangleright c_{0})\right) \\ 
         & = S(c_{-1}k)\triangleright \big(\underbrace{\left(S(b_{-1}h_{2}) \triangleright b_{0}\right)}_{\in I_{\uB}}( S(b_{-2}h_{1})h_{3}\triangleright c_{0})\big). \\ 
        \end{align*}
        Thus, the latter is in $I_{\uB}$, as $I_{\uB}$ is a left $H$-module right $\uB$-ideal.  Moreover, 
        \begin{align*}
       \varpi_{H}\circ(\underline{\varepsilon}\otimes \mathrm{id}_{H}) \left((b\# h)\cdot_{\#} (c\#k)\right) & = (\underbrace{\underline{\varepsilon}(b)h}_{I_{H}}) \,\varepsilon(c)k\in I_{H},
        \end{align*}
        and therefore the claim holds. The other inclusion is trivial.
    \end{proof}
\end{lemma}

Observe, as an immediate consequence, that the universal calculus $\Omega^1_u(\underline{B}\# H)$ on $\underline{B}\# H$ does not coincide with the smash product calculus constructed with the universal calculi on $\underline{B}$ and $H$. Moreover, we observe the following.

\begin{corollary}
    The classifying ideal $I_{\#}$ of any smash product calculus over $\uB\#H$ as above satisfies the following conditions.
    \begin{enumerate}
        \item \begin{equation}
            \pi(I_{\#}) = I_H,
        \end{equation}
        where $\pi \colon \underline{B}\#H \to H, b\#h \mapsto \underline{\varepsilon}(b)h$ and $I_H$ is the classifying ideal of the bicovariant calculus $\Omega^1(H) \cong H \otimes H^+/I_H$ over $H$;
        \item \begin{equation}
            \underline{\mathrm{coAd}}_R^H(I_{\#}) \subseteq I_{\#} \otimes H,
        \end{equation}
        where $\underline{\mathrm{coAd}}_R^H(b\#h) := (b\#h)_2 \otimes \pi\left(S_{\#}((b\#h)_1)(b\#h)_2\right)$ for all $b\#h \in \underline{B}\#H$.
    \end{enumerate}
    \label{cor:forMajidLemma546}
\end{corollary}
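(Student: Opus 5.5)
Both parts reduce to the explicit description of $I_{\#}$ in Equation \eqref{eq:ideal_smash}, together with standard facts about the Hopf algebra projection $\pi\colon\uB\#H\to H$, $b\#h\mapsto\underline{\varepsilon}(b)h$. This $\pi$ is a Hopf algebra map: it is the counit on the $\uB$-factor, $\underline{\varepsilon}$ is a morphism in ${}^H_H\mathcal{YD}$, and
\[
(\pi\otimes\pi)\Delta_\#(b\#h)=\underline{\varepsilon}(b^1)\underline{\varepsilon}((b^2)_0)(b^2)_{-1}h_1\otimes h_2=\underline{\varepsilon}(b)\Delta(h).
\]

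For part 1, the inclusion $\pi(I_{\#})\subseteq I_H$ is read off directly. For $b\#h\in I_{\#}$ we have $\pi(b\#h)=\underline{\varepsilon}(b)h$. Since $\varepsilon_\#(b\#h)=0$, this element lies in $H^+$, so it equals $\underline{\varepsilon}(b)h^+$, which lies in $I_H$ by the second defining condition. For the reverse inclusion, take $k\in I_H\subseteq H^+$ and consider $1\#k$. It lies in $(\uB\#H)^+$. The first condition holds because $1^+=0$ gives $S(k)\triangleright 1^+=0\in I_{\uB}$. The second condition holds because $\underline{\varepsilon}(1)k^+=k\in I_H$. Hence $1\#k\in I_{\#}$ and $\pi(1\#k)=k$. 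Equivalently, one can use the splitting \eqref{eq:splitting_formula}: $\pi$ restricted to $1\#H^+$ is the identity onto $H^+$, and the $1\#H^+$ component of $\varpi_\#$ is $\varpi_H$.

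For part 2, I would first simplify $\underline{\mathrm{coAd}}_R^H$. Because $\pi$ is a Hopf algebra map, we have
\[
\underline{\mathrm{coAd}}_R^H(a)=a_2\otimes S(\pi(a_1))\pi(a_3),
\]
which is the right adjoint $A$-coaction pushed forward along $\pi$. On $b\#h$ this yields $b_0\#h_2\otimes S(b_{-1}h_1)h_3$ (this is exactly $\mathrm{coAd}_{H,\uB^+\#H}$ of Proposition \ref{prop:varpi_smash_is_H_comodule_map}), plus a term from the counit part.

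There are then two ways to finish. The conceptual route uses the commuting diagram of Proposition \ref{prop:chi_is_H_comodule}: $I_{\#}=\ker\varpi_\#$, and $\varpi_\#$ is a right $H$-comodule map (Proposition \ref{prop:varpi_smash_is_H_comodule_map}). The kernel of a comodule map is a subcomodule, so $\mathrm{coAd}(I_\#)\subseteq I_\#\otimes H$. The explicit route checks both defining conditions of \eqref{eq:ideal_smash} on the first leg $b_0\#h_2$, with the third leg fixed. For the first condition, apply $\chi$ and use its colinearity: the computation of $\Delta_{\uB^+}\circ\chi$ in Proposition \ref{prop:chi_is_H_comodule}, combined with \eqref{eq:coactionimageofchi} and the fact that $I_{\uB}$ is a subobject in ${}^H_H\mathcal{YD}$ (equivalently, a right $H$-subcomodule under \eqref{eq:coaction_on_Lambda1B}), puts the image in $I_{\uB}\otimes H$. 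For the second condition, the $H$-part is $\underline{\varepsilon}(b)\mathrm{coAd}_R(h^+)$, which lies in $I_H\otimes H$ because $\Omega^1(H)$ is bicovariant.

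The main obstacle is making the comodule-map argument rigorous on all of $(\uB\#H)^+$, not just on the two summands separately. The two summands of \eqref{eq:splitting_formula} must be stable under the pushed-forward coaction, and the $H$-coaction on $\Lambda^1_A$ must be compatible with it. I would verify this directly using the Yetter--Drinfeld identity in the form already used in Proposition \ref{prop:chi_is_H_comodule}.
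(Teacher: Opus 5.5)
Your proposal is correct and matches the paper's proof. Part 1 is read off from the description \eqref{eq:ideal_smash} of $I_{\#}$, with your element $1\#k$ giving the reverse inclusion. Your ``conceptual route'' for part 2 is exactly the paper's argument: $I_{\#}=\ker\varpi_{\#}$ is a subcomodule because $\varpi_{\#}$ is right $H$-colinear for the relative adjoint coaction. The obstacle you flag is not a real one. The identity $\Delta_H(\varpi_{\#}(a))=\varpi_{\#}(a_2)\otimes S(\pi(a_1))\pi(a_3)$ holds for every $a\in\uB\#H$, directly from $\Delta_H$ lifting $(\mathrm{id}\otimes\pi)\circ\Delta_{\#}$ and from $\pi\circ S_{\#}=S\circ\pi$, so no summand-by-summand verification is needed.
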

\begin{proof}
The first point immediately follows from Equation \ref{eq:ideal_smash}. Moreover, notice that by construction $I_{\#}$ appears as the kernel of $\varpi_{\#}\colon (\uB\# H)^{+}\to \Lambda^{1}_{A}$, which is a morphism of right $H$-comodules according to Proposition \ref{prop:varpi_smash_is_H_comodule_map}. In particular, the right $H$-comodule structure on $(\uB \#H)^{+}$ is precisely given in terms of the relative right adjoint $H$-coaction, which implies that the second point holds. 
\end{proof}

 \begin{remark}\label{rmk:soldering_form_and_ideal}
    An explicit description of the Maurer--Cartan form on $A$ is now given by 
    \begin{equation} 
    \varpi_\#= \psi\circ \underline{\varpi}\circ \chi+\underline{\varepsilon}\otimes \varpi_H.
    \end{equation}
    Moreover, considering the commutative diagram in Proposition \ref{prop:chi_is_H_comodule}, since $I_{\uB}$ is the kernel of $\underline{\varpi}$ and $\chi$ is a morphism of right $H$-comodules, we have that $\uB^+/I_{\uB} \cong \uB^+\#H/(I_\#\cap\uB^+\#H)$ as right $H$-comodules. 
    \qed 
\end{remark}

In the last Section we have exploited the rich algebraic structure of $\uB \# H$, which arises from the assumption that $\uB$ is a braided Hopf algebra in the Yetter–Drinfeld category ${}^{H}_{H}\mathcal{YD}$, where $H$ is a Hopf algebra in ${}_{\Bbbk}\mathrm{Vec}$. This structure provides a coherent interplay between the braided and classical aspects of the construction, and is ultimately responsible for the geometric features exhibited by the resulting model. To make these structural relations more transparent, we give a diagrammatic representation of the various algebraic components of the model:

\[
\begin{tikzcd}[column sep=0.7em, row sep=1.6em]
                                                                                                                                        & \Lambda^{1}_{H}                                                                                              &  &                                                                                  & H^{+}/I_{H} \arrow[lll, "\cong" description]                                                    \\
{\Lambda^{1}_{H,u}} \arrow[ru, two heads]                                                                                               &                                                                                                              &  & H^{+} \arrow[ru, two heads] \arrow[lll, "\cong" description]                     &                                                                                                 \\
                                                                                                                                        & {}^{\mathrm{co}A}(\Lambda^{1}_{\uB}\otimes H) \oplus 1\otimes \Lambda^{1}_{H} \arrow[dd, dashed] \arrow[uu, dashed] &  &                                                                                  & (\uB^{+}\# H \oplus 1\# H^{+})/I_{\#} \arrow[dd] \arrow[uu] \arrow[lll, "\cong" description, dashed] \\
{{}^{\mathrm{co}A}(\Lambda^{1}_{\uB,u}\otimes H) \oplus 1\otimes \Lambda^{1}_{H,u}} \arrow[ru, two heads, dashed] \arrow[dd] \arrow[uu] &                                                                                                              &  & \uB^{+}\# H \oplus 1\# H^{+} \arrow[ru, two heads] \arrow[uu] \arrow[dd] \arrow[lll]  &                                                                                                 \\
                                                                                                                                        & \Lambda^{1}_{\underline{B}}                                                                                  &  &                                                                                  & \underline{B}^{+}/I_{\underline{B}} \arrow[lll, "\cong" description, dashed]                    \\
{\Lambda^{1}_{\underline{B},u}} \arrow[ru, two heads, dashed]                                                                           &                                                                                                              &  & \underline{B}^{+} \arrow[lll, "\cong" description] \arrow[ru, two heads] &                                                                                                
\end{tikzcd}
\]
    
\section{Quantum framings and quantum $\textrm{G}$-structures}
\label{section:framings_and_G_structures}
In noncommutative geometry there is, in general, no intrinsic notion of a frame or a frame bundle. However, in classical differential geometry the frame bundle is equivalent to a principal $\rGL(n)$-bundle equipped with a horizontal, $\rGL(n)$-equivariant, and $\Bbbk^n$-valued $1$-form, inducing an isomorphism between the tangent bundle and the associated bundle obtained by the standard representation of $\textrm{GL}(n)$ on $\Bbbk^n$. Similarly, given a $\rG$-structure $\pi\colon P\to M$, the standard representation of $\rGL(n)$ restricts to a representation of $\rG \subset \rGL(n)$ on $\Bbbk^n$. It follows that the tangent bundle can be recovered as the associated vector bundle $\mathrm{T}M \cong P\times_{\rG}\Bbbk^n$, where the isomorphism is induced by the solder form (more details on this classical construction can be found in Appendix \ref{section:classicalpicture}). This observation suggests a more intrinsic point of view, which is the one that we generalise to the noncommutative setting: rather than starting from the tangent bundle and constructing a frame bundle, we considered the principal bundle together with the solder form as the fundamental geometric data and we recovered the tangent bundle as an associated vector bundle. More in detail, we consider the following notion (see e.g. \cite[Example~5.67]{bm}).

\begin{definition}
Let $V$ be a finite-dimensional $\Bbbk$-vector space equipped with a left $\rG$-action, and let $\pi\colon P\to M$ be a principal $\rG$-bundle. A \emph{frame resolution} of the tangent bundle consists of a horizontal, $\rG$-equivariant $V$-valued $1$-form  $\theta\in\Omega^1(P,V)$ inducing an isomorphism of vector bundles $\mathrm{T}M \cong P\times_{\rG}V$. 
\label{def:frameresolution}
\end{definition}

In classical differential geometry, when considering an $n$ dimensional manifold, the canonical choice is $V=\Bbbk^n$ while in the quantum setting the vector space $V$ has to be considered as part of the data of the mathematical structure, even if, as we will discuss in the next section, in certain setting this data is intrinsic in some sense.

In this Section we thus introduce the notion of a quantum framing; we then focus our analysis on the quantum Maurer--Cartan form studied in Section \ref{subsection:Maurer_Cartan}, and use it to define and study quantum $\rG$-structures on the quantum principal bundles arising from Radford--Majid biproducts.
In Section \ref{section:framings} we show that such a quantum principal bundle is naturally equipped with a frame resolution induced by the quantum Maurer--Cartan form and we compare this result with the picture developed in \cite{bm}.
In Section \ref{section:G_structures} we introduce the notion of a quantum $\mathrm{G}$-structure, and study its basic properties in terms of reductions of the underlying quantum principal bundle, also giving explicit examples of $\mathrm{G}$-structures.

\subsection{Quantum frame resolutions}
\label{section:framings}

In order to generalise the notion of frame resolution to the algebraic setting we need then a consistent notion of what the noncommutative analogue of an associated vector bundle is. 

Let $B=A^{\mathrm{co}H} \subseteq A$ be a quantum principal bundle and let $W$ be a left $H$-comodule algebra with coaction ${}_W\Delta \colon W \to H \otimes W$. Recall the definition of the balanced cotensor product: 
 \begin{equation}
    E := A \square_{H} W = \big\{ a \otimes w \in A \otimes W \; \big| \; a_0\otimes a_1 \otimes w = a \otimes  w_{-1}\otimes w_{0} \big\}.
\label{eq:quantumassociatedvectorbundle}
    \end{equation}
  The latter is a left $B$-module with respect to the $B$-action defined by the multiplication on the first factor. 
\noindent It is straightforward to check that $A \square_{H} W$ is a subalgebra of $A \otimes W$, and moreover that $B$ is a subalgebra of $A \square_{H} W$.
Alternatively, we may consider the following construction.
\begin{definition}
    Let $B=A^{\mathrm{co}H} \subseteq A$ be a quantum principal bundle and let $V$ be a right $H$-comodule with coaction $\Delta_V \colon V \to V \otimes H$. The tensor product $A \otimes V$ is naturally endowed with a right $H$-comodule structure given by
   \begin{equation}  
        \label{eq:coactionforPotimesV}
   \begin{aligned}
      \Delta_{A \otimes V} \colon A \otimes V &\to A \otimes V \otimes H,\\
            a \otimes v \,&\mapsto a_0 \otimes v_0 \otimes a_1v_1,
        \end{aligned}
    \end{equation}
    for any $a \in A$ and $v \in V$.
    We then consider the left $B$-module
    \begin{equation}
    \label{eq:quantum_associated_bundle}
        \mathcal{E} := (A \otimes V)^{\mathrm{co}H},
    \end{equation}
    which we call \emph{quantum associated bundle} to the QPB $B:=A^{\mathrm{co}H}\subseteq A$. Sometimes, we denote the latter by $\mathcal{E}(B, V, H)$ when needed, or simply by $\mathcal{E}$. 
    \label{def:associatedquantumvectorbundleHop}
\end{definition}      
As throughout the manuscript we always assume any Hopf algebra $H$ to have an invertible antipode, we can induce a right $H$-comodule structure on any left $H$-comodule $V$ by defining $\Delta_V(v)=v_{0}\otimes S^{-1}(v_{-1}) $.  Under this assumption, the  notions of quantum associated bundle and balanced cotensor product are isomorphic, namely $(A\otimes V)^{\mathrm{co}H}\cong A\square_H V$ as left $B$-modules.

Consider now a right $H$-covariant first order differential calculus $(\Omega^1(A), \mathrm{d})$  on $A$, and denote by $(\Omega^1(B),\mathrm{d})$ the pullback calculus on $B$ induced by the canonical injection. We denote the horizontal forms by $A\Omega^1(B)$, specialising our attention to the cases in which it is an $A$-bimodule. In the quantum setting the notion of horizontal and equivariant $1$-forms is captured by the \emph{right strongly tensorial} $1$-forms, namely right $H$-comodule maps $ V \to A \Omega^1(B)$. Following \cite{BrzMaj} we then state the following result, yielding the noncommutative generalisation of the classical isomorphism between certain associated vector bundles to the frame bundle and the tangent space of the manifold. 
\begin{proposition}We have a bijective correspondence: 
\[
\left\{
\begin{array}{c}
\emph{Right} \\
\emph{strongly tensorial} \\
\emph{1-forms} \ \theta
\end{array}
\right\}
\quad\Longleftrightarrow\quad
\left\{
\begin{array}{c}
\emph{left $B$-module maps} \\
\cdot \circ (\mathrm{id}_A \otimes \theta) \colon \mathcal{E}
\longrightarrow
\Omega^1(B)
\end{array}
\right\}.
\]
    \label{pro:thecorrespondence2}
\end{proposition}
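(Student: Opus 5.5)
The plan is to construct the two directions of the correspondence explicitly and then check that they are mutually inverse. Given a right strongly tensorial $1$-form $\theta\colon V\to A\Omega^1(B)$, i.e.\ a right $H$-colinear map into the horizontal forms, define
\[
s_\theta:=\cdot\circ(\mathrm{id}_A\otimes\theta)\colon \mathcal{E}=(A\otimes V)^{\mathrm{co}H}\to A\Omega^1(B),\qquad a\otimes v\mapsto a\,\theta(v).
\]
This map is clearly left $B$-linear, since $B$ acts on $\mathcal{E}$ by multiplication in the first factor and $A\Omega^1(B)$ is a left $A$-module. The first key step is to show that $s_\theta$ lands in $\Omega^1(B)$. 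Right $H$-covariance of $\Omega^1(A)$ makes the multiplication $A\otimes A\Omega^1(B)\to A\Omega^1(B)$ right $H$-colinear for the diagonal coaction. Together with the colinearity of $\theta$, this shows that the image of a coinvariant element $a\otimes v$ is coinvariant:
\[
a_0\theta(v_0)\otimes a_1v_1=a\,\theta(v)\otimes 1 .
\]
Hence $s_\theta(\mathcal{E})\subseteq A\Omega^1(B)\cap\Omega^1(A)^{\mathrm{co}H}=\Omega^1(B)$, using the identity $\Omega^1(B)=\Omega^1_{\mathrm{hor}}(A)\cap\Omega^1(A)^{\mathrm{co}H}$ recalled before diagram \eqref{equation:diagram_smash}.

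For the converse, start from a left $B$-module map $s\colon\mathcal{E}\to\Omega^1(B)$ and recover $\theta$ through the Hopf--Galois structure. Write the translation map as $\tau(h)=\chi^{-1}(1\otimes h)=h^{\langle1\rangle}\otimes_B h^{\langle2\rangle}$. For $v\in V$, the element $v_{\langle0\rangle}\otimes\ldots$ is built so that $h^{\langle2\rangle}\otimes v$-type expressions become coinvariant; concretely, the element
\[
S(v_1)^{\langle2\rangle}\otimes v_0
\]
lies in $\mathcal{E}$ (made rigorous in $A\otimes_B\mathcal{E}$). Then define
\[
\theta_s(v):=S(v_1)^{\langle1\rangle}\,s\bigl(S(v_1)^{\langle2\rangle}\otimes v_0\bigr).
\]
Left $B$-linearity of $s$ makes this well defined over $\otimes_B$. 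Its image lies in $A\Omega^1(B)$, and right $H$-colinearity follows from the standard colinearity properties of $\tau$.

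Finally, check that the two assignments are mutually inverse. The relations needed are $\theta_{s_\theta}=\theta$, which follows from $h^{\langle1\rangle}h^{\langle2\rangle}=\varepsilon(h)$ together with colinearity of $\theta$, and $s_{\theta_s}=s$. The latter reduces to the identity $a_0\tau(a_1)=1\otimes_B a$ applied to coinvariant $a\otimes v$, combined with $B$-linearity of $s$. Equivalently, one can invoke the equivalence, valid for faithfully flat Hopf--Galois extensions, between $B$-modules and $H$-covariant $A$-modules (Schneider, cited in Section \ref{subsection:Hopf--Galois_extensions_and_smash_product_algebras}). Under this equivalence, $A\otimes_B\mathcal{E}\cong A\otimes V$ and $A\otimes_B\Omega^1(B)\cong A\Omega^1(B)$, so that $B$-linear maps $\mathcal{E}\to\Omega^1(B)$ correspond to covariant $A$-linear maps $A\otimes V\to A\Omega^1(B)$. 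These are in turn determined by their restriction to $V$, which is a colinear map, i.e.\ a strongly tensorial form.

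The main obstacle is the converse direction, specifically showing that the reconstructed $\theta_s$ is well defined and colinear. This is where faithful flatness is required, both to identify $A\otimes_B\mathcal{E}\cong A\otimes V$ and $A\otimes_B\Omega^1(B)\cong A\Omega^1(B)$. I would try to use the categorical equivalence directly rather than manipulate translation-map identities by hand.
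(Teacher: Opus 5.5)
The paper gives no proof of this proposition; it is quoted from Brzezi\'nski--Majid. Your argument therefore has to stand on its own, and its categorical version does. Because a quantum principal bundle is faithfully flat by definition, Schneider's equivalence between left $B$-modules and relative Hopf modules in ${}_A\mathrm{Mod}^H$, $N\mapsto A\otimes_B N$, yields $\mathrm{Hom}_B(\mathcal{E},\Omega^1(B))\cong\mathrm{Hom}_A^H(A\otimes V,A\Omega^1(B))\cong\mathrm{Hom}^H(V,A\Omega^1(B))$. Chasing $\theta$ back through these bijections gives $a\otimes v\mapsto a\,\theta(v)$ restricted to coinvariants, i.e.\ exactly $s_\theta$. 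You should say this explicitly, since it is what identifies your bijection with the one in the statement. The argument also proves the stronger reading, in which every left $B$-linear map $\mathcal{E}\to\Omega^1(B)$ is some $s_\theta$.

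The isomorphism $A\otimes_B\Omega^1(B)\cong A\Omega^1(B)$ is the counit of the equivalence at $A\Omega^1(B)$. It is equivalent to $(A\Omega^1(B))^{\mathrm{co}H}=\Omega^1(B)$, the identity you also use in the forward direction. You need not import that identity. The equivalence preserves epimorphisms, and $(-)^{\mathrm{co}H}$ turns the multiplication epimorphism $A\otimes_B\Omega^1(B)\to A\Omega^1(B)$ into the inclusion $\Omega^1(B)\hookrightarrow(A\Omega^1(B))^{\mathrm{co}H}$, which is therefore onto. So the whole proof rests on faithful flatness alone.

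Your explicit translation-map formula, however, is wrong as written: the antipode there must be $S^{-1}$, not $S$. With $\tau(h)=\chi^{-1}(1\otimes h)=h^{\langle1\rangle}\otimes_B h^{\langle2\rangle}$ one has $h^{\langle1\rangle}\otimes_B h^{\langle2\rangle}{}_0\otimes h^{\langle2\rangle}{}_1=h_1{}^{\langle1\rangle}\otimes_B h_1{}^{\langle2\rangle}\otimes h_2$. Hence the diagonal coaction sends $S(v_1)^{\langle1\rangle}\otimes_B S(v_1)^{\langle2\rangle}\otimes v_0$ to $S(v_3)^{\langle1\rangle}\otimes_B S(v_3)^{\langle2\rangle}\otimes v_0\otimes S(v_2)v_1$. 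The identity $S(h_2)h_1=\varepsilon(h)$ holds for all $h$ only when $S^2=\mathrm{id}$. So this element is in general not in $A\otimes_B\mathcal{E}$; it already fails for Sweedler's $E_1$ with $A=V=E_1$ and $v=x$. Likewise, the colinearity computation would produce $\theta_s(v_0)\otimes S^2(v_1)$ instead of $\theta_s(v_0)\otimes v_1$.

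The correct definition is $\theta_s(v)=S^{-1}(v_1)^{\langle1\rangle}\,s\bigl(S^{-1}(v_1)^{\langle2\rangle}\otimes v_0\bigr)$. With it:
\begin{itemize}
\item coinvariance follows from $S^{-1}(h_2)h_1=\varepsilon(h)$;
\item colinearity follows from $h^{\langle1\rangle}{}_0\otimes_B h^{\langle2\rangle}\otimes h^{\langle1\rangle}{}_1=h_2{}^{\langle1\rangle}\otimes_B h_2{}^{\langle2\rangle}\otimes S(h_1)$;
\item $s_{\theta_s}=s$ follows from the identity $a_0\otimes v\otimes a_1=a\otimes v_0\otimes S^{-1}(v_1)$, valid for $a\otimes v\in\mathcal{E}$, together with $a_0\tau(a_1)=1\otimes_B a$ and $B$-linearity of $s$.
\end{itemize}
Since you name the categorical equivalence as your actual route, the proof goes through. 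The explicit formula should still be corrected before it is used.
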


In light of the last proposition, we define a quantum frame resolution of $\Omega^1(B)$, \emph{i.e.}, the quantum analogue of the frame resolution of the tangent bundle, following \cite{Majid_braided_group_riemannian}.
\begin{definition}\label{def:QFR}
    A \emph{quantum frame resolution} of $(B, \Omega^1(B))$ is the datum of:
    \begin{enumerate}
        \item a quantum principal bundle $B := A^{\mathrm{co}H} \subseteq A$, 
        \item a  right $H$-comodule $(V,\Delta_V)$,
        \item a right strongly tensorial form $\theta \colon V \to A\Omega^1(B)$ such that, denoting with $\triangleright\colon A\otimes \Omega^{1}(A)\to \Omega^{1}(A)$ the $A$-action on $\Omega^{1}(A)$, the map 
    \begin{equation}
    \begin{aligned}
        s_{\theta} := \cdot \circ (\mathrm{id}_A \otimes \theta) \colon \mathcal{E}(B,V,H) & \to \Omega^1(B),\\    
        a \otimes v & \mapsto a\triangleright \theta(v),
     \label{eq:toevidentiatethesthetamap}
     \end{aligned}
    \end{equation}
    is an isomorphism in ${}_{B}\mathrm{Mod}$.
    \end{enumerate}
    We denote such a quantum frame resolution by the tuple $(A,H,V,\theta)$.
    \label{def:quanutumframeresolution}
\end{definition}

We now specialise to the setting studied in Section \ref{section:smash_product_calculi}, where $A=\uB\#H$, for $\uB\in{}^{H}_{H}\mathcal{YD}$ a braided Hopf algebra. Independently from the last construction, we observe that, in this framework, a canonical and natural isomorphism of the quantum associated vector bundle $E=A\square_{H}W$ with the base 1-forms always arises as a consequence of choosing as $W$ the object $\uB^{+}/I_{\uB}\in {}^{H}_{H}\mathcal{YD}$. 

\begin{proposition}\label{prop:frame_resolution}
    The map   
    \begin{equation}
    \label{eq:definition_of_frame_resolution}
    \begin{aligned}
        \mathfrak{s}\colon A\square_{H} \left(\uB^{+}/I_{\uB}\right) & \to \Omega^{1}(\uB),\\ 
        (b\# h)\otimes [c] & \mapsto \varepsilon_{H}(h)b\,\underline{\varpi}([c]) .
        \end{aligned}
    \end{equation}
    is an isomorphism in ${}_{\uB}\mathrm{Mod}$.
\begin{proof}
The claim holds as as a consequence of the Takuchi equivalence of relative Hopf modules (see \cite{Takeuchi}, and \cite{Reamonn} for a more recent account). More precisely, given a quantum homogeneous space $B\subseteq A$ with structure group $H$, there is an equivalence of categories ${}_{B}^{A}\mathrm{Mod}_{B}\cong {}^{H}\mathrm{Mod}_{B}$. Specialising to our case of study, we have that the left covariant FODC $\Omega^{1}(\uB)$ is naturally an object in ${}_{\uB}^{\uB\# H}\mathrm{Mod}_{\uB}$ and the inverse of the unit of the adjunction is given by Equation \eqref{eq:definition_of_frame_resolution}. More explicitly, the inverse $\mathfrak{s}^{-1}$ is essentially given by the Hopf--Galois map of the bundle, i.e., 
\begin{equation}
    \begin{aligned}\mathfrak{s}^{-1}\colon \Omega^{1}(\uB) & \to A\square_{H}\uB^{+}/I_{\uB}\\ 
    b\,\mathrm{d}c &\mapsto (b\# 1)\cdot_{\#}(c\# 1)_{1} \otimes (c\# 1)_{2}.
    \end{aligned}
\end{equation}
To see explicitly that the latter is indeed the inverse of $\mathfrak{s}$ in ${}_{\uB}\mathrm{Mod}$, consider the composition and deduce that 
\[  \mathfrak{s}^{-1}\circ \mathfrak{s}(b\#h\otimes c) = \varepsilon_{H}(h)b\# c_{-1}\otimes c_{0},
\]
then use that $\varepsilon_{H}(h)c_{-1}\otimes c_{0} = c\otimes h$, as we started from the balanced cotensor product over $H$. The other direction is obvious, and as $\mathfrak{s}\in{}_{\uB}\mathrm{Mod}$ its inverse is also a morphism in the same category. 
\end{proof}
\end{proposition}

\begin{remark}\label{rem:solderingremark}
    We show that the notion of frame resolution discussed in \cite{BrzMaj} coincides with the one proposed in Equation \eqref{eq:definition_of_frame_resolution}, once a coherent \emph{soldering form} and right $H$-comodule structure is chosen on $\uB^{+}/I_{\uB}$.  
    
    We consider the quantum associated vector bundle $(A\otimes V)^{\mathrm{co}H}$, where $A=\uB\#H$ and the right $H$-comodule structure on $V:=\uB^{+}/I_{\uB}$ is given by $\Delta_{\uB^{+}/I_{\uB}}(b)=b_{0}\otimes S(b_{-1})$. Let $\Omega^1_\#(\uB\#H)$ be a right $H$-covariant and left $\uB\#H$-covariant smash product calculus on $\uB\#H$, with classifying ideal $I_\#$, and let $\Omega^1_\#(\uB\#1)$ be the pullback calculus to the homogeneous space $\uB\#1$. Furthermore, define the map 
    \[
    \begin{aligned}
    \theta \colon \uB^{+}/I_{\uB} &\to A\,\Omega^{1}(\uB), \\ 
     [b] & \mapsto S(b_{-1})\triangleright \left( \underline{\varpi}([b_{0}]) \otimes  1\right),
    \end{aligned}
    \]
    and consider the $\uB$-module map 
    \[ 
    \begin{aligned}
    s_{\theta}\colon (A\otimes (\uB^{+}/I_{\uB}))^{\mathrm{co}H} & \to \Omega^1(\uB), \\ 
    (b\# h,[c]) & \mapsto  (b\# h)\,\theta([c]). 
    \end{aligned}
    \]
    As $b\#h\otimes [c] \in (A\otimes (\uB^{+}/I_{\uB}))^{\mathrm{co}H}$, we have 
    \begin{align*}
    b\# h \otimes c\#1 \otimes 1\#1 & = b\# h_{1} \otimes (c_{0}\# 1) \otimes h_{2}\,S(c_{-1})\\  \varepsilon(h)b\otimes c\# 1 \otimes 1\#1 &= b\otimes c_{0}\# 1 \otimes h\, S(c_{-1})  \\ 
    \varepsilon(h)b\otimes c\# 1 \otimes 1\#1 \otimes 1\#1 &= b\otimes c_{0}\# 1 \otimes h_{1}\, S(c_{-1}) \otimes h_{2}\, S(c_{-2}),
    \end{align*}
    where we first applied the counit on the second tensor factor, and then applied the coproduct on the last tensor factor. We now obtain 
    \begin{align*}
    s_\theta(b\# h\otimes [c]) & = b\, \underline{\varpi}(h_{1}S(c_{-1})\triangleright [c_{0}] )\otimes  h_{2}S(c_{-2}) \\ 
    & = \varepsilon_{H}(h) b\,\underline{\varpi}([c])\\
    & = \mathfrak{s}(b\#h\otimes [c]). 
    \end{align*}
    Finally, notice that in \cite[Theorem 5.77]{bm} there is a canonical choice of the right $H$-comodule $V$, for any quantum homogeneous principal bundle once fixed a differential structure. In particular, in our case, this would be given by the quotient $\uB^+\#1/(I_\# \cap \uB\#1)$. However, one verifies that 
    \[
        \begin{split}
            \uB^+\#1/(I_\#\cap\uB\#1) &\to \uB^+\#H/(I_\#\cap \uB^+\#H),\\
            [b\#1] &\mapsto [S(b_{-1})_1\triangleright b_0\#S(b_{-1})_2],\\
            [S^{-1}(h) \triangleright b\#1] &\mapsfrom [b\#h],
        \end{split}
    \]
    provides an isomorphism in $\mathrm{Mod}^{H}\cap \mathrm{Mod}_{H}$ with respect to the usual $H$-(co)module structures aforementioned. In addition, the regularity conditions in \cite[Lemma 5.46]{bm} are automatically satisfied, as already discussed in Corollary \ref{cor:forMajidLemma546}.
    \qed 
\end{remark}

\subsection{Quantum reductions and quantum $\textrm{G}$-structures}
\label{section:G_structures}
In the classical picture, as discussed in more detail in Appendix \ref{section:classicalpicture}, a G-structure is defined as a reduction of the frame bundle, or, equivalently, as a principal  G-bundle $P \to M$ together with a horizontal and equivariant $1$-form $\Theta \in \Omega^1_{\mathrm{hor}}(P,\mathbb{R}^n)^\mathrm{G}$. In particular, the soldering form $\Theta$ is obtained as the pullback of the canonical form in Equation \ref{eq:definitioncanonicalform} along the bundle reduction. We now present an analogous construction in noncommutative differential geometry. 

Consider a principal $H$-comodule algebra $B\subseteq A$ with right $H$-coaction given by $\Delta_{A,H}\colon A \to A \otimes H$. Given a Hopf ideal $J\subset H$, with corresponding Hopf algebra surjection $\pi \colon H \to H_0 := H/J$, we observe that $A$ becomes an $H_0$-comodule algebra with respect to the right $H_0$-coaction $\Delta_{A,H_0} := (\mathrm{id}_A \otimes \pi) \circ \Delta_{A,H}$.  Moving along the lines of  \cite{pagani,GiovanniThesis,gunther,hajac}, we give the following definition.

\begin{definition}
Let $B := A^{\mathrm{co}H} \subseteq A$ be a principal $H$-comodule algebra and $\pi \colon H \to H_0:= H/J$ a surjective Hopf algebra morphism as before. 
Let $B_0=A_0^{\mathrm{co}H_0}\subseteq A_0$ be a principal $H_{0}$-comodule algebra. 
We say that $A_0$ is an \emph{algebraic reduction} or \emph{quantum reduction} of $A$, if there exists a surjective morphism of $H_0$-comodule algebras $\phi \colon A \to A_0$ such that
\begin{equation}
    \phi|_B \colon B \stackrel{\cong}{\longrightarrow} B_0
    \label{eq:restrictionofphi}
\end{equation}
is an isomorphism. Note that $A$ carries the induced $H_0$-coaction $\Delta_{A,H_{0}} := (\mathrm{id}_A \otimes \pi) \circ \Delta_{A,H} \colon A \to A \otimes H_0$.

\label{def:quantumreduction}
\end{definition}

Consider the algebraic reduction induced by a surjective right $H_0$-comodule algebra morphism $\phi \colon A \to A_0$. Let $(\Omega^1(A),\mathrm{d})$ be a right $H$-covariant FODC on $A$, with corresponding right $H$-coaction $\Delta_{\Omega^1(A)}$, and consider on $A_0$ the quotient calculus $(\Omega^1(A_0), \mathrm{d}_0)$ induced by $\phi$, as in Definition \ref{def:quotient_and_pullback_calculi}. We denote by
\begin{equation}
    \begin{split}
        \Phi \colon &\Omega^1(A) \to \Omega^1({A_0}),\\
        &a\mathrm{d}a^{\prime} \mapsto \phi(a)\mathrm{d}_{0}\phi(a^{\prime}),
    \end{split}
    \label{eq:PhiGamma}
\end{equation}
the corresponding morphism of FODCi. Note that if $(\Omega^1(A),\mathrm{d})$ is $H$-covariant, the quotient calculus $(\Omega^1(A_0), \mathrm{d}_0)$ is $H_0$-covariant by construction. We denote the corresponding $H_0$-coaction by $\Delta_{\Omega^1(A_0)}$. Moreover, we consider the pullback FODCi $(\Omega^1(B),\mathrm{d})$ on $B$ and $(\Omega^1(B_0),\mathrm{d}_0)$ on $B_0$, induced by the inclusions $\iota \colon B \hookrightarrow A$ and 
$\iota_0 \colon B_0 \hookrightarrow A_0$.
Since $\phi$ is an isomorphism between the base spaces $B$ and $B_0$, the restriction of $\Phi$ to $\Omega^1(B) \subseteq \Omega^1(A)$ is an isomorphism of FODCi.
\begin{definition}
    Let $(A,H,V,\theta)$ be a quantum frame resolution and $A_0$ be a quantum reduction of $A$ to the Hopf algebra $H_0 := H/J$, with $J$ a Hopf ideal. We call the tuple $(A_0,H_0,V,\theta_0)$, with
    \begin{equation}
        \theta_0 := \Phi \circ \theta,
        \label{eq:theta0definition}
    \end{equation}
    a \emph{quantum} G\emph{-structure} of $(A,H,V,\theta)$.
    \label{def:quantumGstructure}
\end{definition}
Note that the above definition is well-posed only if $\theta_0$ is right $H_0$-strongly tensorial. Let us prove in the following lemma that this is the case.
\begin{lemma}
    Let $\theta \colon V \to A\Omega^1(B)$ a right $H$-strongly tensorial form. Then $\theta_0 := \Phi \circ \theta$ is right $H_0$-strongly tensorial, that is, it has values in $A_0\Omega^1(B_0)$ and it satisfies
    \begin{equation}
        \Delta_{\Omega^1(A_0)} \circ \theta_0 = (\theta_0 \otimes \mathrm{id}_{H_0}) \circ \Delta_{V,H_0},
    \end{equation}
    where $\Delta_{V,H_0} := (\mathrm{id}_V \otimes \pi) \circ \Delta_V$.
    \label{lem:theta0isequivariant}
\end{lemma}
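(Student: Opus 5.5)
The argument splits into two parts: first show that $\theta_0$ takes values in $A_0\Omega^1(B_0)$, then show that it is right $H_0$-colinear. Both parts only use that $\Phi$ is the morphism of calculi induced by $\phi$ and that $\phi$ intertwines the relevant coactions.

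\emph{Values in $A_0\Omega^1(B_0)$.} Take $v\in V$ and write $\theta(v)=\sum_i a_i\,\omega_i$ with $a_i\in A$ and $\omega_i=\sum_j b_{ij}\mathrm{d}b'_{ij}\in\Omega^1(B)$, where $b_{ij},b'_{ij}\in B$. Since $\Phi$ is left $A$-linear along $\phi$, i.e. $\Phi(a\omega)=\phi(a)\Phi(\omega)$, and $\Phi(b\,\mathrm{d}b')=\phi(b)\mathrm{d}_0\phi(b')$, we get
\[
\theta_0(v)=\sum_{i,j}\phi(a_i)\,\phi(b_{ij})\,\mathrm{d}_0\phi(b'_{ij}).
\]
By \eqref{eq:restrictionofphi} we have $\phi(B)=B_0$. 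Hence each $\phi(b_{ij})\mathrm{d}_0\phi(b'_{ij})$ lies in the pullback calculus $\Omega^1(B_0)$, and $\theta_0(v)\in A_0\Omega^1(B_0)$.

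\emph{Colinearity.} The key claim is that $\Phi$ intertwines coactions:
\[
\Delta_{\Omega^1(A_0)}\circ\Phi=(\Phi\otimes\pi)\circ\Delta_{\Omega^1(A)}.
\]
The $H_0$-coaction on the quotient calculus is induced "by construction". I take this to mean it is defined on generators by $\phi(a)\mathrm{d}_0\phi(a')\mapsto \phi(a_0)\mathrm{d}_0\phi(a'_0)\otimes\pi(a_1a'_1)$, that is, as the descent of $(\Phi\otimes\pi)\circ\Delta_{\Omega^1(A)}$ along the surjection $\Phi$. On a generator $a\,\mathrm{d}a'$ the identity then follows directly: $\Delta_{\Omega^1(A)}(a\mathrm{d}a')=a_0\mathrm{d}a'_0\otimes a_1a'_1$, and $\phi$ is a morphism of $H_0$-comodule algebras with respect to $(\mathrm{id}\otimes\pi)\circ\Delta_{A,H}$.

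With this in hand, the colinearity of $\theta_0$ is a short computation:
\[
\Delta_{\Omega^1(A_0)}\circ\Phi\circ\theta=(\Phi\otimes\pi)\circ\Delta_{\Omega^1(A)}\circ\theta=(\Phi\otimes\pi)\circ(\theta\otimes\mathrm{id}_H)\circ\Delta_V=(\theta_0\otimes\mathrm{id}_{H_0})\circ\Delta_{V,H_0}.
\]
The middle equality is the right $H$-strong tensoriality of $\theta$.

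\emph{Main obstacle.} The delicate step is well-definedness: one must check that $(\Phi\otimes\pi)\circ\Delta_{\Omega^1(A)}$ vanishes on $\ker\Phi$, which is the subbimodule $N$ generated by $I\,\mathrm{d}A+A\,\mathrm{d}I$ with $I=\ker\phi$. I would argue this as follows.

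1. The map $(\mathrm{id}\otimes\pi)\circ\Delta_{A,H}$ is an algebra map that makes $\phi$ colinear. Therefore
\[
(\mathrm{id}\otimes\pi)\circ\Delta_{A,H}(I)\subseteq I\otimes H_0,
\]
since for $x\in I$ we have $(\phi\otimes\mathrm{id})(x_0\otimes\pi(x_1))=\Delta_{A_0}(\phi(x))=0$.
2. For a generator $x\,\mathrm{d}a$ with $x\in I$, its image under $\Delta_{\Omega^1(A)}$ followed by $\mathrm{id}\otimes\pi$ is $x_0\mathrm{d}a_0\otimes\pi(x_1a_1)=x_0\mathrm{d}a_0\otimes\pi(x_1)\pi(a_1)$. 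By step 1 the left legs $x_0$ can be taken in $I$, so this lies in $N\otimes H_0$. The case $a\,\mathrm{d}x$ is identical.
3. Bimodule compatibility of $\Delta_{\Omega^1(A)}$ extends this from generators to all of $N$.

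Applying $\Phi\otimes\mathrm{id}$ then kills $N\otimes H_0$, so the coaction descends and the claimed intertwining holds.
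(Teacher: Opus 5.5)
Your proof is correct and follows essentially the same route as the paper. Horizontality holds because $\Phi$ maps $A\Omega^1(B)$ into $A_0\Omega^1(B_0)$, and colinearity follows by composing the $H$-colinearity of $\theta$ (pushed along $\pi$) with the $H_0$-colinearity of $\Phi$. The one addition is that you explicitly check that $(\Phi\otimes\pi)\circ\Delta_{\Omega^1(A)}$ vanishes on $\ker\Phi=N$, so that the induced coaction descends; the paper simply takes this as holding by construction and calls $\Phi$ ``clearly'' a morphism of right $H_0$-comodules.
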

\begin{proof}
    By construction, the restriction of $\Phi$ to $A\Omega^1(B)$ has values in $A_0\Omega^1(B_0)$, and thus $\theta_0$ is horizontal. Moreover, $H$-equivariance of $\theta$ immediately implies $H_0$-equivariance of $\theta$ with respect to the coaction $\Delta_{\Omega^1(A),H_0}:=(\mathrm{id}_{\Omega^1(A)} \otimes \pi) \circ \Delta_{\Omega^1(A)}$, since
    \[
        \Delta_{\Omega^1(A),H_0} \circ \theta = (\mathrm{id}_V \otimes \pi) \circ \Delta_{\Omega^1(A)} \circ \theta = (\theta \otimes \pi) \circ \Delta_V = (\theta \otimes \mathrm{id}_{H_0}) \circ \Delta_{V,H_0}.
    \]
    Finally, we have that
    \[
        \begin{split}
            \Delta_{\Omega^1(A_0)} \circ \theta_0& = \Delta_{\Omega^1(A_0)} \circ \Phi \circ \theta =  (\Phi \otimes \mathrm{id}_{H_0}) \circ \Delta_{\Omega^1(A),H_0} \circ \theta \\
            &= (\Phi \otimes \mathrm{id}_{H_0}) \circ (\theta \otimes \mathrm{id}_{H_0}) \circ \Delta_{V,H_0} = (\theta_0 \otimes \mathrm{id}_{H_0}) \circ \Delta_{V,H_0},
        \end{split}
    \]
    since $\Phi$ is clearly a morphism of right $H_0$-comodules.

\end{proof}
Motivated by classical differential geometry, where reductions preserve the structure of the frame bundle, it becomes natural to ask whether a quantum G-structure is again a quantum frame resolution. The following theorem provides an affirmative answer.

\begin{theorem}\label{th:Gstructure}
    Let $(A,H,V,\theta)$ be a quantum frame resolution of $(B,\Omega^1(B))$ and let $(A_0, H_0, V, \theta_0)$ be a quantum G-structure of $(A,H,V,\theta)$. Then $(A_0,H_0,V,\theta_0)$ is a quantum frame resolution of $(B_0,\Omega^1(B_0))$.
\end{theorem}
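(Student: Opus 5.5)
We have to check the conditions of Definition \ref{def:QFR} for $(A_0,H_0,V,\theta_0)$. Condition (1) is part of the definition of a quantum reduction, since $B_0=A_0^{\mathrm{co}H_0}\subseteq A_0$ is assumed principal. For (2), $V$ becomes a right $H_0$-comodule through $\Delta_{V,H_0}=(\mathrm{id}_V\otimes\pi)\circ\Delta_V$. For (3), Lemma \ref{lem:theta0isequivariant} shows that $\theta_0$ is right $H_0$-strongly tensorial with values in $A_0\Omega^1(B_0)$. So the only real task is to prove that
$$s_{\theta_0}\colon \mathcal{E}_0:=(A_0\otimes V)^{\mathrm{co}H_0}\to\Omega^1(B_0),\qquad a_0\otimes v\mapsto a_0\theta_0(v),$$
is an isomorphism of left $B_0$-modules. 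I plan to do this by comparing $s_{\theta_0}$ with $s_\theta$ through the maps induced by $\phi$.

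First, $\phi\otimes\mathrm{id}_V$ maps $(A\otimes V)^{\mathrm{co}H}$ into $(A_0\otimes V)^{\mathrm{co}H_0}$. To see this, apply $\mathrm{id}\otimes\mathrm{id}\otimes\pi$ to the $H$-coinvariance condition and use that $\phi$ is $H_0$-colinear for the induced coaction $\Delta_{A,H_0}$. This gives a map $\phi_*\colon\mathcal{E}\to\mathcal{E}_0$, and it is $B$-linear when $\mathcal{E}_0$ is viewed as a $B$-module via the isomorphism $\phi|_B$. Next, I will check that the square
$$\Phi|_{\Omega^1(B)}\circ s_\theta=s_{\theta_0}\circ\phi_*$$
commutes. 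This uses only that $\Phi$ is left $A$-linear along $\phi$, meaning $\Phi(a\omega)=\phi(a)\Phi(\omega)$, which holds because $\Phi$ is the morphism of FODCi induced by the algebra map $\phi$. Since $s_\theta$ and $\Phi|_{\Omega^1(B)}$ are isomorphisms, with the latter being the restriction noted before Definition \ref{def:quantumGstructure}, it follows that $s_{\theta_0}\circ\phi_*$ is an isomorphism. Hence $s_{\theta_0}$ is surjective and $\phi_*$ is injective.

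The main obstacle is showing that $\phi_*$ is surjective, equivalently that $s_{\theta_0}$ is injective. My approach is to identify both associated bundles with a module over the common base using faithful flatness and the Hopf--Galois structure. For a principal comodule algebra, $A\square_H W\cong$ the module of $H$-colinear maps, and base change along $\pi$ gives the standard isomorphism $A\square_H V\cong A_0\square_{H_0}V$ whenever $A_0\cong A\square_H H_0$. This is the usual characterisation of reductions in the works of Hajac and Pagani.

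So the first step is to prove $A_0\cong A\square_H H_0$, via $a_0$ corresponding to the element determined by $\Delta_{A_0}$, with the inverse given by multiplication after applying $\phi$. The argument runs as follows. Both $A\square_H H_0$ and $A_0$ are principal $H_0$-comodule algebras over $B\cong B_0$, and the natural map $A\square_H H_0\to A_0$, $\sum a\otimes t\mapsto\sum\phi(a)\varepsilon(t)$, is an $H_0$-colinear algebra map that restricts to $\phi|_B$ on coinvariants. A colinear morphism of principal comodule algebras that is bijective on coinvariants is bijective, by the Hopf--Galois/faithful flatness comparison: tensor the Galois isomorphisms and use Schneider's equivalence. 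Then $A_0\square_{H_0}V\cong(A\square_H H_0)\square_{H_0}V\cong A\square_H V$ by associativity of cotensor products, which holds because $A$ is faithfully flat and hence coflat. Under these identifications $\phi_*$ is the identity map, so it is bijective, and $s_{\theta_0}$ is an isomorphism.

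If the general comparison lemma proves too delicate, I would fall back on proving injectivity of $s_{\theta_0}$ directly, using $\mathcal{E}_0\cong B_0\otimes_B\mathcal{E}$ from Schneider's equivalence.
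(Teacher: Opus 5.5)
Your reduction is sound, and its first half is exactly the paper's argument. The paper sets $\Psi=(\phi\otimes\mathrm{id}_V)\circ s_\theta^{-1}\circ\Phi|_{\Omega^1(B)}^{-1}$, and its check of $s_{\theta_0}\circ\Psi=\mathrm{id}$ is your commuting square $\Phi|_{\Omega^1(B)}\circ s_\theta=s_{\theta_0}\circ\phi_*$. For $\Psi\circ s_{\theta_0}=\mathrm{id}$, the paper writes $\bar a=\phi(\tilde a)$ and then applies $s_\theta^{-1}\circ s_\theta$ to $\tilde a\otimes v$. That tacitly assumes the lift can be chosen inside $\mathcal{E}$, which is precisely the surjectivity of $\phi_*\colon\mathcal{E}\to\mathcal{E}_0$ you single out. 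So you have located the real crux correctly.

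Your argument for that step, however, does not work, so there is a genuine gap.

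\emph{The key object is not defined.} $A\square_H H_0$ needs a left $H$-coaction on $H_0=H/J$, and there is no natural one. The map $(\mathrm{id}\otimes\pi)\circ\Delta$ sends $J$ into $J\otimes H_0$, not to zero, so it does not descend to $H_0$.

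\emph{No formula of this shape can give $A_0$.} It would not depend on $\phi$, whereas $A_0$ depends on the chosen reduction; classically, different reductions of a fixed bundle can be non-isomorphic. Classically $P\times_G K$ needs a homomorphism $G\to K$, while a reduction $Q\subseteq P$ satisfies $P\cong Q\times_K G$. The correct comparison therefore runs the other way, $A\to A_0\square_{H_0}H$, $a\mapsto\phi(a_0)\otimes a_1$. Using it would still require two lemmas you have not stated: an induction lemma (that $A_0\square_{H_0}H$ is again a faithfully flat $H$-Galois extension of $B_0$) and the comparison lemma for Galois extensions.

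\emph{A cleaner repair uses only the standing hypotheses.} Regard $A_0$ as a right $B$-module through $\phi|_B\colon B\cong B_0$, and apply $A_0\otimes_B(-)$ to the $B$-linear map $\phi_*$.
\begin{itemize}
\item Schneider's equivalence for $A$, followed by base change along $\phi$, gives by multiplication $A_0\otimes_B\mathcal{E}\cong A_0\otimes_A(A\otimes_B\mathcal{E})\cong A_0\otimes_A(A\otimes V)\cong A_0\otimes V$.
\item Schneider's equivalence for $A_0$ gives, again by multiplication, $A_0\otimes_{B_0}\mathcal{E}_0\cong A_0\otimes V$.
\item Both identifications send $a'\otimes\sum a^i\otimes v^i$ to $\sum a'\phi(a^i)\otimes v^i$. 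So under them $\mathrm{id}_{A_0}\otimes\phi_*$ becomes the identity of $A_0\otimes V$, and hence is bijective.
\item Faithful flatness of $A_0$ over $B_0$ then forces $\ker\phi_*=\mathrm{coker}\,\phi_*=0$.
\end{itemize}
Consequently $s_{\theta_0}=\Phi|_{\Omega^1(B)}\circ s_\theta\circ\phi_*^{-1}$ is an isomorphism of left $B_0$-modules.

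Your fallback ``$\mathcal{E}_0\cong B_0\otimes_B\mathcal{E}$'' is, since $B\cong B_0$, just a restatement of the claim. The faithful-flatness step above is what actually closes the gap.
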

\begin{proof}
    Let 
    \[
        \mathcal{E}:=(A \otimes V)^{\mathrm{co}H}, \quad \mathcal{E}_0 := (A_0 \otimes V)^{\mathrm{co}H_0}\,
    \]
    be the quantum associated bundles to the quantum principal bundles $B \subseteq A$ and $B_0 \subseteq A_0$ respectively, where the right $H$-comodule $(V,\Delta_V)$ is regarded as a right $H_0$-comodule via the induced coaction $\Delta_{V,H_0} := (\mathrm{id}_V \otimes \pi) \circ \Delta_V$. Then, if $\phi \colon A \to A_0$ is the surjective morphism of right $H_0$-comodule algebras defining the reduction, the map $\phi \otimes \mathrm{id}_V$ restricts to a map $\mathcal{E} \to \mathcal{E}_0$. In fact, for any $a \otimes v \in \mathcal{E}$, we have that
    \[
        \begin{split}
            \Delta_{A_0 \otimes V} \circ (\phi(a) \otimes v) &= \phi(a_0) \otimes v_0 \otimes \pi(a_1)\pi(v_1) = (\phi \otimes \pi) \circ \Delta_{A \otimes V}(a \otimes v)\\
            &= \phi(a) \otimes v \otimes 1.
        \end{split}
    \]
    
    Consider now the following diagram:
    \[
    \begin{centering}
    \begin{tikzcd}
    \mathcal{E} \arrow[ddd, "(\phi \otimes \mathrm{id}_V)"] \arrow[rrrrr, "s_{\theta}", shift left] &  &  &  &  & \Omega^1(B) \arrow[lllll, "s_{\theta^{-1}}", shift left=2] \arrow[ddd, "\Phi\big|_{\Omega^1(B)}", shift left] \\
                                                                                         &  &  &  &  &                                                                                                                  \\
                                                                                         &  &  &  &  &                                                                                                                  \\
    \mathcal{E}_0 \arrow[rrrrr, "s_{\theta_0}"]                                          &  &  &  &  & \Omega^1(B_0) \arrow[uuu, "\Phi\big|_{\Omega^1(B)}^{-1}", shift left=2]                                    
    \end{tikzcd},
    \end{centering}
\]
    where, as discussed, the restriction of $\Phi$ to base forms is an isomorphism, and $s_{\theta} := \cdot \circ (\mathrm{id}_A \otimes \theta) \colon \mathcal{E} \to \Omega^1(B)$, where $\cdot$ denotes the $A$-action on $\Omega^1(A)$, is an isomorphism of left $B$-modules, since by assumption $(A,H,V,\theta)$ is a quantum frame resolution. Since $\theta_0$ is right strongly tensorial, the map $s_{\theta_0}:= \cdot_0 \circ (\mathrm{id}_{A_0} \otimes \theta_0)$ is a left $B_0$-module map $\mathcal{E}_0 \to \Omega^1(B_0)$ by Proposition \ref{pro:thecorrespondence2}, where we denote by $\cdot_0$ the $A_0$-action on $\Omega^1(A_0)$. We show that the map
    \[
        \Psi := (\phi \otimes \mathrm{id}_V) \circ s_{\theta}^{-1} \circ \Phi\big|_{\Omega^1(B)}^{-1} \colon \Omega^1(B_0) \to \mathcal{E}_0,
        \label{eq:LAMAPPA}
    \]
    is both a left and a right inverse of $s_{\theta_0}$.

    For all $\bar{a} \otimes v \in \mathcal{E}_0$ with $\bar{a} := \phi(\tilde{a})$ for some $\tilde{a} \in A$, we have that

        \[
            \begin{split}
                \Psi \circ s_{\theta_0} (\bar{a} \otimes v) &= (\phi \otimes \mathrm{id}_V) \circ s_{\theta}^{-1} \circ \Phi\big|_{\Omega^1(B)}^{-1} \circ \cdot_0 \circ \big(\mathrm{id}_{A_0} \otimes (\Phi \circ \theta)\big) (\phi(\tilde{a}) \otimes v) \\
                &=(\phi \otimes \mathrm{id}_V) \circ s_{\theta}^{-1} \circ \Phi\big|_{\Omega^1(B)}^{-1} \circ \cdot_0 \circ (\mathrm{id}_{A_0} \otimes \Phi) \circ (\phi \otimes \theta)(\tilde{a} \otimes v)  \\
                &= (\phi \otimes \mathrm{id}_V) \circ s_{\theta}^{-1} \circ \Phi\big|_{\Omega^1(B)}^{-1} \circ \cdot_0 \circ (\phi \otimes \Phi) \circ (\mathrm{id}_A \otimes \theta)(\tilde{a} \otimes v)\\
                &= (\phi \otimes \mathrm{id}_V) \circ s_{\theta}^{-1} \circ \Phi\big|_{\Omega^1(B)}^{-1} \circ \Phi \circ \cdot \circ (\mathrm{id}_A \otimes \theta)(\tilde{a} \otimes v)  \\
                &= (\phi \otimes \mathrm{id}_V) \circ s_{\theta}^{-1} \circ \Phi\big|_{\Omega^1(B)}^{-1} \circ \Phi\big|_{\Omega^1(B)} \circ s_{\theta}(\tilde{a} \otimes v) \\
                &= (\phi \otimes \mathrm{id}_V)(\tilde{a} \otimes v) = \bar{a} \otimes v,
            \end{split}
        \]
        
       \noindent  where we used that $(\phi, \Phi)$ is a morphism of differential calculi.
       
        On the other hand, similarly,

            \begin{align*}
                s_{\theta_0} \circ \Psi &= s_{\theta_0} \circ (\phi \otimes \mathrm{id}_V) \circ s_{\theta}^{-1} \circ \Phi\big|_{\Omega^1(B)}^{-1}\\
                &=\cdot_0 \circ \big(\mathrm{id}_{A_0} \otimes (\Phi \circ \theta)\big) \circ (\phi \otimes \mathrm{id}_V) \circ s_{\theta}^{-1} \circ \Phi\big|_{\Omega^1(B)}^{-1} \\
                &= \cdot_0 \circ (\phi \otimes \Phi) \circ (\mathrm{id}_A \otimes \theta) \circ s_{\theta}^{-1} \circ \Phi\big|_{\Omega^1(B)}^{-1}\\
                &= \Phi \circ \cdot \circ (\mathrm{id}_A \otimes \theta) \circ s_{\theta}^{-1} \circ \Phi\big|_{\Omega^1(B)}^{-1}\\
                &= \Phi\big|_{\Omega^1(B)} \circ s_{\theta} \circ s_{\theta}^{-1}\circ \Phi\big|_{\Omega^1(B)}^{-1} = \mathrm{id}_{\Omega^1(B_0)},
            \end{align*}
            
    concluding the proof. 
\end{proof}
Our next goal is to construct examples of G-structures within the Radford--Majid biproduct setting. The following result proves to be very useful for this purpose. 
\begin{proposition}
\label{prop:split_hopf_and_YD}
    Let $H,H'$ be Hopf algebras in  ${}_{\Bbbk}\mathrm{Vec}$, let $\uB\in {}^{H}_{H}\mathcal{YD}$ be a braided Hopf algebra, and let $\pi\colon H\to H'$ be a Hopf algebra surjection with a section $\iota\colon H'\to H$ which is also a Hopf algebra map. It follows that $\uB$ is a braided Hopf algebra in ${}^{H'}_{H'}\mathcal{YD}$. 
    \begin{proof}
     Letting $\triangleright\colon H\otimes \uB \to \uB$ and ${}_{H}\Delta\colon \uB \to H\otimes \uB$ respectively denote the $H$-action and $H$-coaction on $\uB$, one defines $H'$-action and $H'$-coaction on $\uB$ as 
      \[
      \begin{aligned}
      \triangleright'\colon H'\otimes \uB & \to \uB, & {}_{H'}\Delta\colon \uB &\to H'\otimes \uB,\\
      h'\otimes b &\mapsto \iota(h')\triangleright b, & b&\mapsto (\pi\otimes \mathrm{id})\circ {}_{H}\Delta,
      \end{aligned}
      \]
      and one verifies that the Yetter--Drinfeld compatibility between the latter holds. The claim now follows because, as is evident from the above assignments, any morphism in ${}^{H}_{H}\mathcal{YD}$ is also a morphism in ${}^{H'}_{H'}\mathcal{YD}$.
    \end{proof}
\end{proposition}

\begin{example} 
Consider Sweedler's Hopf algebra $E_{n}$ as in Example \ref{ex:sweedler_hopf}. There is an $n$-dimensional differential calculus on the latter given by the $\Omega^{1}(E_{n}):=\mathrm{span}_{E_{n}}\left\{\mathrm{d}x_{1},\dots,\mathrm{d}x_{n}\right\}$, with relations 
\begin{equation}
x_{i}\,\mathrm{d}x_{j}=-\mathrm{d}x_{j}\, x_{i},\qquad g\,\mathrm{d}x_{i}=-\mathrm{d}x_{i}\, g\,.
\end{equation}

\noindent This differential calculus is bicovariant: having assumed $\mathrm{d}g=0$ translates in a symmetric lifting of the coproduct at the level of the differential $1$-forms. In particular, for $i=1,\dots,n$, we have 
\begin{align*}
\Delta_{E_{n}}(\mathrm{d}x_{i}) = \mathrm{d}x_{i}\otimes 1 , \qquad {}_{{E_{n}}}\Delta(\mathrm{d}x_{i}) = 1\otimes \mathrm{d}x_{i} . 
\end{align*}
Considering the transmutation of $E_{n}$ in the braided monoidal category  $({}_{E_{n}}\mathrm{Mod},\otimes,\sigma_{\mathcal{R}_{0}})$ of left $E_{n}$-modules, we have a braided $\underline{E_{n}}$-bicovariant first order differential calculus $\Omega^{1}(\underline{E_{n}})$ in ${}_{E_{n}}\mathrm{Mod}$  on $\underline{E_{n}}$, again as in Example \ref{ex:sweedler_hopf}. As for any quasitriangular Hopf algebra in ${}_{\Bbbk}\mathrm{Vec}$, there is a braided monoidal functor 
\[
\mathcal{F}\colon \left({}_{H}\mathrm{Mod},\otimes,\sigma_{\mathcal{R}}\right)\to \left({}^{H}_{H}\mathcal{YD},\otimes,\sigma_{\mathcal{YD}}\right)
\]
enriching any object $M$ in ${}_{H}\mathrm{Mod}$ with the (Yetter--Drinfeld compatible) left $H$-coaction 
\[
\begin{aligned}
{}_{H}\Delta\colon M & \to H\otimes M,\\
m & \mapsto m_{-1}\otimes m_{0}=:\mathcal{R}^{2}\otimes \mathcal{R}^{1}\triangleright m, 
\end{aligned}
\]
we in particular have $\underline{E_{n}},\Omega^{1}(\underline{E_{n}}) \in \left({}^{E_{n}}_{E_{n}}\mathcal{YD},\otimes,\sigma_{\mathcal{YD}}\right)$, with left $H$-coaction given on generators by 
\[
    {}_{\underline{E_{n}}}\Delta(x_{i})  = g\otimes x_{i}, \qquad {}_{\Omega^{1}(\underline{E_{n}})}\Delta(\mathrm{d}x_{i}) = g \otimes \mathrm{d}x_{i}. 
\]
Therefore, as $\underline{E_{n}}$ is now understood as a braided Hopf algebra in ${}^{E_{n}}_{E_{n}}\mathcal{YD}$, and as $\Omega^{1}(\underline{E_{n}})$ is now understood as braided $\underline{E_{n}}$-bicovariant FODC in ${}^{E_{n}}_{E_{n}}\mathcal{YD}$, we may consider the Radford--Majid biproduct $\underline{E_{n}}\# E_{n}$ alongside the first order smash product differential calculus $\left(\Omega^{1}(\underline{E_{n}}\# E_{n}),\mathrm{d}_{\#,n}\right)$. It may be readily checked that for the case at hand the map $\mathrm{ver}\colon \Omega^{1}(\underline{E_{n}})\to \Omega^{1}(E_{n})\otimes \underline{E_{n}}$ in Theorem \ref{thm:onbicovarianceofsmash} is well-defined. In particular, it is identically zero, and therefore we have a bicovariant FODC on $\underline{E_{n}}\# E_{n}$.

In light of Proposition \ref{prop:split_hopf_and_YD}, and by means of the Hopf algebra surjection 
\[
\begin{aligned}
    \pi \colon E_{n}& \twoheadrightarrow E_{n-1}, \\ 
    (x_{1},\dots,x_{n},g,1) & \mapsto (x_{1},\dots,x_{n-1},g,1) \\ 
\end{aligned}
\]
admitting the identity as a section, we may also understand $\underline{E_{n}}$ as a braided Hopf algebra in ${}^{E_{n-1}}_{E_{n-1}}\mathcal{YD}$ and $\Omega^{1}(\underline{E_{n}})$ as a  braided $\underline{E_{n}}$-bicovariant FODC in ${}^{E_{n-1}}_{E_{n-1}}\mathcal{YD}$. 

Consider now the Radford--Majid biproduct $\underline{E_{n}}\# E_{n}$ and the first order smash product differential calculus $\left(\Omega^{1}(\underline{E_{n}}\#E_{n-1}),\mathrm{d}_{\#,n-1}\right)$, which is again bicovariant for similar reasoning. The calculus $\Omega^{1}(\underline{E_{n}}\#E_{n-1})$ is induced as the quotient calculus of $\Omega^{1}(\underline{E_{n}}\#E_{n})$ via the Hopf algebra surjection $\phi:=(\mathrm{id}\otimes\pi)\colon \underline{E_{n}}\# E_{n} \twoheadrightarrow \underline{E_{n}}\# E_{n-1}$ we have a map 
\[
\begin{aligned}
    \Phi\colon \left(\Omega^{1}(\underline{E_{n}}\#E_{n}),\mathrm{d}_{\#,n}\right) &\to \left(\Omega^{1}(\underline{E_{n}}\#E_{n-1}),\mathrm{d}_{\#,n-1}\right), \\ 
    E\,\mathrm{d}_{\#,n}E'& \mapsto \phi(E)\,\mathrm{d}_{\#,n-1}\circ \phi(E').
\end{aligned}
\]
Thus, the construction presented above provides an example of quantum $\mathrm{G}$-structure in agreement with Theorem \ref{th:Gstructure}. A clarifying diagram follows. 
\[
\begin{tikzcd}
\Omega^{1}(\underline{E_{n}}\#E_{n}) \arrow[rrrr, "\Phi"] &                                                                &                                                       &                            & \Omega^{1}(\underline{E_{n}}\# E_{n-1}) \\
                                                                                        & \underline{E_{n}} \# E_{n} \arrow[rr, "\phi "] &                                                       & \underline{E}_n \# E_{n-1} &                                                    \\
                                                                                        &                                                                &                                                       &                            &                                                    \\
                                                                                        &                                                                & \underline{E_{n}} \arrow[luu, hook] \arrow[ruu, hook] &                            &                                                   
\end{tikzcd}
\]
\qed

\end{example}
Classically, on a 2-dimensional manifold, a reduction of the frame bundle to the orthogonal group $\mathrm{O}_2$ is equivalent to the choice of a Riemannian metric, while a reduction to $\mathrm{SO}_2\cong \mathrm{U}_1$ additionally specifies an orientation. 
Building on Examples \ref{ex:C_q(2)_braided} and \ref{example:2D_calculus_on_the_quantum_plane} we now extend this perspective to the frame bundle of the braided quantum plane.

\begin{example}

Fix the notation as above, namely let $\uB:=\underline{\mathbb{C}}_{q}^{2}$, $H:=\mathcal{O}_{q}(\mathrm{GL}_{2})$, $H_{0}:=\mathcal{O}(\mathrm{SO}_{2})\cong \mathcal{O}(\mathrm{U}_1):=\mathbb{C}[\alpha,\alpha^{-1}]$, and consider the Hopf algebra surjection 
\begin{equation}
\pi\colon H  \twoheadrightarrow H_{0}, \qquad  
\begin{pmatrix}
    a & b \\ c & d
\end{pmatrix}  \mapsto \begin{pmatrix}
    \alpha & 0 \\ 
    0 & 1
\end{pmatrix}, \qquad 
D^{-1}  \mapsto \alpha^{-1},
\end{equation}

\noindent whose kernel, which is a Hopf ideal, we denote by $K:=\ker\pi$. Consider the  bicovariant $4$-dimensional differential calculus $(\Omega^{1}(H):=H\otimes H^{+}/I,\mathrm{d})$, whose cotangent space $H^{+}/I\cong\Lambda^{1}_{H}$ is explicitly given in \cite{aflw} in terms of generators $\omega^{i}$. To induce a quotient calculus on $H_{0}$ via the surjection $\pi$, we construct a right $H_{0}$-ideal contained in $H_{0}^{+}$. Define $J:=\pi(I)$. This is a right $H_{0}$-ideal, since

\[
J\cdot_{0}H_{0}=\pi(I)\cdot_{0}\pi(H)=\pi(I\cdot H)=\pi(I)=J.
\]

\noindent Moreover $J\subseteq H_{0}^{+}$: indeed $I\subseteq H^{+}$ by hypothesis, so $J=\pi(I)\subseteq\pi(H^{+})=H_{0}^{+}$. Hence $J$ defines a left covariant differential calculus
\[
\Omega^{1}(H_{0}):=H_{0}\otimes H_{0}^{+}/J
\]
on $H_{0}$, with cotangent space $H_{0}^{+}/J\cong\Lambda^{1}_{H_{0}}$.

\smallskip

We now give an explicit realisation of $\Lambda^{1}_{H_{0}}$.  Since $\varepsilon_{0}\circ\pi=\varepsilon$, for every $k\in K$ we have 
\[
\varepsilon(k)=\varepsilon_{0}(\pi(k))=\varepsilon_{0}(0)=0,
\]
and thus $K\subseteq H^{+}$. Consequently, the restriction $\pi|_{H^{+}}\colon H^{+}\to H_{0}^{+}$ is well defined and, since $\pi$ is surjective and $\varepsilon=\varepsilon_{0}\circ\pi$, it is again surjective, with kernel $H^{+}\cap K=K$. Moreover, the induced map at the level of the cotangent spaces, that we denote by $\widetilde{\pi}\colon\Lambda^{1}_{H}\to\Lambda^{1}_{H_{0}}$, is automatically well defined because $\pi(I)\subseteq J$. Furthermore, since $\pi|_{H^+}$ is surjective, $\widetilde{\pi}$ is surjective as well. Accordingly, the images of the generators $\omega^{i}$ span $\Lambda^{1}_{H_{0}}$. We get
\[
\begin{aligned}
\widetilde{\pi}(\omega^{1})&=Q'(q^{-2}-q^{-1})\,\alpha^{-1}\mathrm{d}\alpha, & \widetilde{\pi}(\omega^{2})&=0,\\
\widetilde{\pi}(\omega^{3})&=0, & \widetilde{\pi}(\omega^{4})&=Q'(q^{-5}-q^{-3}-q^{-2}+q^{-1})\,\alpha^{-1}\mathrm{d}\alpha.
\end{aligned}
\]
Thus, $\Lambda^{1}_{H_{0}}$ is 1-dimensional, spanned by the single element $\varpi(\alpha):=\alpha^{-1}\mathrm{d}\alpha$. Finally, $\Omega^{1}(H_{0})$ is  bicovariant. Indeed, as $\mathrm{coAd}_{\mathrm{R}}(I)\subseteq I\otimes H$, and since $\pi$ is a Hopf algebra map, we have 
\begin{align*}
\mathrm{coAd}_{\mathrm{R}}(J)& =\mathrm{coAd}_{\mathrm{R}}(\pi(I))\\ 
& =(\pi\otimes\pi)\bigl(\mathrm{coAd}_{\mathrm{R}}(I)\bigr)\\ 
& \subseteq(\pi\otimes\pi)(I\otimes H)=J\otimes H_{0}.
\end{align*}

\noindent Hence, $J$ is closed under $\mathrm{coAd}_{\mathrm{R}}$ and $\Omega^{1}(H_{0})$ is bicovariant. 

As the Hopf algebra map $\pi\colon H \twoheadrightarrow H_{0}$ has a Hopf algebra section $\iota \colon H_{0} \to H$ sending $\alpha \mapsto D$, we understand $\uB\in {}^{H_{0}}_{H_{0}}\mathcal{YD}$ in light of Proposition \ref{prop:split_hopf_and_YD}, and the above construction gives an example of a quantum G-structure in accordance with the following diagram: 
\[
\begin{tikzcd}
\Omega^{1}(\underline{\mathbb{C}}_{q}^{2}\# \mathcal{O}_{q}(\mathrm{GL}_{2})) \arrow[rrrr, "\Phi"] &                                                                         &                                                       &                                                     & \Omega^{1}(\underline{\mathbb{C}}_{q}^{2}\# \mathcal{O}(\mathrm{SO}_{2})) \\
                                                                                      & \underline{\mathbb{C}}_{q}^{2}\# \mathcal{O}_{q}(\mathrm{GL}_{2}) \arrow[rr, "\phi"] &                                                       & \underline{\mathbb{C}}_{q}^{2}\# \mathcal{O}(\mathrm{SO}_{2}) &                                                                 \\
                                                                                      &                                                                         &                                                       &                                                     &                                                                 \\
                                                                                      &                                                                         & \underline{\mathbb{C}}_{q}^{2} \arrow[luu, hook] \arrow[ruu, hook] &                                                     &                                                                
\end{tikzcd}
\]
where $\phi:=(\mathrm{id}\otimes \pi)$ and $\Phi$ is the induced morphism of FODCi. 
\qed
\end{example}

\appendix

\section{Cartan geometries, frame bundles, and G-structures} \label{section:classicalpicture}
This Appendix provides a point of comparison between the results obtained in the paper and the classical differential geometric picture. In the following, we work in the category of $n$-dimensional real or complex smooth manifolds, and we denote by $\rGL(n,\Bbbk)$ or simply $\rGL(n)$ the general linear group on $\Bbbk^n$. 

\begin{definition}
    Let $\rH \subset \rG$ be a closed Lie subgroup of a Lie group $\rG$. A \emph{Cartan geometry} of type $(\rG,\rH)$ on a manifold $M$ is a principal $\rH$-bundle $\pi \colon E \to M$ together with a $\mathrm{Lie}(\rG)=:\mathfrak{g}$-valued $1$-form $\omega \in \Omega^1(E,\mathfrak{g})$ called the \emph{Cartan connection}, satisfying
    \begin{equation}
            \label{eq:propertiesofCartanconnection}
        \begin{aligned}
            &\mathrm{R}_h^{\ast} \omega = \mathrm{ad}_{h^{-1}} \circ \omega, & &  \text{for all } h \in \rH,\\
            &\omega\big(\zeta(X)\big) = X, & &  \text{for all } X \in \mathfrak{h},\\
            &\omega_p \colon T_pE \to \mathfrak{g} & & \text{is a linear isomorphism for all } p \in E.
        \end{aligned}
        \end{equation}
    \end{definition}
    Here, $\mathrm{R}_h$ denotes the right action of $\rH$ on fibers, $\mathrm{ad}_\bullet$ denotes the adjoint action of a Lie group on its Lie algebra, and $\zeta(X)$ is the fundamental vector field on $E$ associated with $X \in \mathfrak{h}$. The Cartan connection $\omega$ has \emph{curvature} given by the $\mathfrak{g}$-valued $2$-form
    \begin{equation}
        \label{eq:curvatureCartanconnection}
        \Omega := \mathrm{d}\omega + \tfrac{1}{2}[\omega,\omega] \in \Omega^2(E,\mathfrak{g}),
    \end{equation}
    measuring the failure of $\omega$ to satisfy the Maurer--Cartan equation. We now specialise, in the following definition, to a case of particular interest.
    \begin{definition}
    The \emph{homogeneous model} for a Cartan geometry of type $(\rG,\rH)$ is a principal bundle $\rG \to \rG/\rH$ endowed with a $1$-form $\varpi \in \Omega^1(\rG,\mathfrak{g})$, the \emph{Maurer--Cartan form}, which plays the role of a Cartan connection with vanishing curvature, i.e. $\Omega=0$ in \eqref{eq:curvatureCartanconnection}. 
\label{def:homogeneous_model_cartan}
\end{definition}
Consider the frame bundle $\pi:\mathrm{F}M \to M$ viewed as a principal $\rGL(n)$-bundle. It is canonically equipped with an $\Bbbk^{n}$ valued $1$-form $\theta$ that is horizontal and $\rGL(n)$-equivariant. More in detail, viewing the fiber $\mathrm{F}M_p$ over $p \in M$ as the space of all linear isomorphisms $\varphi$ between $\Bbbk^n$ and $\mathrm{T}_pM$, there exists a $1$-form with values in $\Bbbk^n$, called the \emph{soldering form} (or \emph{canonical form}) defined on  general vector fields $\xi \in \mathfrak{X}(\mathrm{F}M)$ as
    \begin{equation}
    \label{eq:definitioncanonicalform}
        \theta_\varphi (\xi_\varphi) := \varphi^{-1} (\pi_{\ast} \xi_\varphi) \in \Bbbk^n.
    \end{equation}
    The soldering form is horizontal and $\rGL(n)$-equivariant by construction:
    \[
    (\mathrm{R}_g)^*\theta=g^{-1}\theta,
    \]
    with $\mathrm{R}_g:\mathrm{F}M\to \mathrm{F}M$ being the fiber-wise action induced by the right multiplication by an element $g\in \rGL(n)$, while on the right hand side of the previous equation we use the standard action of $ \rGL(n)$ on $\Bbbk^n$. As the tangent bundle of a manifold $M$ is isomorphic to the associated bundle $\mathrm{F}M\times_{\rGL(n)} \Bbbk^n$, we have that connections on the frame bundle can be viewed as  linear connections on the tangent bundle. Moreover  we can then identify a principal $\rGL(n)$-bundle over $M$ equipped with an equivariant, horizontal and $\Bbbk^n$ valued differential form, with the frame bundle.

   An explicit realisation of the above construction would be the affine $n$-space $\mathbb{A}^n$, which can indeed be constructed as the homogeneous model

\begin{equation}
    \label{eq:affinemspace}
    \mathbb{A}^{n} \cong \textrm{Aff}(n) /\rGL(n),
\end{equation}

\noindent with $ \textrm{Aff}(n) = \Bbbk^{n}\rtimes \rGL(n,\Bbbk)$ being the affine group. More explicitly, employing a matrix representation, we have that
\begin{equation}
\label{eq:affineextension}
    \textrm{Aff}(n) = \left\{  \begin{pmatrix} 1 & 0 \\ b & A  \end{pmatrix} \mid A \in \rGL(n,\Bbbk),\, b \in \Bbbk^{n}\right\} \subset \rGL(n+1,\Bbbk),
\end{equation}

\noindent whose Lie algebra is

\begin{equation}
\label{eq.defliealgebraassociatedtoaffinegroup}
    \mathfrak{aff}(n) = \left\{  \begin{pmatrix} 0 & 0 \\ B & X  \end{pmatrix} \mid X \in \mathfrak{gl}(n),\, B \in \Bbbk^{n}\right\} \cong \Bbbk^{n}\oplus \mathfrak{gl}(n).
\end{equation}
For the purpose of the paper, it is of central relevance to observe that the splitting $\mathfrak{aff}(n)=\Bbbk^n\oplus\mathfrak{gl}(n)$ presented above is preserved by the adjoint action of $\mathrm{Aff}(n)$ restricted to $\rGL(n)$, so that $\mathfrak{aff}(n)$ splits as a direct sum of $\rGL(n)$-modules. Considering the principal bundle $\textrm{Aff}(n)\to \textrm{Aff}(n)/  \rGL(n)$, the Maurer--Cartan form on the affine group splits  according to this direct sum decomposition as
$\varpi=\theta +\gamma$, where $\theta\in \Omega^1(\textrm{Aff}(n),\Bbbk^n)$ and $\gamma\in \Omega^1(\textrm{Aff}(n),\mathfrak{gl}(n))$ are both $\rGL(n)$-equivariant forms. We stress that this $\theta$ coincides with the soldering form of $\eqref{eq:definitioncanonicalform}$ once $\mathrm{Aff}(n)$ is identified with $\mathrm{F}\mathbb{A}^n$ (the frame bundle of the affine space $\mathbb{A}^n$); this identification is precisely what justifies referring to the $\Bbbk^n$-component of $\varpi$ by the same symbol and name as the canonical form on a general frame bundle. In particular, since $\theta$ is also a horizontal $1$-form, we can identify $\textrm{Aff}(n)$ with the frame bundle $\mathrm{F}\mathbb{A}^{n}$, and thus $\gamma$ with an affine connection on $\mathbb{A}^{n}$. This construction can be generalised to the case of non-vanishing curvature, in the Cartan sense, by considering a $\rGL(n)$ principal bundle $P\to M$ equipped with a Cartan connection $\omega\in\Omega^1(P,\mathfrak{aff}(n))$. However, as here we deal with the noncommutative generalisation of the affine model, we consider a full description of the latter to be beyond the intent of the paper, and thus we address the reader to the monograph \cite{parabook} for more details.
    
\smallskip
    
We now turn our attention to the reduction of the frame bundle $\mathrm{F}M \to M$ to a closed subgroup $\rH \subset \rGL(n)$ of the general linear group, more commonly referred to as an \emph{$\rH$-structure} on $M$. It is worth mentioning that
many relevant geometrical structures arise in this way. For example, considering $\mathrm{O}_n(\mathbb{R})\subset \rGL(n,\mathbb{R})$, one gets a Riemannian structure on $M$. Similarly, for $n$ an even, positive integer, and for $\rH=\mathrm{Sp}(n,\mathbb{R})\subset \rGL(n,\mathbb{R})$
the reduction of the frame bundle induces an almost symplectic structure. We recall the following standard result to the reader. 

\begin{proposition}
\label{pro:thepowerofGstructures}
An $\rH$-structure $\pi \colon P \to M$, with structure group $\rH \subseteq \rGL(n,\Bbbk)$, is equivalently described by a principal $\rH$-bundle $\pi \colon P \to M$ endowed with a horizontal, $\rH$-equivariant $\Bbbk^n$-valued $1$-form $\Theta \in \Omega^1(P,\Bbbk^n)$ such that, for every $p\in P$, the restriction of $\Theta_p$ to the horizontal tangent space is an isomorphism.
\end{proposition}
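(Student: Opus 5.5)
The plan is to prove both directions of the correspondence, working pointwise on fibres and using that an $\rH$-structure is, by definition, a reduction $P\subseteq \mathrm{F}M$ of the frame bundle to $\rH\subseteq\rGL(n,\Bbbk)$. The soldering form \eqref{eq:definitioncanonicalform} on $\mathrm{F}M$ will serve as the source of $\Theta$ in one direction. In the other direction, $\Theta$ will be used to embed $P$ into $\mathrm{F}M$.

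For the direction from $\rH$-structure to form, let $\iota\colon P\hookrightarrow\mathrm{F}M$ be the reduction, an $\rH$-equivariant embedding over $\mathrm{id}_M$, and set $\Theta:=\iota^*\theta$.
\begin{itemize}
\item Horizontality: it follows from that of $\theta$ together with $\pi_{\mathrm{F}M}\circ\iota=\pi_P$, since vertical vectors of $P$ are sent to vertical vectors of $\mathrm{F}M$.
\item Equivariance: from $\iota\circ \mathrm{R}_h=\mathrm{R}_h\circ\iota$ and $(\mathrm{R}_g)^*\theta=g^{-1}\theta$ we get $(\mathrm{R}_h)^*\Theta=h^{-1}\Theta$ for $h\in\rH$.
\item Isomorphism condition: at $p\in P$ with $\iota(p)=\varphi$, we have $\Theta_p(\xi)=\varphi^{-1}(\pi_*\xi)$. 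The map $\pi_*$ restricted to any horizontal complement is an isomorphism onto $\mathrm{T}_{\pi(p)}M$, and $\varphi^{-1}$ is an isomorphism, so $\Theta_p$ restricted to horizontals is an isomorphism.
\end{itemize}

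For the converse, let a principal $\rH$-bundle $P$ carry such a $\Theta$. Since $\Theta$ is horizontal, $\Theta_p$ vanishes on $\ker\pi_*$ and therefore descends to a linear map $\bar\Theta_p\colon \mathrm{T}_{\pi(p)}M\cong \mathrm{T}_pP/\ker(\pi_*)_p\to\Bbbk^n$. This map is independent of the choice of horizontal complement, and it is an isomorphism by hypothesis.

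Define $\iota(p):=\bar\Theta_p^{-1}\colon\Bbbk^n\to \mathrm{T}_{\pi(p)}M$, a frame at $\pi(p)$. Equivariance $(\mathrm{R}_h)^*\Theta=h^{-1}\Theta$ gives $\bar\Theta_{ph}=h^{-1}\bar\Theta_p$, because $\pi\circ \mathrm{R}_h=\pi$. Hence $\iota(ph)=\iota(p)\circ h$, which is exactly the right $\rGL(n)$-action on $\mathrm{F}M$ restricted to $\rH$. So $\iota$ is an $\rH$-equivariant bundle map over $\mathrm{id}_M$, and injective on fibres since $\rH$ acts freely.

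It remains to check smoothness and that $\iota$ is an embedding. Smoothness is the step I expect to need care. I would check it in a local trivialisation $P|_U\cong U\times\rH$ with local section $s$. Then $s^*\Theta$ is a smooth $\Bbbk^n$-valued $1$-form on $U$ that is pointwise invertible, so $\iota\circ s$ is a smooth local frame. Equivariance then propagates smoothness to all of $P|_U$. An injective equivariant bundle morphism of principal bundles over the same base, with closed structure subgroup, is an embedding onto a reduced subbundle. Finally, $\iota^*\theta=\Theta$ holds by construction, since $\theta_{\iota(p)}(\xi)=\iota(p)^{-1}\pi_*\xi=\Theta_p(\xi)$.

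The two constructions are mutually inverse: starting from $\iota$ and passing to $\iota^*\theta$ recovers $\iota$ by the formula for $\bar\Theta_p$, and conversely the last identity shows the form is recovered. This yields the claimed equivalence.
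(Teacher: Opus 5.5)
Your proof is correct and follows the route the paper indicates: the paper states this as a standard result without proof, noting only that $\Theta$ is obtained as $\iota^*\theta$ by pulling back the soldering form along the reduction $\iota\colon P\to\mathrm{F}M$, which is exactly your forward direction. Your converse, which reconstructs the reduction as $\iota(p)=\bar\Theta_p^{-1}$ and then checks equivariance, smoothness via local sections, and the embedding property using closedness of the structure group, correctly supplies the half the paper leaves implicit.
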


In particular, given the reduction $\iota: P\to \mathrm{F}M$, one gets $\Theta\in \Omega^{1}(P,\Bbbk^{n})$ by pulling back $\theta$ along the principal bundle morphism $\iota$. 

Connections on $\rH$-structures can be described in a similar style as in the affine picture. One starts with the affine extension of $\rH$, that is, $\textrm{Aff}_\rH(n)=\Bbbk^{n}\rtimes \rH$, and the principal bundle $\textrm{Aff}_\rH(n) \to \textrm{Aff}_\rH(n)/ \rH$. The Maurer--Cartan form on $\textrm{Aff}_{\rH}(n)$ splits equivariantly, with respect to the $\rH$-action, into a principal connection $\gamma$ and an equivariant horizontal  $\Bbbk^{n}$-valued form $\Theta$, of which we make use to identify the bundle $\textrm{Aff}_\rH(n)\to \textrm{Aff}_\rH(n)/ \rH$ with an $\rH$-structure. For the case in which $\rH = \textrm{O}(p,q) \subset \rGL(n,\mathbb{R})$, the affine extension is given by the pseudo-Euclidean group. Hence, this picture describes a $p+q$-dimensional pseudo-Riemannian manifold as a curved analogue of the pseudo-Euclidean space $\mathbb{E}^n$, the latter being an instance of the general categorical equivalence between between pseudo-Riemannian manifolds and torsion-free Cartan geometries of type $( \mathbb{R}^{p+q} \rtimes \textrm{O}(p,q),\textrm{O}(p,q))$.

\bigskip
\noindent\textbf{Contacts}
\medskip

\noindent
Antonio Del Donno:
\texttt{antonio.deldonno@matfyz.cuni.cz}

\medskip
\noindent
Giovanni Gava: \texttt{giovanni.gava@matfyz.cuni.cz}

\medskip
\noindent
Emanuele Latini: \texttt{emanuele.latini@unibo.it}

\medskip
\noindent
Thomas Weber: \texttt{thomas.weber@matfyz.cuni.cz}


\begin{thebibliography}{99}

\bibitem{Andr-Grana}
\textsc{Andruskiewitsch, N., Graña, M.},
\emph{Braided Hopf algebras over non abelian finite groups}.

\bibitem{paolo}
\textsc{Aschieri, P.},
\emph{Cartan structure equations and Levi-Civita connection in braided geometry}.
J. Geom. Phys. \textbf{224}, 105800 (2026).

\bibitem{afl}
\textsc{Aschieri, P., Fioresi, R., Latini, E.},
\emph{Quantum Principal Bundles on Projective Bases}.
Commun. Math. Phys. \textbf{382} (2021) 1691--1724.

\bibitem{aflw}
\textsc{Aschieri, P., Fioresi, R., Latini, E., Weber, T.},
\emph{Differential Calculi on Quantum Principal Bundles Over Projective Bases}.
Commun. Math. Phys. \textbf{405} (2024) 136.


\bibitem{AzizMajid}
\textsc{Aziz, R., Majid, S.},
\emph{Quantum differentials on cross product Hopf algebras}.
J. Algebra. \textbf{563} (2020)
303-351.


\bibitem{bm}
\textsc{Beggs, E. J., Majid, S.},
\emph{Quantum Riemannian Geometry}.
Springer, 2020.

\bibitem{Julius}
\textsc{Benner, J.},
\emph{Codifferential Calculi on Quantum Homogeneous Spaces}.
Preprint arXiv:2606.09687.

\bibitem{BespalovCrossed}
\textsc{Bespalov, Yu. N.},
\emph{Crossed modules, quantum braided groups, and ribbon structures}.
Theor. Math. Phys. \textbf{103}, 3 (1995) 621–637.

\bibitem{BespalovBraidedFundThm}
\textsc{Bespalov, Yu. N., Drabant, B.},
\emph{Hopf (bi-)modules and crossed modules in braided monoidal categories}.
J. Pure Appl. Algebra \textbf{123} (1998) 105--129.

\bibitem{BespalovCalcs}
\textsc{Bespalov, Y., Drabant, B.}, \emph{Differential calculus in braided Abelian categories.} arXiv preprint q-alg/9703036.

\bibitem{Borowiec}
\textsc{Borowiec, A.},
\emph{Bicovariant Codifferential Calculi}.
Preprint arXiv:2602.12493.

\bibitem{Bor96}
\textsc{Borowiec, A.},
\emph{Cartan pairs}.
Czech. J. Phys. \textbf{46} (1996) 1197--1202.

\bibitem{Bor97}
\textsc{Borowiec, A.},
\emph{Vector Fields and Differential Operators: Noncommutative Case}.
Czechoslov. J. Phys. \textbf{47} (1997) 1093--1100.

\bibitem{Brz}
\textsc{Brzezi\'nski, T.},
\emph{Translation map in quantum principal bundles}.
J. Geom. Phys. \textbf{20} (1996) 349--370.

\bibitem{BrzMaj}
\textsc{Brzezi\'nski, T., Majid, S.},
\emph{Quantum group gauge theory on quantum spaces}.
Commun. Math. Phys. \textbf{157} (1993) 591--638;
Erratum: Commun. Math. Phys. \textbf{167} (1995) 235.

\bibitem{Bottegoni_Sciandra}
\textsc{Bottegoni, L., Renda, F., Sciandra, A.},
\emph{Infinitesimal R-matrices for some families of Hopf algebras.} J. Pure Appl. Algebra \textbf{230}, 3 (2026).

\bibitem{parabook}
\textsc{Čap, A., Slovák, J.}, 
\emph{Parabolic geometries I: Background and general theory}.
AMS Math. Surv. Monogr. \textbf{154} (2024).

\bibitem{delDLW}
\textsc{Del Donno, A., Latini, E., Weber, T.},
\emph{On the Durdević approach to quantum principal bundles}.
J. Geom. Phys. \textbf{216} (2025) 105567.

\bibitem{DoiTak}
\textsc{Doi, Y., Takeuchi, M.},
\emph{Cleft comodule algebras for a bialgebra}.
Commun. Algebra \textbf{14} (1986) 801--817.


\bibitem{pagani}
\textsc{Fioresi, R., Latini, E., Pagani, C.},
\emph{Reduction of Quantum Principal Bundles over Non-affine Bases}.
Preprint, arXiv:2403.06830.


\bibitem{Keegan}
\textsc{Flood, K. J., Lobbia, G., Tendas, G.},
\emph{Canonical Differential Calculi via Functorial Geometrization}.
Preprint arXiv:2512.20742.


\bibitem{GiovanniThesis}
\textsc{Gava, G.},
\emph{On quantum $G$-structures}.
Master Thesis, arXiv:2607.05426.


\bibitem{gunther}
\textsc{Günther, R.},
\emph{Crossed Products for Pointed Hopf Algebras}.
Comm. Algebra \textbf{27} (1999) 4389--4410.

\bibitem{hajac_strong}
\textsc{Hajac, P.M.},
\emph{Strong Connections on Quantum Principal Bundles}.
Commun. Math. Phys. \textbf{182} (1996) 579--617.

\bibitem{Hajac_plane}
\textsc{Hajac, P. M., Matthes, R.},
\emph{Frame, cotangent and tangent bundles of the quantum plane.}
In Proc. II Int. Workshop Lie Theory and its Applications in Physics II, ed. by H.-D. Doebner, V. K. Dobrev, J. Hilgert, World Scientific, Singapore, pp.~377--385.



\bibitem{hajac}
\textsc{Hajac, P. M., Rudnik, J., Zieliński, B.},
\emph{Reductions of Piecewise-Trivial Principal Comodule Algebras}.
Preprint arXiv:1101.0201.


\bibitem{KasselBook}
\textsc{Kassel, C.},
\emph{Quantum Groups}.
Graduate Texts in Mathematics 155, Springer, New York, 1995.

\bibitem{RosenbergKontsevich}
\textsc{Kontsevich, M., Rosenberg, A.}, \emph{Noncommutative smooth spaces.}
In The Gelfand mathematical seminars, 1996-1999 (pp. 85-108). Boston, MA: Birkh\"auser Boston.

\bibitem{MajidBraid}
\textsc{Majid, S.},
\emph{Braided groups}.
J. Pure Appl. Algebra. \textbf{86}, 2 (1993) 187--221.

\bibitem{Majid1991}
\textsc{Majid, S.},
\emph{Braided Groups and Algebraic Quantum Field Theories}.
Lett. Math. Phys. \textbf{22} (1991) 167--175.

\bibitem{MajidBos}
\textsc{Majid, S.}, 
\emph{Cross products by braided groups and bosonization}.
J. Algebra \textbf{163}, 1 (1994) 165-–190.

\bibitem{MajidFoundation}
\textsc{Majid, S.},
\emph{Foundations of Quantum Group Theory}.
Cambridge University Press, Cambridge, 1995.

\bibitem{Majid_braided_group_riemannian} 
\textsc{Majid, S.}, 
\emph{Quantum and braided group Riemannian geometry.} 
J. Geom. Phys. \textbf{30}, 2 (1999) 113-146.

\bibitem{MajidLectureNotes}
\textsc{Majid, S.},
\emph{Rank of Quantum Groups and Braided Groups in Dual Form}.
Lect. Notes Math. \textbf{1510} (1992) 79--89.

\bibitem{Reamonn}
\textsc{\'{O} Buachalla, R.}, \emph{Noncommutative complex structures on quantum homogeneous spaces.} 
J. Geom. Phys. \textbf{99} (2016) 154--173.

\bibitem{PflaumSchauenburg}
\textsc{Pflaum, M., Schauenburg, P.},
\emph{Differential Calculi on Noncommutative Bundles}.
Z. Phys. C \textbf{76} (1997) 733--744.

\bibitem{Radford}
\textsc{Radford, D.E.},
\emph{The Structure of Hopf Algebras with a Projection}.
J. Algebra \textbf{92} (1985) 322--347.

\bibitem{SchauenburgDG}
\textsc{Schauenburg, P.},
\emph{Differential-Graded Hopf Algebras and Quantum Group Differential Calculi}.
J. Algebra \textbf{180} (1996) 239-286.

\bibitem{Schneider}
\textsc{Schneider, H.-J.},
\emph{Principal Homogeneous Spaces for Arbitrary Hopf Algebras}.
Israel J. Math. \textbf{72} (1990) 167--195.


\bibitem{SciWeb}
\textsc{Sciandra, A., Weber, T.},
\emph{Noncommutative Differential Geometry on Crossed Product Algebras}.
J. Algebra \textbf{664} (2025) 129--176.

\bibitem{sharpe}
\textsc{Sharpe, R. W.}, 
\emph{Differential geometry: Cartan's generalization of Klein's Erlangen program}. Graduate Texts in Mathematics (Vol. 166), Springer Science Business Media, 2020.

\bibitem{Takeuchi}
\textsc{Takeuchi, M.},
\emph{Relative Hopf modules --- equivalences and freeness criteria}.
J. Algebra \textbf{60} (1979) 452--471.


\bibitem{Woronowicz1989}
\textsc{Woronowicz, S. L.},
\emph{Differential Calculus on Compact Matrix Pseudogroups (Quantum Groups)}.
Commun. Math. Phys. \textbf{122} (1989) 125--170.

\end{thebibliography}
\end{document}